\definecolor{mypurple}{rgb}{.4,.0,.5}
\def\y{{\bf y}}
\def\x{{\bf x}}
\def\x{{\mathbf x}}
\def\u{{\bf u}}
\def\x{{\bf x}}
\def\y{{\bf y}}
\def\q{{\bf q}}
\def\m{{\bf m}}
\def\h{{\bf h}}
\def\be{\begin{equation}}
\def\ee{\end{equation}}
\def\ba{\left[\begin{array}}
\def\ea{\end{array}\right]}
\def\u{{\bf u}}
\def\x{{\bf x}}
\def\y{{\bf y}}
\def\q{{\bf q}}
\def\p{{\bf p}}
\def\1{{\bf 1}}
\def\0{{\bf 0}}
\def\calX{{\cal X}}
\def\calY{{\cal Y}}
\def\mR{{\mathbb R}}
\def\mN{{\mathbb N}}
\def\mE{{\mathbb E}}
\def\mS{{\mathbb S}}
\def\lp{\left (}
\def\rp{\right )}
\newtheorem{theorem}{Theorem}
\newtheorem{proposition}{Proposition}
\begin{document}

\begin{singlespace}

\title {Fully lifted interpolating comparisons of bilinearly indexed random processes  
}
\author{
\textsc{Mihailo Stojnic
\footnote{e-mail: {\tt flatoyer@gmail.com}} }}
\date{}
\maketitle

\centerline{{\bf Abstract}} \vspace*{0.1in}

A powerful statistical interpolating concept, which we call \emph{fully lifted} (fl), is introduced and presented while establishing a connection between bilinearly indexed random processes and their corresponding fully decoupled (linearly indexed) comparative alternatives. Despite on occasion very involved technical considerations, the final interpolating forms and their underlying relations admit rather elegant expressions that provide conceivably highly desirable and useful tool for further studying various different aspects of random processes and their applications. We also discuss the generality of the considered models and show that they encompass many well known random structures and optimization problems to which then the obtained results automatically apply.

\vspace*{0.25in} \noindent {\bf Index Terms: Random processes; comparison principles, lifting}.

\end{singlespace}

\section{Introduction}
\label{sec:back}

In this paper, we study random processes interpolations and resulting comparisons.  Many excellent results on these topics have been  obtained in various directions over the last several decades. Probably two of the most prominent ones are
the Slepian's max \cite{Slep62} and the Gordon's minmax \cite{Gordon85} principle (see also, e.g.,  \cite{Sudakov71,Fernique74,Fernique75,Kahane86}). Brief history and development reviews, with a hint at a rather massive range of applications spanning the most diverse scientific fields, can be found in, e.g., \cite{Stojnicgscomp16,Adler90,Lifshits85,LedTal91,Tal05}. The methods turned out to be particularly successful over the last 20 years when utilized as the main probabilistic foundation in the analysis of random structures and optimization problems (see, e.g., \cite{Guerra03,Tal06,Pan10,Pan10a,Pan13,Pan13a} for the Slepian  and \cite{StojnicISIT2010binary,StojnicCSetam09,StojnicUpper10,StojnicICASSP10knownsupp,StojnicICASSP10block,StojnicICASSP10var}  for both Slepian and Gordon principles based mechanisms). As is well known, in such structures, in large dimensional settings, one often encounters the appearance of the so-called \emph{phase-transition} (PT) phenomenon. This phenomenon means that the underlying random structure or optimization problem exhibits a sharp change in behavior, as one moves from one region of the system parameters to another. For example, for a particular algorithm, a random optimization problem is highly likely solvable throughout a range of system parameters  and then highly unlikely solvable outside such a range. Quite surprisingly, not only were the techniques from, say, \cite{StojnicISIT2010binary,StojnicCSetam09,StojnicUpper10,StojnicICASSP10block,StojnicICASSP10knownsupp} strong enough to provide an excellent qualitative performance characterizations of the underlying problems, they also turned out to be capable of doing so on an ultimate, PT, precision level. In other words, methods employed in \cite{StojnicISIT2010binary,StojnicCSetam09,StojnicUpper10,StojnicICASSP10block,StojnicICASSP10knownsupp} were strong enough to precisely pinpoint both the existence of the PT phenomena and their exact locations in the space of problem parameters.

On the other hand, in some of the scenarios such a methodology (despite providing useful performance characterizations) still needs a substantial upgrading to achieve the full PT level of precision (more on this can be found in, e.g., \cite{Guerra03,Tal06,Pan10,Pan10a,Pan13,Pan13a}  for quadratic (or particularly polynomial/tensorial) max type forms, and in, e.g., \cite{StojnicLiftStrSec13,StojnicMoreSophHopBnds10,StojnicRicBnds13,StojnicAsymmLittBnds11,StojnicGardSphNeg13,StojnicGardSphErr13} and references therein for various other forms including both quadratic and bilinear max and minmax types of forms). As discussed in \cite{Guerra03,Tal06,Pan10,Pan10a,Pan13,Pan13a}  regarding the quadratic/tensorial max forms, and in \cite{Stojnicgscompyx16} regarding the forms from \cite{StojnicLiftStrSec13,StojnicMoreSophHopBnds10,StojnicRicBnds13,StojnicAsymmLittBnds11,StojnicGardSphNeg13,StojnicGardSphErr13}, the foundational blocks of the needed upgrades are rooted in the core improvements of the underlying random processes' comparisons. As many of generic random structures still lack a full PT level analytical characterization, the need for establishing optimal comparisons' upgradings is rather pressing. The machinery of \cite{Stojnicgscomp16,Stojnicgscompyx16} and the introduction of a \emph{partial lifting} concept made a solid progress in that direction. Here though, we move things to a whole another level and create a new interpolating mechanism to which we refer as \emph{fully lifted} (fl). As it will be clear throughout the presentation, the key idea will be in successively  increasing the number of the so-called nested levels of lifting. To make the reasoning and the underlying mathematics more instructive and easier to follow and to slowly successively introduce all key technical ingredients, we, in Sections \ref{sec:gencon} and \ref{sec:seclev}, present separately the results for the first and second level of lifting and then, in Section \ref{sec:rthlev}, the corresponding generalization for any $r$-th ($r\in\mN$), level of lifting.

\section{A bilinear comparison form -- first level of lifting}
\label{sec:gencon}

For given two sets, $\calX=\{\x^{(1)},\x^{(2)},\dots,\x^{(l)}\}$ with $\x^{(i)}\in \mR^n$ and $\calY=\{\y^{(1)},\y^{(2)},\dots,\y^{(l)}\}$ with $\y^{(i)}\in \mR^m$, vector $\p=[\p_0,\p_1,\p_2]$ with $\p_0\geq \p_1\geq \p_2= 0$, vector $\q=[\q_0,\q_1,\q_2]$ with $\q_0\geq \q_1\geq \q_2= 0$, and real parameters $\beta>0$ and $s$, we consider function
\begin{equation}\label{eq:genanal1}
 f(G,u^{(4,1)},u^{(4,2)},\calX,\calY,\p,\q,\beta,s)= \frac{1}{\beta|s|\sqrt{n}} \log\lp \sum_{i_1=1}^{l}\lp\sum_{i_2=1}^{l}e^{\beta \lp (\y^{(i_2)})^T
 G\x^{(i_1)}+\|\x^{(i_1)}\|_2\|\y^{(i_2)}\|_2 (u^{(4,1)}+u^{(4,2)})\rp} \rp^{s}\rp.
\end{equation}
Behavior of this function in random mediums is of our prevalent interest. In particular, we consider $(m\times n)$ dimensional matrices  $G\in \mR^{m\times n}$ with i.i.d. standard normal components and independent (of $G$ and among themselves) random variables $u^{(4,1)}\sim {\mathcal N}(0,\p_0\q_0-\p_1\q_1)$ and $u^{(4,2)}\sim {\mathcal N}(0,\p_1\q_1)$. For a scalar $\m=[\m_1]$, the following function turns out to be critically important for studying $f(G,u^{(4,1)},u^{(4,2)},\calX,\calY,\q,\beta,s)$
\begin{equation}\label{eq:genanal2}
\xi(\calX,\calY,\p,\q,\m,\beta,s)  \triangleq   \mE_{G,u^{(4,2)}}\frac{1}{\beta|s|\sqrt{n}\m_1} \log \mE_{u^{(4,1)}}\lp \sum_{i_1=1}^{l}\lp\sum_{i_2=1}^{l}e^{\beta \lp (\y^{(i_2)})^T
 G\x^{(i_1)}+\|\x^{(i_1)}\|_2\|\y^{(i_2)}\|_2 u^{(4)}\rp} \rp^{s}\rp^{\m_1}.
\end{equation}
Throughout the paper, $\mE$ with a subscript denotes expectation with respect to subscript specified randomness. On the other hand, $\mE$ without subscript denotes the expectation with respect to any underlying randomness. To study properties of $\xi(\calX,\calY,\q,\m,\beta,s)$, we
find it quite convenient to follow into the footsteps of \cite{Stojnicgscomp16,Stojnicgscompyx16} and consider the following interpolating function $\psi(\cdot)$
\begin{equation}\label{eq:genanal3}
\psi(\calX,\calY,\p,\q,\m,\beta,s,t)  =  \mE_{G,u^{(4,2)},\u^{(2,2)},\h^{(2)}} \frac{1}{\beta|s|\sqrt{n}\m_1} \log \mE_{u^{(4,1)},\u^{(2,1)},\h^{(1)}} \lp \sum_{i_1=1}^{l}\lp\sum_{i_2=1}^{l}e^{\beta D_0^{(i_1,i_2)}} \rp^{s}\rp^{\m_1},
\end{equation}
where
\begin{eqnarray}\label{eq:genanal3a}
 D_0^{(i_1,i_2)} & \triangleq & \sqrt{t}(\y^{(i_2)})^T
 G\x^{(i_1)}+\sqrt{1-t}\|\x^{(i_1)}\|_2 (\y^{(i_2)})^T(\u^{(2,1)}+\u^{(2,2)})\nonumber \\
 & & +\sqrt{t}\|\x^{(i_1)}\|_2\|\y^{(i_2)}\|_2(u^{(4,1)}+u^{(4,2)}) +\sqrt{1-t}\|\y^{(i_2)}\|_2(\h^{(1)}+\h^{(2)})^T\x^{(i_1)}.
 \end{eqnarray}
In (\ref{eq:genanal3}), $\u^{(2,1)}$ and $\u^{(2,2)}$ are $m$ dimensional vectors of i.i.d. zero-mean Gaussians with variances $\p_0-\p_1$ and $\p_1$, respectively. Similarly, $\h^{(1)}$ and $\h^{(2)}$ are $n$ dimensional vectors of i.i.d. zero-mean Gaussians with variances $\q_0-\q_1$ and $\q_1$, respectively. These four vectors are assumed to be independent among themselves and of $G$, $u^{(4,1)}$, and $u^{(4,2)}$ as well.

It is not that difficult to see that one basically has $D_0^{(i_1,i_2)}$ as a \emph{bilinearly} indexed (Gaussian) random process. One also easily observes that $\xi(\calX,\calY,\p,\q,\m,\beta,s)=\psi(\calX,\calY,\p,\q,\m,\beta,s,1)$ and since $\psi(\calX,\calY,\p,\q,\m,\beta,s,0)$ is typically easier to handle than $\psi(\calX,\calY,\p,\q,\m,\beta,s,1)$, one would like to connect $\psi(\calX,\calY,\p,\q,\m,\beta,s,1)$ to $\psi(\calX,\calY,\p,\q,\m,\beta,s,0)$ as a way of connecting $\xi(\calX,\calY,\p,\q,\m,\beta,s)$ to $\psi(\calX,\calY,\p,\q,\m,\beta,s,0)$. This would essentially connect the original, \emph{bilinearly} indexed, process to two decoupled, \emph{linearly} indexed, comparative alternatives.

For the convenience of the exposition that follows below, we set
\begin{eqnarray}\label{eq:genanal4}
\u^{(i_1,1)} & =  & \frac{G\x^{(i_1)}}{\|\x^{(i_1)}\|_2} \nonumber \\
\u^{(i_1,3,1)} & =  & \frac{(\h^{(1)})^T\x^{(i_1)}}{\|\x^{(i_1)}\|_2} \nonumber \\
\u^{(i_1,3,2)} & =  & \frac{(\h^{(2)})^T\x^{(i_1)}}{\|\x^{(i_1)}\|_2}.
\end{eqnarray}
Assuming that $G_{j,1:n}$ denotes the $j$-th row of $G$ and that $\u_j^{(i_1,1)}$ is the $j$-th component of $\u^{(i_1,1)}$, one from (\ref{eq:genanal4}) has
\begin{eqnarray}\label{eq:genanal5}
\u_j^{(i_1,1)} & =  & \frac{G_{j,1:n}\x^{(i_1)}}{\|\x^{(i_1)}\|_2},1\leq j\leq m.
\end{eqnarray}
Also, it is rather trivial that for any fixed $i_1$, the elements of $\u^{(i_1,1)}$ are i.i.d. standard normals, the elements of $\u^{(2,1)}$ and $\u^{(2,2)}$ are zero-mean Gaussians with respective variances $\p_0-\p_1$ and $\p_1-\p_2$, the elements of $\u^{(i_1,3,1)}$ and $\u^{(i_1,3,2)}$ are zero-mean Gaussians with  respective variances $\q_0-\q_1$ and $\q_1-\q_2$. Setting ${\mathcal U}_r=\{u^{(4,r)},\u^{(2,r)},\h^{(r)}\},r\in\{1,2\}$, one then easily rewrites (\ref{eq:genanal3}) as
\begin{equation}\label{eq:genanal6}
\psi(\calX,\calY,\p,\q,\m,\beta,s,t)  =  \mE_{G,{\mathcal U}_2} \frac{1}{\beta|s|\sqrt{n}\m_1} \log \mE_{{\mathcal U}_1} \lp \sum_{i_1=1}^{l}\lp\sum_{i_2=1}^{l}A^{(i_1,i_2)} \rp^{s}\rp^{\m_1},
\end{equation}
where $\beta_{i_1}=\beta\|\x^{(i_1)}\|_2$ and
\begin{eqnarray}\label{eq:genanal7}
B^{(i_1,i_2)} & \triangleq &  \sqrt{t}(\y^{(i_2)})^T\u^{(i_1,1)}+\sqrt{1-t} (\y^{(i_2)})^T(\u^{(2,1)}+\u^{(2,2)}) \nonumber \\
D^{(i_1,i_2)} & \triangleq &  (B^{(i_1,i_2)}+\sqrt{t}\|\y^{(i_2)}\|_2 (u^{(4,1)}+u^{(4,2)})+\sqrt{1-t}\|\y^{(i_2)}\|_2(\u^{(i_1,3,1)}+\u^{(i_1,3,2)})) \nonumber \\
A^{(i_1,i_2)} & \triangleq &  e^{\beta_{i_1}D^{(i_1,i_2)}}\nonumber \\
C^{(i_1)} & \triangleq &  \sum_{i_2=1}^{l}A^{(i_1,i_2)}\nonumber \\
Z & \triangleq & \sum_{i_1=1}^{l} \lp \sum_{i_2=1}^{l} A^{(i_1,i_2)}\rp^s =\sum_{i_1=1}^{l}  (C^{(i_1)})^s.
\end{eqnarray}

The readers familiar with the statistical physics replica theory related fields, may recognize the nesting type of averaging of successively added Gaussian sequences often seen in the so-called replica symmetry breaking (rsb) schemes, pioneered by Parisi in \cite{Parisi80,Par79,Par80,Par83}. As is well known in these fields, the idea of replicated systems, in both replica symmetric (rs) and replica symmetry breaking (rsb) regimes, is to eventually utilize the \emph{index decoupling} enabled by the averaging over large number of replicas. While replicated systems appear exactly nowhere in (\ref{eq:genanal6}) and (\ref{eq:genanal7}), a trace of their decoupling flair  might be recognized if one notes the existence of the (bilinearly) coupled $\psi(\calX,\calY,\p,\q,\m,\beta,s,1)$ and fully decoupled $\psi(\calX,\calY,\p,\q,\m,\beta,s,0)$. In particular, if an analytical relation between $\psi(\calX,\calY,\p,\q,\m,\beta,s,1)$ and $\psi(\calX,\calY,\p,\q,\m,\beta,s,0)$ can be established, then the coupled $\psi(\calX,\calY,\p,\q,\m,\beta,s,1)$ might turn out to be, partially or even completely, characterizable via the decoupled $\psi(\calX,\calY,\p,\q,\m,\beta,s,0)$. This would, in a way, be similar to how decoupled replicated systems, rather magically, faithfully characterize the non-replicated coupled ones in various statistical physics considerations. While no rigorous mathematical justification can be put behind such a reasoning, the underlying intuition seems rather convincing and imposes, almost as an imperative, further exploring of the existence of a potential connection between $\psi(\calX,\calY,\p,\q,\m,\beta,s,1)$ and $\psi(\calX,\calY,\p,\q,\m,\beta,s,0)$.

Since comparing $\psi(\calX,\calY,\p,\q,\m,\beta,s,1)$ and $\psi(\calX,\calY,\p,\q,\m,\beta,s,0)$ naturally presents itself as a logical path in such an exploration, studying monotonicity of $\psi(\calX,\calY,\p,\q,\m,\beta,s,t)$ seems as a good starting point on such a path. To that end, we start by considering its derivative
\begin{eqnarray}\label{eq:genanal9}
\frac{d\psi(\calX,\calY,\q,\m,\beta,s,t)}{dt} & = &  \mE_{G,{\mathcal U}_2} \frac{1}{\beta|s|\sqrt{n}\m_1} \log \mE_{{\mathcal U}_1} Z^{\m_1}\nonumber \\
& = &  \mE_{G,{\mathcal U}_2} \frac{1}{\beta|s|\sqrt{n}\m_1\mE_{{\mathcal U}_1} Z^{\m_1}} \frac{d \mE_{{\mathcal U}_1} Z^{\m_1} }{dt}\nonumber \\
& = &  \mE_{G,{\mathcal U}_2} \frac{\m_1}{\beta|s|\sqrt{n}\m_1\mE_{{\mathcal U}_1} Z^{\m_1}}\mE_{{\mathcal U}_1} \frac{1}{Z^{1-\m_1}}\frac{d Z}{dt}\nonumber \\
& = &  \mE_{G,{\mathcal U}_2} \frac{\m_1}{\beta|s|\sqrt{n}\m_1\mE_{{\mathcal U}_1} Z^{\m_1}}\mE_{{\mathcal U}_1} \frac{1}{Z^{1-\m_1}} \frac{d\lp \sum_{i_1=1}^{l} \lp \sum_{i_2=1}^{l} A^{(i_1,i_2)}\rp^s \rp }{dt}\nonumber \\
& = &   \mE_{G,{\mathcal U}_2} \frac{s\m_1}{\beta|s|\sqrt{n}\m_1\mE_{{\mathcal U}_1} Z^{\m_1}}\mE_{{\mathcal U}_1} \frac{1}{Z^{1-\m_1}}  \sum_{i=1}^{l} (C^{(i_1)})^{s-1} \nonumber \\
& & \times \sum_{i_2=1}^{l}\beta_{i_1}A^{(i_1,i_2)}\frac{dD^{(i_1,i_2)}}{dt},
\end{eqnarray}
where
\begin{eqnarray}\label{eq:genanal9a}
\frac{dD^{(i_1,i_2)}}{dt}= \lp \frac{dB^{(i_1,i_2)}}{dt}+\frac{\|\y^{(i_2)}\|_2 (u^{(4,1)}+u^{(4,2)})}{2\sqrt{t}}-\frac{\|\y^{(i_2)}\|_2 (\u^{(i_1,3,1)}+\u^{(i_1,3,2)})}{2\sqrt{1-t}}\rp.
\end{eqnarray}
Utilizing (\ref{eq:genanal7}) we find
\begin{eqnarray}\label{eq:genanal10}
\frac{dB^{(i_1,i_2)}}{dt} & = &   \frac{d\lp\sqrt{t}(\y^{(i_2)})^T\u^{(i_1,1)}+\sqrt{1-t} (\y^{(i_2)})^T(\u^{(2,1)}+\u^{(2,2)})\rp}{dt} \nonumber \\
 & = &
\sum_{j=1}^{m}\lp \frac{\y_j^{(i_2)}\u_j^{(i_1,1)}}{2\sqrt{t}}-\frac{\y_j^{(i_2)}\u_j^{(2,1)}}{2\sqrt{1-t}}-\frac{\y_j^{(i_2)}\u_j^{(2,2)}}{2\sqrt{1-t}}\rp.
\end{eqnarray}
Combining (\ref{eq:genanal9a}) and (\ref{eq:genanal10}) we obtain
\begin{eqnarray}\label{eq:genanal10a}
\frac{dD^{(i_1,i_2)}}{dt} & = & \sum_{j=1}^{m}\lp \frac{\y_j^{(i_2)}\u_j^{(i_1,1)}}{2\sqrt{t}}-\frac{\y_j^{(i_2)}\u_j^{(2,1)}}{2\sqrt{1-t}}-\frac{\y_j^{(i_2)}\u_j^{(2,2)}}{2\sqrt{1-t}}\rp \nonumber \\
& & +\frac{\|\y^{(i_2)}\|_2 u^{(4,1)}}{2\sqrt{t}}+\frac{\|\y^{(i_2)}\|_2 u^{(4,2)}}{2\sqrt{t}}-\frac{\|\y^{(i_2)}\|_2 \u^{(i_1,3,1)}}{2\sqrt{1-t}}-\frac{\|\y^{(i_2)}\|_2 \u^{(i_1,3,2)}}{2\sqrt{1-t}}.
\end{eqnarray}
The above terms can be rearranged into three clearly distinguishable groups that depend on $G$, ${\mathcal U}_2$, and ${\mathcal U}_1$, respectively
\begin{eqnarray}\label{eq:genanal10b}
\frac{dD^{(i_1,i_2)}}{dt} & = & \bar{T}_G+\bar{T}_2+\bar{T}_1,
\end{eqnarray}
where
\begin{eqnarray}\label{eq:genanal10c}
\bar{T}_G & = & \sum_{j=1}^{m} \frac{\y_j^{(i_2)}\u_j^{(i_1,1)}}{2\sqrt{t}}\nonumber \\
\bar{T}_2 & = & -\sum_{j=1}^{m}\frac{\y_j^{(i_2)}\u_j^{(2,2)}}{2\sqrt{1-t}}+\frac{\|\y^{(i_2)}\|_2 u^{(4,2)}}{2\sqrt{t}} -\frac{\|\y^{(i_2)}\|_2 \u^{(i_1,3,2)}}{2\sqrt{1-t}} \nonumber\\
\bar{T}_1 & = &  -\sum_{j=1}^{m} \frac{\y_j^{(i_2)}\u_j^{(2,1)}}{2\sqrt{1-t}}  +\frac{\|\y^{(i_2)}\|_2 u^{(4,1)}}{2\sqrt{t}}-\frac{\|\y^{(i_2)}\|_2 \u^{(i_1,3,1)}}{2\sqrt{1-t}}.
\end{eqnarray}
Combining (\ref{eq:genanal9}) and (\ref{eq:genanal10b}) we have
\begin{equation}\label{eq:genanal10d}
\frac{d\psi(\calX,\calY,\q,\m,\beta,s,t)}{dt}  =     \mE_{G,{\mathcal U}_2} \frac{s\m_1}{\beta|s|\sqrt{n}\m_1\mE_{{\mathcal U}_1} Z^{\m_1}}\mE_{{\mathcal U}_1} \frac{1}{Z^{1-\m_1}}  \sum_{i_1=1}^{l} (C^{(i_1)})^{s-1} \sum_{i_2=1}^{l}\beta_{i_1}A^{(i_1,i_2)}
\lp \bar{T}_G + \bar{T}_2+ \bar{T}_1\rp.
\end{equation}
Finally, we recognize the following seven objects placed in the three mentioned groups that will play a key role in the ensuing computations.
\begin{equation}\label{eq:genanal10e}
\frac{d\psi(\calX,\calY,\q,\m,\beta,s,t)}{dt}  =       \frac{\mbox{sign}(s)}{2\beta\sqrt{n}} \sum_{i_1=1}^{l}  \sum_{i_2=1}^{l}
\beta_{i_1}\lp T_G + T_2+ T_1\rp,
\end{equation}
where
\begin{eqnarray}\label{eq:genanal10f}
T_G & = & \sum_{j=1}^{m}\frac{T_{G,j}}{\sqrt{t}}  \nonumber\\
T_2 & = & -\sum_{j=1}^{m}\frac{T_{2,1,j}}{\sqrt{1-t}}-\|\y^{(i_2)}\|_2\frac{T_{2,2}}{\sqrt{1-t}}+\|\y^{(i_2)}\|_2\frac{T_{2,3}}{\sqrt{t}} \nonumber\\
T_1 & = & -\sum_{j=1}^{m}\frac{T_{1,1,j}}{\sqrt{1-t}}-\|\y^{(i_2)}\|_2\frac{T_{1,2}}{\sqrt{1-t}}+\|\y^{(i_2)}\|_2\frac{T_{1,3}}{\sqrt{t}},
\end{eqnarray}
and
\begin{eqnarray}\label{eq:genanal10g}
T_{G,j} & = &  \mE_{G,{\mathcal U}_2} \lp\frac{1}{\mE_{{\mathcal U}_1} Z^{\m_1}}\mE_{{\mathcal U}_1}\frac{(C^{(i_1)})^{s-1} A^{(i_1,i_2)} \y_j^{(i_2)}\u_j^{(i_1,1)}}{Z^{1-\m_1}} \rp \nonumber \\
T_{2,1,j} & = &  \mE_{G,{\mathcal U}_2}\lp \frac{1}{\mE_{{\mathcal U}_1} Z^{\m_1}}\mE_{{\mathcal U}_1}\frac{(C^{(i_1)})^{s-1} A^{(i_1,i_2)} \y_j^{(i_2)}\u_j^{(2,2)}}{Z^{1-\m_1}} \rp \nonumber \\
T_{2,2} & = &  \mE_{G,{\mathcal U}_2} \lp\frac{1}{\mE_{{\mathcal U}_1} Z^{\m_1}}\mE_{{\mathcal U}_1}\frac{(C^{(i_1)})^{s-1} A^{(i_1,i_2)} \u^{(i_1,3,2)}}{Z^{1-\m_1}} \rp \nonumber \\
T_{2,3} & = &  \mE_{G,{\mathcal U}_2}\lp \frac{1}{\mE_{{\mathcal U}_1} Z^{\m_1}}\mE_{{\mathcal U}_1}\frac{(C^{(i_1)})^{s-1} A^{(i_1,i_2)} u^{(4,2)}}{Z^{1-\m_1}} \rp \nonumber \\
T_{1,1,j} & = &  \mE_{G,{\mathcal U}_2} \lp\frac{1}{\mE_{{\mathcal U}_1} Z^{\m_1}}\mE_{{\mathcal U}_1}\frac{(C^{(i_1)})^{s-1} A^{(i_1,i_2)} \y_j^{(i_2)}\u_j^{(2,1)}}{Z^{1-\m_1}}\rp \nonumber \\
T_{1,2} & = &  \mE_{G,{\mathcal U}_2} \lp\frac{1}{\mE_{{\mathcal U}_1} Z^{\m_1}}\mE_{{\mathcal U}_1}\frac{(C^{(i_1)})^{s-1} A^{(i_1,i_2)} \u^{(i_1,3,1)}}{Z^{1-\m_1}}\rp \nonumber \\
T_{1,3} & = &  \mE_{G,{\mathcal U}_2}\lp \frac{1}{\mE_{{\mathcal U}_1} Z^{\m_1}}\mE_{{\mathcal U}_1}\frac{(C^{(i_1)})^{s-1} A^{(i_1,i_2)} u^{(4,1)}}{Z^{1-\m_1}}\rp.
\end{eqnarray}
The above equations (\ref{eq:genanal10d})-(\ref{eq:genanal10g}) are, in principle, sufficient to determine the $\psi(\calX,\calY,\q,\m,\beta,s,t)$'s derivative. We will handle each of the above seven terms separately. While doing so, we will heavily rely on  \cite{Stojnicgscompyx16}. Moreover, to facilitate the presentation, whenever given a chance, we will try to parallel the presentation of  \cite{Stojnicgscompyx16} as much as possible.

\subsection{Computing $\frac{d\psi(\calX,\calY,\p,\q,\m,\beta,s,t)}{dt}$}
\label{sec:compderivative}

As mentioned above, we will separately handle all the terms appearing in (\ref{eq:genanal10g}). However, we also carefully choose the order in which we do so. In particular, we first handle the three terms from the last group ($T_{1,1,j}$, $T_{1,2}$, and $T_{1,3}$), then the three terms from the middle group ($T_{2,1,j}$, $T_{2,2}$, and $T_{2,3}$), and then $T_{G,j}$ as well. We call the last group $T_1$--group, the middle one $T_2$--group, and the first one $T_G$--group.

\subsubsection{Handling $T_1$--group}
\label{sec:handlT1}

We handle separately each of the three terms from $T_1$--group.

\vspace{.0in}
\underline{\textbf{\emph{Determining}} $T_{1,1,j}$}
\label{sec:hand1T11}

We start by relying on the Gaussian integration by parts to obtain

 \begin{eqnarray}\label{eq:liftgenAanal19}
T_{1,1,j} & = & \mE_{G,{\mathcal U}_2}\lp \frac{1}{\mE_{{\mathcal U}_1} Z^{\m_1}}\mE_{{\mathcal U}_1}  \frac{(C^{(i_1)})^{s-1} A^{(i_1,i_2)}\y_j^{(i_2)}}{Z^{1-\m_1}}\rp \nonumber \\
& = & \mE_{G,{\mathcal U}_2} \lp \frac{1}{\mE_{{\mathcal U}_1} Z^{\m_1}}\mE_{{\mathcal U}_1}\lp \mE_{{\mathcal U}_1}(\u_j^{(2,1)}\u_j^{(2,1)}) \frac{d}{du_j^{(2,1)}}\lp\frac{(C^{(i_1)})^{s-1} A^{(i_1,i_2)}\y_j^{(i_2)}}{Z^{1-\m_1}}\rp\rp\rp \nonumber \\
& = & \mE_{G,{\mathcal U}_2}\lp \frac{1}{\mE_{{\mathcal U}_1} Z^{\m_1}}\mE_{{\mathcal U}_1}(\u_j^{(2,1)}\u_j^{(2,1)})\mE_{{\mathcal U}_1}\lp  \frac{d}{du_j^{(2,1)}}\lp\frac{(C^{(i_1)})^{s-1} A^{(i_1,i_2)}\y_j^{(i_2)}}{Z^{1-\m_1}}\rp\rp\rp \nonumber \\
& = & \mE_{G,{\mathcal U}_2} \lp\frac{(\p_0-\p_1)}{\mE_{{\mathcal U}_1} Z^{\m_1}}\mE_{{\mathcal U}_1}\lp  \frac{d}{du_j^{(2,1)}}\lp\frac{(C^{(i_1)})^{s-1} A^{(i_1,i_2)}\y_j^{(i_2)}}{Z^{1-\m_1}}\rp\rp\rp.
\end{eqnarray}
Now, we observe that the inner expectation on the most right hand side is structurally identical to the one considered in the second part of Section 3.1 in \cite{Stojnicgscompyx16}. That quantity was computed based on the analysis from Section 2.1.2 of \cite{Stojnicgscompyx16}. One notes that the above minimal scaling adjustment (together with a trivial replacement of $c_3$ by $\m_1$) then immediately gives
\begin{eqnarray}\label{eq:liftgenAanal19a}
T_{1,1,j} & = &  \mE_{G,{\mathcal U}_2} \lp\frac{(\p_0-\p_1)}{\mE_{{\mathcal U}_1} Z^{\m_1}}\mE_{{\mathcal U}_1}\lp  \frac{d}{du_j^{(2,1)}}\lp\frac{(C^{(i_1)})^{s-1} A^{(i_1,i_2)}\y_j^{(i_2)}}{Z^{1-\m_1}}\rp\rp\rp \nonumber \\
& = &  (\p_0-\p_1)\mE_{G,{\mathcal U}_2} \lp\frac{1}{\mE_{{\mathcal U}_1} Z^{\m_1}}\lp \Theta_1+\Theta_2 \rp\rp,
\end{eqnarray}
where from \cite{Stojnicgscompyx16},
\begin{eqnarray}\label{eq:liftgenAanal19b}
\mE_{{\mathcal U}_1}\lp  \frac{d}{du_j^{(2,1)}}\lp\frac{(C^{(i_1)})^{s-1} A^{(i_1,i_2)}\y_j^{(i_2)}}{Z^{1-\m_1}}\rp\rp
=\Theta_1+\Theta_2,
\end{eqnarray}
with
{\small\begin{eqnarray}\label{eq:liftgenAanal19c}
\Theta_1 &  = &  \mE_{{\mathcal U}_1} \Bigg( \Bigg. \frac{\y_j^{(i_2)} \lp (C^{(i_1)})^{s-1}\beta_{i_1}A^{(i_1,i_2)}\y_j^{(i_2)}\sqrt{1-t} +A^{(i_1,i_2)}(s-1)(C^{(i_1)})^{s-2}\beta_{i_1}\sum_{p_2=1}^{l}A^{(i_1,p_2)}\y_j^{(p_2)}\sqrt{1-t}\rp}{Z^{1-\m_1}}\Bigg. \Bigg)\Bigg. \Bigg) \nonumber \\
\Theta_2 & = & -(1-\m_1)\mE_{{\mathcal U}_1} \lp\sum_{p_1=1}^{l}
\frac{(C^{(i_1)})^{s-1} A^{(i_1,i_2)}\y_j^{(i_2)}}{Z^{2-\m_1}}
s  (C^{(p_1)})^{s-1}\sum_{p_2=1}^{l}\beta_{p_1}A^{(p_1,p_2)}\y_j^{(p_2)}\sqrt{1-t}\rp\Bigg.\Bigg).\nonumber \\
\end{eqnarray}}
We find it convenient to observe the following as well
{\footnotesize\begin{eqnarray}\label{eq:liftgenAanal19d}
\sum_{i_1=1}^{l}\sum_{i_2=1}^{l}\sum_{j=1}^{m} \lp\frac{1}{\mE_{{\mathcal U}_1} Z^{\m_1}} \frac{\beta_{i_1}\Theta_1}{\sqrt{1-t}}\rp
&  = & \lp  \mE_{{\mathcal U}_1}\frac{Z^{\m_1}}{\mE_{{\mathcal U}_1} Z^{\m_1}} \sum_{i_1=1}^{l}\frac{(C^{(i_1)})^s}{Z}\sum_{i_2=1}^{l}\frac{A^{(i_1,i_2)}}{C^{(i_1)}}\beta_{i_1}^2\|\y^{(i_2)}\|_2^2\rp \nonumber\\
& & +  \lp \mE_{{\mathcal U}_1}\frac{Z^{\m_1}}{\mE_{{\mathcal U}_1} Z^{\m_1}}  \sum_{i_1=1}^{l}\frac{(s-1)(C^{(i_1)})^s}{Z}\sum_{i_2=1}^{l}\sum_{p_2=1}^{l}\frac{A^{(i_1,i_2)}A^{(i_1,p_2)}}{(C^{(i_1)})^2}\beta_{i_1}^2(\y^{(p_2)})^T\y^{(i_2)}\rp.\nonumber \\
 \end{eqnarray}}

\noindent Consider the operator
\begin{eqnarray}\label{eq:genAanal19e}
 \Phi_{{\mathcal U}_1} & \triangleq &  \mE_{{\mathcal U}_{1}} \frac{Z^{\m_1}}{\mE_{{\mathcal U}_{1}}Z^{\m_1}}  \triangleq  \mE_{{\mathcal U}_{1}} \lp \frac{Z^{\m_1}}{\mE_{{\mathcal U}_{1}}Z^{\m_1}}\lp \cdot \rp\rp,
 \end{eqnarray}
and set
\begin{eqnarray}\label{eq:genAanal19e1}
  \gamma_0(i_1,i_2) & = &
\frac{(C^{(i_1)})^{s}}{Z}  \frac{A^{(i_1,i_2)}}{C^{(i_1)}} \nonumber \\
\gamma_{01}^{(1)}  & = &  \Phi_{{\mathcal U}_1} (\gamma_0(i_1,i_2)) \nonumber \\
\gamma_{02}^{(1)}  & = &  \Phi_{{\mathcal U}_1} (\gamma_0(i_1,i_2)\times \gamma_0(i_1,p_2)) \nonumber \\
\gamma_{1}^{(1)}   & = &  \Phi_{{\mathcal U}_1}  \lp \gamma_0(i_1,i_2)\times \gamma_0(p_1,p_2) \rp \nonumber \\.
\gamma_{2}^{(1)}   & = &  \Phi_{{\mathcal U}_1}  \gamma_0(i_1,i_2)  \times   \Phi_{{\mathcal U}_1} \gamma_0(p_1,p_2).
\end{eqnarray}
Clearly, all the above $\gamma$'s are valid measures. For example, one has the following
\begin{eqnarray}\label{eq:genAanal19f}
 \sum_{i_1=1}^{l}  \sum_{i_2=1}^{l} \gamma_{01}^{(1)}  & = & \sum_{i_1=1}^{l}  \sum_{i_2=1}^{l}\mE_{{\mathcal U}_1}\frac{Z^{\m_1}}{\mE_{{\mathcal U}_1} Z^{\m_1}}\frac{(C^{(i_1)})^{s}}{Z}  \frac{A^{(i_1,i_2)}}{C^{(i_1)}} \nonumber \\
& = &
 \mE_{{\mathcal U}_1}\frac{Z^{\m_1}}{\mE_{{\mathcal U}_1} Z^{\m_1}}   \sum_{i_1=1}^{l} \frac{(C^{(i_1)})^{s}}{Z}  \sum_{i_2=1}^{l} \frac{A^{(i_1,i_2)}}{C^{(i_1)}}\nonumber\\
 & = &
 \mE_{{\mathcal U}_1}\frac{Z^{\m_1}}{\mE_{{\mathcal U}_1} Z^{\m_1}}  \nonumber \\
 & = & 1,
 \end{eqnarray}
where the second to last equality follows by the definitions of $Z$ and $C^{(i_1)}$  from (\ref{eq:genanal7}). Combining (\ref{eq:genAanal19f}) with the trivial fact $\gamma_{01}^{(1)}\geq 0$, one indeed has that $\gamma_{01}^{(1)}$ is a measure. The proofs for the remaining $\gamma$'s are identical and we omit them. Throughout the paper, we denote by $\langle \cdot \rangle_a$ the average with respect to measure $a$. Also, all the above $\gamma$ measures (and $\Phi(\cdot)$ operators) are clearly functions of $t$. Consequently, all functions that depend on $\gamma$'s throughout  the paper will be functions of $t$. To lighten the notation we skip stating this explicitly all the time and, instead, assume it as implicitly trivial.

From (\ref{eq:liftgenAanal19d})
\begin{eqnarray}\label{eq:liftgenAanal19g}
\sum_{i_1=1}^{l}\sum_{i_2=1}^{l}\sum_{j=1}^{m} \lp\frac{1}{\mE_{{\mathcal U}_1} Z^{\m_1}} \frac{\beta_{i_1}\Theta_1}{\sqrt{1-t}}\rp
&  = & \beta^2 \lp \langle \|\x^{(i_1)}\|_2^2\|\y^{(i_2)}\|_2^2\rangle_{\gamma_{01}^{(1)}} +  (s-1) \langle \|\x^{(i_1)}\|_2^2(\y^{(p_2)})^T\y^{(i_2)}\rangle_{\gamma_{02}^{(1)}} \rp. \nonumber \\
 \end{eqnarray}
From (\ref{eq:liftgenAanal19c}), we also have
\begin{eqnarray}\label{eq:liftgenAanal19h}
\sum_{i_1=1}^{l}\sum_{i_2=1}^{l}\sum_{j=1}^{m} \lp\frac{1}{\mE_{{\mathcal U}_1} Z^{\m_1}} \frac{\beta_{i_1}\Theta_2}{\sqrt{1-t}}\rp
 & = & -s(1-\m_1) \mE_{G,{\mathcal U}_2} \Bigg( \Bigg. \frac{Z^{\m_1}}{\mE_{{\mathcal U}_1} Z^{\m_1}} \sum_{i_1=1}^{l}\frac{(C^{(i_1)})^s}{Z}\sum_{i_2=1}^{l}
\frac{A^{(i_1,i_2)}}{C^{(i_1)}} \nonumber \\
& & \times
 \sum_{p_1=1}^{l} \frac{(C^{(p_1)})^s}{Z}\sum_{p_2=1}^{l}\frac{A^{(p_1,p_2)}}{C^{(p_1)}} \beta_{i_1}\beta_{p_1}(\y^{(p_2)})^T\y^{(i_2)} \Bigg.\Bigg)\nonumber \\
& =& -s\beta^2(1-\m_1) \mE_{G,{\mathcal U}_2} \langle \|\x^{(i_1)}\|_2\|\x^{(p_1)}\|_2(\y^{(p_2)})^T\y^{(i_2)} \rangle_{\gamma_{1}^{(1)}}.
\end{eqnarray}
Combining  (\ref{eq:liftgenAanal19a}), (\ref{eq:liftgenAanal19g}), and (\ref{eq:liftgenAanal19h}) we obtain
\begin{eqnarray}\label{eq:liftgenAanal19i}
\sum_{i_1=1}^{l}\sum_{i_2=1}^{l}\sum_{j=1}^{m} \beta_{i_1}\frac{T_{1,1,j}}{\sqrt{1-t
}}
& = &  (\p_0-\p_1) \mE_{G,{\mathcal U}_2} \lp\frac{1}{\mE_{{\mathcal U}_1} Z^{\m_1}}\lp \frac{\beta_{i_1}\Theta_1}{\sqrt{1-t}}+\frac{\beta_{i_1}\Theta_2}{\sqrt{1-t}} \rp\rp\nonumber \\
& = & (\p_0-\p_1)\beta^2 \nonumber \\
 & &
 \times \lp \mE_{G,{\mathcal U}_2} \langle \|\x^{(i_1)}\|_2^2\|\y^{(i_2)}\|_2^2\rangle_{\gamma_{01}^{(1)}} +  (s-1)\mE_{G,{\mathcal U}_2}\langle \|\x^{(i_1)}\|_2^2(\y^{(p_2)})^T\y^{(i_2)}\rangle_{\gamma_{02}^{(1)}} \rp \nonumber \\
& & - (\p_0-\p_1)s\beta^2(1-\m_1)\langle \|\x^{(i_1)}\|_2\|\x^{(p_1)}\|_2(\y^{(p_2)})^T\y^{(i_2)} \rangle_{\gamma_{1}^{(1)}}.
\end{eqnarray}

\underline{\textbf{\emph{Determining}} $T_{1,2}$}
\label{sec:hand1T12}

As above, we follow the Gaussian integration by parts path to find
\begin{eqnarray}\label{eq:liftgenBanal20}
T_{1,2} & = & \mE_{G,{\mathcal U}_2} \lp \frac{1}{\mE_{{\mathcal U}_1} Z^{\m_1}}\mE_{{\mathcal U}_1} \frac{(C^{(i_1)})^{s-1} A^{(i_1,i_2)}\u^{(i_1,3,1)}}{Z^{1-\m_1}}\rp \nonumber \\
& = & \mE_{G,{\mathcal U}_2} \lp \frac{1}{\mE_{{\mathcal U}_1} Z^{\m_1}}
\mE_{{\mathcal U}_1} \sum_{p_1=1}^{l}\mE_{{\mathcal U}_1}(\u^{(i_1,3,1)}\u^{(p_1,3,1)}) \frac{d}{d\u^{(p_1,3,1)}}\lp\frac{(C^{(i_1)})^{s-1} A^{(i_1,i_2)}}{Z^{1-\m_1}}\rp\rp \nonumber \\
& = & \mE_{G,{\mathcal U}_2} \lp\frac{(\q_0-\q_1)}{\mE_{{\mathcal U}_1} Z^{\m_1}}
\mE_{{\mathcal U}_1} \sum_{p_1=1}^{l}\frac{(\x^{(i_1)})^T\x^{(p_1)}}{\|\x^{(i_1)}\|_2\|\x^{(p_1)}\|_2} \frac{d}{d\u^{(p_1,3,1)}}\lp\frac{(C^{(i_1)})^{s-1} A^{(i_1,i_2)}}{Z^{1-\m_1}}\rp\rp.
\end{eqnarray}
The inner expectation on the most right hand side is structurally identical to the one considered in the third part of Section 3.1 in \cite{Stojnicgscompyx16}. Following the analysis from Section 2.1.3 of \cite{Stojnicgscompyx16}, one, after trivially replacing $c_3$ by $\m_1$, immediately has

$ $
{\footnotesize \begin{eqnarray}\label{eq:liftgenBanal20a}
T_{1,2}  & = & \mE_{G,{\mathcal U}_2} \lp\frac{(\q_0-\q_1)}{\mE_{{\mathcal U}_1} Z^{\m_1}}
\mE_{{\mathcal U}_1} \sum_{p_1=1}^{l}\frac{(\x^{(i_1)})^T\x^{(p_1)}}{\|\x^{(i_1)}\|_2\|\x^{(p_1)}\|_2} \frac{d}{d\u^{(p_1,3,1)}}\lp\frac{(C^{(i_1)})^{s-1} A^{(i_1,i_2)}}{Z^{1-\m_1}}\rp\rp \nonumber \\
& = &
\mE_{G,{\mathcal U}_2} \Bigg( \Bigg. \frac{(\q_0-\q_1)}{\mE_{{\mathcal U}_1} Z^{\m_1}} \nonumber \\
& & \times \mE_{{\mathcal U}_1} \lp\frac{(C^{(i_1)})^{s-1}\beta_{i_1}A^{(i_1,i_2)}\|\y^{(i_2)}\|_2\sqrt{1-t}+A^{(i_1,i_2)}(s-1)(C^{(i_1)})^{s-2}\beta_{i_1}\sum_{p_2=1}^{l}A^{(i_1,p_2)}\|\y^{(p_2)}\|_2\sqrt{1-t} }{Z^{1-\m_1}}\rp\Bigg.\Bigg) \nonumber  \\
  & & -\mE_{G,{\mathcal U}_2}\Bigg( \Bigg. \frac{(\q_0-\q_1)(1-\m_1)}{\mE_{{\mathcal U}_1} Z^{\m_1}} \nonumber \\
& & \times\mE_{{\mathcal U}_1} \lp\sum_{p_1=1}^{l}\frac{(\x^{(i_1)})^T\x^{(p_1)}}{\|\x^{(i_1)}\|_2\|\x^{(p_1)}\|_2}
\frac{(C^{(i_1)})^{s-1} A^{(i_1,i_2)}}{Z^{2-\m_1}}
s  (C^{(p_1)})^{s-1}\sum_{p_2=1}^{l}\beta_{p_1}A^{(p_1,p_2)}\|\y^{(p_2)}\|_2\sqrt{1-t}\rp\Bigg. \Bigg).
\end{eqnarray}}
Following the above considerations leading to (\ref{eq:liftgenAanal19i}), we can then also immediately write
\begin{eqnarray}\label{eq:liftgenBanal20b}
\sum_{i_1=1}^{l}\sum_{i_2=1}^{l} \beta_{i_1}\|\y^{(i_2)}\|_2 \frac{T_{1,2}}{\sqrt{1-t}} & = & (\q_0-\q_1)\beta^2 \nonumber \\
& & \times
\lp\mE_{G,{\mathcal U}_2}\langle \|\x^{(i_1)}\|_2^2\|\y^{(i_2)}\|_2^2\rangle_{\gamma_{01}^{(1)}} +   (s-1)\mE_{G,{\mathcal U}_2}\langle \|\x^{(i_1)}\|_2^2 \|\y^{(i_2)}\|_2\|\y^{(p_2)}\|_2\rangle_{\gamma_{02}^{(1)}}\rp\nonumber \\
& & - (\q_0-\q_1)s\beta^2(1-\m_1)\mE_{G,{\mathcal U}_2}\langle (\x^{(p_1)})^T\x^{(i_1)}\|\y^{(i_2)}\|_2\|\y^{(p_2)}\|_2 \rangle_{\gamma_{1}^{(1)}}.
\end{eqnarray}

$ $

\underline{\textbf{\emph{Determining}} $T_{1,3}$}
\label{sec:hand1T13}

We again proceed via Gaussian integration by parts to obtain
\begin{eqnarray}\label{eq:liftgenCanal21}
T_{1,3} & = & \mE_{G,{\mathcal U}_2} \lp \frac{1}{\mE_{{\mathcal U}_1} Z^{\m_1}}\mE_{{\mathcal U}_1}  \frac{(C^{(i_1)})^{s-1} A^{(i_1,i_2)}u^{(4,1)}}{Z^{1-\m_1}} \rp \nonumber \\
& = & \mE_{G,{\mathcal U}_2} \lp \frac{1}{\mE_{{\mathcal U}_1} Z^{\m_1}}\mE_{{\mathcal U}_1} \lp\mE_{{\mathcal U}_1} (u^{(4,1)}u^{(4,1)})\lp\frac{d}{du^{(4,1)}} \lp\frac{(C^{(i_1)})^{s-1} A^{(i_1,i_2)}u^{(4,1)}}{Z^{1-\m_1}}\rp \rp\rp\rp \nonumber \\
& = & \mE_{G,{\mathcal U}_2} \lp \frac{(\p_0\q_0-\p_1\q_1)}{\mE_{{\mathcal U}_1} Z^{\m_1}}\mE_{{\mathcal U}_1} \lp\frac{d}{du^{(4,1)}} \lp\frac{(C^{(i_1)})^{s-1} A^{(i_1,i_2)}u^{(4,1)}}{Z^{1-\m_1}}\rp\rp\rp.
\end{eqnarray}
The last inner expectation is structurally identical to the one considered in the fourth part of Section 3.1 in \cite{Stojnicgscompyx16} which was computed based on the analysis from Section 2.1.4 of \cite{Stojnicgscompyx16}. As a result of that analysis we immediately have
\begin{eqnarray}\label{eq:liftgenCanal21a}
T_{1,3}  & = & \mE_{G,{\mathcal U}_2} \lp \frac{(\p_0\q_0-\p_1\q_1)}{\mE_{{\mathcal U}_1} Z^{\m_1}}\mE_{{\mathcal U}_1} \lp\frac{d}{du^{(4,1)}} \lp\frac{(C^{(i_1)})^{s-1} A^{(i_1,i_2)}u^{(4,1)}}{Z^{1-\m_1}}\rp\rp\rp \nonumber \\
 & = &\mE_{G,{\mathcal U}_2} \Bigg( \Bigg. \frac{(\p_0\q_0-\p_1\q_1)}{\mE_{{\mathcal U}_1} Z^{\m_1}} \nonumber \\
& & \times  \mE_{{\mathcal U}_1} \lp\frac{(C^{(i_1)})^{s-1}\beta_{i_1}A^{(i_1,i_2)}\|\y^{(i_2)}\|_2\sqrt{t}+A^{(i_1,i_2)}(s-1)(C^{(i_1)})^{s-2}\beta_{i_1}\sum_{p_2=1}^{l}A^{(i_1,p_2)}\|\y^{(p_2)}\|_2\sqrt{t}}{Z^{1-\m_1}} \rp \Bigg.\Bigg)\nonumber \\
& & -\mE_{G,{\mathcal U}_2} \Bigg( \Bigg. \frac{(\p_0\q_0-\p_1\q_1)(1-\m_1)}{\mE_{{\mathcal U}_1} Z^{\m_1}} \nonumber \\
& & \times \mE_{{\mathcal U}_1} \lp
\frac{(C^{(i_1)})^{s-1} A^{(i_1,i_2)}}{Z^{2-\m_1}}
s  \sum_{p_1=1}^{l} (C^{(p_1)})^{s-1}\sum_{p_2=1}^{l}\beta_{p_1}A^{(p_1,p_2)}\|\y^{(p_2)}\|_2\sqrt{t}\rp \Bigg. \Bigg).
\end{eqnarray}
Analogously to (\ref{eq:liftgenBanal20b}), we have
\begin{eqnarray}\label{eq:liftgenCanal21b}
\sum_{i_1=1}^{l}\sum_{i_2=1}^{l} \beta_{i_1}\|\y^{(i_2)}\|_2 \frac{T_{1,3}}{\sqrt{t}} & = & (\p_0\q_0-\p_1\q_1)\beta^2 \nonumber \\
& & \times
\lp \mE_{G,{\mathcal U}_2}\langle \|\x^{(i_1)}\|_2^2\|\y^{(i_2)}\|_2^2\rangle_{\gamma_{01}^{(1)}} +   (s-1)\mE_{G,{\mathcal U}_2}\langle \|\x^{(i_1)}\|_2^2 \|\y^{(i_2)}\|_2\|\y^{(p_2)}\|_2\rangle_{\gamma_{02}^{(1)}}\rp\nonumber \\
& & - (\p_0\q_0-\p_1\q_1)s\beta^2(1-\m_1)\mE_{G,{\mathcal U}_2}\langle \|\x^{(i_1)}\|_2\|\x^{(p_`)}\|_2\|\y^{(i_2)}\|_2\|\y^{(p_2)}\|_2 \rangle_{\gamma_{1}^{(1)}}. \nonumber \\
\end{eqnarray}

\subsubsection{Handling $T_2$--group}
\label{sec:handlT2}

Similarly to above, we handle separately each of the three terms that $T_2$'s contribution is comprised of.

\underline{\textbf{\emph{Determining}} $T_{2,1,j}$}
\label{sec:hand1T21}

Relying on the Gaussian integration by parts we write
\begin{eqnarray}\label{eq:genDanal19}
T_{2,1,j}& = &  \mE_{G,{\mathcal U}_2}\lp \frac{1}{\mE_{{\mathcal U}_1} Z^{\m_1}}\mE_{{\mathcal U}_1}\frac{(C^{(i_1)})^{s-1} A^{(i_1,i_2)} \y_j^{(i_2)}\u_j^{(2,2)}}{Z^{1-\m_1}} \rp \nonumber \\
 &  = & \mE_{G,{\mathcal U}_2,{\mathcal U}_1}  \frac{(C^{(i_1)})^{s-1} A^{(i_1,i_2)}\y_j^{(i_2)}\u_j^{(2,2)}}{Z^{1-\m_1}\mE_{{\mathcal U}_1} Z^{\m_1}} \nonumber \\
& = &
\mE_{G,{\mathcal U}_1}\lp\mE_{{\mathcal U}_2}\lp\mE_{{\mathcal U}_2} (\u_j^{(2,2)}\u_j^{(2,2)})\frac{d}{d\u_j^{(2,2)}}\lp \frac{(C^{(i_1)})^{s-1} A^{(i_1,i_2)}\y_j^{(i_2)}}{Z^{1-\m_1}\mE_{{\mathcal U}_1} Z^{\m_1}}\rp\rp\rp \nonumber \\
& = &
\mE_{G,{\mathcal U}_2,{\mathcal U}_1}\lp \frac{\p_1}{\mE_{{\mathcal U}_1} Z^{\m_1}}\frac{d}{d\u_j^{(2,2)}}\lp \frac{(C^{(i_1)})^{s-1} A^{(i_1,i_2)}\y_j^{(i_2)}}{Z^{1-\m_1}}\rp\rp \nonumber \\
& & + \mE_{G,{\mathcal U}_2,{\mathcal U}_1}\lp \frac{\p_1\lp(C^{(i_1)})^{s-1} A^{(i_1,i_2)}\y_j^{(i_2)} \rp}{Z^{1-\m_1}}\frac{d}{d\u_j^{(2,2)}}\lp \frac{1}{\mE_{{\mathcal U}_1} Z^{\m_1}}\rp\rp.\nonumber \\
\end{eqnarray}
For the convenience of further exposition, we write the above as
\begin{eqnarray}\label{eq:genDanal19a}
T_{2,1,j}   =   T_{2,1,j}^{c} +  T_{2,1,j}^{d},
\end{eqnarray}
where
\begin{eqnarray}\label{eq:genDanal19b}
T_{2,1,j}^c &  = &
\mE_{G,{\mathcal U}_2,{\mathcal U}_1}\lp \frac{\p_1}{\mE_{{\mathcal U}_1} Z^{\m_1}}\frac{d}{d\u_j^{(2,2)}}\lp \frac{(C^{(i_1)})^{s-1} A^{(i_1,i_2)}\y_j^{(i_2)}}{Z^{1-\m_1}}\rp\rp \nonumber \\
T_{2,1,j}^d &  = & \mE_{G,{\mathcal U}_2,{\mathcal U}_1}\lp \frac{\p_1\lp(C^{(i_1)})^{s-1} A^{(i_1,i_2)}\y_j^{(i_2)} \rp}{Z^{1-\m_1}}\frac{d}{d\u_j^{(2,2)}}\lp \frac{1}{\mE_{{\mathcal U}_1} Z^{\m_1}}\rp\rp.
\end{eqnarray}
One now observes that $T_{2,1,j}^c$ scaled by $\p_1$ is structurally identical to the term considered in the first part of Section \ref{sec:hand1T11} scaled by $(\p_0-\p_1)$. That means that we have
\begin{eqnarray}\label{eq:genDanal19b1}
\sum_{i_1=1}^{l}\sum_{i_2=1}^{l}\sum_{j=1}^{m} \beta_{i_1}\frac{T_{2,1,j}^c}{\sqrt{1-t
}}
& = &  \sum_{i_1=1}^{l}\sum_{i_2=1}^{l}\sum_{j=1}^{m} \beta_{i_1}\frac{T_{1,1,j}}{\sqrt{1-t}}\frac{\p_1}{\p_0-\p_1}\nonumber\\
& = & \p_1\beta^2
\lp \mE_{G,{\mathcal U}_2}\langle \|\x^{(i_1)}\|_2^2\|\y^{(i_2)}\|_2^2\rangle_{\gamma_{01}^{(1)}} +   (s-1)\mE_{G,{\mathcal U}_2}\langle \|\x^{(i_1)}\|_2^2(\y^{(p_2)})^T\y^{(i_2)}\rangle_{\gamma_{02}^{(1)}} \rp\nonumber \\
& & - \p_1s\beta^2(1-\m_1)\mE_{G,{\mathcal U}_2}\langle \|\x^{(i_1)}\|_2\|\x^{(p_1)}\|_2(\y^{(p_2)})^T\y^{(i_2)} \rangle_{\gamma_{1}^{(1)}}.
\end{eqnarray}
We now focus on $T_{2,1,j}^d$. To that end we have
\begin{eqnarray}\label{eq:genDanal20}
\frac{d}{d\u_j^{(2,2)}}\lp \frac{1}{\mE_{{\mathcal U}_1} Z^{\m_1}}\rp = -\frac{1}{\lp\mE_{{\mathcal U}_1} Z^{\m_1}\rp^2}
\mE_{{\mathcal U}_1}\frac{d Z^{\m_1}}{d\u_j^{(2,2)}}.\nonumber \\
\end{eqnarray}
Moreover, we find
\begin{eqnarray}\label{eq:genDanal21}
\frac{dZ^{\m_1}}{d\u_j^{(2,2)}} & = & \frac{\m_1}{Z^{1-\m_1}}\frac{d\lp \sum_{p_1=1}^{l}  (C^{(p_1)})^s \rp}{d\u_j^{(2,2)}}
=\frac{\m_1}{Z^{1-\m_1}} s \sum_{p_1=1}^{l}  (C^{(p_1)})^{s-1}\sum_{p_2=1}^{l}\frac{d(A^{(p_1,p_2)})}{d\u_j^{(2)}} \nonumber \\
& = & \frac{\m_1}{Z^{1-\m_1}}s  \sum_{p_1=1}^{l}  (C^{(p_1)})^{s-1}\sum_{p_2=1}^{l}
\beta_{p_1}A^{(p_1,p_2)}\y_j^{(p_2)}\sqrt{1-t}.
\end{eqnarray}
A combination of (\ref{eq:genDanal20}) and (\ref{eq:genDanal21}) gives
\begin{eqnarray}\label{eq:genDanal22}
\frac{d}{d\u_j^{(2,2)}}\lp \frac{1}{\mE_{{\mathcal U}_1} Z^{\m_1}}\rp = -\frac{1}{\lp\mE_{{\mathcal U}_1} Z^{\m_1}\rp^2}
\mE_{{\mathcal U}_1}  \frac{\m_1}{Z^{1-\m_1}}s \sum_{p_1=1}^{l}  (C^{(p_1)})^{s-1}\sum_{p_2=1}^{l}
\beta_{p_1}A^{(p_1,p_2)}\y_j^{(p_2)}\sqrt{1-t}.
\end{eqnarray}
Combining further (\ref{eq:genDanal19b}) and (\ref{eq:genDanal22}), we obtain
\begin{eqnarray}\label{eq:genDanal23}
T_{2,1,j}^d &  = & \mE_{G,{\mathcal U}_2,{\mathcal U}_1}\lp \frac{\p_1\lp(C^{(i_1)})^{s-1} A^{(i_1,i_2)}\y_j^{(i_2)} \rp}{Z^{1-\m_1}}\frac{d}{d\u_j^{(2,2)}}\lp \frac{1}{\mE_{{\mathcal U}_1} Z^{\m_1}}\rp\rp \nonumber \\
& = & -s\sqrt{1-t}\p_1\m_1\mE_{G,{\mathcal U}_2,{\mathcal U}_1}\Bigg( \Bigg. \frac{\lp(C^{(i_1)})^{s-1} A^{(i_1,i_2)}\y_j^{(i_2)} \rp}{Z^{1-\m_1}} \nonumber \\
& & \times
\lp \frac{1}{\lp\mE_{{\mathcal U}_1} Z^{\m_1}\rp^2}
\mE_{{\mathcal U}_1}  \frac{1}{Z^{1-\m_1}} \sum_{p_1=1}^{l}  (C^{(p_1)})^{s-1}\sum_{p_2=1}^{l}
\beta_{p_1}A^{(p_1,p_2)}\y_j^{(p_2)} \rp\Bigg. \Bigg)\nonumber \\
& = & -s\sqrt{1-t}\p_1\m_1\mE_{G,{\mathcal U}_2}\Bigg( \Bigg. \mE_{{\mathcal U}_1}\frac{Z^{\m_1}}{\mE_{{\mathcal U}_1} Z^{\m_1}}
 \frac{(C^{(i_1)})^{s}}{Z}  \frac{A^{(i_1,i_2)}}{C^{(i_1)}}\y_j^{(i_2)} \nonumber \\
& & \times
\lp \mE_{{\mathcal U}_1}  \frac{Z^{\m_1}}{\mE_{{\mathcal U}_1} Z^{\m_1}} \sum_{p_1=1}^{l}  \frac{(C^{(p_1)})^s}{Z}\sum_{p_2=1}^{l}
\frac{A^{(p_1,p_2)}}{C^{(p,1)}}\beta_{p_1}\y_j^{(p_2)}  \rp\Bigg. \Bigg).
\end{eqnarray}
We find it useful to note the following as well
\begin{eqnarray}\label{eq:genDanal24}
 \sum_{i_1=1}^{l}  \sum_{i_2=1}^{l} \sum_{j=1}^{m}  \beta_{i_1}\frac{T_{2,1,j}^d}{\sqrt{1-t}} &  = &  -s\p_1\m_1\mE_{G,{\mathcal U}_2}  \sum_{j=1}^{m} \Bigg( \Bigg. \mE_{{\mathcal U}_1}\frac{Z^{\m_1}}{\mE_{{\mathcal U}_1} Z^{\m_1}}
 \sum_{i_1=1}^{l} \frac{(C^{(i_1)})^{s}}{Z}  \sum_{i_2=1}^{l} \frac{A^{(i_1,i_2)}}{C^{(i_1)}} \beta_{i_1}\y_j^{(i_2)} \nonumber \\
& & \times
\lp \mE_{{\mathcal U}_1}  \frac{Z^{\m_1}}{\mE_{{\mathcal U}_1} Z^{\m_1}} \sum_{p_1=1}^{l}  \frac{(C^{(p_1)})^s}{Z}\sum_{p_2=1}^{l}
\frac{A^{(p_1,p_2)}}{C^{(p,1)}}\beta_{p_1}\y_j^{(p_2)}  \rp\Bigg. \Bigg)\nonumber\\
& = & -s\p_1\m_1\mE_{G,{\mathcal U}_2} \langle \beta_{p_1}\beta_{i_1}(\y^{(p_2)})^T\y^{(i_2)} \rangle_{\gamma_{2}^{(1)}} \nonumber \\
& = & -s\beta^2\p_1\m_1\mE_{G,{\mathcal U}_2} \langle \|\x^{(i_1)}\|_2\|\x^{(p_1)}\|_2(\y^{(p_2)})^T\y^{(i_2)} \rangle_{\gamma_{2}^{(1)}}.
\end{eqnarray}
A combination of (\ref{eq:genDanal19a}), (\ref{eq:genDanal19b1}), and (\ref{eq:genDanal24}) produces
\begin{eqnarray}\label{eq:genDanal25}
 \sum_{i_1=1}^{l}  \sum_{i_2=1}^{l} \sum_{j=1}^{m}  \beta_{i_1}\frac{T_{2,1,j}}{\sqrt{1-t}}& = & \p_1\beta^2
 \lp \mE_{G,{\mathcal U}_2}\langle \|\x^{(i_1)}\|_2^2\|\y^{(i_2)}\|_2^2\rangle_{\gamma_{01}^{(1)}} +  (s-1)\mE_{G,{\mathcal U}_2}\langle \|\x^{(i_1)}\|_2^2(\y^{(p_2)})^T\y^{(i_2)}\rangle_{\gamma_{02}^{(1)}} \rp\nonumber \\
& & - \p_1s\beta^2(1-\m_1)\mE_{G,{\mathcal U}_2}\langle \|\x^{(i_1)}\|_2\|\x^{(p_1)}\|_2(\y^{(p_2)})^T\y^{(i_2)} \rangle_{\gamma_{1}^{(1)}}\nonumber \\
 &   &
  -s\beta^2\p_1\m_1\mE_{G,{\mathcal U}_2} \langle \|\x^{(i_1)}\|_2\|\x^{(p_1)}\|_2(\y^{(p_2)})^T\y^{(i_2)} \rangle_{\gamma_{2}^{(1)}}.
\end{eqnarray}

\underline{\textbf{\emph{Determining}} $T_{2,2}$}
\label{sec:hand1T22}

As usual, we proceed by relying on the Gaussian integration by parts to obtain
\begin{eqnarray}\label{eq:liftgenEanal20}
T_{2,2} & = &  \mE_{G,{\mathcal U}_2} \lp \frac{1}{\mE_{{\mathcal U}_1} Z^{\m_1}}\mE_{{\mathcal U}_1}\frac{(C^{(i_1)})^{s-1} A^{(i_1,i_2)} \u^{(i_1,3,2)}}{Z^{1-\m_1}} \rp \nonumber \\
& = &  \mE_{G,{\mathcal U}_2,{\mathcal U}_1} \lp\frac{1}{\mE_{{\mathcal U}_1} Z^{\m_1}} \frac{(C^{(i_1)})^{s-1} A^{(i_1,i_2)} \u^{(i_1,3,2)}}{Z^{1-\m_1}}\rp\nonumber \\
& = & \mE_{G,{\mathcal U}_1} \lp
\mE_{{\mathcal U}_2} \lp \sum_{p_1=1}^{l}\mE_{{\mathcal U}_2}(\u^{(i_1,3,2)}\u^{(p_1,3,2)}) \frac{d}{d\u^{(p_1,3,2)}}\lp\frac{(C^{(i_1)})^{s-1} A^{(i_1,i_2)}}{Z^{1-\m_1}\mE_{{\mathcal U}_1} Z^{\m_1}}\rp\rp\rp \nonumber \\
& = & \mE_{G,{\mathcal U}_2,{\mathcal U}_1} \lp
 \frac{1}{\mE_{{\mathcal U}_1} Z^{\m_1}} \sum_{p_1=1}^{l}\mE_{{\mathcal U}_2}(\u^{(i_1,3,2)}\u^{(p_1,3,2)}) \frac{d}{d\u^{(p_1,3,2)}}\lp\frac{(C^{(i_1)})^{s-1} A^{(i_1,i_2)}}{Z^{1-\m_1}}\rp\rp \nonumber \\
& & + \mE_{G,{\mathcal U}_2,{\mathcal U}_1} \lp \frac{(C^{(i_1)})^{s-1} A^{(i_1,i_2)}}{Z^{1-\m_1}}
  \sum_{p_1=1}^{l}\mE_{{\mathcal U}_2}(\u^{(i_1,3,2)}\u^{(p_1,3,2)}) \frac{d}{d\u^{(p_1,3,2)}}\lp\frac{1}{\mE_{{\mathcal U}_1} Z^{\m_1}}\rp\rp.
 \end{eqnarray}
Later on, it will turn out as useful to write the above as
\begin{eqnarray}\label{eq:genEanal19a}
T_{2,2}   =   T_{2,2}^{c} +  T_{2,2}^{d},
\end{eqnarray}
where
\begin{eqnarray}\label{eq:genEanal19b}
T_{2,2}^c &  = &
\mE_{G,{\mathcal U}_2,{\mathcal U}_1} \lp
 \frac{1}{\mE_{{\mathcal U}_1} Z^{\m_1}} \sum_{p_1=1}^{l}\mE_{{\mathcal U}_2}(\u^{(i_1,3,2)}\u^{(p_1,3,2)}) \frac{d}{d\u^{(p_1,3,2)}}\lp\frac{(C^{(i_1)})^{s-1} A^{(i_1,i_2)}}{Z^{1-\m_1}}\rp\rp \nonumber \\
T_{2,2}^d &  = & \mE_{G,{\mathcal U}_2,{\mathcal U}_1} \lp \frac{(C^{(i_1)})^{s-1} A^{(i_1,i_2)}}{Z^{1-\m_1}}
  \sum_{p_1=1}^{l}\mE_{{\mathcal U}_2}(\u^{(i_1,3,2)}\u^{(p_1,3,2)}) \frac{d}{d\u^{(p_1,3,2)}}\lp\frac{1}{\mE_{{\mathcal U}_1} Z^{\m_1}}\rp\rp.
\end{eqnarray}
Since
\begin{eqnarray}\label{eq:genEanal19c}
\mE_{{\mathcal U}_2}(\u^{(i_1,3,2)}\u^{(p_1,3,2)}) & = & \q_1\frac{(\x^{(i_1)})^T\x^{(p_1)}}{\|\x^{(i_1)}\|_2\|\x^{(p_1)}\|_2}
=\q_1\frac{\beta^2(\x^{(i_1)})^T\x^{(p_1)}}{\beta_{i_1}\beta_{p_1}}, \nonumber \\
\mE_{{\mathcal U}_2}(\u^{(i_1,3,1)}\u^{(p_1,3,1)}) & = & (\q_0-\q_1)\frac{(\x^{(i_1)})^T\x^{(p_1)}}{\|\x^{(i_1)}\|_2\|\x^{(p_1)}\|_2}=(\q_0-\q_1)\frac{\beta^2(\x^{(i_1)})^T\x^{(p_1)}}{\beta_{i_1}\beta_{p_1}},
\end{eqnarray}
one observes that $T_{2,2}^c$ scaled by $\q_1$ is structurally identical to the term considered in the second part of Section \ref{sec:hand1T11} scaled by $(\q_0-\q_1)$. Utilizing (\ref{eq:liftgenBanal20b}) we then have
 \begin{eqnarray}\label{eq:genEanal19c1}
\sum_{i_1=1}^{l}\sum_{i_2=1}^{l} \beta_{i_1}\|\y^{(i_2)}\|_2 \frac{T_{2,2}^c}{\sqrt{1-t}} & = & \q_1 \beta^2 \Bigg( \Bigg. \mE_{G,{\mathcal U}_2}\langle \|\x^{(i_1)}\|_2^2\|\y^{(i_2)}\|_2^2\rangle_{\gamma_{01}^{(1)}} \nonumber \\
& & +   (s-1)\mE_{G,{\mathcal U}_2}\langle \|\x^{(i_1)}\|_2^2 \|\y^{(i_2)}\|_2\|\y^{(p_2)}\|_2\rangle_{\gamma_{02}^{(1)}}\Bigg.\Bigg)   \nonumber \\
& & - \q_1s\beta^2(1-\m_1)\mE_{G,{\mathcal U}_2}\langle (\x^{(p_1)})^T\x^{(i_1)}\|\y^{(i_2)}\|_2\|\y^{(p_2)}\|_2 \rangle_{\gamma_{1}^{(1)}}.
\end{eqnarray}

To determine  $T_{2,2}^d$, we start with
\begin{eqnarray}\label{eq:genEanal20}
\frac{d}{d\u^{(p_1,3,2)}}\lp\frac{1}{\mE_{{\mathcal U}_1} Z^{\m_1}}\rp  & = &
 -\frac{1}{\lp\mE_{{\mathcal U}_1} Z^{\m_1} \rp^2}
\mE_{{\mathcal U}_1}\frac{dZ^{\m_1}}{d\u^{(p_1,3)}}.
\end{eqnarray}
Moreover,
\begin{eqnarray}\label{eq:genEanal21}
\frac{dZ^{\m_1}}{d\u^{(p_1,3,2)}} & = & \frac{\m_1}{Z^{1-\m_1}} \frac{dZ}{d\u^{(p_1,3,2)}} = \frac{\m_1}{Z^{1-\m_1}}\frac{d\sum_{q_1=1}^{l}  (C^{(q_1)})^s}{d\u^{(p_1,3,2)}} = \frac{\m_1}{Z^{1-\m_1}} s  (C^{(p_1)})^{s-1}\sum_{p_2=1}^{l}\frac{d(A^{(p_1,p_2)})}{d\u^{(p_1,3,2)}} \nonumber \\
& = & \frac{\m_1}{Z^{1-\m_1}} s  (C^{(p_1)})^{s-1}\sum_{p_2=1}^{l}
\beta_{p_1}A^{(p_1,p_2)}\|\y^{(p_2)}\|_2\sqrt{1-t}.
\end{eqnarray}
Combining (\ref{eq:genEanal20}) and (\ref{eq:genEanal21}), we obtain
\begin{eqnarray}\label{eq:genEanal22}
\frac{d}{d\u^{(p_1,3,2)}}\lp\frac{1}{\mE_{{\mathcal U}_1} Z^{\m_1}}\rp  & = &
 -\frac{1}{\lp\mE_{{\mathcal U}_1} Z^{\m_1} \rp^2}
 \frac{\m_1}{Z^{1-\m_1}} s  (C^{(p_1)})^{s-1}\sum_{p_2=1}^{l}
\beta_{p_1}A^{(p_1,p_2)}\|\y^{(p_2)}\|_2\sqrt{1-t}.
\end{eqnarray}
A further combination of (\ref{eq:genEanal19b}) and (\ref{eq:genEanal22}) gives
\begin{eqnarray}\label{eq:genEanal23}
 T_{2,2}^d &  = & -\mE_{G,{\mathcal U}_2,{\mathcal U}_1} \Bigg( \Bigg. \frac{(C^{(i_1)})^{s-1} A^{(i_1,i_2)}}{Z^{1-\m_1}} \nonumber \\
 & & \times
  \sum_{p_1=1}^{l}  \q_1\frac{\beta^2(\x^{(i_1)})^T\x^{(p_1)}}{\beta_{i_1}\beta_{p_1}}
\frac{1}{\lp\mE_{{\mathcal U}_1} Z^{\m_1} \rp^2}
 \frac{\m_1}{Z^{1-\m_1}} s  (C^{(p_1)})^{s-1}\sum_{p_2=1}^{l}
\beta_{p_1}A^{(p_1,p_2)}\|\y^{(p_2)}\|_2\sqrt{1-t}  \Bigg. \Bigg) \nonumber \\
&  = & -s\sqrt{1-t}\beta^2\q_1\m_1\mE_{G,{\mathcal U}_2,{\mathcal U}_1} \Bigg( \Bigg. \frac{Z^{\m_1}}{\mE_{{\mathcal U}_1} Z^{\m_1}}
 \frac{(C^{(i_1)})^{s-1} A^{(i_1,i_2)}}{Z} \nonumber \\
 & & \times
\frac{Z^{\m_1}}{\mE_{{\mathcal U}_1} Z^{\m_1}}
  \sum_{p_1=1}^{l}
 \frac{(C^{(p_1)})^{s}}{Z}  \sum_{p_2=1}^{l}
\frac{A^{(p_1,p_2)}}{(C^{(p_1)})}\|\y^{(p_2)}\|_2\frac{(\x^{(i_1)})^T\x^{(p_1)}}{\beta_{i_1}} \Bigg. \Bigg).
\end{eqnarray}
Similarly to what was done above when considering $T_{2,1,j}^d$, we here note
\begin{eqnarray}\label{eq:genEanal24}
\sum_{i_1=1}^{l}\sum_{i_2=1}^{l} \beta_{i_1}\|\y^{(i_2)}\|_2\frac{T_{2,2}^d}{\sqrt{1-t}} &  = &   -s\beta^2\q_1\m_1\mE_{G,{\mathcal U}_2,{\mathcal U}_1} \Bigg( \Bigg. \frac{Z^{\m_1}}{\mE_{{\mathcal U}_1} Z^{\m_1}}
\sum_{i_1=1}^{l} \frac{(C^{(i_1)})^s}{Z} \sum_{i_2=1}^{l} \frac{A^{(i_1,i_2)}}{(C^{(i_1)})} \nonumber \\
 & & \times
\frac{Z^{\m_1}}{\mE_{{\mathcal U}_1} Z^{\m_1}}
  \sum_{p_1=1}^{l}
 \frac{(C^{(p_1)})^{s}}{Z}  \sum_{p_2=1}^{l}
\frac{A^{(p_1,p_2)}}{(C^{(p_1)})} \|\y^{(i_2)}\|_2\|\y^{(p_2)}\|_2(\x^{(i_1)})^T\x^{(p_1)} \Bigg. \Bigg) \nonumber \\
& = & -s\beta^2\q_1\m_1\mE_{G,{\mathcal U}_2} \langle \|\y^{(i_2)}\|_2\|\y^{(p_2)}\|_2(\x^{(i_1)})^T\x^{(p_1)}\rangle_{\gamma_{2}^{(1)}}.
\end{eqnarray}
A combination of (\ref{eq:genEanal19a}), (\ref{eq:genEanal19c1}), and
(\ref{eq:genEanal24}) gives
\begin{eqnarray}\label{eq:genEanal25}
\sum_{i_1=1}^{l}\sum_{i_2=1}^{l} \beta_{i_1}\|\y^{(i_2)}\|_2\frac{T_{2,2}}{\sqrt{1-t}} &  = &
\q_1\beta^2 \Bigg( \Bigg. \mE_{G,{\mathcal U}_2}\langle \|\x^{(i_1)}\|_2^2\|\y^{(i_2)}\|_2^2\rangle_{\gamma_{01}^{(1)}} \nonumber \\
& & +  (s-1)\mE_{G,{\mathcal U}_2}\langle \|\x^{(i_1)}\|_2^2 \|\y^{(i_2)}\|_2\|\y^{(p_2)}\|_2\rangle_{\gamma_{02}^{(1)}}\Bigg.\Bigg) \nonumber \\
& & - \q_1s\beta^2(1-\m_1)\mE_{G,{\mathcal U}_2}\langle (\x^{(p_1)})^T\x^{(i_1)}\|\y^{(i_2)}\|_2\|\y^{(p_2)}\|_2 \rangle_{\gamma_{1}^{(1)}} \nonumber \\
&  & -s\beta^2\q_1\m_1\mE_{G,{\mathcal U}_2} \langle \|\y^{(i_2)}\|_2\|\y^{(p_2)}\|_2(\x^{(i_1)})^T\x^{(p_1)}\rangle_{\gamma_{2}^{(1)}}.
\end{eqnarray}

\underline{\textbf{\emph{Determining}} $T_{2,3}$}
\label{sec:hand1T23}

Gaussian integration by parts also gives
\begin{eqnarray}\label{eq:genFanal21}
T_{2,3} & = &  \mE_{G,{\mathcal U}_2}\lp \frac{1}{\mE_{{\mathcal U}_1} Z^{\m_1}}\mE_{{\mathcal U}_1}\frac{(C^{(i_1)})^{s-1} A^{(i_1,i_2)} u^{(4,2)}}{Z^{1-\m_1}} \rp \nonumber \\
& = & \mE_{G,{\mathcal U}_1} \lp  \mE_{{\mathcal U}_2} \lp\mE_{{\mathcal U}_2} (u^{(4,2)}u^{(4,2)})\lp\frac{d}{du^{(4,2)}} \lp\frac{(C^{(i_1)})^{s-1} A^{(i_1,i_2)}}{Z^{1-\m_1}\mE_{{\mathcal U}_1} Z^{\m_1}}\rp \rp\rp\rp \nonumber \\
& = & \p_1\q_1\mE_{G,{\mathcal U}_2,{\mathcal U}_1} \lp \frac{1}{\mE_{{\mathcal U}_1} Z^{\m_1}} \lp\frac{d}{du^{(4,2)}} \lp\frac{(C^{(i_1)})^{s-1} A^{(i_1,i_2)}}{Z^{1-\m_1}}\rp\rp\rp \nonumber \\
& & + \p_1\q_1\mE_{G,{\mathcal U}_2,{\mathcal U}_1} \lp \frac{(C^{(i_1)})^{s-1} A^{(i_1,i_2)}}{Z^{1-\m_1}}\lp\frac{d}{du^{(4,2)}} \lp\frac{1}{\mE_{{\mathcal U}_1} Z^{\m_1}} \rp\rp\rp.
\end{eqnarray}
As usual, we find it useful to rewrite the above as
\begin{eqnarray}\label{eq:genFanal22}
T_{2,3} & = & T_{2,3}^c+T_{2,3}^d,
\end{eqnarray}
where
\begin{eqnarray}\label{eq:genFanal23}
T_{2,3}^c & = &   \p_1\q_1\mE_{G,{\mathcal U}_2,{\mathcal U}_1} \lp \frac{1}{\mE_{{\mathcal U}_1} Z^{\m_1}} \lp\frac{d}{du^{(4,2)}} \lp\frac{(C^{(i_1)})^{s-1} A^{(i_1,i_2)}}{Z^{1-\m_1}}\rp\rp\rp \nonumber \\
 T_{2,3}^d & = & \p_1\q_1\mE_{G,{\mathcal U}_2,{\mathcal U}_1} \lp \frac{(C^{(i_1)})^{s-1} A^{(i_1,i_2)}}{Z^{1-\m_1}}\lp\frac{d}{du^{(4,2)}} \lp\frac{1}{\mE_{{\mathcal U}_1} Z^{\m_1}} \rp\rp\rp.
\end{eqnarray}
It is again not that difficult to observe that $\frac{T_{2,3}^c}{\p_1\q_1}=\frac{T_{1,3}}{\p_0\q_0-\p_1\q_1}$, which together with (\ref{eq:liftgenCanal21b}) gives
 \begin{eqnarray}\label{eq:genFanal23b}
\sum_{i_1=1}^{l}\sum_{i_2=1}^{l} \beta_{i_1}\|\y^{(i_2)}\|_2 \frac{T_{2,3}^c}{\sqrt{t}} & = &
\p_1\q_1 \beta^2 \Bigg( \Bigg. \mE_{G,{\mathcal U}_2}\langle \|\x^{(i_1)}\|_2^2\|\y^{(i_2)}\|_2^2\rangle_{\gamma_{01}^{(1)}} \nonumber \\
& & \times +   (s-1)\mE_{G,{\mathcal U}_2}\langle \|\x^{(i_1)}\|_2^2 \|\y^{(i_2)}\|_2\|\y^{(p_2)}\|_2\rangle_{\gamma_{02}^{(1)}}\Bigg. \Bigg) \nonumber \\
& & - \p_1\q_1 s\beta^2(1-\m_1)\mE_{G,{\mathcal U}_2}\langle \|\x^{(i_1)}\|_2\|\x^{(p_`)}\|_2\|\y^{(i_2)}\|_2\|\y^{(p_2)}\|_2 \rangle_{\gamma_{1}^{(1)}}.
\end{eqnarray}

To find $T_{2,3}^d$, we start by writing
\begin{eqnarray}\label{eq:genFanal24}
\frac{d}{du^{(4,2)}}\lp \frac{1}{\mE_{{\mathcal U}_1} Z^{\m_1}} \rp  & = &
 -\frac{1}{\lp\mE_{{\mathcal U}_1} Z^{\m_1}\rp^2} \mE_{{\mathcal U}_1} \frac{dZ^{\m_1}}{d u^{(4,2)}}=-\frac{1}{\lp\mE_{{\mathcal U}_1} Z^{\m_1}\rp^2} \mE_{{\mathcal U}_1} \frac{\m_1}{Z^{1-\m_1}}\frac{dZ}{d u^{(4,2)}}.\nonumber \\
\end{eqnarray}
We also have
\begin{equation}\label{eq:genFanal25}
\frac{dZ}{du^{(4,2)}}=\frac{d\sum_{p_1=1}^{l}  (C^{(p_1)})^s}{du^{(4,2)}}
=s \sum_{p_1=1}^{l} (C^{(p_1)})^{s-1}\sum_{p_2=1}^{l}\frac{d(A^{(p_1,p_2)})}{du^{(4,2)}}=s \sum_{p_1=1}^{l} (C^{(p_1)})^{s-1}\sum_{p_2=1}^{l}
\beta_{p_1}A^{(p_1,p_2)}\|\y^{(p_2)}\|_2\sqrt{t}.
\end{equation}
Connecting (\ref{eq:genFanal24}) and (\ref{eq:genFanal25}), we find
\begin{eqnarray}\label{eq:genFanal26}
\frac{d}{du^{(4,2)}}\lp \frac{1}{\mE_{{\mathcal U}_1} Z^{\m_1}} \rp   =-\frac{1}{\lp\mE_{{\mathcal U}_1} Z^{\m_1}\rp^2} \mE_{{\mathcal U}_1} \frac{\m_1}{Z^{1-\m_1}}s \sum_{p_1=1}^{l} (C^{(p_1)})^{s-1}\sum_{p_2=1}^{l}
\beta_{p_1}A^{(p_1,p_2)}\|\y^{(p_2)}\|_2\sqrt{t}.\nonumber \\
\end{eqnarray}
Further connection between (\ref{eq:genFanal23}) and (\ref{eq:genFanal26}) gives
\begin{eqnarray}\label{eq:genFanal27}
T_{2,3}^d   & = & -\p_1\q_1\mE_{G,{\mathcal U}_2,{\mathcal U}_1} \Bigg( \Bigg.\frac{(C^{(i_1)})^{s-1} A^{(i_1,i_2)}}{Z^{1-\m_1}}\nonumber \\
& & \times\frac{1}{\lp\mE_{{\mathcal U}_1} Z^{\m_1}\rp^2} \mE_{{\mathcal U}_1} \frac{\m_1}{Z^{1-\m_1}}s \sum_{p_1=1}^{l} (C^{(p_1)})^{s-1}\sum_{p_2=1}^{l}
\beta_{p_1}A^{(p_1,p_2)}\|\y^{(p_2)}\|_2\sqrt{t}\Bigg. \Bigg) \nonumber \\
& = & -s\sqrt{t}\p_1\q_1\m_1\mE_{G,{\mathcal U}_2} \Bigg( \Bigg. \mE_{{\mathcal U}_1} \frac{Z^{\m_1}}{\mE_{{\mathcal U}_1} Z^{\m_1}} \frac{(C^{(i_1)})^s}{Z}\frac{A^{(i_1,i_2)}}{(C^{(i_1)})}\nonumber \\
& & \times \mE_{{\mathcal U}_1} \frac{Z^{\m_1}}{\mE_{{\mathcal U}_1} Z^{\m_1}} \sum_{p_1=1}^{l} \frac{(C^{(p_1)})^{s}}{Z}\sum_{p_2=1}^{l}
\frac{A^{(p_1,p_2)}}{(C^{(p_1)})}\beta_{p_1}\|\y^{(p_2)}\|_2\Bigg. \Bigg).
\end{eqnarray}
As earlier, we find it useful to note
\begin{eqnarray}\label{eq:genFanal28}
\sum_{i_1=1}^{l}\sum_{i_2=1}^{l} \beta_{i_1}\|\y^{(i_2)}\|_2\frac{T_{2,3}^d}{\sqrt{t}}
& = & -s\beta^2\p_1\q_1\m_1\mE_{G,{\mathcal U}_2} \Bigg( \Bigg. \mE_{{\mathcal U}_1} \frac{Z^{\m_1}}{\mE_{{\mathcal U}_1} Z^{\m_1}} \sum_{i_1=1}^{l} \frac{(C^{(i_1)})^s}{Z} \sum_{i_2=1}^{l} \frac{A^{(i_1,i_2)}}{(C^{(i_1)})}\nonumber \\
& & \times \mE_{{\mathcal U}_1} \frac{Z^{\m_1}}{\mE_{{\mathcal U}_1} Z^{\m_1}} \sum_{p_1=1}^{l} \frac{(C^{(p_1)})^{s}}{Z}\sum_{p_2=1}^{l}
\frac{A^{(p_1,p_2)}}{(C^{(p_1)})}\|\x^{(i_2)}\|_2\|\x^{(p_2)}\|_2\|\y^{(i_2)}\|_2\|\y^{(p_2)}\|_2\Bigg. \Bigg)\nonumber \\
& = & -s\beta^2\p_1\q_1\m_1\mE_{G,{\mathcal U}_2} \langle\|\x^{(i_2)}\|_2\|\x^{(p_2)}\|_2\|\y^{(i_2)}\|_2\|\y^{(p_2)}\rangle_{\gamma_{2}^{(1)}}.
\end{eqnarray}
 A combination of (\ref{eq:genFanal22}), (\ref{eq:genFanal23b}), and (\ref{eq:genFanal28}) gives
\begin{eqnarray}\label{eq:genFanal29}
\sum_{i_1=1}^{l}\sum_{i_2=1}^{l} \beta_{i_1}\|\y^{(i_2)}\|_2\frac{T_{2,3}}{\sqrt{t}}
& = &
\p_1\q_1\beta^2 \Bigg( \Bigg. \mE_{G,{\mathcal U}_2}\langle \|\x^{(i_1)}\|_2^2\|\y^{(i_2)}\|_2^2\rangle_{\gamma_{01}^{(1)}} \nonumber \\
& & +   (s-1)\mE_{G,{\mathcal U}_2}\langle \|\x^{(i_1)}\|_2^2 \|\y^{(i_2)}\|_2\|\y^{(p_2)}\|_2\rangle_{\gamma_{02}^{(1)}}\Bigg.\Bigg) \nonumber \\
& & - \p_1\q_1 s\beta^2(1-\m_1)\mE_{G,{\mathcal U}_2}\langle \|\x^{(i_1)}\|_2\|\x^{(p_`)}\|_2\|\y^{(i_2)}\|_2\|\y^{(p_2)}\|_2 \rangle_{\gamma_{1}^{(1)}} \nonumber \\
&  & -s\beta^2\p_1\q_1\m_1\mE_{G,{\mathcal U}_2} \langle\|\x^{(i_2)}\|_2\|\x^{(p_2)}\|_2\|\y^{(i_2)}\|_2\|\y^{(p_2)}\rangle_{\gamma_{2}^{(1)}}.
\end{eqnarray}

\subsubsection{Handling $T_G$--group}
\label{sec:handlTG}

We start by recalling on
\begin{eqnarray}\label{eq:genGanal1}
T_{G,j}  =   \mE_{G,{\mathcal U}_2} \lp\frac{1}{\mE_{{\mathcal U}_1} Z^{\m_1}}\mE_{{\mathcal U}_1}\frac{(C^{(i_1)})^{s-1} A^{(i_1,i_2)} \y_j^{(i_2)}\u_j^{(i_1,1)}}{Z^{1-\m_1}} \rp =   \mE_{{\mathcal U}_2,{\mathcal U}_1} \lp \mE_{G} \frac{(C^{(i_1)})^{s-1} A^{(i_1,i_2)} \y_j^{(i_2)}\u_j^{(i_1,1)}}{Z^{1-\m_1}\mE_{{\mathcal U}_1} Z^{\m_1}} \rp.
 \end{eqnarray}
Gaussian integration by parts gives
\begin{eqnarray}\label{eq:genGanal2}
T_{G,j}  & = &   \mE_{{\mathcal U}_2,{\mathcal U}_1} \lp \mE_{G} \frac{(C^{(i_1)})^{s-1} A^{(i_1,i_2)} \y_j^{(i_2)}\u_j^{(i_1,1)}}{Z^{1-\m_1}\mE_{{\mathcal U}_1} Z^{\m_1}} \rp \nonumber \\
& = &
 \mE_{{\mathcal U}_2,{\mathcal U}_1} \Bigg( \Bigg. \mE_{G} \lp \frac{1}{\mE_{{\mathcal U}_1} Z^{\m_1}} \sum_{p_1=1}^{l} \mE (\u_j^{(i_1,1)}\u_j^{(p_1,1)})\frac{d}{d\u_j^{(p_1,1)}}\lp \frac{(C^{(i_1)})^{s-1} A^{(i_1,i_2)}\y_j^{(i_2)}}{Z^{1-\m_1}}\rp \rp \nonumber \\
 & & + \mE_{G} \lp   \frac{(C^{(i_1)})^{s-1} A^{(i_1,i_2)}\y_j^{(i_2)}}{Z^{1-\m_1}} \sum_{p_1=1}^{l} \mE (\u_j^{(i_1,1)}\u_j^{(p_1,1)})\frac{d}{d\u_j^{(p_1,1)}}\lp   \frac{1}{\mE_{{\mathcal U}_1} Z^{\m_1}} \rp \rp\Bigg.\Bigg).
 \end{eqnarray}
After setting
\begin{eqnarray}\label{eq:genGanal3}
\Theta_{G,1} & = & \sum_{p_1=1}^{l} \mE (\u_j^{(i_1,1)}\u_j^{(p_1,1)})\frac{d}{d\u_j^{(p_1,1)}}\lp \frac{(C^{(i_1)})^{s-1} A^{(i_1,i_2)}\y_j^{(i_2)}}{Z^{1-\m_1}}\rp \nonumber \\
\Theta_{G,2} & = &   \lp   \frac{(C^{(i_1)})^{s-1} A^{(i_1,i_2)}\y_j^{(i_2)}}{Z^{1-\m_1}} \sum_{p_1=1}^{l} \mE (\u_j^{(i_1,1)}\u_j^{(p_1,1)})\frac{d}{d\u_j^{(p_1,1)}}\lp   \frac{1}{\mE_{{\mathcal U}_1} Z^{\m_1}} \rp \rp \nonumber \\
T_{G,j}^c & = &  \mE_{G,{\mathcal U}_2}\lp \mE_{{\mathcal U}_1}\frac{1}{\mE_{{\mathcal U}_1} Z^{\m_1}} \Theta_{G,1}\rp \nonumber \\
T_{G,j}^d & = &  \mE_{G,{\mathcal U}_2}\lp \mE_{{\mathcal U}_1}\Theta_{G,2}\rp,
 \end{eqnarray}
one has
\begin{eqnarray}\label{eq:genGanal4}
T_{G,j}  & = &  T_{G,j}^c+T_{G,j}^d.
 \end{eqnarray}
One now recognizes that $\Theta_{G,1}$ is structurally closely related to the one considered in Section 2.1.1 of \cite{Stojnicgscompyx16}. The only structural difference is the power of $Z$. Instead of $1$ we now have $1-\m_1$. Such a difference doesn't change the analysis very much. In fact, these results are used to handle the structure considered in the first part of Section 3.1 of \cite{Stojnicgscompyx16}, which is pretty much identical to the one that we have here. Following \cite{Stojnicgscompyx16}'s Section 3.1, we then have
\begin{eqnarray}\label{eq:genGanal5}
\Theta_{G,1} & = & \lp \frac{\y_j^{(i_2)}}{Z^{1-\m_1}}\lp(C^{(i_1)})^{s-1}\beta_{i_1}A^{(i_1,i_2)}\y_j^{(i_2)}\sqrt{t}+(s-1)(C^{(i_1)})^{s-2}\beta_{i_1}\sum_{p_2=1}^{l}A^{(i_1,p_2)}\y_j^{(p_2)}\sqrt{t}\rp \rp \nonumber \\
& &  -(1-\m_1)
\mE \lp\sum_{p_1=1}^{l} \frac{(\x^{(i_1)})^T\x^{(p_1)}}{\|\x^{(i_1)}\|_2\|\x^{(p_1)}\|_2}
\frac{(C^{(i_1)})^{s-1} A^{(i_1,i_2)}\y_j^{(i_2)}}{Z^{2-\m_1}}
s  (C^{(p_1)})^{s-1}\sum_{p_2=1}^{l}\beta_{p_1}A^{(p_1,p_2)}\y_j^{(p_2)}\sqrt{t}\rp.\nonumber \\
\end{eqnarray}
We can then conveniently write
\begin{eqnarray}\label{eq:genGanal6}
 \sum_{i_1=1}^{l} \sum_{i_2=1}^{l}\sum_{j=1}^{m}\beta_{i_1} \frac{T_{G,j}^c}{\sqrt{t}}
 & = & \sum_{i_1=1}^{l} \sum_{i_2=1}^{l}\sum_{j=1}^{m}\beta_{i_1} \frac{\mE_{G,{\mathcal U}_2}\lp \mE_{{\mathcal U}_1}\frac{1}{\mE_{{\mathcal U}_1} Z^{\m_1}} \Theta_{G,1}\rp}{\sqrt{t}} \nonumber\\
 & = & \beta^2 \lp \mE_{G,{\mathcal U}_2} \langle \|\x^{(i_1)}\|_2^2\|\y^{(i_2)}\|_2^2\rangle_{\gamma_{01}^{(1)}} + \mE_{G,{\mathcal U}_2}\langle \|\x^{(i_1)}\|_2^2(\y^{(p_2)})^T\y^{(i_2)}\rangle_{\gamma_{02}^{(1)}}    \rp   \nonumber \\
 &  & -s\beta^2(1-\m_1) \mE_{G,{\mathcal U}_2}\langle (\x^{(p_1)})^T\x^{(i_1)}(\y^{(p_2)})^T\y^{(i_2)}\rangle_{\gamma_{1}^{(1)}},
 \end{eqnarray}
where the first two terms are obtained directly from (\ref{eq:liftgenAanal19g}).

To determine $T_{G,j}^d$, we start by writing
\begin{eqnarray}\label{eq:genGanal7}
\frac{d}{d\u_j^{(p_1,1)}}\lp   \frac{1}{\mE_{{\mathcal U}_1} Z^{\m_1}} \rp
& = & -\frac{1}{\lp\mE_{{\mathcal U}_1} Z^{\m_1}\rp^2}\mE_{{\mathcal U}_1}  \frac{dZ^{\m_1}}{d\u_j^{(p_1,1)}}
= -\mE_{{\mathcal U}_1}  \frac{\m_1Z^{\m_1}}{\lp\mE_{{\mathcal U}_1} Z^{\m_1}\rp^2}\frac{1}{Z} \frac{dZ}{d\u_j^{(p_1,1)}}\nonumber\\
& = &  -\mE_{{\mathcal U}_1}  \frac{\m_1Z^{\m_1}}{\lp\mE_{{\mathcal U}_1} Z^{\m_1}\rp^2}\frac{1}{Z} \frac{d\sum_{q_1=1}^l(C^{(q_1)})^s}{d\u_j^{(p_1,1)}} = -\mE_{{\mathcal U}_1}  \frac{\m_1Z^{\m_1}}{\lp\mE_{{\mathcal U}_1} Z^{\m_1}\rp^2}\frac{1}{Z} s (C^{(p_1)})^{s-1}\frac{dC^{(p_1)}}{d\u_j^{(p_1,1)}}\nonumber\\
& = & -\mE_{{\mathcal U}_1}  \frac{\m_1Z^{\m_1}}{\lp\mE_{{\mathcal U}_1} Z^{\m_1}\rp^2}\frac{1}{Z} s (C^{(p_1)})^{s-1}\sum_{p_2=1}^{l}\frac{dA^{(p_1,p_2)}}{d\u_j^{(p_1,1)}}\nonumber\\
& = & -\mE_{{\mathcal U}_1}  \frac{\m_1Z^{\m_1}}{\lp\mE_{{\mathcal U}_1} Z^{\m_1}\rp^2}\frac{1}{Z} s (C^{(p_1)})^{s-1}\sum_{p_2=1}^{l}A^{(p_1,p_2)}\beta_{p_1}\y_j^{(p_2)}\sqrt{t}.
\end{eqnarray}
Combining  the expression for $\Theta_{G,2}$ from (\ref{eq:genGanal3}) with (\ref{eq:genGanal7}), we find
\begin{eqnarray}\label{eq:genGanal8}
 \Theta_{G,2} & = &  - \lp   \frac{(C^{(i_1)})^{s-1} A^{(i_1,i_2)}\y_j^{(i_2)}}{Z^{1-\m_1}} \sum_{p_1=1}^{l} \mE (\u_j^{(i_1,1)}\u_j^{(p_1,1)})\frac{d}{d\u_j^{(p_1,1)}}\lp   \frac{1}{\mE_{{\mathcal U}_1} Z^{\m_1}} \rp \rp \nonumber \\& = &
  \frac{(C^{(i_1)})^{s-1} A^{(i_1,i_2)}\y_j^{(i_2)}}{Z^{1-\m_1}} \nonumber \\
  & & \times \sum_{p_1=1}^{l} \mE (\u_j^{(i_1,1)}\u_j^{(p_1,1)})
 \mE_{{\mathcal U}_1}  \frac{\m_1Z^{\m_1}}{\lp\mE_{{\mathcal U}_1} Z^{\m_1}\rp^2}\frac{1}{Z} s (C^{(p_1)})^{s-1}\sum_{p_2=1}^{l}A^{(p_1,p_2)}\beta_{p_1}\y_j^{(p_2)}\sqrt{t}.
  \end{eqnarray}
Combining $T_{G,j}^d$ from (\ref{eq:genGanal3}) and (\ref{eq:genGanal8}), we obtain
\begin{eqnarray}\label{eq:genGanal9}
 \sum_{i_1=1}^{l} \sum_{i_2=1}^{l}\sum_{j=1}^{m}\beta_{i_1} \frac{T_{G,j}^d}{\sqrt{t}}
 & = & \sum_{i_1=1}^{l} \sum_{i_2=1}^{l}\sum_{j=1}^{m}\beta_{i_1} \frac{\mE_{G,{\mathcal U}_2}\lp \mE_{{\mathcal U}_1}  \Theta_{G,2}\rp}{\sqrt{t}} \nonumber\\
 & = & -\sum_{i_1=1}^{l} \sum_{i_2=1}^{l} \mE_{G,{\mathcal U}_2} \Bigg( \Bigg.  \mE_{{\mathcal U}_1} \frac{(C^{(i_1)})^{s-1} A^{(i_1,i_2)}}{Z^{1-\m_1}} \sum_{p_1=1}^{l} \frac{(\x^{(p_1)})^T\x^{(i_1)}}{\|\x^{(i_1)}\|_2\|\x^{(p_1)}\|_2}  \nonumber \\
 & & \times
 \mE_{{\mathcal U}_1}  \frac{\m_1Z^{\m_1}}{\lp\mE_{{\mathcal U}_1} Z^{\m_1}\rp^2}\frac{1}{Z} s (C^{(p_1)})^{s-1}\sum_{p_2=1}^{l}A^{(p_1,p_2)}\beta_{i_1}\beta_{p_1}(\y^{(p_2)})^T\y^{(i_2)} \Bigg. \Bigg) \nonumber \\
 & = & -s\beta^2\m_1\mE_{G,{\mathcal U}_2}  \Bigg( \Bigg.  \mE_{{\mathcal U}_1}  \frac{Z^{\m_1}}{\mE_{{\mathcal U}_1} Z^{\m_1}} \sum_{p_1=1}^{l}  \frac{C^{(i_1)})^{s}}{Z} \sum_{i_2=1}^{l}\frac{A^{(i_1,i_2)}}{C^{(i_1)}} \nonumber \\
 & & \times
 \mE_{{\mathcal U}_1}  \frac{Z^{\m_1}}{\mE_{{\mathcal U}_1} Z^{\m_1}} \sum_{p_1=1}^{l}  \frac{C^{(p_1)})^{s}}{Z} \sum_{p_2=1}^{l}\frac{A^{(p_1,p_2)}}{C^{(p_1)}} (\x^{(p_1)})^T\x^{(i_1)} (\y^{(p_2)})^T\y^{(i_2)} \Bigg. \Bigg) \nonumber \\
 & = & -s\beta^2\m_1\mE_{G,{\mathcal U}_2} \langle(\x^{(p_1)})^T\x^{(i_1)} (\y^{(p_2)})^T\y^{(i_2)} \rangle_{\gamma_{2}^{(1)}}.
 \end{eqnarray}
A further combination of (\ref{eq:genGanal4}), (\ref{eq:genGanal6}), and (\ref{eq:genGanal9}) gives
\begin{eqnarray}\label{eq:genGanal10}
 \sum_{i_1=1}^{l} \sum_{i_2=1}^{l}\sum_{j=1}^{m}\beta_{i_1} \frac{T_{G,j}}{\sqrt{t}}
  & = & \beta^2 \lp \mE_{G,{\mathcal U}_2} \langle \|\x^{(i_1)}\|_2^2\|\y^{(i_2)}\|_2^2\rangle_{\gamma_{01}^{(1)}} +   (s-1) \mE_{G,{\mathcal U}_2}\langle \|\x^{(i_1)}\|_2^2(\y^{(p_2)})^T\y^{(i_2)}\rangle_{\gamma_{02}^{(1)}}   \rp    \nonumber \\
 &  & -s\beta^2(1-\m_1) \mE_{G,{\mathcal U}_2}\langle (\x^{(p_1)})^T\x^{(i_1)}(\y^{(p_2)})^T\y^{(i_2)}\rangle_{\gamma_{1}^{(1)}} \nonumber \\
 & & -s\beta^2\m_1\mE_{G,{\mathcal U}_2} \langle(\x^{(p_1)})^T\x^{(i_1)} (\y^{(p_2)})^T\y^{(i_2)} \rangle_{\gamma_{2}^{(1)}}.
 \end{eqnarray}

\subsubsection{Connecting all pieces together}
\label{sec:conalt}

We now connect together all the pieces obtained above. To do so, we utilize (\ref{eq:genanal10e}) and (\ref{eq:genanal10f}) to write
\begin{eqnarray}\label{eq:ctp1}
\frac{d\psi(\calX,\calY,\q,\m,\beta,s,t)}{dt}  & = &       \frac{\mbox{sign}(s)}{2\beta\sqrt{n}} \lp \Omega_G+\Omega_1+\Omega_2+\Omega_3\rp,
\end{eqnarray}
where
\begin{eqnarray}\label{eq:ctp2}
\Omega_G & = & \sum_{i_1=1}^{l}  \sum_{i_2=1}^{l}\sum_{j=1}^{m} \beta_{i_1}\frac{T_{G,j}}{\sqrt{t}}  \nonumber\\
\Omega_1 & = & -\sum_{i_1=1}^{l}  \sum_{i_2=1}^{l} \sum_{j=1}^{m}\beta_{i_1}\frac{T_{2,1,j}}{\sqrt{1-t}}-\sum_{i_1=1}^{l}  \sum_{i_2=1}^{l} \sum_{j=1}^{m}\beta_{i_1}\frac{T_{1,1,j}}{\sqrt{1-t}} \nonumber\\
\Omega_2 & = & -\sum_{i_1=1}^{l}  \sum_{i_2=1}^{l}\beta_{i_1}\|\y^{(i_2)}\|_2\frac{T_{2,2}}{\sqrt{1-t}}-\sum_{i_1=1}^{l}  \sum_{i_2=1}^{l}\beta_{i_1}\|\y^{(i_2)}\|_2\frac{T_{1,2}}{\sqrt{1-t}} \nonumber\\
\Omega_3 & = & \sum_{i_1=1}^{l}  \sum_{i_2=1}^{l}\beta_{i_1}\|\y^{(i_2)}\|_2\frac{T_{2,3}}{\sqrt{t}}+ \sum_{i_1=1}^{l}  \sum_{i_2=1}^{l}\beta_{i_1}\|\y^{(i_2)}\|_2\frac{T_{1,3}}{\sqrt{t}}.
\end{eqnarray}
From (\ref{eq:genGanal10}) we have
\begin{eqnarray}\label{eq:cpt3}
\Omega_G & = & \beta^2 \lp \mE_{G,{\mathcal U}_2} \langle \|\x^{(i_1)}\|_2^2\|\y^{(i_2)}\|_2^2\rangle_{\gamma_{01}^{(1)}} +  (s-1)\mE_{G,{\mathcal U}_2}\langle \|\x^{(i_1)}\|_2^2(\y^{(p_2)})^T\y^{(i_2)}\rangle_{\gamma_{02}^{(1)}}  \rp     \nonumber \\
 &  & -s\beta^2(1-\m_1) \mE_{G,{\mathcal U}_2}\langle (\x^{(p_1)})^T\x^{(i_1)}(\y^{(p_2)})^T\y^{(i_2)}\rangle_{\gamma_{1}^{(1)}} \nonumber \\
 & & -s\beta^2\m_1\mE_{G,{\mathcal U}_2} \langle(\x^{(p_1)})^T\x^{(i_1)} (\y^{(p_2)})^T\y^{(i_2)} \rangle_{\gamma_{2}^{(1)}}.
 \end{eqnarray}
From (\ref{eq:liftgenAanal19i}) and (\ref{eq:genDanal25}), we have
\begin{eqnarray}\label{eq:cpt4}
-\Omega_1& = & (\p_0-\p_1)\beta^2 \lp \mE_{G,{\mathcal U}_2}\langle \|\x^{(i_1)}\|_2^2\|\y^{(i_2)}\|_2^2\rangle_{\gamma_{01}^{(1)}} +   (s-1)\mE_{G,{\mathcal U}_2}\langle \|\x^{(i_1)}\|_2^2(\y^{(p_2)})^T\y^{(i_2)}\rangle_{\gamma_{02}^{(1)}} \rp \nonumber \\
& & - (\p_0-\p_1)s\beta^2(1-\m_1)\mE_{G,{\mathcal U}_2}\langle \|\x^{(i_1)}\|_2\|\x^{(p_1)}\|_2(\y^{(p_2)})^T\y^{(i_2)} \rangle_{\gamma_{1}^{(1)}} \nonumber\\
& & +\p_1\beta^2 \lp \mE_{G,{\mathcal U}_2}\langle \|\x^{(i_1)}\|_2^2\|\y^{(i_2)}\|_2^2\rangle_{\gamma_{01}^{(1)}} +   (s-1)\mE_{G,{\mathcal U}_2}\langle \|\x^{(i_1)}\|_2^2(\y^{(p_2)})^T\y^{(i_2)}\rangle_{\gamma_{02}^{(1)}} \rp \nonumber \\
& & - \p_1s\beta^2(1-\m_1)\mE_{G,{\mathcal U}_2}\langle \|\x^{(i_1)}\|_2\|\x^{(p_1)}\|_2(\y^{(p_2)})^T\y^{(i_2)} \rangle_{\gamma_{1}^{(1)}}\nonumber \\
 &   &
  -s\beta^2\p_1\m_1\mE_{G,{\mathcal U}_2} \langle \|\x^{(i_1)}\|_2\|\x^{(p_1)}\|_2(\y^{(p_2)})^T\y^{(i_2)} \rangle_{\gamma_{2}^{(1)}} \nonumber \\
& = & \p_0\beta^2 \lp \mE_{G,{\mathcal U}_2}\langle \|\x^{(i_1)}\|_2^2\|\y^{(i_2)}\|_2^2\rangle_{\gamma_{01}^{(1)}} +   (s-1)\mE_{G,{\mathcal U}_2}\langle \|\x^{(i_1)}\|_2^2(\y^{(p_2)})^T\y^{(i_2)}\rangle_{\gamma_{02}^{(1)}} \rp \nonumber \\
& & - \p_0s\beta^2(1-\m_1)\mE_{G,{\mathcal U}_2}\langle \|\x^{(i_1)}\|_2\|\x^{(p_1)}\|_2(\y^{(p_2)})^T\y^{(i_2)} \rangle_{\gamma_{1}^{(1)}}\nonumber \\
 &   &
  -s\beta^2\p_1\m_1\mE_{G,{\mathcal U}_2} \langle \|\x^{(i_1)}\|_2\|\x^{(p_1)}\|_2(\y^{(p_2)})^T\y^{(i_2)} \rangle_{\gamma_{2}^{(1)}}.
\end{eqnarray}
From (\ref{eq:liftgenBanal20b}) and (\ref{eq:genEanal25}), we have
\begin{eqnarray}\label{eq:cpt5}
-\Omega_2 & = & (\q_0-\q_1)\beta^2 \lp\mE_{G,{\mathcal U}_2}\langle \|\x^{(i_1)}\|_2^2\|\y^{(i_2)}\|_2^2\rangle_{\gamma_{01}^{(1)}} +   (s-1)\mE_{G,{\mathcal U}_2}\langle \|\x^{(i_1)}\|_2^2 \|\y^{(i_2)}\|_2\|\y^{(p_2)}\|_2\rangle_{\gamma_{02}^{(1)}}\rp\nonumber \\
& & - (\q_0-\q_1)s\beta^2(1-\m_1)\mE_{G,{\mathcal U}_2}\langle (\x^{(p_1)})^T\x^{(i_1)}\|\y^{(i_2)}\|_2\|\y^{(p_2)}\|_2 \rangle_{\gamma_{1}^{(1)}}\nonumber \\
&   &+
\q_1\beta^2\lp\mE_{G,{\mathcal U}_2}\langle \|\x^{(i_1)}\|_2^2\|\y^{(i_2)}\|_2^2\rangle_{\gamma_{01}^{(1)}} +   (s-1)\mE_{G,{\mathcal U}_2}\langle \|\x^{(i_1)}\|_2^2 \|\y^{(i_2)}\|_2\|\y^{(p_2)}\|_2\rangle_{\gamma_{02}^{(1)}}\rp\nonumber \\
& & - \q_1s\beta^2(1-\m_1)\mE_{G,{\mathcal U}_2}\langle (\x^{(p_1)})^T\x^{(i_1)}\|\y^{(i_2)}\|_2\|\y^{(p_2)}\|_2 \rangle_{\gamma_{1}^{(1)}} \nonumber \\
&  & -s\beta^2\q_1\m_1\mE_{G,{\mathcal U}_2} \langle \|\y^{(i_2)}\|_2\|\y^{(p_2)}\|_2(\x^{(i_1)})^T\x^{(p_1)}\rangle_{\gamma_{2}^{(1)}} \nonumber \\
&  = &
\q_0\beta^2\lp\mE_{G,{\mathcal U}_2}\langle \|\x^{(i_1)}\|_2^2\|\y^{(i_2)}\|_2^2\rangle_{\gamma_{01}^{(1)}} +   (s-1)\mE_{G,{\mathcal U}_2}\langle \|\x^{(i_1)}\|_2^2 \|\y^{(i_2)}\|_2\|\y^{(p_2)}\|_2\rangle_{\gamma_{02}^{(1)}}\rp\nonumber \\
& & - \q_0s\beta^2(1-\m_1)\mE_{G,{\mathcal U}_2}\langle (\x^{(p_1)})^T\x^{(i_1)}\|\y^{(i_2)}\|_2\|\y^{(p_2)}\|_2 \rangle_{\gamma_{1}^{(1)}} \nonumber \\
&  & -s\beta^2\q_1\m_1\mE_{G,{\mathcal U}_2} \langle \|\y^{(i_2)}\|_2\|\y^{(p_2)}\|_2(\x^{(i_1)})^T\x^{(p_1)}\rangle_{\gamma_{1}^{(1)}}.
\end{eqnarray}
From (\ref{eq:liftgenCanal21b}) and (\ref{eq:genFanal29}), we have
  \begin{eqnarray}\label{eq:cpt6}
\Omega_3 & = & (\p_0\q_0-\p_1\q_1)\beta^2 \lp \mE_{G,{\mathcal U}_2}\langle \|\x^{(i_1)}\|_2^2\|\y^{(i_2)}\|_2^2\rangle_{\gamma_{01}^{(1)}} +   (s-1)\mE_{G,{\mathcal U}_2}\langle \|\x^{(i_1)}\|_2^2 \|\y^{(i_2)}\|_2\|\y^{(p_2)}\|_2\rangle_{\gamma_{02}^{(1)}}\rp\nonumber \\
& & - (\p_0\q_0-\p_1\q_1)s\beta^2(1-\m_1)\mE_{G,{\mathcal U}_2}\langle \|\x^{(i_1)}\|_2\|\x^{(p_`)}\|_2\|\y^{(i_2)}\|_2\|\y^{(p_2)}\|_2 \rangle_{\gamma_{1}^{(1)}}\nonumber \\
&  & +
\p_1\q_1\beta^2\lp\mE_{G,{\mathcal U}_2}\langle \|\x^{(i_1)}\|_2^2\|\y^{(i_2)}\|_2^2\rangle_{\gamma_{01}^{(1)}} +   (s-1)\mE_{G,{\mathcal U}_2}\langle \|\x^{(i_1)}\|_2^2 \|\y^{(i_2)}\|_2\|\y^{(p_2)}\|_2\rangle_{\gamma_{02}^{(1)}}\rp\nonumber \\
& & - \p_1\q_1 s\beta^2(1-\m_1)\mE_{G,{\mathcal U}_2}\langle \|\x^{(i_1)}\|_2\|\x^{(p_`)}\|_2\|\y^{(i_2)}\|_2\|\y^{(p_2)}\|_2 \rangle_{\gamma_{1}^{(1)}} \nonumber \\
&  & -s\beta^2\p_1\q_1\m_1\mE_{G,{\mathcal U}_2} \mE_{G,{\mathcal U}_2}\langle\|\x^{(i_2)}\|_2\|\x^{(p_2)}\|_2\|\y^{(i_2)}\|_2\|\y^{(p_2)}\rangle_{\gamma_{2}^{(1)}} \nonumber \\
& = &
\p_0\q_0\beta^2\lp\mE_{G,{\mathcal U}_2}\langle \|\x^{(i_1)}\|_2^2\|\y^{(i_2)}\|_2^2\rangle_{\gamma_{01}^{(1)}} +   (s-1)\mE_{G,{\mathcal U}_2}\langle \|\x^{(i_1)}\|_2^2 \|\y^{(i_2)}\|_2\|\y^{(p_2)}\|_2\rangle_{\gamma_{02}^{(1)}}\rp\nonumber \\
& & - \p_0\q_0 s\beta^2(1-\m_1)\mE_{G,{\mathcal U}_2}\langle \|\x^{(i_1)}\|_2\|\x^{(p_`)}\|_2\|\y^{(i_2)}\|_2\|\y^{(p_2)}\|_2 \rangle_{\gamma_{1}^{(1)}} \nonumber \\
&  & -s\beta^2\p_1\q_1
\m_1\mE_{G,{\mathcal U}_2} \langle\|\x^{(i_1)}\|_2\|\x^{(p_1)}\|_2\|\y^{(i_2)}\|_2\|\y^{(p_2)}\rangle_{\gamma_{2}^{(1)}}.
\end{eqnarray}
Finally, combining (\ref{eq:ctp1}) with (\ref{eq:cpt3})-(\ref{eq:cpt6}), we obtain
\begin{eqnarray}\label{eq:cpt7}
\frac{d\psi(\calX,\calY,\q,\m,\beta,s,t)}{dt}  & = &       \frac{\mbox{sign}(s)\beta}{2\sqrt{n}} \lp \phi_1^{(1)}+\phi_2^{(1)} +\phi_{01}^{(1)}+\phi_{02}^{(1)}\rp ,
 \end{eqnarray}
where
\begin{eqnarray}\label{eq:cpt8}
\phi_1^{(1)} & = &
-s(1-\m_1)\mE_{G,{\mathcal U}_2} \langle (\p_0\|\x^{(i_1)}\|_2\|\x^{(p_1)}\|_2 -(\x^{(p_1)})^T\x^{(i_1)})(\q_0\|\y^{(i_2)}\|_2\|\y^{(p_2)}\|_2 -(\y^{(p_2)})^T\y^{(i_2)})\rangle_{\gamma_{1}^{(1)}} \nonumber \\
\phi_2^{(1)} & = &
-s\m_1\mE_{G,{\mathcal U}_2} \langle (\p_1\|\x^{(i_1)}\|_2\|\x^{(p_1)}\|_2 -(\x^{(p_1)})^T\x^{(i_1)})(\q_1\|\y^{(i_2)}\|_2\|\y^{(p_2)}\|_2 -(\y^{(p_2)})^T\y^{(i_2)})\rangle_{\gamma_{2}^{(1)}} \nonumber \\
\phi_{01}^{(1)} & = & (1-\p_0)(1-\q_0)\mE_{G,{\mathcal U}_2}\langle \|\x^{(i_1)}\|_2^2\|\y^{(i_2)}\|_2^2\rangle_{\gamma_{01}^{(1)}} \nonumber\\
\phi_{02}^{(1)} & = & (s-1)(1-\p_0)\mE_{G,{\mathcal U}_2}\left\langle \|\x^{(i_1)}\|_2^2 \lp\q_0\|\y^{(i_2)}\|_2\|\y^{(p_2)}\|_2-(\y^{(p_2)})^T\y^{(i_2)}\rp\right\rangle_{\gamma_{02}^{(1)}}. \end{eqnarray}

We summarize the above into the following proposition.
\begin{proposition}
\label{thm:thm1} Consider scalar $\m_1$, vector $\p=[\p_0,\p_1,\p_2]$ with $\p_0\geq \p_1\geq \p_2=0$,  and vector $\q=[\q_0,\q_1,\q_2]$ with $\q_0\geq \q_1\geq \q_2=0$. Let $k\in\{1,2\}$ and $G\in\mR^{m \times n},u^{(4,k)}\in\mR^1,\u^{(2,k)}\in\mR^{m\times 1}$, and $\h^{(k)}\in\mR^{n\times 1}$ all have i.i.d. zero-mean normal components (they are then independent of each other as well). Let the variances of the components of $G$, $u^{(4,1)}$, $u^{(4,2)}$, $\u^{(2,1)}$, $\u^{(2,2)}$, $\h^{(1)}$, and $\h^{(2)}$ be $1$, $\p_0\q_0-\p_1\q_1$, $\p_1\q_1$, $\p_0-\p_1$, $\p_1$, $\q_0-\q_1$, and $\q_1$, respectively. Let ${\mathcal U}_k=[u^{(4,k)},\u^{(2,k)},\h^{(k)}]$. Assume that set ${\mathcal X}=\{\x^{(1)},\x^{(2)},\dots,\x^{(l)}\}$, where $\x^{(i)}\in \mR^{n},1\leq i\leq l$, and set ${\mathcal Y}=\{\y^{(1)},\y^{(2)},\dots,\y^{(l)}\}$, where $\y^{(i)}\in \mR^{m},1\leq i\leq l$ are given and that $\beta\geq 0$ and $s$ are real numbers. Consider the following function
\begin{equation}\label{eq:prop1eq1}
\psi(\calX,\calY,\p,\q,\m,\beta,s,t)  =  \mE_{G,{\mathcal U}_2} \frac{1}{\beta|s|\sqrt{n}\m_1} \log \mE_{{\mathcal U}_1} \lp \sum_{i_1=1}^{l}\lp\sum_{i_2=1}^{l}e^{\beta D_0^{(i_1,i_2)}} \rp^{s}\rp^{\m_1},
\end{equation}
where
\begin{eqnarray}\label{eq:prop1eq2}
 D_0^{(i_1,i_2)} & \triangleq & \sqrt{t}(\y^{(i_2)})^T
 G\x^{(i_1)}+\sqrt{1-t}\|\x^{(i_1)}\|_2 (\y^{(i_2)})^T(\u^{(2,1)}+\u^{(2,2)})\nonumber \\
 & & +\sqrt{t}\|\x^{(i_1)}\|_2\|\y^{(i_2)}\|_2(u^{(4,1)}+u^{(4,2)}) +\sqrt{1-t}\|\y^{(i_2)}\|_2(\h^{(1)}+\h^{(2)})^T\x^{(i_1)}.
 \end{eqnarray}
then
\begin{eqnarray}\label{eq:prop1eq3}
\frac{d\psi(\calX,\calY,\q,\m,\beta,s,t)}{dt}  & = &   \frac{\mbox{sign}(s)\beta}{2\sqrt{n}} \lp \phi_1^{(1)}+\phi_2^{(1)} +\phi_{01}^{(1)}+\phi_{02}^{(1)}\rp,
 \end{eqnarray}
where $\phi$'s are as in (\ref{eq:cpt8}).
\end{proposition}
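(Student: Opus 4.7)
The plan is to start from the chain-rule expansion of $d\psi/dt$ already laid out in (\ref{eq:genanal10d})--(\ref{eq:genanal10g}), which organizes the derivative into three groups of Gaussian expectations, indexed by which ambient randomness supplies the differentiated Gaussian: the $T_G$--group (components of $G$), the $T_2$--group (variables in ${\mathcal U}_2$, namely $u^{(4,2)}$, $\u^{(2,2)}$, $\h^{(2)}$), and the $T_1$--group (variables in ${\mathcal U}_1$, namely $u^{(4,1)}$, $\u^{(2,1)}$, $\h^{(1)}$). For each of the seven scalar integrals I would apply Gaussian integration by parts with respect to the Gaussian that appears linearly in the integrand, and then convert the resulting expressions into averages with respect to the four measures $\gamma_{01}^{(1)}, \gamma_{02}^{(1)}, \gamma_1^{(1)}, \gamma_2^{(1)}$ introduced in (\ref{eq:genAanal19e1}).

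For the $T_1$--group, integration by parts in the ${\mathcal U}_1$ variables yields covariance prefactors $\p_0-\p_1$, $\q_0-\q_1$, and $\p_0\q_0-\p_1\q_1$ respectively, because the outer normalizer $1/\mE_{{\mathcal U}_1} Z^{\m_1}$ is unaffected by the differentiation (being a function only of $G$ and ${\mathcal U}_2$). For the $T_2$--group, the analogous integration by parts acts on the ${\mathcal U}_2$ variables and must additionally differentiate the denominator $1/\mE_{{\mathcal U}_1} Z^{\m_1}$, splitting each term into a continuation part $T_{2,*}^c$, which is structurally identical to its $T_1$ counterpart up to rescaling of the variance prefactor, and a derivative part $T_{2,*}^d$ carrying a factor $\m_1$ and producing the measure $\gamma_2^{(1)}$. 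For the $T_G$--group the same two-part split occurs: a $T_{G,j}^c$ part feeding $\gamma_1^{(1)}$ and a $T_{G,j}^d$ part feeding $\gamma_2^{(1)}$ with coefficient $\m_1$. At each step I would rely on the reductions already recorded in \cite{Stojnicgscompyx16} (with the trivial substitution of their $c_3$ by our $\m_1$) to avoid reproducing the underlying differentiation computations from scratch.

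The key cancellation occurs when the $T_1$ and $T_2^c$ contributions are summed: the variance prefactors combine as $(\p_0-\p_1)+\p_1=\p_0$, $(\q_0-\q_1)+\q_1=\q_0$, and $(\p_0\q_0-\p_1\q_1)+\p_1\q_1=\p_0\q_0$. Together with the $T_G^c$ part, these produce the $(1-\m_1)$-weighted averages over $\gamma_{01}^{(1)}, \gamma_{02}^{(1)}, \gamma_1^{(1)}$, while $T_{G,j}^d$ and the three $T_{2,*}^d$'s separately deliver $\m_1$-weighted averages over $\gamma_2^{(1)}$ with variance coefficients $1, \p_1, \q_1, \p_1\q_1$. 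The final step is to observe that the coefficients of $\|\x^{(i_1)}\|_2\|\x^{(p_1)}\|_2$, $(\x^{(p_1)})^T\x^{(i_1)}$, $\|\y^{(i_2)}\|_2\|\y^{(p_2)}\|_2$, $(\y^{(p_2)})^T\y^{(i_2)}$ across the $\gamma_1^{(1)}$ (resp.\ $\gamma_2^{(1)}$) averages assemble into the factored form $(\p_k\|\x^{(i_1)}\|_2\|\x^{(p_1)}\|_2-(\x^{(p_1)})^T\x^{(i_1)})(\q_k\|\y^{(i_2)}\|_2\|\y^{(p_2)}\|_2-(\y^{(p_2)})^T\y^{(i_2)})$ with $k=0$ for $\gamma_1^{(1)}$ and $k=1$ for $\gamma_2^{(1)}$, producing $\phi_1^{(1)}$ and $\phi_2^{(1)}$. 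The $\gamma_{01}^{(1)}$ coefficient telescopes as $1-\p_0-\q_0+\p_0\q_0=(1-\p_0)(1-\q_0)$, giving $\phi_{01}^{(1)}$, and the $\gamma_{02}^{(1)}$ coefficient reduces similarly to $\phi_{02}^{(1)}$.

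The main obstacle is sheer bookkeeping: seven terms, most of which split further into ``c'' and ``d'' pieces, collectively yielding more than a dozen integrals that must be grouped so that the variance prefactors telescope correctly and the $\gamma_1^{(1)}$ and $\gamma_2^{(1)}$ sums factor. No single step is analytically subtle; each reduces to one Gaussian integration by parts followed by differentiation of rational functions of $Z$ and $C^{(i_1)}$. The challenge, and the reason for performing the terms in the order $T_1$, $T_2$, $T_G$, is to recycle the structural identities between $T_{1,*}$ and $T_{2,*}^c$ (differing only by a variance rescaling) and between $T_{G,j}^c$ and the $T_{1,*}$ already computed, so that the entire calculation collapses into the elegant four-term expression (\ref{eq:cpt7})--(\ref{eq:cpt8}).
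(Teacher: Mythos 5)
Your proposal is correct and follows essentially the same route as the paper: the same $T_G$/$T_2$/$T_1$ grouping, the same order of attack ($T_1$, then $T_2$, then $T_G$), the same $c$/$d$ splitting with variance prefactors telescoping as $(\p_0-\p_1)+\p_1=\p_0$ etc., and the same final factorization into $\phi_1^{(1)},\phi_2^{(1)},\phi_{01}^{(1)},\phi_{02}^{(1)}$. The only slight imprecision is the phrase suggesting the $(1-\m_1)$ weight attaches to the $\gamma_{01}^{(1)}$ and $\gamma_{02}^{(1)}$ averages as well (it appears only on the $\gamma_1^{(1)}$ terms), but your subsequent coefficient bookkeeping shows you have the right structure.
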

\begin{proof}
  Follows from the above presentation.
\end{proof}

\section{Second level of full lifting}
\label{sec:seclev}

In this section, we show how the results of the previous section extend to the second level of lifting (this level in some scenarios can be viewed as the first non-trivial one). All key logical ingredients needed for such an extension are already present in the derivation from Section \ref{sec:gencon}. We here formalize them and effectively trace the path for full lifting with $r\in\mN$ levels. To facilitate the easiness of the exposition and following, we will try to parallel the derivation from Sectoin \ref{sec:gencon} as closely as possible. However, to avoid unnecessary repetitions, we often proceed at a much faster pace than in Section \ref{sec:gencon}.

We now consider vectors $\p=[\p_0,\p_1,\p_2,\p_3]$ with $\p_0\geq \p_1\geq \p_2\geq \p_3=0$ and $\q=[\q_0,\q_1,\q_2,\q_3]$ with $\q_0\geq \q_1\geq \q_2\geq \q_3 = 0$. As earlier, we take $(m\times n)$ dimensional matrices  $G\in \mR^{m\times n}$ with i.i.d. standard normal components and independent (of $G$ and among themselves) random variables $u^{(4,1)}\sim {\mathcal N}(0,\p_0\q_0-\p_1\q_1)$, $u^{(4,2)}\sim {\mathcal N}(0,\p_1\q_1-\p_2\q_2)$, and $u^{(4,3)}\sim {\mathcal N}(0,\p_2\q_2)$. For a vector $\m=[\m_1,\m_2]$,  we consider the following interpolating function $\psi(\cdot)$
\begin{equation}\label{eq:lev2genanal3}
\psi(\calX,\calY,\p,\q,\m,\beta,s,t)  =  \mE_{G,{\mathcal U}_3} \frac{1}{\beta|s|\sqrt{n}\m_2} \log \lp \mE_{{\mathcal U}_2}\lp\lp\mE_{{\mathcal U}_1} \lp Z^{\m_1}\rp\rp^{\frac{\m_2}{\m_1}}\rp\rp,
\end{equation}
where
\begin{eqnarray}\label{eq:lev2genanal3a}
Z & \triangleq & \sum_{i_1=1}^{l}\lp\sum_{i_2=1}^{l}e^{\beta D_0^{(i_1,i_2)}} \rp^{s} \nonumber \\
 D_0^{(i_1,i_2)} & \triangleq & \sqrt{t}(\y^{(i_2)})^T
 G\x^{(i_1)}+\sqrt{1-t}\|\x^{(i_1)}\|_2 (\y^{(i_2)})^T(\u^{(2,1)}+\u^{(2,2)})\nonumber \\
 & & +\sqrt{t}\|\x^{(i_1)}\|_2\|\y^{(i_2)}\|_2(u^{(4,1)}+u^{(4,2)}) +\sqrt{1-t}\|\y^{(i_2)}\|_2(\h^{(1)}+\h^{(2)})^T\x^{(i_1)},
 \end{eqnarray}
 and ${\mathcal U}_k=[u^{(4,k)},\u^{(2,k)},\h^{(2k)}]$, $k\in\{1,2,3\}$. In (\ref{eq:lev2genanal3}), $\u^{(2,1)}$, $\u^{(2,2)}$, and $\u^{(2,3)}$  are $m$ dimensional vectors of i.i.d. zero-mean Gaussians with variances $\p_0-\p_1$, $\p_1-\p_2$, and , $\p_2$, respectively. Similarly, $\h^{(1)}$, $\h^{(2)}$, and $\h^{(3)}$ are $n$ dimensional vectors of i.i.d. zero-mean Gaussians with variances $\q_0-\q_1$, $\q_1-\q_2$, and $\q_2$, respectively. These six vectors are assumed to be independent among themselves and of $G$, $u^{(4,1)}$, $u^{(4,2)}$, and $u^{(4,3)}$ as well.

For the convenience of the exposition, we, analogously to (\ref{eq:genanal4}), set
\begin{eqnarray}\label{eq:lev2genanal4}
\u^{(i_1,1)} & =  & \frac{G\x^{(i_1)}}{\|\x^{(i_1)}\|_2} \nonumber \\
\u^{(i_1,3,1)} & =  & \frac{(\h^{(1)})^T\x^{(i_1)}}{\|\x^{(i_1)}\|_2} \nonumber \\
\u^{(i_1,3,2)} & =  & \frac{(\h^{(2)})^T\x^{(i_1)}}{\|\x^{(i_1)}\|_2} \nonumber \\
\u^{(i_1,3,3)} & =  & \frac{(\h^{(3)})^T\x^{(i_1)}}{\|\x^{(i_1)}\|_2}.
\end{eqnarray}
and recall that
\begin{eqnarray}\label{eq:lev2genanal5}
\u_j^{(i_1,1)} & =  & \frac{G_{j,1:n}\x^{(i_1)}}{\|\x^{(i_1)}\|_2},1\leq j\leq m.
\end{eqnarray}
Clearly, for any fixed $i_1$, the elements of $\u^{(i_1,1)}$ are i.i.d. standard normals, the elements of $\u^{(2,1)}$, $\u^{(2,2)}$, and $\u^{(2,3)}$ are zero-mean Gaussians with respective variances $\p_0-\p_1$, $\p_1-\p_2$, and $\p_2$, and the elements of $\u^{(i_1,3,1)}$, $\u^{(i_1,3,2)}$, and $\u^{(i_1,3,3)}$ are zero-mean Gaussians with  respective variances $\q_0-\q_1$, $\q_1-\q_2$, and $\q_2$. One then rewrites (\ref{eq:lev2genanal3}) as
\begin{equation}\label{eq:lev2genanal6}
\psi(\calX,\calY,\p,\q,\m,\beta,s,t)  =  \mE_{G,{\mathcal U}_2} \frac{1}{\beta|s|\sqrt{n}\m_2} \log \lp\mE_{{\mathcal U}_2}  \lp \mE_{{\mathcal U}_1} \lp \sum_{i_1=1}^{l}\lp\sum_{i_2=1}^{l}A^{(i_1,i_2)} \rp^{s}\rp^{\m_1}\rp^{\frac{\m_2}{\m_1}}\rp,
\end{equation}
where $\beta_{i_1}=\beta\|\x^{(i_1)}\|_2$ and now
\begin{eqnarray}\label{eq:lev2genanal7}
B^{(i_1,i_2)} & \triangleq &  \sqrt{t}(\y^{(i_2)})^T\u^{(i_1,1)}+\sqrt{1-t} (\y^{(i_2)})^T(\u^{(2,1)}+\u^{(2,2)}) \nonumber \\
D^{(i_1,i_2)} & \triangleq &  (B^{(i_1,i_2)}+\sqrt{t}\|\y^{(i_2)}\|_2 (u^{(4,1)}+u^{(4,2)}+u^{(4,3)})+\sqrt{1-t}\|\y^{(i_2)}\|_2(\u^{(i_1,3,1)}+\u^{(i_1,3,2)}+\u^{(i_1,3,3)})) \nonumber \\
A^{(i_1,i_2)} & \triangleq &  e^{\beta_{i_1}D^{(i_1,i_2)}}\nonumber \\
C^{(i_1)} & \triangleq &  \sum_{i_2=1}^{l}A^{(i_1,i_2)}\nonumber \\
Z & \triangleq & \sum_{i_1=1}^{l} \lp \sum_{i_2=1}^{l} A^{(i_1,i_2)}\rp^s =\sum_{i_1=1}^{l}  (C^{(i_1)})^s.
\end{eqnarray}
Since we are again interested in studying monotonicity of $\psi(\calX,\calY,\p,\q,\m,\beta,s,t)$, we start by considering its derivative
\begin{eqnarray}\label{eq:lev2genanal9}
\frac{d\psi(\calX,\calY,\q,\m,\beta,s,t)}{dt} & = &  \frac{d}{dt}\lp\mE_{G,{\mathcal U}_3} \frac{1}{\beta|s|\sqrt{n}\m_2} \log \lp \mE_{{\mathcal U}_2}\lp\lp\mE_{{\mathcal U}_1} \lp Z^{\m_1}\rp\rp^{\frac{\m_2}{\m_1}}\rp\rp\rp\nonumber \\
& = &  \mE_{G,{\mathcal U}_3} \frac{1}{\beta|s|\sqrt{n}\m_2\mE_{{\mathcal U}_2}\lp\lp\mE_{{\mathcal U}_1} \lp Z^{\m_1}\rp\rp^{\frac{\m_2}{\m_1}}\rp}
\frac{d\lp \mE_{{\mathcal U}_2}\lp\lp\mE_{{\mathcal U}_1} \lp Z^{\m_1}\rp\rp^{\frac{\m_2}{\m_1}}\rp\rp}{dt}\nonumber \\
& = &  \mE_{G,{\mathcal U}_3,{\mathcal U}_2} \frac{\lp\mE_{{\mathcal U}_1} \lp Z^{\m_1}\rp\rp^{\frac{\m_2}{\m_1}-1}}{\beta|s|\sqrt{n}\m_1\mE_{{\mathcal U}_2}\lp\lp\mE_{{\mathcal U}_1} \lp Z^{\m_1}\rp\rp^{\frac{\m_2}{\m_1}}\rp}
\frac{d \mE_{{\mathcal U}_1} Z^{\m_1} }{dt}\nonumber \\
& = &  \mE_{G,{\mathcal U}_3,{\mathcal U}_2} \frac{\lp\mE_{{\mathcal U}_1} \lp Z^{\m_1}\rp\rp^{\frac{\m_2}{\m_1}-1}}{\beta|s|\sqrt{n}\m_1\mE_{{\mathcal U}_2}\lp\lp\mE_{{\mathcal U}_1} \lp Z^{\m_1}\rp\rp^{\frac{\m_2}{\m_1}}\rp}
\mE_{{\mathcal U}_1} \frac{1}{Z^{1-\m_1}}\frac{d Z}{dt}\nonumber \\
& = &  \mE_{G,{\mathcal U}_3,{\mathcal U}_2} \frac{\lp\mE_{{\mathcal U}_1} \lp Z^{\m_1}\rp\rp^{\frac{\m_2}{\m_1}-1}}{\beta|s|\sqrt{n}\mE_{{\mathcal U}_2}\lp\lp\mE_{{\mathcal U}_1} \lp Z^{\m_1}\rp\rp^{\frac{\m_2}{\m_1}}\rp}
\mE_{{\mathcal U}_1} \frac{1}{Z^{1-\m_1}} \frac{d\lp \sum_{i_1=1}^{l} \lp \sum_{i_2=1}^{l} A^{(i_1,i_2)}\rp^s \rp }{dt}\nonumber \\
& = &  \mE_{G,{\mathcal U}_3,{\mathcal U}_2} \frac{\lp\mE_{{\mathcal U}_1} \lp Z^{\m_1}\rp\rp^{\frac{\m_2}{\m_1}-1}}{\beta|s|\sqrt{n}\mE_{{\mathcal U}_2}\lp\lp\mE_{{\mathcal U}_1} \lp Z^{\m_1}\rp\rp^{\frac{\m_2}{\m_1}}\rp}\mE_{{\mathcal U}_1} \frac{1}{Z^{1-\m_1}}  \sum_{i=1}^{l} (C^{(i_1)})^{s-1} \nonumber \\
& & \times \sum_{i_2=1}^{l}\beta_{i_1}A^{(i_1,i_2)}\frac{dD^{(i_1,i_2)}}{dt},
\end{eqnarray}
where
\begin{eqnarray}\label{eq:lev2genanal9a}
\frac{dD^{(i_1,i_2)}}{dt}= \lp \frac{dB^{(i_1,i_2)}}{dt}+\frac{\|\y^{(i_2)}\|_2 (u^{(4,1)}+u^{(4,2)}+u^{(4,3)})}{2\sqrt{t}}-\frac{\|\y^{(i_2)}\|_2 (\u^{(i_1,3,1)}+\u^{(i_1,3,2)}+\u^{(i_1,3,3)})}{2\sqrt{1-t}}\rp.
\end{eqnarray}
Utilizing (\ref{eq:lev2genanal7}) we find
\begin{eqnarray}\label{eq:lev2genanal10}
\frac{dB^{(i_1,i_2)}}{dt} & = &   \frac{d\lp\sqrt{t}(\y^{(i_2)})^T\u^{(i_1,1)}+\sqrt{1-t} (\y^{(i_2)})^T(\u^{(2,1)}+\u^{(2,2)}+\u^{(2,3)})\rp}{dt} \nonumber \\
 & = &
\sum_{j=1}^{m}\lp \frac{\y_j^{(i_2)}\u_j^{(i_1,1)}}{2\sqrt{t}}-\frac{\y_j^{(i_2)}\u_j^{(2,1)}}{2\sqrt{1-t}}-\frac{\y_j^{(i_2)}\u_j^{(2,2)}}{2\sqrt{1-t}}
-\frac{\y_j^{(i_2)}\u_j^{(2,3)}}{2\sqrt{1-t}}\rp.
\end{eqnarray}
One can then write analogously to (\ref{eq:genanal10e})
\begin{equation}\label{eq:lev2genanal10e}
\frac{d\psi(\calX,\calY,\q,\m,\beta,s,t)}{dt}  =       \frac{\mbox{sign}(s)}{2\beta\sqrt{n}} \sum_{i_1=1}^{l}  \sum_{i_2=1}^{l}
\beta_{i_1}\lp T_G + T_2+ T_1\rp,
\end{equation}
where
\begin{eqnarray}\label{eq:lev2genanal10f}
T_G & = & \sum_{j=1}^{m}\frac{T_{G,j}}{\sqrt{t}}  \nonumber\\
T_3 & = & -\sum_{j=1}^{m}\frac{T_{3,1,j}}{\sqrt{1-t}}-\|\y^{(i_2)}\|_2\frac{T_{3,2}}{\sqrt{1-t}}+\|\y^{(i_2)}\|_2\frac{T_{3,3}}{\sqrt{t}} \nonumber\\
T_2 & = & -\sum_{j=1}^{m}\frac{T_{2,1,j}}{\sqrt{1-t}}-\|\y^{(i_2)}\|_2\frac{T_{2,2}}{\sqrt{1-t}}+\|\y^{(i_2)}\|_2\frac{T_{2,3}}{\sqrt{t}} \nonumber\\
T_1 & = & -\sum_{j=1}^{m}\frac{T_{1,1,j}}{\sqrt{1-t}}-\|\y^{(i_2)}\|_2\frac{T_{1,2}}{\sqrt{1-t}}+\|\y^{(i_2)}\|_2\frac{T_{1,3}}{\sqrt{t}},
\end{eqnarray}
and for $k\in\{1,2,3\}$
\begin{eqnarray}\label{eq:lev2genanal10g}
T_{G,j} & = &  \mE_{G,{\mathcal U}_3,{\mathcal U}_2} \lp\frac{\lp\mE_{{\mathcal U}_1} \lp Z^{\m_1}\rp\rp^{\frac{\m_2}{\m_1}-1}}{\mE_{{\mathcal U}_2}\lp\lp\mE_{{\mathcal U}_1} \lp Z^{\m_1}\rp\rp^{\frac{\m_2}{\m_1}}\rp}
  \mE_{{\mathcal U}_1}\frac{(C^{(i_1)})^{s-1} A^{(i_1,i_2)} \y_j^{(i_2)}\u_j^{(i_1,1)}}{Z^{1-\m_1}} \rp \nonumber \\
T_{k,1,j} & = &   \mE_{G,{\mathcal U}_3,{\mathcal U}_2} \lp\frac{\lp\mE_{{\mathcal U}_1} \lp Z^{\m_1}\rp\rp^{\frac{\m_2}{\m_1}-1}}{\mE_{{\mathcal U}_2}\lp\lp\mE_{{\mathcal U}_1} \lp Z^{\m_1}\rp\rp^{\frac{\m_2}{\m_1}}\rp}
  \mE_{{\mathcal U}_1}\frac{(C^{(i_1)})^{s-1} A^{(i_1,i_2)} \y_j^{(i_2)}\u_j^{(2,k)}}{Z^{1-\m_1}} \rp \nonumber \\
T_{k,2} & = &   \mE_{G,{\mathcal U}_3,{\mathcal U}_2} \lp\frac{\lp\mE_{{\mathcal U}_1} \lp Z^{\m_1}\rp\rp^{\frac{\m_2}{\m_1}-1}}{\mE_{{\mathcal U}_2}\lp\lp\mE_{{\mathcal U}_1} \lp Z^{\m_1}\rp\rp^{\frac{\m_2}{\m_1}}\rp}
  \mE_{{\mathcal U}_1}\frac{(C^{(i_1)})^{s-1} A^{(i_1,i_2)} \u^{(i_1,3,k)}}{Z^{1-\m_1}} \rp \nonumber \\
T_{k,3} & = &   \mE_{G,{\mathcal U}_3,{\mathcal U}_2} \lp\frac{\lp\mE_{{\mathcal U}_1} \lp Z^{\m_1}\rp\rp^{\frac{\m_2}{\m_1}-1}}{\mE_{{\mathcal U}_2}\lp\lp\mE_{{\mathcal U}_1} \lp Z^{\m_1}\rp\rp^{\frac{\m_2}{\m_1}}\rp}
  \mE_{{\mathcal U}_1}\frac{(C^{(i_1)})^{s-1} A^{(i_1,i_2)} u^{(4,k)}}{Z^{1-\m_1}} \rp.
\end{eqnarray}
We will handle each of the above ten terms separately heavily relying on and paralleling as closely as possible the presentation of Section \ref{sec:gencon}.

\subsection{Computing $\frac{d\psi(\calX,\calY,\p,\q,\m,\beta,s,t)}{dt}$ on the second level of full lifting}
\label{sec:lev2compderivative}

As in Section \ref{sec:gencon}, we carefully choose the order in which we handle each of the above terms. We group the three terms indexed by $k$ into three groups, $T_k$, $k\in\{1,2,3\}$, and first handle $k=1$ scenario, then $k=2$, and then $k=3$. After that, we handle $T_{G,j}$ as well.

\subsubsection{Handling $T_k$--groups --- $k=1$}
\label{sec:lev2handlTk1}

We handle separately each of the three terms from $T_1$--group.

\underline{\textbf{\emph{Determining}} $T_{1,1,j}$}
\label{sec:lev2hand1T11}

Analogously to (\ref{eq:liftgenAanal19}), the Gaussian integration by parts gives
 \begin{eqnarray}\label{eq:lev2liftgenAanal19}
T_{1,1,j} & = & \mE_{G,{\mathcal U}_3,{\mathcal U}_2} \lp\frac{\lp\mE_{{\mathcal U}_1} \lp Z^{\m_1}\rp\rp^{\frac{\m_2}{\m_1}-1}}{\mE_{{\mathcal U}_2}\lp\lp \mE_{{\mathcal U}_1} \lp Z^{\m_1}\rp\rp^{\frac{\m_2}{\m_1}}\rp}      \mE_{{\mathcal U}_1}  \frac{(C^{(i_1)})^{s-1} A^{(i_1,i_2)}\y_j^{(i_2)}}{Z^{1-\m_1}}\rp \nonumber \\
& = & \mE_{G,{\mathcal U}_3,{\mathcal U}_2} \lp\frac{\lp\mE_{{\mathcal U}_1} \lp Z^{\m_1}\rp\rp^{\frac{\m_2}{\m_1}-1}}{\mE_{{\mathcal U}_2}\lp\lp   \mE_{{\mathcal U}_1} \lp Z^{\m_1}\rp\rp^{\frac{\m_2}{\m_1}}\rp}   \mE_{{\mathcal U}_1}\lp  \frac{d}{du_j^{(2,1)}}\lp\frac{(C^{(i_1)})^{s-1} A^{(i_1,i_2)}\y_j^{(i_2)}}{Z^{1-\m_1}}\rp\rp\rp.
\end{eqnarray}
Moreover, analogously to (\ref{eq:liftgenAanal19a}), we also have
\begin{eqnarray}\label{eq:lev2liftgenAanal19a}
T_{1,1,j}
& = &  (\p_0-\p_1)
\mE_{G,{\mathcal U}_3,{\mathcal U}_2} \lp\frac{\lp\mE_{{\mathcal U}_1} \lp Z^{\m_1}\rp\rp^{\frac{\m_2}{\m_1}-1}}{\mE_{{\mathcal U}_2}\lp\lp \mE_{{\mathcal U}_1} \lp Z^{\m_1}\rp\rp^{\frac{\m_2}{\m_1}}\rp}   \lp \Theta_1+\Theta_2 \rp\rp.
\end{eqnarray}
where $\Theta_1$ and $\Theta_2$ are as in (\ref{eq:liftgenAanal19c}). Analogously to (\ref{eq:liftgenAanal19d}), we conveniently observe the following as well
 \begin{eqnarray}\label{eq:lev2liftgenAanal19d}
\sum_{i_1=1}^{l}\sum_{i_2=1}^{l}\sum_{j=1}^{m} \mE_{{\mathcal U}_2} \lp\frac{\lp\mE_{{\mathcal U}_1} \lp Z^{\m_1}\rp\rp^{\frac{\m_2}{\m_1}-1}}{\mE_{{\mathcal U}_2}\lp\lp \mE_{{\mathcal U}_1} \lp Z^{\m_1}\rp\rp^{\frac{\m_2}{\m_1}}\rp} \frac{\beta_{i_1}\Theta_1}{\sqrt{1-t}}\rp
&  = &  L^{(2)}_1+L^{(2)}_2,
 \end{eqnarray}
where
\begin{eqnarray}\label{eq:lev2liftgenAanal19d1}
L^{(1)} &  = & \mE_{{\mathcal U}_2} \lp \mE_{{\mathcal U}_1} \frac{\lp\mE_{{\mathcal U}_1} \lp Z^{\m_1}\rp\rp^{\frac{\m_2}{\m_1}-1}Z^{\m_1}}{\mE_{{\mathcal U}_2}\lp\lp \mE_{{\mathcal U}_1} \lp Z^{\m_1}\rp\rp^{\frac{\m_2}{\m_1}}\rp}  \sum_{i_1=1}^{l}\frac{(C^{(i_1)})^s}{Z}\sum_{i_2=1}^{l}\frac{A^{(i_1,i_2)}}{C^{(i_1)}}\beta_{i_1}^2\|\y^{(i_2)}\|_2^2\rp \nonumber\\
L^{(1)} &  = & \mE_{{\mathcal U}_2} \lp \mE_{{\mathcal U}_1} \frac{\lp\mE_{{\mathcal U}_1} \lp Z^{\m_1}\rp\rp^{\frac{\m_2}{\m_1}-1}Z^{\m_1}}{\mE_{{\mathcal U}_2}\lp\lp \mE_{{\mathcal U}_1} \lp Z^{\m_1}\rp\rp^{\frac{\m_2}{\m_1}}\rp}    \sum_{i_1=1}^{l}\frac{(s-1)(C^{(i_1)})^s}{Z}\sum_{i_2=1}^{l}\sum_{p_2=1}^{l}\frac{A^{(i_1,i_2)}A^{(i_1,p_2)}}{(C^{(i_1)})^2}\beta_{i_1}^2(\y^{(p_2)})^T\y^{(i_2)}\rp.\nonumber \\
 \end{eqnarray}
 We set the operators
\begin{eqnarray}\label{eq:lev2genAanal19d2}
 \Phi_{{\mathcal U}_1} & \triangleq &  \mE_{{\mathcal U}_1} \frac{Z^{\m_1}}{\mE_{{\mathcal U}_1}\lp Z^{\m_1}\rp} \nonumber \\
 \Phi_{{\mathcal U}_2} & \triangleq & \mE_{{\mathcal U}_2}   \frac{\lp\mE_{{\mathcal U}_1} \lp Z^{\m_1}\rp\rp^{\frac{\m_2}{\m_1}}}{\mE_{{\mathcal U}_2}\lp\lp \mE_{{\mathcal U}_1} \lp Z^{\m_1}\rp\rp^{\frac{\m_2}{\m_1}}\rp}\nonumber \\
  \Phi_{{\mathcal U}_2,{\mathcal U}_1} & \triangleq &   \Phi_{{\mathcal U}_2}  \Phi_{{\mathcal U}_1} = \mE_{{\mathcal U}_2}   \frac{\lp\mE_{{\mathcal U}_1} \lp Z^{\m_1}\rp\rp^{\frac{\m_2}{\m_1}}}{\mE_{{\mathcal U}_2}\lp\lp \mE_{{\mathcal U}_1} \lp Z^{\m_1}\rp\rp^{\frac{\m_2}{\m_1}}\rp}
\mE_{{\mathcal U}_1} \frac{Z^{\m_1}}{\mE_{{\mathcal U}_1}\lp Z^{\m_1}\rp},
 \end{eqnarray}
and consider the following weighted/product Gibbs measures
\begin{eqnarray}\label{eq:lev2genAanal19e}
\gamma_0(i_1,i_2) & = &
\frac{(C^{(i_1)})^{s}}{Z}  \frac{A^{(i_1,i_2)}}{C^{(i_1)}} \nonumber \\
\gamma_{01}^{(2)}  & = & \Phi_{{\mathcal U}_2,{\mathcal U}_1} (\gamma_0(i_1,i_2)) \nonumber \\
\gamma_{02}^{(2)}  & = & \Phi_{{\mathcal U}_2,{\mathcal U}_1} (\gamma_0(i_1,i_2)\times \gamma_0(i_1,p_2)) \nonumber \\
\gamma_1^{(2)}  & = & \Phi_{{\mathcal U}_2,{\mathcal U}_1} (\gamma_0(i_1,i_2)\times \gamma_0(p_1,p_2)) \nonumber\\
\gamma_2^{(2)}  & = & \Phi_{{\mathcal U}_2} (\Phi_{{\mathcal U}_1}\gamma_0(i_1,i_2)\times \Phi_{{\mathcal U}_1}\gamma_0(p_1,p_2))\nonumber \\
\gamma_3^{(2)}  & = & (\Phi_{{\mathcal U}_2}\Phi_{{\mathcal U}_1}\gamma_0(i_1,i_2)\times \Phi_{{\mathcal U}_2}\Phi_{{\mathcal U}_1}\gamma_0(p_1,p_2))
\end{eqnarray}
As in Section \ref{sec:gencon}, all of the above $\gamma$'s are indeed valid measures. For example, we can write the following for $\gamma_1^{(2)}$
\begin{eqnarray}\label{eq:lev2genAanal19f}
 \sum_{i_1=1}^{l}  \sum_{i_2=1}^{l} \gamma_1^{(2)} & = & \sum_{i_1=1}^{l}  \sum_{i_2=1}^{l}\mE_{{\mathcal U}_2}   \frac{\lp\mE_{{\mathcal U}_1} \lp Z^{\m_1}\rp\rp^{\frac{\m_2}{\m_1}}}{\mE_{{\mathcal U}_2}\lp\lp \mE_{{\mathcal U}_1} \lp Z^{\m_1}\rp\rp^{\frac{\m_2}{\m_1}}\rp}
\mE_{{\mathcal U}_1} \frac{Z^{\m_1}}{\mE_{{\mathcal U}_1}\lp Z^{\m_1}\rp}
\frac{(C^{(i_1)})^{s}}{Z}  \frac{A^{(i_1,i_2)}}{C^{(i_1)}}\nonumber \\
& = &
\mE_{{\mathcal U}_2}   \frac{\lp\mE_{{\mathcal U}_1} \lp Z^{\m_1}\rp\rp^{\frac{\m_2}{\m_1}}}{\mE_{{\mathcal U}_2}\lp\lp \mE_{{\mathcal U}_1} \lp Z^{\m_1}\rp\rp^{\frac{\m_2}{\m_1}}\rp}
\mE_{{\mathcal U}_1} \frac{Z^{\m_1}}{\mE_{{\mathcal U}_1}\lp Z^{\m_1}\rp}
  \sum_{i_1=1}^{l} \frac{(C^{(i_1)})^{s}}{Z}  \sum_{i_2=1}^{l} \frac{A^{(i_1,i_2)}}{C^{(i_1)}}\nonumber\\
 & = &
\mE_{{\mathcal U}_2}   \frac{\lp\mE_{{\mathcal U}_1} \lp Z^{\m_1}\rp\rp^{\frac{\m_2}{\m_1}}}{\mE_{{\mathcal U}_2}\lp\lp \mE_{{\mathcal U}_1} \lp Z^{\m_1}\rp\rp^{\frac{\m_2}{\m_1}}\rp}
\mE_{{\mathcal U}_1} \frac{Z^{\m_1}}{\mE_{{\mathcal U}_1}\lp Z^{\m_1}\rp} \nonumber \\
 & = & 1,
 \end{eqnarray}
where the second to last equality follows by the definitions of $Z$ and $C^{(i_1)}$  from (\ref{eq:lev2genanal7}). Together with the trivial fact $\gamma_1^{(2)}\geq 0$, (\ref{eq:lev2genAanal19f})  ensures that $\gamma_1^{(2)}$ is indeed a measure. The proofs that $\gamma_2^{(2)}$ and $\gamma_3^{(2)}$ are also valid measures trivially proceed along these lines as well and we skip them. We continue the practice established in Section \ref{sec:gencon}, and denote by $\langle \cdot \rangle_{a}$ the average with respect to measure $a$. Analogously to (\ref{eq:liftgenAanal19g}), we from (\ref{eq:lev2liftgenAanal19d}) obtain
\begin{eqnarray}\label{eq:lev2liftgenAanal19g}
\sum_{i_1=1}^{l}\sum_{i_2=1}^{l}\sum_{j=1}^{m} \mE_{{\mathcal U}_2} \lp\frac{\lp\mE_{{\mathcal U}_1} \lp Z^{\m_1}\rp\rp^{\frac{\m_2}{\m_1}-1}}{\mE_{{\mathcal U}_2}\lp\lp \mE_{{\mathcal U}_1} \lp Z^{\m_1}\rp\rp^{\frac{\m_2}{\m_1}}\rp} \frac{\beta_{i_1}\Theta_1}{\sqrt{1-t}}\rp
 & = &  \beta^2 \Bigg(\Bigg. \langle \|\x^{(i_1)}\|_2^2\|\y^{(i_2)}\|_2^2\rangle_{\gamma_{01}^{(2)}} \nonumber \\
   & & +   (s-1)\langle \|\x^{(i_1)}\|_2^2(\y^{(p_2)})^T\y^{(i_2)}\rangle_{\gamma_{02}^{(2)}} \Bigg. \Bigg).
 \end{eqnarray}
Using (\ref{eq:liftgenAanal19c}), we also have
\begin{eqnarray}\label{eq:lev2liftgenAanal19h}
\sum_{i_1=1}^{l}\sum_{i_2=1}^{l}\sum_{j=1}^{m} \mE_{{\mathcal U}_2} \lp\frac{\lp\mE_{{\mathcal U}_1} \lp Z^{\m_1}\rp\rp^{\frac{\m_2}{\m_1}-1}}{\mE_{{\mathcal U}_2}\lp\lp \mE_{{\mathcal U}_1} \lp Z^{\m_1}\rp\rp^{\frac{\m_2}{\m_1}}\rp} \frac{\beta_{i_1}\Theta_2}{\sqrt{1-t}}\rp
  =  L^{(2)}_3,
\end{eqnarray}
where
\begin{eqnarray}\label{eq:lev2liftgenAanal19h1}
L^{(2)}_3 & = & -s(1-\m_1)\mE_{{\mathcal U}_2} \Bigg( \Bigg. \frac{\lp\mE_{{\mathcal U}_1} \lp Z^{\m_1}\rp\rp^{\frac{\m_2}{\m_1}-1}Z^{\m_1}}{\mE_{{\mathcal U}_2}\lp\lp \mE_{{\mathcal U}_1} \lp Z^{\m_1}\rp\rp^{\frac{\m_2}{\m_1}}\rp}   \sum_{i_1=1}^{l}\frac{(C^{(i_1)})^s}{Z}\sum_{i_2=1}^{l}
\frac{A^{(i_1,i_2)}}{C^{(i_1)}} \nonumber \\
& & \times
  \sum_{p_1=1}^{l} \frac{(C^{(p_1)})^s}{Z}\sum_{p_2=1}^{l}\frac{A^{(p_1,p_2)}}{C^{(p_1)}} \beta_{i_1}\beta_{p_1}(\y^{(p_2)})^T\y^{(i_2)} \Bigg.\Bigg)\nonumber \\
& =& -s\beta^2(1-\m_1)\langle \|\x^{(i_1)}\|_2\|\x^{(p_1)}\|_2(\y^{(p_2)})^T\y^{(i_2)} \rangle_{\gamma_{1}^{(2)}}.
\end{eqnarray}
Combining  (\ref{eq:lev2liftgenAanal19a}), (\ref{eq:lev2liftgenAanal19g}), and (\ref{eq:lev2liftgenAanal19h}) we obtain, analogously to (\ref{eq:liftgenAanal19i}),
\begin{eqnarray}\label{eq:lev2liftgenAanal19i}
\sum_{i_1=1}^{l}\sum_{i_2=1}^{l}\sum_{j=1}^{m} \beta_{i_1}\frac{T_{1,1,j}}{\sqrt{1-t
}}
& = & (\p_0-\p_1)\beta^2 \Bigg( \Bigg. \mE_{G,{\mathcal U}_3}\langle \|\x^{(i_1)}\|_2^2\|\y^{(i_2)}\|_2^2\rangle_{\gamma_{01}^{(2)}} \nonumber \\
& & +   (s-1)\mE_{G,{\mathcal U}_3}\langle \|\x^{(i_1)}\|_2^2(\y^{(p_2)})^T\y^{(i_2)}\rangle_{\gamma_{02}^{(2)}} \Bigg. \Bigg) \nonumber \\
& & - (\p_0-\p_1)s\beta^2(1-\m_1)\mE_{G,{\mathcal U}_3}\langle \|\x^{(i_1)}\|_2\|\x^{(p_1)}\|_2(\y^{(p_2)})^T\y^{(i_2)} \rangle_{\gamma_{1}^{(2)}}.
\end{eqnarray}

\underline{\textbf{\emph{Determining}} $T_{1,2}$}
\label{sec:lev2hand1T12}

Integration by parts gives
{\small \begin{eqnarray}\label{eq:lev2liftgenBanal20}
T_{1,2} & = & \mE_{G,{\mathcal U}_3,{\mathcal U}_2} \lp\frac{\lp\mE_{{\mathcal U}_1} \lp Z^{\m_1}\rp\rp^{\frac{\m_2}{\m_1}-1}}{\mE_{{\mathcal U}_2}\lp\lp\mE_{{\mathcal U}_1} \lp Z^{\m_1}\rp\rp^{\frac{\m_2}{\m_1}}\rp}\mE_{{\mathcal U}_1} \frac{(C^{(i_1)})^{s-1} A^{(i_1,i_2)}\u^{(i_1,3,1)}}{Z^{1-\m_1}}\rp \nonumber \\
& = & (\q_0-\q_1) \mE_{G,{\mathcal U}_3,{\mathcal U}_2} \lp\frac{\lp\mE_{{\mathcal U}_1} \lp Z^{\m_1}\rp\rp^{\frac{\m_2}{\m_1}-1}}{\mE_{{\mathcal U}_2}\lp\lp\mE_{{\mathcal U}_1} \lp Z^{\m_1}\rp\rp^{\frac{\m_2}{\m_1}}\rp}
\mE_{{\mathcal U}_1} \sum_{p_1=1}^{l}\frac{(\x^{(i_1)})^T\x^{(p_1)}}{\|\x^{(i_1)}\|_2\|\x^{(p_1)}\|_2} \frac{d}{d\u^{(p_1,3,1)}}\lp\frac{(C^{(i_1)})^{s-1} A^{(i_1,i_2)}}{Z^{1-\m_1}}\rp\rp.\nonumber \\
\end{eqnarray}}
Analogously to (\ref{eq:liftgenBanal20b}) we obtain
{\small \begin{eqnarray}\label{eq:lev2liftgenBanal20b}
\sum_{i_1=1}^{l}\sum_{i_2=1}^{l} \beta_{i_1}\|\y^{(i_2)}\|_2 \frac{T_{1,2}}{\sqrt{1-t}}
& = & (\q_0-\q_1) \beta^2 \Bigg( \Bigg. \mE_{G,{\mathcal U}_3}\langle \|\x^{(i_1)}\|_2^2\|\y^{(i_2)}\|_2^2\rangle_{\gamma_{01}^{(2)}} \nonumber \\
& & +  (s-1)\mE_{G,{\mathcal U}_3}\langle \|\x^{(i_1)}\|_2^2 \|\y^{(i_2)}\|_2\|\y^{(p_2)}\|_2\rangle_{\gamma_{02}^{(2)}}\Bigg.\Bigg)\nonumber \\
& & - (\q_0-\q_1)s\beta^2(1-\m_1)\mE_{G,{\mathcal U}_3}\langle (\x^{(p_1)})^T\x^{(i_1)}\|\y^{(i_2)}\|_2\|\y^{(p_2)}\|_2 \rangle_{\gamma_{1}^{(2)}}.
\end{eqnarray}}

\underline{\textbf{\emph{Determining}} $T_{1,3}$}
\label{sec:lev2hand1T13}

We again proceed via Gaussian integration by parts to obtain
\begin{eqnarray}\label{eq:lev2liftgenCanal21}
T_{1,3} & = & \mE_{G,{\mathcal U}_3,{\mathcal U}_2} \lp\frac{\lp\mE_{{\mathcal U}_1} \lp Z^{\m_1}\rp\rp^{\frac{\m_2}{\m_1}-1}}{\mE_{{\mathcal U}_2}\lp\lp\mE_{{\mathcal U}_1} \lp Z^{\m_1}\rp\rp^{\frac{\m_2}{\m_1}}\rp}  \mE_{{\mathcal U}_1}  \frac{(C^{(i_1)})^{s-1} A^{(i_1,i_2)}u^{(4,1)}}{Z^{1-\m_1}} \rp \nonumber \\
& = & (\p_0\q_0-\p_1\q_1) \mE_{G,{\mathcal U}_3,{\mathcal U}_2} \lp\frac{\lp\mE_{{\mathcal U}_1} \lp Z^{\m_1}\rp\rp^{\frac{\m_2}{\m_1}-1}}{\mE_{{\mathcal U}_2}\lp\lp\mE_{{\mathcal U}_1} \lp Z^{\m_1}\rp\rp^{\frac{\m_2}{\m_1}}\rp}    \mE_{{\mathcal U}_1} \lp\frac{d}{du^{(4,1)}} \lp\frac{(C^{(i_1)})^{s-1} A^{(i_1,i_2)}u^{(4,1)}}{Z^{1-\m_1}}\rp\rp\rp, \nonumber \\
\end{eqnarray}
and  analogously to (\ref{eq:liftgenCanal21b})
\begin{eqnarray}\label{eq:lev2liftgenCanal21b}
\sum_{i_1=1}^{l}\sum_{i_2=1}^{l} \beta_{i_1}\|\y^{(i_2)}\|_2 \frac{T_{1,3}}{\sqrt{t}}
& = & (\p_0\q_0-\p_1\q_1) \beta^2 \Bigg(\Bigg. \mE_{G,{\mathcal U}_3}\langle \|\x^{(i_1)}\|_2^2\|\y^{(i_2)}\|_2^2\rangle_{\gamma_{01}^{(2)}}\nonumber \\
 & & +  (s-1)\mE_{G,{\mathcal U}_3}\langle \|\x^{(i_1)}\|_2^2 \|\y^{(i_2)}\|_2\|\y^{(p_2)}\|_2\rangle_{\gamma_{02}^{(2)}}\Bigg.\Bigg) \nonumber \\
& & - (\p_0\q_0-\p_1\q_1)s\beta^2(1-\m_1)\mE_{G,{\mathcal U}_3}\langle \|\x^{(i_1)}\|_2\|\x^{(p_`)}\|_2\|\y^{(i_2)}\|_2\|\y^{(p_2)}\|_2 \rangle_{\gamma_{1}^{(2)}}. \nonumber \\
\end{eqnarray}

\subsubsection{Handling $T_k$--groups --- $k=2$}
\label{sec:lev2handlT2}

As usual, we handle separately each of the three terms contributing to  $T_2$ group.

\underline{\textbf{\emph{Determining}} $T_{2,1,j}$}
\label{sec:lev2hand1T21}

Relying on the Gaussian integration by parts we write
\begin{eqnarray}\label{eq:lev2genDanal19}
T_{2,1,j}& = &  \mE_{G,{\mathcal U}_3,{\mathcal U}_2} \lp\frac{\lp\mE_{{\mathcal U}_1} \lp Z^{\m_1}\rp\rp^{\frac{\m_2}{\m_1}-1}}{\mE_{{\mathcal U}_2}\lp\lp\mE_{{\mathcal U}_1} \lp Z^{\m_1}\rp\rp^{\frac{\m_2}{\m_1}}\rp}   \mE_{{\mathcal U}_1}\frac{(C^{(i_1)})^{s-1} A^{(i_1,i_2)} \y_j^{(i_2)}\u_j^{(2,2)}}{Z^{1-\m_1}} \rp \nonumber \\
& = &  \mE_{G,{\mathcal U}_3,{\mathcal U}_2,{\mathcal U}_1} \lp\frac{1}{\mE_{{\mathcal U}_2}\lp\lp\mE_{{\mathcal U}_1} \lp Z^{\m_1}\rp\rp^{\frac{\m_2}{\m_1}}\rp}       \frac{(C^{(i_1)})^{s-1} A^{(i_1,i_2)} \y_j^{(i_2)}\u_j^{(2,2)}}{Z^{1-\m_1}\lp\mE_{{\mathcal U}_1} \lp Z^{\m_1}\rp\rp^{1-\frac{\m_2}{\m_1}}} \rp \nonumber \\
& = &  \mE_{G,{\mathcal U}_3,{\mathcal U}_1} \lp\frac{1}{\mE_{{\mathcal U}_2}\lp\lp\mE_{{\mathcal U}_1} \lp Z^{\m_1}\rp\rp^{\frac{\m_2}{\m_1}}\rp}     \mE_{{\mathcal U}_2}  \frac{(C^{(i_1)})^{s-1} A^{(i_1,i_2)} \y_j^{(i_2)}\u_j^{(2,2)}}{Z^{1-\m_1}\lp\mE_{{\mathcal U}_1} \lp Z^{\m_1}\rp\rp^{\frac{1-\m_2}{\m_1}}} \rp \nonumber \\
 & = &
\mE_{G,{\mathcal U}_3,{\mathcal U}_1} \lp\frac{1}{\mE_{{\mathcal U}_2}\lp\lp\mE_{{\mathcal U}_1} \lp Z^{\m_1}\rp\rp^{\frac{\m_2}{\m_1}}\rp}  \mE_{{\mathcal U}_2}\lp\mE_{{\mathcal U}_2} (\u_j^{(2,2)}\u_j^{(2,2)})\frac{d}{d\u_j^{(2,2)}}\lp \frac{(C^{(i_1)})^{s-1} A^{(i_1,i_2)}\y_j^{(i_2)}}{Z^{1-\m_1}\lp\mE_{{\mathcal U}_1} \lp Z^{\m_1}\rp\rp^{1-\frac{\m_2}{\m_1}}}\rp\rp\rp \nonumber \\
& = &
 (\p_1-\p_2) \mE_{G,{\mathcal U}_3,{\mathcal U}_2,{\mathcal U}_1} \lp\frac{\lp\mE_{{\mathcal U}_1} \lp Z^{\m_1}\rp\rp^{\frac{\m_2}{\m_1}-1}}{\mE_{{\mathcal U}_2}\lp\lp\mE_{{\mathcal U}_1} \lp Z^{\m_1}\rp\rp^{\frac{\m_2}{\m_1}}\rp} \frac{d}{d\u_j^{(2,2)}}\lp \frac{(C^{(i_1)})^{s-1} A^{(i_1,i_2)}\y_j^{(i_2)}}{Z^{1-\m_1}}\rp\rp \nonumber \\
& & +
 (\p_1-\p_2)\mE_{G,{\mathcal U}_3,{\mathcal U}_2,{\mathcal U}_1} \lp\frac{Z^{\m_1-1}(C^{(i_1)})^{s-1} A^{(i_1,i_2)}\y_j^{(i_2)}}{\mE_{{\mathcal U}_2}\lp\lp\mE_{{\mathcal U}_1} \lp Z^{\m_1}\rp\rp^{\frac{\m_2}{\m_1}}\rp}  \lp \frac{d}{d\u_j^{(2,2)}}\lp \frac{1}{\lp\mE_{{\mathcal U}_1} \lp Z^{\m_1}\rp\rp^{1-\frac{\m_2}{\m_1}}}\rp\rp\rp.
\end{eqnarray}
As in (\ref{eq:genDanal19a}), we split $T_{2,1,j}$ into two components
\begin{eqnarray}\label{eq:lev2genDanal19a}
T_{2,1,j}   =   T_{2,1,j}^{c} +  T_{2,1,j}^{d},
\end{eqnarray}
where
\begin{eqnarray}\label{eq:lev2genDanal19b}
T_{2,1,j}^c &  = &
 (\p_1-\p_2)\mE_{G,{\mathcal U}_3,{\mathcal U}_2,{\mathcal U}_1} \lp\frac{\lp\mE_{{\mathcal U}_1} \lp Z^{\m_1}\rp\rp^{\frac{\m_2}{\m_1}-1}}{\mE_{{\mathcal U}_2}\lp\lp\mE_{{\mathcal U}_1} \lp Z^{\m_1}\rp\rp^{\frac{\m_2}{\m_1}}\rp} \frac{d}{d\u_j^{(2,2)}}\lp \frac{(C^{(i_1)})^{s-1} A^{(i_1,i_2)}\y_j^{(i_2)}}{Z^{1-\m_1}}\rp\rp \nonumber \\
T_{2,1,j}^d &  = & (\p_1-\p_2) \mE_{G,{\mathcal U}_3,{\mathcal U}_2,{\mathcal U}_1} \lp\frac{Z^{\m_1-1}(C^{(i_1)})^{s-1} A^{(i_1,i_2)}\y_j^{(i_2)}}{\mE_{{\mathcal U}_2}\lp\lp\mE_{{\mathcal U}_1} \lp Z^{\m_1}\rp\rp^{\frac{\m_2}{\m_1}}\rp}  \lp \frac{d}{d\u_j^{(2,2)}}\lp \frac{1}{\lp\mE_{{\mathcal U}_1} \lp Z^{\m_1}\rp\rp^{1-\frac{\m_2}{\m_1}}}\rp\rp\rp. \nonumber \\
\end{eqnarray}
One now observes that $T_{2,1,j}^c$ scaled by $(\p_1-\p_2)$ is structurally identical to the term considered in the first part of Section \ref{sec:lev2hand1T11} scaled by $(\p_0-\p_1)$. That means that we have
\begin{eqnarray}\label{eq:lev2genDanal19b1}
\sum_{i_1=1}^{l}\sum_{i_2=1}^{l}\sum_{j=1}^{m} \beta_{i_1}\frac{T_{2,1,j}^c}{\sqrt{1-t
}}
& = &  \sum_{i_1=1}^{l}\sum_{i_2=1}^{l}\sum_{j=1}^{m} \beta_{i_1}\frac{T_{1,1,j}}{\sqrt{1-t}}\frac{\p_1-\p_2}{\p_0-\p_1}\nonumber\\
& = & (\p_1-\p_2)\beta^2 \nonumber \\
& & \times \lp \mE_{G,{\mathcal U}_3}\langle \|\x^{(i_1)}\|_2^2\|\y^{(i_2)}\|_2^2\rangle_{\gamma_{1}^{(2)}} +  (s-1)\mE_{G,{\mathcal U}_3}\langle \|\x^{(i_1)}\|_2^2(\y^{(p_2)})^T\y^{(i_2)}\rangle_{\gamma_{1}^{(2)}} \rp \nonumber \\
& & - (\p_1-\p_2)s\beta^2(1-\m_1)\mE_{G,{\mathcal U}_3,{\mathcal U}_2}\langle \|\x^{(i_1)}\|_2\|\x^{(p_1)}\|_2(\y^{(p_2)})^T\y^{(i_2)} \rangle_{\gamma_{1}^{(2)}}.
\end{eqnarray}
We now focus on $T_{2,1,j}^d$. To that end we have
\begin{eqnarray}\label{eq:lev2genDanal20}
\lp \frac{d}{d\u_j^{(2,2)}}\lp \frac{1}{\lp\mE_{{\mathcal U}_1} \lp Z^{\m_1}\rp\rp^{1-\frac{\m_2}{\m_1}}}\rp\rp
=
  -\frac{1-\frac{\m_2}{\m_1}}{\lp\mE_{{\mathcal U}_1} \lp Z^{\m_1}\rp\rp^{2-\frac{\m_2}{\m_1}}}
\mE_{{\mathcal U}_1}\frac{d Z^{\m_1}}{d\u_j^{(2,2)}}.\nonumber \\
\end{eqnarray}
Utilizing (\ref{eq:genDanal21}), we write analogously to (\ref{eq:genDanal23})
\begin{eqnarray}\label{eq:lev2genDanal23}
T_{2,1,j}^d &  = & (\p_1-\p_2) \mE_{G,{\mathcal U}_3,{\mathcal U}_2,{\mathcal U}_1} \lp\frac{Z^{\m_1-1}(C^{(i_1)})^{s-1} A^{(i_1,i_2)}\y_j^{(i_2)}}{\mE_{{\mathcal U}_2}\lp\lp\mE_{{\mathcal U}_1} \lp Z^{\m_1}\rp\rp^{\frac{\m_2}{\m_1}}\rp}  \lp   -\frac{1-\frac{\m_2}{\m_1}}{\lp\mE_{{\mathcal U}_1} \lp Z^{\m_1}\rp\rp^{2-\frac{\m_2}{\m_1}}}
\mE_{{\mathcal U}_1}\frac{d Z^{\m_1}}{d\u_j^{(2,2)}}\rp\rp \nonumber \\
& = & -s\sqrt{1-t}(\p_1-\p_2)(\m_1-\m_2)\mE_{G,{\mathcal U}_3,{\mathcal U}_2,{\mathcal U}_1}\Bigg( \Bigg. \frac{Z^{\m_1-1}\lp(C^{(i_1)})^{s-1} A^{(i_1,i_2)}\y_j^{(i_2)} \rp}{\mE_{{\mathcal U}_2}\lp\lp\mE_{{\mathcal U}_1} \lp Z^{\m_1}\rp\rp^{\frac{\m_2}{\m_1}}\rp} \nonumber \\
& & \times
\lp \frac{1}{\lp\mE_{{\mathcal U}_1} Z^{\m_1}\rp^{2-\frac{\m_2}{\m_1}}}
\mE_{{\mathcal U}_1}  \frac{1}{Z^{1-\m_1}} \sum_{p_1=1}^{l}  (C^{(p_1)})^{s-1}\sum_{p_2=1}^{l}
\beta_{p_1}A^{(p_1,p_2)}\y_j^{(p_2)} \rp\Bigg. \Bigg)\nonumber \\
& = & -s\sqrt{1-t}(\p_1-\p_2)(\m_1-\m_2)\mE_{G,{\mathcal U}_3}\Bigg( \Bigg. \mE_{{\mathcal U}_2}
\frac{\lp\lp\mE_{{\mathcal U}_1} \lp Z^{\m_1}\rp\rp^{\frac{\m_2}{\m_1}}\rp}{\mE_{{\mathcal U}_2}\lp\lp\mE_{{\mathcal U}_1} \lp Z^{\m_1}\rp\rp^{\frac{\m_2}{\m_1}}\rp}
\mE_{{\mathcal U}_1}\frac{Z^{\m_1}}{\mE_{{\mathcal U}_1} Z^{\m_1}}
 \frac{(C^{(i_1)})^{s}}{Z}  \frac{A^{(i_1,i_2)}}{C^{(i_1)}}\y_j^{(i_2)} \nonumber \\
& & \times
\lp \mE_{{\mathcal U}_1}  \frac{Z^{\m_1}}{\mE_{{\mathcal U}_1} Z^{\m_1}} \sum_{p_1=1}^{l}  \frac{(C^{(p_1)})^s}{Z}\sum_{p_2=1}^{l}
\frac{A^{(p_1,p_2)}}{C^{(p,1)}}\beta_{p_1}\y_j^{(p_2)}  \rp\Bigg. \Bigg).
\end{eqnarray}
We then first observe
\begin{eqnarray}\label{eq:lev2genDanal24}
 \sum_{i_1=1}^{l}  \sum_{i_2=1}^{l} \sum_{j=1}^{m}  \beta_{i_1}\frac{T_{2,1,j}^d}{\sqrt{1-t}}
 & = & -s\beta^2(\p_1-\p_2)(\m_1-\m_2)\mE_{G,{\mathcal U}_3} \langle \|\x^{(i_1)}\|_2\|\x^{(p_1)}\|_2(\y^{(p_2)})^T\y^{(i_2)} \rangle_{\gamma_{2}^{(2)}},
\end{eqnarray}
and
\begin{eqnarray}\label{eq:lev2genDanal25}
 \sum_{i_1=1}^{l}  \sum_{i_2=1}^{l} \sum_{j=1}^{m}  \beta_{i_1}\frac{T_{2,1,j}}{\sqrt{1-t}}
 & = & (\p_1-\p_2)\beta^2 \nonumber \\
 & & \times
  \lp \mE_{G,{\mathcal U}_3}\langle \|\x^{(i_1)}\|_2^2\|\y^{(i_2)}\|_2^2\rangle_{\gamma_{01}^{(2)}} +   (s-1)\mE_{G,{\mathcal U}_3}\langle \|\x^{(i_1)}\|_2^2(\y^{(p_2)})^T\y^{(i_2)}\rangle_{\gamma_{02}^{(2)}} \rp\nonumber \\
& & - (\p_1-\p_2)s\beta^2(1-\m_1)\mE_{G,{\mathcal U}_3}\langle \|\x^{(i_1)}\|_2\|\x^{(p_1)}\|_2(\y^{(p_2)})^T\y^{(i_2)} \rangle_{\gamma_{1}^{(2)}}\nonumber \\
 &   &
  -s\beta^2(\p_1-\p_2)(\m_1-\m_2)\mE_{G,{\mathcal U}_3} \langle \|\x^{(i_1)}\|_2\|\x^{(p_1)}\|_2(\y^{(p_2)})^T\y^{(i_2)} \rangle_{\gamma_{2}^{(2)}}.
\end{eqnarray}

\underline{\textbf{\emph{Determining}} $T_{2,2}$}
\label{sec:lev2hand1T22}

As usual, we proceed by relying on the Gaussian integration by parts to obtain
\begin{eqnarray}\label{eq:lev2liftgenEanal20}
T_{2,2} & = &  \mE_{G,{\mathcal U}_3,{\mathcal U}_2} \lp\frac{\lp\mE_{{\mathcal U}_1} \lp Z^{\m_1}\rp\rp^{\frac{\m_2}{\m_1}-1}}{\mE_{{\mathcal U}_2}\lp\lp\mE_{{\mathcal U}_1} \lp Z^{\m_1}\rp\rp^{\frac{\m_2}{\m_1}}\rp}
\mE_{{\mathcal U}_1}\frac{(C^{(i_1)})^{s-1} A^{(i_1,i_2)} \u^{(i_1,3,2)}}{Z^{1-\m_1}} \rp \nonumber \\
& = &  \mE_{G,{\mathcal U}_3,{\mathcal U}_2,{\mathcal U}_1} \lp\frac{1}{\mE_{{\mathcal U}_2}\lp\lp\mE_{{\mathcal U}_1} \lp Z^{\m_1}\rp\rp^{\frac{\m_2}{\m_1}}\rp}
 \frac{(C^{(i_1)})^{s-1} A^{(i_1,i_2)} \u^{(i_1,3,2)}}{Z^{1-\m_1}\lp\mE_{{\mathcal U}_1} \lp Z^{\m_1}\rp\rp^{1-\frac{\m_2}{\m_1}}} \rp \nonumber \\
 & = & \mE_{G,{\mathcal U}_3,{\mathcal U}_1} \Bigg( \Bigg. \frac{1}{\mE_{{\mathcal U}_2}\lp\lp\mE_{{\mathcal U}_1} \lp Z^{\m_1}\rp\rp^{\frac{\m_2}{\m_1}}\rp}\nonumber \\
 & & \times
\mE_{{\mathcal U}_2} \lp \sum_{p_1=1}^{l}\mE_{{\mathcal U}_2}(\u^{(i_1,3,2)}\u^{(p_1,3,2)}) \frac{d}{d\u^{(p_1,3,2)}}\lp\frac{(C^{(i_1)})^{s-1} A^{(i_1,i_2)}}{Z^{1-\m_1}    \lp\mE_{{\mathcal U}_1} \lp Z^{\m_1}\rp\rp^{1-\frac{\m_2}{\m_1}}    }\rp\rp\Bigg. \Bigg) \nonumber \\
& = &   T_{2,2}^{c} +  T_{2,2}^{d},
 \end{eqnarray}
where
\begin{eqnarray}\label{eq:lev2genEanal19b}
T_{2,2}^c &  = &
\mE_{G,{\mathcal U}_2,{\mathcal U}_1} \lp
 \frac{\lp\mE_{{\mathcal U}_1} \lp Z^{\m_1}\rp\rp^{\frac{\m_2}{\m_1}-1}}{\mE_{{\mathcal U}_2}\lp\lp\mE_{{\mathcal U}_1} \lp Z^{\m_1}\rp\rp^{\frac{\m_2}{\m_1}}\rp} \sum_{p_1=1}^{l}\mE_{{\mathcal U}_2}(\u^{(i_1,3,2)}\u^{(p_1,3,2)}) \frac{d}{d\u^{(p_1,3,2)}}\lp\frac{(C^{(i_1)})^{s-1} A^{(i_1,i_2)}}{Z^{1-\m_1}}\rp\rp \nonumber \\
T_{2,2}^d &  = & \mE_{G,{\mathcal U}_2,{\mathcal U}_1} \lp \frac{Z^{\m_1-1}(C^{(i_1)})^{s-1} A^{(i_1,i_2)}}{\mE_{{\mathcal U}_2}\lp\lp\mE_{{\mathcal U}_1} \lp Z^{\m_1}\rp\rp^{\frac{\m_2}{\m_1}}\rp}
  \sum_{p_1=1}^{l}\mE_{{\mathcal U}_2}(\u^{(i_1,3,2)}\u^{(p_1,3,2)}) \frac{d}{d\u^{(p_1,3,2)}}\lp\frac{1}{\lp\mE_{{\mathcal U}_1} \lp Z^{\m_1}\rp\rp^{1-\frac{\m_2}{\m_1}}}\rp\rp.\nonumber \\
\end{eqnarray}
Since
\begin{eqnarray}\label{eq:lev2genEanal19c}
\mE_{{\mathcal U}_2}(\u^{(i_1,3,2)}\u^{(p_1,3,2)}) & = & (\q_1-\q_2)\frac{(\x^{(i_1)})^T\x^{(p_1)}}{\|\x^{(i_1)}\|_2\|\x^{(p_1)}\|_2}
=(\q_1-\q_2)\frac{\beta^2(\x^{(i_1)})^T\x^{(p_1)}}{\beta_{i_1}\beta_{p_1}}, \nonumber \\
\mE_{{\mathcal U}_2}(\u^{(i_1,3,1)}\u^{(p_1,3,1)}) & = & (\q_0-\q_1)\frac{(\x^{(i_1)})^T\x^{(p_1)}}{\|\x^{(i_1)}\|_2\|\x^{(p_1)}\|_2}=(\q_0-\q_1)\frac{\beta^2(\x^{(i_1)})^T\x^{(p_1)}}{\beta_{i_1}\beta_{p_1}},
\end{eqnarray}
one observes that $T_{2,2}^c$ scaled by $(\q_1-\q_2)$ is structurally identical to the term considered in the second part of Section \ref{sec:lev2hand1T11} scaled by $(\q_0-\q_1)$. Utilizing (\ref{eq:lev2liftgenBanal20b}) we then have
\begin{eqnarray}\label{eq:lev2genEanal19c1}
\sum_{i_1=1}^{l}\sum_{i_2=1}^{l} \beta_{i_1}\|\y^{(i_2)}\|_2 \frac{T_{2,2}^c}{\sqrt{1-t}} & = & (\q_1-\q_2)\beta^2\Bigg(\Bigg. \mE_{G,{\mathcal U}_3}\langle \|\x^{(i_1)}\|_2^2\|\y^{(i_2)}\|_2^2\rangle_{\gamma_{01}^{(2)}} \nonumber \\
& & +   (s-1)\mE_{G,{\mathcal U}_3}\langle \|\x^{(i_1)}\|_2^2 \|\y^{(i_2)}\|_2\|\y^{(p_2)}\|_2\rangle_{\gamma_{02}^{(2)}}\Bigg.\Bigg) \nonumber \\
& & - (\q_1-\q_2)s\beta^2(1-\m_1)\mE_{G,{\mathcal U}_3}\langle (\x^{(p_1)})^T\x^{(i_1)}\|\y^{(i_2)}\|_2\|\y^{(p_2)}\|_2 \rangle_{\gamma_{1}^{(2)}}.
\nonumber \\
\end{eqnarray}

To determine  $T_{2,2}^d$, we start with
\begin{eqnarray}\label{eq:lev2genEanal20}
\frac{d}{d\u^{(p_1,3,2)}}\lp\frac{1}{\lp\mE_{{\mathcal U}_1} \lp Z^{\m_1}\rp\rp^{1-\frac{\m_2}{\m_1}}}\rp
   & = &
 -\frac{1-\frac{\m_2}{\m_1}}{\lp\mE_{{\mathcal U}_1} Z^{\m_1} \rp^{2-\frac{\m_2}{\m_1}}}
\mE_{{\mathcal U}_1}\frac{dZ^{\m_1}}{d\u^{(p_1,3,2)}}.
\end{eqnarray}
Moreover, from (\ref{eq:genEanal21}),
\begin{eqnarray}\label{eq:lev2genEanal21}
\frac{dZ^{\m_1}}{d\u^{(p_1,3,2)}}  & = & \frac{\m_1}{Z^{1-\m_1}} s  (C^{(p_1)})^{s-1}\sum_{p_2=1}^{l}
\beta_{p_1}A^{(p_1,p_2)}\|\y^{(p_2)}\|_2\sqrt{1-t}.
\end{eqnarray}
Combining (\ref{eq:lev2genEanal20}) and (\ref{eq:lev2genEanal21}), we obtain
\begin{eqnarray}\label{eq:lev2genEanal22}
\frac{d}{d\u^{(p_1,3,2)}}\lp\frac{1}{\lp\mE_{{\mathcal U}_1} \lp Z^{\m_1}\rp\rp^{1-\frac{\m_2}{\m_1}}}\rp
  & = &
-\frac{1-\frac{\m_2}{\m_1}}{\lp\mE_{{\mathcal U}_1} Z^{\m_1} \rp^{2-\frac{\m_2}{\m_1}}}
 \frac{\m_1}{Z^{1-\m_1}} s  (C^{(p_1)})^{s-1}\sum_{p_2=1}^{l}
\beta_{p_1}A^{(p_1,p_2)}\|\y^{(p_2)}\|_2\sqrt{1-t}.\nonumber \\
\end{eqnarray}
A further combination of (\ref{eq:lev2genEanal19b}) and (\ref{eq:lev2genEanal22}) gives the following analogue to (\ref{eq:genEanal23})
\begin{eqnarray}\label{eq:lev2genEanal23}
 T_{2,2}^d  &  = & -s\sqrt{1-t}\beta^2(\q_1-\q_2)(\m_1-\m_2)\mE_{G,{\mathcal U}_3} \Bigg( \Bigg. \mE_{{\mathcal U}_2}\frac{\lp\mE_{{\mathcal U}_1} \lp Z^{\m_1}\rp\rp^{\frac{\m_2}{\m_1}}}{\mE_{{\mathcal U}_2}\lp\lp\mE_{{\mathcal U}_1} \lp Z^{\m_1}\rp\rp^{\frac{\m_2}{\m_1}}\rp}    \mE_{{\mathcal U}_1}\frac{Z^{\m_1}}{\mE_{{\mathcal U}_1} Z^{\m_1}}
 \frac{(C^{(i_1)})^{s-1} A^{(i_1,i_2)}}{Z} \nonumber \\
 & & \times
\mE_{{\mathcal U}_1}\frac{Z^{\m_1}}{\mE_{{\mathcal U}_1} Z^{\m_1}}
  \sum_{p_1=1}^{l}
 \frac{(C^{(p_1)})^{s}}{Z}  \sum_{p_2=1}^{l}
\frac{A^{(p_1,p_2)}}{(C^{(p_1)})}\|\y^{(p_2)}\|_2\frac{(\x^{(i_1)})^T\x^{(p_1)}}{\beta_{i_1}} \Bigg. \Bigg).
\end{eqnarray}
Similarly to what was done above when considering $T_{2,1,j}^d$, we here note
\begin{equation}\label{eq:lev2genEanal24}
\sum_{i_1=1}^{l}\sum_{i_2=1}^{l} \beta_{i_1}\|\y^{(i_2)}\|_2\frac{T_{2,2}^d}{\sqrt{1-t}}
 =
-s\beta^2(\q_1-\q_2)(\m_1-\m_2)\mE_{G,{\mathcal U}_3} \langle \|\y^{(i_2)}\|_2\|\y^{(p_2)}\|_2(\x^{(i_1)})^T\x^{(p_1)}\rangle_{\gamma_{2}^{(2)}}.
\end{equation}
A combination of (\ref{eq:lev2liftgenEanal20}), (\ref{eq:lev2genEanal19c1}), and
(\ref{eq:lev2genEanal24}) gives

\begin{eqnarray}\label{eq:lev2genEanal25}
\sum_{i_1=1}^{l}\sum_{i_2=1}^{l} \beta_{i_1}\|\y^{(i_2)}\|_2\frac{T_{2,2}}{\sqrt{1-t}}
& = & (\q_1-\q_2) \beta^2 \Bigg( \Bigg. \mE_{G,{\mathcal U}_3}\langle \|\x^{(i_1)}\|_2^2\|\y^{(i_2)}\|_2^2\rangle_{\gamma_{01}^{(2)}} \nonumber \\
& & +   (s-1)\mE_{G,{\mathcal U}_3}\langle \|\x^{(i_1)}\|_2^2 \|\y^{(i_2)}\|_2\|\y^{(p_2)}\|_2\rangle_{\gamma_{02}^{(2)}}\Bigg.\Bigg) \nonumber \\
& & - (\q_1-\q_2)s\beta^2(1-\m_1)\mE_{G,{\mathcal U}_3}\langle (\x^{(p_1)})^T\x^{(i_1)}\|\y^{(i_2)}\|_2\|\y^{(p_2)}\|_2 \rangle_{\gamma_{1}^{(2)}} \nonumber \\
&  & -s\beta^2(\q_1-\q_2)(\m_1-\m_2)\mE_{G,{\mathcal U}_3} \langle \|\y^{(i_2)}\|_2\|\y^{(p_2)}\|_2(\x^{(i_1)})^T\x^{(p_1)}\rangle.
\end{eqnarray}

\underline{\textbf{\emph{Determining}} $T_{2,3}$}
\label{sec:lev2hand1T23}

Gaussian integration by parts also gives
\begin{eqnarray}\label{eq:lev2genFanal21}
T_{2,3} & = &  \mE_{G,{\mathcal U}_3,{\mathcal U}_2} \lp\frac{\lp\mE_{{\mathcal U}_1} \lp Z^{\m_1}\rp\rp^{\frac{\m_2}{\m_1}-1}}{\mE_{{\mathcal U}_2}\lp\lp\mE_{{\mathcal U}_1} \lp Z^{\m_1}\rp\rp^{\frac{\m_2}{\m_1}}\rp}
    \mE_{{\mathcal U}_1}\frac{(C^{(i_1)})^{s-1} A^{(i_1,i_2)} u^{(4,2)}}{Z^{1-\m_1}} \rp \nonumber \\
& = & \mE_{G,{\mathcal U}_3,{\mathcal U}_1} \Bigg( \Bigg. \frac{1}{\mE_{{\mathcal U}_2}\lp\lp\mE_{{\mathcal U}_1} \lp Z^{\m_1}\rp\rp^{\frac{\m_2}{\m_1}}\rp} \nonumber \\
& & \times \mE_{{\mathcal U}_2} \lp\mE_{{\mathcal U}_2} (u^{(4,2)}u^{(4,2)})\lp\frac{d}{du^{(4,2)}} \lp\frac{(C^{(i_1)})^{s-1} A^{(i_1,i_2)}}{Z^{1-\m_1}\lp\mE_{{\mathcal U}_1} \lp Z^{\m_1}\rp\rp^{1-\frac{\m_2}{\m_1}}}\rp \rp\rp\Bigg. \Bigg) \nonumber \\
& = & (\p_1\q_1-\p_2\q_2)\mE_{G,{\mathcal U}_3,{\mathcal U}_2,{\mathcal U}_1} \lp \frac{\lp\mE_{{\mathcal U}_1} \lp Z^{\m_1}\rp\rp^{\frac{\m_2}{\m_1}-1}}{\mE_{{\mathcal U}_2}\lp\lp\mE_{{\mathcal U}_1} \lp Z^{\m_1}\rp\rp^{\frac{\m_2}{\m_1}}\rp} \lp\frac{d}{du^{(4,2)}} \lp\frac{(C^{(i_1)})^{s-1} A^{(i_1,i_2)}}{Z^{1-\m_1}}\rp\rp\rp \nonumber \\
& & + (\p_1\q_1-\p_2\q_2)\mE_{G,{\mathcal U}_3,{\mathcal U}_2,{\mathcal U}_1} \lp \frac{Z^{\m_1-1}(C^{(i_1)})^{s-1} A^{(i_1,i_2)}}{\mE_{{\mathcal U}_2}\lp\lp\mE_{{\mathcal U}_1} \lp Z^{\m_1}\rp\rp^{\frac{\m_2}{\m_1}}\rp}\lp\frac{d}{du^{(4,2)}} \lp\frac{1}{\lp\mE_{{\mathcal U}_1} \lp Z^{\m_1}\rp\rp^{1-\frac{\m_2}{\m_1}}} \rp\rp\rp.\nonumber \\
\end{eqnarray}
As usual, we find it useful to rewrite the above as
\begin{eqnarray}\label{eq:lev2genFanal22}
T_{2,3} & = & T_{2,3}^c+T_{2,3}^d,
\end{eqnarray}
where
\begin{eqnarray}\label{eq:lev2genFanal23}
T_{2,3}^c & = &  (\p_1\q_1-\p_2\q_2)\mE_{G,{\mathcal U}_3,{\mathcal U}_2,{\mathcal U}_1} \lp \frac{\lp\mE_{{\mathcal U}_1} \lp Z^{\m_1}\rp\rp^{\frac{\m_2}{\m_1}-1}}{\mE_{{\mathcal U}_2}\lp\lp\mE_{{\mathcal U}_1} \lp Z^{\m_1}\rp\rp^{\frac{\m_2}{\m_1}}\rp}
\lp\frac{d}{du^{(4,2)}} \lp\frac{(C^{(i_1)})^{s-1} A^{(i_1,i_2)}}{Z^{1-\m_1}}\rp\rp\rp \nonumber \\
 T_{2,3}^d & = & (\p_1\q_1-\p_2\q_2)\mE_{G,{\mathcal U}_3,{\mathcal U}_2,{\mathcal U}_1} \lp \frac{Z^{\m_1-1}(C^{(i_1)})^{s-1} A^{(i_1,i_2)}}{\mE_{{\mathcal U}_2}\lp\lp\mE_{{\mathcal U}_1} \lp Z^{\m_1}\rp\rp^{\frac{\m_2}{\m_1}}\rp}
 \lp\frac{d}{du^{(4,2)}} \lp\frac{1}{\lp\mE_{{\mathcal U}_1} \lp Z^{\m_1}\rp\rp^{1-\frac{\m_2}{\m_1}}} \rp\rp\rp. \nonumber \\
\end{eqnarray}
One again  observes that $\frac{T_{2,3}^c}{\p_1\q_1-\p_2\q_2}=\frac{T_{1,3}}{\p_0\q_0-\p_1\q_1}$, which together with (\ref{eq:lev2liftgenCanal21b}), gives the following analogue to (\ref{eq:genFanal23b})
\begin{eqnarray}\label{eq:lev2genFanal23b}
\sum_{i_1=1}^{l}\sum_{i_2=1}^{l} \beta_{i_1}\|\y^{(i_2)}\|_2 \frac{T_{2,3}^c}{\sqrt{t}} & = &
(\p_1\q_1-\p_2\q_2)\beta^2 \Bigg(\Bigg. \mE_{G,{\mathcal U}_3}\langle \|\x^{(i_1)}\|_2^2\|\y^{(i_2)}\|_2^2\rangle_{\gamma_{01}^{(2)}} \nonumber \\
& & +   (s-1)\mE_{G,{\mathcal U}_3}\langle \|\x^{(i_1)}\|_2^2 \|\y^{(i_2)}\|_2\|\y^{(p_2)}\|_2\rangle_{\gamma_{02}^{(2)}} \Bigg.\Bigg) \nonumber \\
& & - (\p_1\q_1-\p_2\q_2) s\beta^2(1-\m_1)\mE_{G,{\mathcal U}_3}\langle \|\x^{(i_1)}\|_2\|\x^{(p_`)}\|_2\|\y^{(i_2)}\|_2\|\y^{(p_2)}\|_2 \rangle_{\gamma_{1}^{(2)}}. \nonumber \\
\end{eqnarray}

To find $T_{2,3}^d$, we first write
\begin{eqnarray}\label{eq:lev2genFanal24}
\frac{d}{du^{(4,2)}} \lp\frac{1}{\lp\mE_{{\mathcal U}_1} \lp Z^{\m_1}\rp\rp^{1-\frac{\m_2}{\m_1}}} \rp & = &
 -\frac{1-\frac{\m_2}{\m_1}}{\lp\mE_{{\mathcal U}_1} Z^{\m_1}\rp^{2-\frac{\m_2}{\m_1}}} \mE_{{\mathcal U}_1} \frac{\m_1}{Z^{1-\m_1}}\frac{dZ}{d u^{(4,2)}}.\nonumber \\
\end{eqnarray}
We then also recall on (\ref{eq:genFanal25})
\begin{equation}\label{eq:lev2genFanal25}
\frac{dZ}{du^{(4,2)}}=\frac{d\sum_{p_1=1}^{l}  (C^{(p_1)})^s}{du^{(4,2)}}
=s \sum_{p_1=1}^{l} (C^{(p_1)})^{s-1}\sum_{p_2=1}^{l}\frac{d(A^{(p_1,p_2)})}{du^{(4,2)}}=s \sum_{p_1=1}^{l} (C^{(p_1)})^{s-1}\sum_{p_2=1}^{l}
\beta_{p_1}A^{(p_1,p_2)}\|\y^{(p_2)}\|_2\sqrt{t}.
\end{equation}
Connecting (\ref{eq:lev2genFanal23}), (\ref{eq:lev2genFanal24}) and (\ref{eq:lev2genFanal25}), we find
\begin{eqnarray}\label{eq:lev2genFanal27}
T_{2,3}^d   & = & -(\p_1\q_1-\p_2\q_2)\mE_{G,{\mathcal U}_3,{\mathcal U}_2,{\mathcal U}_1} \Bigg( \Bigg.
\frac{\lp\mE_{{\mathcal U}_1} \lp Z^{\m_1}\rp\rp^{\frac{\m_2}{\m_1}}}{\mE_{{\mathcal U}_2}\lp\lp\mE_{{\mathcal U}_1} \lp Z^{\m_1}\rp\rp^{\frac{\m_2}{\m_1}}\rp}
\frac{(C^{(i_1)})^{s-1} A^{(i_1,i_2)}}{Z^{1-\m_1}}\nonumber \\
& & \times\frac{1-\frac{-\m_2}{\m_1}}{\lp\mE_{{\mathcal U}_1} Z^{\m_1}\rp^2} \mE_{{\mathcal U}_1} \frac{\m_1}{Z^{1-\m_1}}s \sum_{p_1=1}^{l} (C^{(p_1)})^{s-1}\sum_{p_2=1}^{l}
\beta_{p_1}A^{(p_1,p_2)}\|\y^{(p_2)}\|_2\sqrt{t}\Bigg. \Bigg) \nonumber \\
& = & -s\sqrt{t}(\p_1\q_1-\p_2\q_2)(\m_1-\m_2)\mE_{G,{\mathcal U}_3} \Bigg( \Bigg. \mE_{{\mathcal U}_2} \frac{\lp\mE_{{\mathcal U}_1} \lp Z^{\m_1}\rp\rp^{\frac{\m_2}{\m_1}}}{\mE_{{\mathcal U}_2}\lp\lp\mE_{{\mathcal U}_1} \lp Z^{\m_1}\rp\rp^{\frac{\m_2}{\m_1}}\rp}
\mE_{{\mathcal U}_1} \frac{Z^{\m_1}}{\mE_{{\mathcal U}_1} Z^{\m_1}} \frac{(C^{(i_1)})^s}{Z}\frac{A^{(i_1,i_2)}}{(C^{(i_1)})}\nonumber \\
& & \times \mE_{{\mathcal U}_1} \frac{Z^{\m_1}}{\mE_{{\mathcal U}_1} Z^{\m_1}} \sum_{p_1=1}^{l} \frac{(C^{(p_1)})^{s}}{Z}\sum_{p_2=1}^{l}
\frac{A^{(p_1,p_2)}}{(C^{(p_1)})}\beta_{p_1}\|\y^{(p_2)}\|_2\Bigg. \Bigg).
\end{eqnarray}
As earlier, we first write
\begin{eqnarray}\label{eq:lev2genFanal28}
\sum_{i_1=1}^{l}\sum_{i_2=1}^{l} \beta_{i_1}\|\y^{(i_2)}\|_2\frac{T_{2,3}^d}{\sqrt{t}}
 & = & -s\beta^2(\p_1\q_1-\p_2\q_2)(\m_1-\m_2)\mE_{G,{\mathcal U}_3} \langle\|\x^{(i_2)}\|_2\|\x^{(p_2)}\|_2\|\y^{(i_2)}\|_2\|\y^{(p_2)}\rangle_{\gamma_{2}^{(2)}},\nonumber \\
\end{eqnarray}
and then combine it with (\ref{eq:lev2genFanal22}) and (\ref{eq:lev2genFanal23b}) to obtain the following analogue to (\ref{eq:genFanal29})
 \begin{eqnarray}\label{eq:lev2genFanal29}
\sum_{i_1=1}^{l}\sum_{i_2=1}^{l} \beta_{i_1}\|\y^{(i_2)}\|_2\frac{T_{2,3}}{\sqrt{t}}
& = &
(\p_1\q_1-\p_2\q_2)\beta^2 \Bigg( \Bigg. \mE_{G,{\mathcal U}_3}\langle \|\x^{(i_1)}\|_2^2\|\y^{(i_2)}\|_2^2\rangle_{\gamma_{01}^{(2)}} \nonumber \\
& & +  (s-1)\mE_{G,{\mathcal U}_3}\langle \|\x^{(i_1)}\|_2^2 \|\y^{(i_2)}\|_2\|\y^{(p_2)}\|_2\rangle_{\gamma_{02}^{(2)}}\Bigg.\Bigg) \nonumber \\
& & - (\p_1\q_1-\p_2\q_2) s\beta^2(1-\m_1)\mE_{G,{\mathcal U}_3}\langle \|\x^{(i_1)}\|_2\|\x^{(p_`)}\|_2\|\y^{(i_2)}\|_2\|\y^{(p_2)}\|_2 \rangle_{\gamma_{1}^{(2)}} \nonumber \\
&  & -s\beta^2(\p_1\q_1-\p_2\q_2)(\m_1-\m_2)\mE_{G,{\mathcal U}_3} \langle\|\x^{(i_2)}\|_2\|\x^{(p_2)}\|_2\|\y^{(i_2)}\|_2\|\y^{(p_2)}\rangle_{\gamma_{2}^{(2)}}.\nonumber \\
\end{eqnarray}

\subsubsection{Handling $T_k$--groups --- $k=3$}
\label{sec:lev2x3lev2handlTk2}

As usual, we handle separately each of the three terms contributing to  $T_3$ group.

\underline{\textbf{\emph{Determining}} $T_{3,1,j}$}
\label{sec:lev2x3lev2hand1T21}

Relying on the Gaussian integration by parts we write
\begin{eqnarray}\label{eq:lev2x3lev2genDanal19}
T_{3,1,j}& = &  \mE_{G,{\mathcal U}_3,{\mathcal U}_2} \lp\frac{\lp\mE_{{\mathcal U}_1} \lp Z^{\m_1}\rp\rp^{\frac{\m_2}{\m_1}-1}}{\mE_{{\mathcal U}_2}\lp\lp\mE_{{\mathcal U}_1} \lp Z^{\m_1}\rp\rp^{\frac{\m_2}{\m_1}}\rp}   \mE_{{\mathcal U}_1}\frac{(C^{(i_1)})^{s-1} A^{(i_1,i_2)} \y_j^{(i_2)}\u_j^{(2,3)}}{Z^{1-\m_1}} \rp \nonumber \\
& = &  \mE_{G,{\mathcal U}_2,{\mathcal U}_1} \lp \mE_{{\mathcal U}_3}       \frac{(C^{(i_1)})^{s-1} A^{(i_1,i_2)} \y_j^{(i_2)}\u_j^{(2,3)}}{Z^{1-\m_1}\lp\mE_{{\mathcal U}_1} \lp Z^{\m_1}\rp\rp^{1-\frac{\m_2}{\m_1}}\mE_{{\mathcal U}_2}\lp\lp\mE_{{\mathcal U}_1} \lp Z^{\m_1}\rp\rp^{\frac{\m_2}{\m_1}}\rp} \rp \nonumber \\
& = &  T_{3,1,j}^{c} +  T_{3,1,j}^{d},
 \end{eqnarray}
 where
\begin{eqnarray}\label{eq:lev2x3lev2genDanal19b}
T_{3,1,j}^c &  = &
\mE_{G,{\mathcal U}_3,{\mathcal U}_2,{\mathcal U}_1} \lp\frac{1}{\mE_{{\mathcal U}_2}\lp\lp\mE_{{\mathcal U}_1} \lp Z^{\m_1}\rp\rp^{\frac{\m_2}{\m_1}}\rp}  \lp\mE_{{\mathcal U}_2} (\u_j^{(2,3)}\u_j^{(2,3)})\frac{d}{d\u_j^{(2,3)}}\lp \frac{(C^{(i_1)})^{s-1} A^{(i_1,i_2)}\y_j^{(i_2)}}{Z^{1-\m_1}\lp\mE_{{\mathcal U}_1} \lp Z^{\m_1}\rp\rp^{1-\frac{\m_2}{\m_1}}}\rp\rp\rp \nonumber \\
T_{3,1,j}^d &  = & \mE_{G,{\mathcal U}_3,{\mathcal U}_2,{\mathcal U}_1} \lp   \frac{(C^{(i_1)})^{s-1} A^{(i_1,i_2)}\y_j^{(i_2)}}{Z^{1-\m_1}\lp\mE_{{\mathcal U}_1} \lp Z^{\m_1}\rp\rp^{1-\frac{\m_2}{\m_1}}}
 \lp\mE_{{\mathcal U}_2} (\u_j^{(2,3)}\u_j^{(2,3)})\frac{d}{d\u_j^{(2,3)}}
\lp \frac{1}{\mE_{{\mathcal U}_2}\lp\lp\mE_{{\mathcal U}_1} \lp Z^{\m_1}\rp\rp^{\frac{\m_2}{\m_1}}\rp}  \rp\rp\rp.\nonumber \\
\end{eqnarray}
Since $\mE_{{\mathcal U}_2} (\u_j^{(2,3)}\u_j^{(2,3)})=\p_2$, one now observes that $T_{3,1,j}^c$ scaled by $\p_2$ is structurally identical to $T_{2,1,j}^c$ scaled by $(\p_1-\p_2)$. That means that we have
\begin{eqnarray}\label{eq:lev2x3lev2genDanal19b1}
\sum_{i_1=1}^{l}\sum_{i_2=1}^{l}\sum_{j=1}^{m} \beta_{i_1}\frac{T_{3,1,j}^c}{\sqrt{1-t
}}
& = &  \sum_{i_1=1}^{l}\sum_{i_2=1}^{l}\sum_{j=1}^{m} \beta_{i_1}\frac{T_{2,1,j}}{\sqrt{1-t}}\frac{\p_2}{\p_1-\p_2}\nonumber\\
& = & \p_2 \beta^2 \lp \mE_{G,{\mathcal U}_3}\langle \|\x^{(i_1)}\|_2^2\|\y^{(i_2)}\|_2^2\rangle_{\gamma_{01}^{(2)}} +   (s-1)\mE_{G,{\mathcal U}_3}\langle \|\x^{(i_1)}\|_2^2(\y^{(p_2)})^T\y^{(i_2)}\rangle_{\gamma_{02}^{(2)}} \rp\nonumber \\
& & - \p_2s\beta^2(1-\m_1)\mE_{G,{\mathcal U}_3}\langle \|\x^{(i_1)}\|_2\|\x^{(p_1)}\|_2(\y^{(p_2)})^T\y^{(i_2)} \rangle_{\gamma_{1}^{(2)}}\nonumber \\
 &   &
  -s\beta^2\p_2(\m_1-\m_2)\mE_{G,{\mathcal U}_3} \langle \|\x^{(i_1)}\|_2\|\x^{(p_1)}\|_2(\y^{(p_2)})^T\y^{(i_2)} \rangle_{\gamma_{2}^{(2)}}.
\end{eqnarray}

We now focus on $T_{3,1,j}^d$. To that end we have
\begin{eqnarray}\label{eq:lev2x3lev2genDanal20}
\frac{d}{d\u_j^{(2,3)}} \lp \frac{1}{\mE_{{\mathcal U}_2}\lp\lp\mE_{{\mathcal U}_1} \lp Z^{\m_1}\rp\rp^{\frac{\m_2}{\m_1}}\rp}\rp
& = & -\frac{1}{\lp\mE_{{\mathcal U}_2}\lp\lp\mE_{{\mathcal U}_1} \lp Z^{\m_1}\rp\rp^{\frac{\m_2}{\m_1}}\rp\rp^2}
\mE_{{{\mathcal U}_2}}\frac{d\lp\lp\mE_{{\mathcal U}_1} \lp Z^{\m_1}\rp\rp^{\frac{\m_2}{\m_1}}\rp}{d\u_j^{(2,3)}}\nonumber\\
& = & -\frac{1}{\lp\mE_{{\mathcal U}_2}\lp\lp\mE_{{\mathcal U}_1} \lp Z^{\m_1}\rp\rp^{\frac{\m_2}{\m_1}}\rp\rp^2}
\mE_{{{\mathcal U}_2}}
\frac{\m_2}{\m_1}   \lp\mE_{{\mathcal U}_1} \lp Z^{\m_1}\rp\rp^{\frac{\m_2}{\m_1}-1}
\mE_{{\mathcal U}_1}\frac{d Z^{\m_1}}{d\u_j^{(2,3)}}\nonumber\\
& = & -\frac{1}{\lp\mE_{{\mathcal U}_2}\lp\lp\mE_{{\mathcal U}_1} \lp Z^{\m_1}\rp\rp^{\frac{\m_2}{\m_1}}\rp\rp^2}
\mE_{{{\mathcal U}_2}}
\frac{\m_2}{\m_1}   \lp\mE_{{\mathcal U}_1} \lp Z^{\m_1}\rp\rp^{\frac{\m_2}{\m_1}-1}
\mE_{{\mathcal U}_1}\frac{d Z^{\m_1}}{d\u_j^{(2,3)}}.\nonumber \\
\end{eqnarray}
From (\ref{eq:genDanal21}), we have
\begin{eqnarray}\label{eq:lev2x3lev2genDanal20a}
\frac{dZ^{\m_1}}{d\u_j^{(2,2)}}
& = & \frac{\m_1}{Z^{1-\m_1}}s  \sum_{p_1=1}^{l}  (C^{(p_1)})^{s-1}\sum_{p_2=1}^{l}
\beta_{p_1}A^{(p_1,p_2)}\y_j^{(p_2)}\sqrt{1-t}.
\end{eqnarray}
Combining (\ref{eq:lev2x3lev2genDanal19b}), (\ref{eq:lev2x3lev2genDanal20}), and (\ref{eq:lev2x3lev2genDanal20a}) we find
\begin{eqnarray}\label{eq:lev2x3lev2genDanal21}
T_{3,1,j}^d &  = & -\p_2 \mE_{G,{\mathcal U}_3,{\mathcal U}_2,{\mathcal U}_1} \Bigg( \Bigg.   \frac{(C^{(i_1)})^{s-1} A^{(i_1,i_2)}\y_j^{(i_2)}}{Z^{1-\m_1}\lp\mE_{{\mathcal U}_1} \lp Z^{\m_1}\rp\rp^{1-\frac{\m_2}{\m_1}}}
  \Bigg( \Bigg.  \frac{1}{\lp\mE_{{\mathcal U}_2}\lp\lp\mE_{{\mathcal U}_1} \lp Z^{\m_1}\rp\rp^{\frac{\m_2}{\m_1}}\rp\rp^2} \nonumber \\
& & \times \mE_{{{\mathcal U}_2}}
\frac{\m_2}{\m_1}   \lp\mE_{{\mathcal U}_1} \lp Z^{\m_1}\rp\rp^{\frac{\m_2}{\m_1}-1}
\mE_{{\mathcal U}_1} \frac{\m_1}{Z^{1-\m_1}}s  \sum_{p_1=1}^{l}  (C^{(p_1)})^{s-1}\sum_{p_2=1}^{l}
\beta_{p_1}A^{(p_1,p_2)}\y_j^{(p_2)}\sqrt{1-t}   \Bigg. \Bigg) \Bigg.\Bigg).\nonumber \\
&  = & -s\sqrt{1-t}\m_2\p_2 \mE_{G,{\mathcal U}_3,{\mathcal U}_2,{\mathcal U}_1} \Bigg( \Bigg.   \frac{(C^{(i_1)})^{s-1} A^{(i_1,i_2)}\y_j^{(i_2)}}{Z^{1-\m_1}\lp\mE_{{\mathcal U}_1} \lp Z^{\m_1}\rp\rp^{1-\frac{\m_2}{\m_1}}}
  \Bigg( \Bigg.  \frac{1}{\lp\mE_{{\mathcal U}_2}\lp\lp\mE_{{\mathcal U}_1} \lp Z^{\m_1}\rp\rp^{\frac{\m_2}{\m_1}}\rp\rp} \nonumber \\
& & \times \mE_{{{\mathcal U}_2}}
    \frac{\lp\mE_{{\mathcal U}_1} \lp Z^{\m_1}\rp\rp^{\frac{\m_2}{\m_1}}}{\mE_{{\mathcal U}_2}\lp\lp\mE_{{\mathcal U}_1} \lp Z^{\m_1}\rp\rp^{\frac{\m_2}{\m_1}}\rp}
\mE_{{\mathcal U}_1} \frac{Z^{\m_1}}{\mE_{{\mathcal U}_1} \lp Z^{\m_1}\rp}  \sum_{p_1=1}^{l}  \frac{(C^{(p_1)})^{s}}{Z}\sum_{p_2=1}^{l}
\frac{A^{(p_1,p_2)}}{C^{(p_1)}}\beta_{p_1}\y_j^{(p_2)}   \Bigg. \Bigg) \Bigg.\Bigg)\nonumber \\
&  = & -s\sqrt{1-t}\m_2\p_2 \mE_{G,{\mathcal U}_3} \Bigg( \Bigg.   \mE_{{{\mathcal U}_2}}
    \frac{\lp\mE_{{\mathcal U}_1} \lp Z^{\m_1}\rp\rp^{\frac{\m_2}{\m_1}}}{\mE_{{\mathcal U}_2}\lp\lp\mE_{{\mathcal U}_1} \lp Z^{\m_1}\rp\rp^{\frac{\m_2}{\m_1}}\rp}
\mE_{{\mathcal U}_1} \frac{Z^{\m_1}}{\mE_{{\mathcal U}_1} \lp Z^{\m_1}\rp}  \frac{(C^{(i_1)})^{s}}{Z}
\frac{A^{(i_1,i_2)}}{C^{(i_1)}}\y_j^{(i_2)}  \nonumber \\
& & \times \mE_{{{\mathcal U}_2}}
    \frac{\lp\mE_{{\mathcal U}_1} \lp Z^{\m_1}\rp\rp^{\frac{\m_2}{\m_1}}}{\mE_{{\mathcal U}_2}\lp\lp\mE_{{\mathcal U}_1} \lp Z^{\m_1}\rp\rp^{\frac{\m_2}{\m_1}}\rp}
\mE_{{\mathcal U}_1} \frac{Z^{\m_1}}{\mE_{{\mathcal U}_1} \lp Z^{\m_1}\rp}  \sum_{p_1=1}^{l}  \frac{(C^{(p_1)})^{s}}{Z}\sum_{p_2=1}^{l}
\frac{A^{(p_1,p_2)}}{C^{(p_1)}}\beta_{p_1}\y_j^{(p_2)}   \Bigg. \Bigg).
\end{eqnarray}
We then first observe
\begin{eqnarray}\label{eq:lev2x3lev2genDanal24}
 \sum_{i_1=1}^{l}  \sum_{i_2=1}^{l} \sum_{j=1}^{m}  \beta_{i_1}\frac{T_{3,1,j}^d}{\sqrt{1-t}}
 & = & -s\beta^2\p_2\m_2\mE_{G,{\mathcal U}_3} \langle \|\x^{(i_1)}\|_2\|\x^{(p_1)}\|_2(\y^{(p_2)})^T\y^{(i_2)} \rangle_{\gamma_{3}^{(2)}},
\end{eqnarray}
and
\begin{eqnarray}\label{eq:lev2x3lev2genDanal25}
 \sum_{i_1=1}^{l}  \sum_{i_2=1}^{l} \sum_{j=1}^{m}  \beta_{i_1}\frac{T_{3,1,j}}{\sqrt{1-t}}
 & = &  \sum_{i_1=1}^{l}\sum_{i_2=1}^{l}\sum_{j=1}^{m} \beta_{i_1}\frac{T_{2,1,j}}{\sqrt{1-t}}\frac{\p_2}{\p_1-\p_2}\nonumber\\
& = & \p_2 \beta^2 \lp \mE_{G,{\mathcal U}_3}\langle \|\x^{(i_1)}\|_2^2\|\y^{(i_2)}\|_2^2\rangle_{\gamma_{01}^{(2)}} +  (s-1)\mE_{G,{\mathcal U}_3}\langle \|\x^{(i_1)}\|_2^2(\y^{(p_2)})^T\y^{(i_2)}\rangle_{\gamma_{02}^{(2)}} \rp\nonumber \\
& & - \p_2s\beta^2(1-\m_1)\mE_{G,{\mathcal U}_3}\langle \|\x^{(i_1)}\|_2\|\x^{(p_1)}\|_2(\y^{(p_2)})^T\y^{(i_2)} \rangle_{\gamma_{1}^{(2)}}\nonumber \\
 &   &
  -s\beta^2\p_2(\m_1-\m_2)\mE_{G,{\mathcal U}_3} \langle \|\x^{(i_1)}\|_2\|\x^{(p_1)}\|_2(\y^{(p_2)})^T\y^{(i_2)} \rangle_{\gamma_{2}^{(2)}}\nonumber \\
 &  & -s\beta^2\p_2\m_2\mE_{G,{\mathcal U}_3} \langle \|\x^{(i_1)}\|_2\|\x^{(p_1)}\|_2(\y^{(p_2)})^T\y^{(i_2)} \rangle_{\gamma_{3}^{(2)}}.
\end{eqnarray}

\underline{\textbf{\emph{Determining}} $T_{3,2}$}
\label{sec:lev2x3lev2hand1T22}

As usual, we proceed by relying on the Gaussian integration by parts to obtain
\begin{eqnarray}\label{eq:lev2x3lev2liftgenEanal20}
T_{3,2} & = &  \mE_{G,{\mathcal U}_3,{\mathcal U}_2} \lp\frac{\lp\mE_{{\mathcal U}_1} \lp Z^{\m_1}\rp\rp^{\frac{\m_2}{\m_1}-1}}{\mE_{{\mathcal U}_2}\lp\lp\mE_{{\mathcal U}_1} \lp Z^{\m_1}\rp\rp^{\frac{\m_2}{\m_1}}\rp}
\mE_{{\mathcal U}_1}\frac{(C^{(i_1)})^{s-1} A^{(i_1,i_2)} \u^{(i_1,3,3)}}{Z^{1-\m_1}} \rp \nonumber \\
& = &  \mE_{G,{\mathcal U}_2,{\mathcal U}_1} \lp \mE_{{\mathcal U}_3}
 \frac{(C^{(i_1)})^{s-1} A^{(i_1,i_2)} \u^{(i_1,3,3)}}{Z^{1-\m_1}\lp\mE_{{\mathcal U}_1} \lp Z^{\m_1}\rp\rp^{1-\frac{\m_2}{\m_1}}\mE_{{\mathcal U}_2}\lp\lp\mE_{{\mathcal U}_1} \lp Z^{\m_1}\rp\rp^{\frac{\m_2}{\m_1}}\rp} \rp \nonumber \\
 & = &   T_{3,2}^{c} +  T_{3,2}^{d},
 \end{eqnarray}
where

{\small \begin{eqnarray}\label{eq:lev2x3lev2genEanal19b}
T_{3,2}^c &  = &
\mE_{G,{\mathcal U}_2,{\mathcal U}_1} \lp
 \frac{1}{\mE_{{\mathcal U}_2}\lp\lp\mE_{{\mathcal U}_1} \lp Z^{\m_1}\rp\rp^{\frac{\m_2}{\m_1}}\rp} \sum_{p_1=1}^{l}\mE_{{\mathcal U}_3}(\u^{(i_1,3,3)}\u^{(p_1,3,3)}) \frac{d}{d\u^{(p_1,3,3)}}\lp\frac{(C^{(i_1)})^{s-1} A^{(i_1,i_2)}}{Z^{1-\m_1}\lp\mE_{{\mathcal U}_1} \lp Z^{\m_1}\rp\rp^{1-\frac{\m_2}{\m_1}}}\rp\rp \nonumber \\
T_{3,2}^d &  = & \mE_{G,{\mathcal U}_2,{\mathcal U}_1} \lp \frac{(C^{(i_1)})^{s-1} A^{(i_1,i_2)}}{Z^{1-\m_1}\lp\mE_{{\mathcal U}_1} \lp Z^{\m_1}\rp\rp^{1-\frac{\m_2}{\m_1}}}
  \sum_{p_1=1}^{l}\mE_{{\mathcal U}_3}(\u^{(i_1,3,3)}\u^{(p_1,3,3)}) \frac{d}{d\u^{(p_1,3,3)}}\lp  \frac{1}{\mE_{{\mathcal U}_2}\lp\lp\mE_{{\mathcal U}_1} \lp Z^{\m_1}\rp\rp^{\frac{\m_2}{\m_1}}\rp}  \rp\rp.\nonumber \\
\end{eqnarray}}

\noindent Since
\begin{eqnarray}\label{eq:lev2x3lev2genEanal19c}
\mE_{{\mathcal U}_3}(\u^{(i_1,3,3)}\u^{(p_1,3,3)}) & = & \q_2\frac{(\x^{(i_1)})^T\x^{(p_1)}}{\|\x^{(i_1)}\|_2\|\x^{(p_1)}\|_2}
= \q_2\frac{\beta^2(\x^{(i_1)})^T\x^{(p_1)}}{\beta_{i_1}\beta_{p_1}},
\end{eqnarray}
one observes that $T_{3,2}^c$ scaled by $\q_2$ is structurally identical to $T_{2,2}$ scaled by $(\q_0-\q_1)$, which implies
\begin{eqnarray}\label{eq:lev2x3lev2genEanal19c1}
\sum_{i_1=1}^{l}\sum_{i_2=1}^{l} \beta_{i_1}\|\y^{(i_2)}\|_2 \frac{T_{3,2}^c}{\sqrt{1-t}} & = & 
\q_2 \beta^2 
\Bigg.\Bigg( \mE_{G,{\mathcal U}_3}\langle \|\x^{(i_1)}\|_2^2\|\y^{(i_2)}\|_2^2\rangle_{\gamma_{01}^{(2)}}
 \nonumber \\
 & & 
 + (s-1)\mE_{G,{\mathcal U}_3}\langle \|\x^{(i_1)}\|_2^2 \|\y^{(i_2)}\|_2\|\y^{(p_2)}\|_2\rangle_{\gamma_{02}^{(2)}}\Bigg.\Bigg)
\nonumber \\
& & - \q_2s\beta^2(1-\m_1)\mE_{G,{\mathcal U}_3}\langle (\x^{(p_1)})^T\x^{(i_1)}\|\y^{(i_2)}\|_2\|\y^{(p_2)}\|_2 \rangle_{\gamma_{1}^{(2)}} \nonumber \\
&  & -s\beta^2\q_2(\m_1-\m_2)\mE_{G,{\mathcal U}_3} \langle \|\y^{(i_2)}\|_2\|\y^{(p_2)}\|_2(\x^{(i_1)})^T\x^{(p_1)}\rangle_{\gamma_{2}^{(2)}}.
\end{eqnarray}

To determine  $T_{3,2}^d$, we start with
\begin{eqnarray}\label{eq:lev2x3lev2genEanal20}
\frac{d}{d\u_j^{(p_1,3,3)}} \lp \frac{1}{\mE_{{\mathcal U}_2}\lp\lp\mE_{{\mathcal U}_1} \lp Z^{\m_1}\rp\rp^{\frac{\m_2}{\m_1}}\rp}\rp
& = & -\frac{1}{\lp\mE_{{\mathcal U}_2}\lp\lp\mE_{{\mathcal U}_1} \lp Z^{\m_1}\rp\rp^{\frac{\m_2}{\m_1}}\rp\rp^2}
\mE_{{{\mathcal U}_2}}\frac{d\lp\lp\mE_{{\mathcal U}_1} \lp Z^{\m_1}\rp\rp^{\frac{\m_2}{\m_1}}\rp}{d\u_j^{(p_1,3,3)}}\nonumber\\
& = & -\frac{1}{\lp\mE_{{\mathcal U}_2}\lp\lp\mE_{{\mathcal U}_1} \lp Z^{\m_1}\rp\rp^{\frac{\m_2}{\m_1}}\rp\rp^2}\nonumber\\
& & \times
\mE_{{{\mathcal U}_2}}
\frac{\m_2}{\m_1}   \lp\mE_{{\mathcal U}_1} \lp Z^{\m_1}\rp\rp^{\frac{\m_2}{\m_1}-1}
\mE_{{\mathcal U}_1}\frac{d Z^{\m_1}}{d\u_j^{(p_1,3,3)}}\nonumber\\
& = & -\frac{1}{\lp\mE_{{\mathcal U}_2}\lp\lp\mE_{{\mathcal U}_1} \lp Z^{\m_1}\rp\rp^{\frac{\m_2}{\m_1}}\rp\rp^2}\nonumber\\
& & \times \mE_{{{\mathcal U}_2}}
\frac{\m_2}{\m_1}   \lp\mE_{{\mathcal U}_1} \lp Z^{\m_1}\rp\rp^{\frac{\m_2}{\m_1}-1}
\mE_{{\mathcal U}_1}\frac{d Z^{\m_1}}{d\u_j^{(p_1,3,3)}}.\nonumber \\
\end{eqnarray}
 Moreover,
\begin{eqnarray}\label{eq:lev2x3lev2genEanal21}
\frac{dZ^{\m_1}}{d\u^{(p_1,3,3)}}  & = & \frac{\m_1}{Z^{1-\m_1}} s  (C^{(p_1)})^{s-1}\sum_{p_2=1}^{l}
\beta_{p_1}A^{(p_1,p_2)}\|\y^{(p_2)}\|_2\sqrt{1-t}.
\end{eqnarray}
A combination of  (\ref{eq:lev2x3lev2genEanal19b}), (\ref{eq:lev2x3lev2genEanal20}), and (\ref{eq:lev2x3lev2genEanal21}) gives
{\small \begin{eqnarray}\label{eq:lev2x3lev2genEanal21b}
 T_{3,2}^d &  = & -\q_2\mE_{G,{\mathcal U}_3,{\mathcal U}_2,{\mathcal U}_1} \Bigg( \Bigg. \frac{(C^{(i_1)})^{s-1} A^{(i_1,i_2)}}{Z^{1-\m_1}\lp\mE_{{\mathcal U}_1} \lp Z^{\m_1}\rp\rp^{1-\frac{\m_2}{\m_1}}}
  \sum_{p_1=1}^{l}\frac{(\x^{(i_1)})^T\x^{(p_1)}}{\beta_{i_1}}
   \frac{1}{\lp\mE_{{\mathcal U}_2}\lp\lp\mE_{{\mathcal U}_1} \lp Z^{\m_1}\rp\rp^{\frac{\m_2}{\m_1}}\rp\rp^2}\nonumber\\
& & \times \mE_{{{\mathcal U}_2}}
\frac{\m_2}{\m_1}   \lp\mE_{{\mathcal U}_1} \lp Z^{\m_1}\rp\rp^{\frac{\m_2}{\m_1}-1}
\mE_{{\mathcal U}_1}\frac{d Z^{\m_1}}{d\u_j^{(p_1,3,3)}}\Bigg. \Bigg)\nonumber \\
&  = & -\q_2\mE_{G,{\mathcal U}_3,{\mathcal U}_2,{\mathcal U}_1} \Bigg( \Bigg. \frac{(C^{(i_1)})^{s-1} A^{(i_1,i_2)}}{Z^{1-\m_1}\lp\mE_{{\mathcal U}_1} \lp Z^{\m_1}\rp\rp^{1-\frac{\m_2}{\m_1}}}
  \sum_{p_1=1}^{l}\frac{(\x^{(i_1)})^T\x^{(p_1)}}{\beta_{i_1}}
   \frac{1}{\lp\mE_{{\mathcal U}_2}\lp\lp\mE_{{\mathcal U}_1} \lp Z^{\m_1}\rp\rp^{\frac{\m_2}{\m_1}}\rp\rp^2}\nonumber\\
& & \times \mE_{{{\mathcal U}_2}}
\frac{\m_2}{\m_1}   \lp\mE_{{\mathcal U}_1} \lp Z^{\m_1}\rp\rp^{\frac{\m_2}{\m_1}-1}
\mE_{{\mathcal U}_1}
\frac{\m_1}{Z^{1-\m_1}} s  (C^{(p_1)})^{s-1}\sum_{p_2=1}^{l}
\beta_{p_1}A^{(p_1,p_2)}\|\y^{(p_2)}\|_2\sqrt{1-t}\Bigg. \Bigg) \nonumber \\
&  = & -s\sqrt{1-t}\m_2\q_2 \mE_{G,{\mathcal U}_3} \Bigg( \Bigg.   \mE_{{{\mathcal U}_2}}
    \frac{\lp\mE_{{\mathcal U}_1} \lp Z^{\m_1}\rp\rp^{\frac{\m_2}{\m_1}}}{\mE_{{\mathcal U}_2}\lp\lp\mE_{{\mathcal U}_1} \lp Z^{\m_1}\rp\rp^{\frac{\m_2}{\m_1}}\rp}
\mE_{{\mathcal U}_1} \frac{Z^{\m_1}}{\mE_{{\mathcal U}_1} \lp Z^{\m_1}\rp}  \frac{(C^{(i_1)})^{s}}{Z}
\frac{A^{(i_1,i_2)}}{C^{(i_1)}}  \nonumber \\
& & \times \mE_{{{\mathcal U}_2}}
    \frac{\lp\mE_{{\mathcal U}_1} \lp Z^{\m_1}\rp\rp^{\frac{\m_2}{\m_1}}}{\mE_{{\mathcal U}_2}\lp\lp\mE_{{\mathcal U}_1} \lp Z^{\m_1}\rp\rp^{\frac{\m_2}{\m_1}}\rp}
\mE_{{\mathcal U}_1} \frac{Z^{\m_1}}{\mE_{{\mathcal U}_1} \lp Z^{\m_1}\rp}  \sum_{p_1=1}^{l}  \frac{(C^{(p_1)})^{s}}{Z}\sum_{p_2=1}^{l}
\frac{A^{(p_1,p_2)}}{C^{(p_1)}}\frac{(\x^{(i_1)})^T\x^{(p_1)}}{\beta_{i_1}}\|\y^{(p_2)}\|_2   \Bigg. \Bigg).
\end{eqnarray}}

\noindent We then first observe
\begin{equation}\label{eq:lev2x3lev2genEanal24}
\sum_{i_1=1}^{l}\sum_{i_2=1}^{l} \beta_{i_1}\|\y^{(i_2)}\|_2\frac{T_{3,2}^d}{\sqrt{1-t}}
 =
-s\beta^2 \q_2 \m_2\mE_{G,{\mathcal U}_3} \langle \|\y^{(i_2)}\|_2\|\y^{(p_2)}\|_2(\x^{(i_1)})^T\x^{(p_1)}\rangle_{\gamma_{3}^{(2)}},
\end{equation}
and after a combination of (\ref{eq:lev2x3lev2liftgenEanal20}), (\ref{eq:lev2x3lev2genEanal19c1}), and
(\ref{eq:lev2x3lev2genEanal24}) obtain
\begin{eqnarray}\label{eq:lev2x3lev2genEanal25}
\sum_{i_1=1}^{l}\sum_{i_2=1}^{l} \beta_{i_1}\|\y^{(i_2)}\|_2\frac{T_{3,2}}{\sqrt{1-t}}
& = & \q_2\beta^2
\Bigg(\Bigg. \mE_{G,{\mathcal U}_3}\langle \|\x^{(i_1)}\|_2^2\|\y^{(i_2)}\|_2^2\rangle_{\gamma_{01}^{(2)}}  \nonumber \\
& & +   (s-1)\mE_{G,{\mathcal U}_3}\langle \|\x^{(i_1)}\|_2^2 \|\y^{(i_2)}\|_2\|\y^{(p_2)}\|_2\rangle_{\gamma_{02}^{(2)}}\Bigg.\Bigg)\nonumber \\
& & - \q_2s\beta^2(1-\m_1)\mE_{G,{\mathcal U}_3}\langle (\x^{(p_1)})^T\x^{(i_1)}\|\y^{(i_2)}\|_2\|\y^{(p_2)}\|_2 \rangle_{\gamma_{1}^{(2)}} \nonumber \\
&  & -s\beta^2\q_2(\m_1-\m_2)\mE_{G,{\mathcal U}_3} \langle \|\y^{(i_2)}\|_2\|\y^{(p_2)}\|_2(\x^{(i_1)})^T\x^{(p_1)}\rangle_{\gamma_{2}^{(2)}} \nonumber \\
& & -s\beta^2 \q_2 \m_2\mE_{G,{\mathcal U}_3} \langle \|\y^{(i_2)}\|_2\|\y^{(p_2)}\|_2(\x^{(i_1)})^T\x^{(p_1)}\rangle_{\gamma_{3}^{(2)}}.
 \end{eqnarray}

\underline{\textbf{\emph{Determining}} $T_{3,3}$}
\label{sec:lev2x3lev2hand1T23}

Gaussian integration by parts also gives
\begin{eqnarray}\label{eq:lev2x3lev2genFanal21}
T_{3,3}
 & = &  \mE_{G,{\mathcal U}_3,{\mathcal U}_2} \lp\frac{\lp\mE_{{\mathcal U}_1} \lp Z^{\m_1}\rp\rp^{\frac{\m_2}{\m_1}-1}}{\mE_{{\mathcal U}_2}\lp\lp\mE_{{\mathcal U}_1} \lp Z^{\m_1}\rp\rp^{\frac{\m_2}{\m_1}}\rp}
    \mE_{{\mathcal U}_1}\frac{(C^{(i_1)})^{s-1} A^{(i_1,i_2)} u^{(4,3)}}{Z^{1-\m_1}} \rp \nonumber \\
 & = &  \mE_{G,{\mathcal U}_2,{\mathcal U}_1} \lp
    \mE_{{\mathcal U}_3}    \frac{(C^{(i_1)})^{s-1} A^{(i_1,i_2)} u^{(4,3)}}{Z^{1-\m_1}\lp\mE_{{\mathcal U}_1} \lp Z^{\m_1}\rp\rp^{1-\frac{\m_2}{\m_1}}\mE_{{\mathcal U}_2}\lp\lp\mE_{{\mathcal U}_1} \lp Z^{\m_1}\rp\rp^{\frac{\m_2}{\m_1}}\rp} \rp.
\end{eqnarray}
As usual, we find it useful to rewrite the above as
\begin{eqnarray}\label{eq:lev2x3lev2genFanal22}
T_{3,3} & = & T_{3,3}^c+T_{3,3}^d,
\end{eqnarray}
where
\begin{eqnarray}\label{eq:lev2x3lev2genFanal23}
T_{3,3}^c & = &  \p_2\q_2\mE_{G,{\mathcal U}_3,{\mathcal U}_2,{\mathcal U}_1} \lp
\frac{1}{\mE_{{\mathcal U}_2}\lp\lp\mE_{{\mathcal U}_1} \lp Z^{\m_1}\rp\rp^{\frac{\m_2}{\m_1}}\rp}
\lp\frac{d}{du^{(4,3)}} \lp \frac{(C^{(i_1)})^{s-1} A^{(i_1,i_2)} u^{(4,3)}}{Z^{1-\m_1}\lp\mE_{{\mathcal U}_1} \lp Z^{\m_1}\rp\rp^{1-\frac{\m_2}{\m_1}} }\rp\rp\rp \nonumber \\
 T_{3,3}^d & = & \p_2\q_2\mE_{G,{\mathcal U}_3,{\mathcal U}_2,{\mathcal U}_1} \lp
 \frac{(C^{(i_1)})^{s-1} A^{(i_1,i_2)} u^{(4,3)}}{Z^{1-\m_1}\lp\mE_{{\mathcal U}_1} \lp Z^{\m_1}\rp\rp^{1-\frac{\m_2}{\m_1}} }
 \lp\frac{d}{du^{(4,3)}} \lp  \frac{1}{\mE_{{\mathcal U}_2}\lp\lp\mE_{{\mathcal U}_1} \lp Z^{\m_1}\rp\rp^{\frac{\m_2}{\m_1}}\rp}
   \rp\rp\rp.
\end{eqnarray}
One again  observes that $\frac{T_{3,3}^c}{\p_2\q_2}=\frac{T_{2,3}}{\p_1\q_1-\p_2\q_2}$, which gives
\begin{eqnarray}\label{eq:lev2x3lev2genFanal23b}
\sum_{i_1=1}^{l}\sum_{i_2=1}^{l} \beta_{i_1}\|\y^{(i_2)}\|_2 \frac{T_{3,3}^c}{\sqrt{t}}
& = &
\p_2\q_2\beta^2
 \nonumber \\
 & & \times
 \lp \mE_{G,{\mathcal U}_3}\langle \|\x^{(i_1)}\|_2^2\|\y^{(i_2)}\|_2^2\rangle_{\gamma_{01}^{(2)}} +  (s-1)\mE_{G,{\mathcal U}_3}\langle \|\x^{(i_1)}\|_2^2 \|\y^{(i_2)}\|_2\|\y^{(p_2)}\|_2\rangle_{\gamma_{02}^{(2)}}\rp\nonumber \\
& & -  \p_2\q_2 s\beta^2(1-\m_1)\mE_{G,{\mathcal U}_3}\langle \|\x^{(i_1)}\|_2\|\x^{(p_`)}\|_2\|\y^{(i_2)}\|_2\|\y^{(p_2)}\|_2 \rangle_{\gamma_{1}^{(2)}} \nonumber \\
&  & -s\beta^2 \p_2\q_2(\m_1-\m_2)\mE_{G,{\mathcal U}_3} \langle\|\x^{(i_2)}\|_2\|\x^{(p_2)}\|_2\|\y^{(i_2)}\|_2\|\y^{(p_2)}\rangle_{\gamma_{2}^{(2)}}.
\end{eqnarray}

To find $T_{3,3}^d$, we first write
\begin{eqnarray}\label{eq:lev2x3lev2genFanal24}
\frac{d}{d u^{(4,3)}} \lp \frac{1}{\mE_{{\mathcal U}_2}\lp\lp\mE_{{\mathcal U}_1} \lp Z^{\m_1}\rp\rp^{\frac{\m_2}{\m_1}}\rp}\rp
 & = & -\frac{1}{\lp\mE_{{\mathcal U}_2}\lp\lp\mE_{{\mathcal U}_1} \lp Z^{\m_1}\rp\rp^{\frac{\m_2}{\m_1}}\rp\rp^2}\nonumber\\
& & \times \mE_{{{\mathcal U}_2}}
\frac{\m_2}{\m_1}   \lp\mE_{{\mathcal U}_1} \lp Z^{\m_1}\rp\rp^{\frac{\m_2}{\m_1}-1}
\mE_{{\mathcal U}_1}\frac{d Z^{\m_1}}{d u^{(4,3)}}.
\end{eqnarray}
We then also have
\begin{equation}\label{eq:lev2x3lev2genFanal25}
\frac{dZ^{\m_1}}{du^{(4,3)}} =\m_1Z^{\m_1-1}s \sum_{p_1=1}^{l} (C^{(p_1)})^{s-1}\sum_{p_2=1}^{l}
\beta_{p_1}A^{(p_1,p_2)}\|\y^{(p_2)}\|_2\sqrt{t},
\end{equation}
and
\begin{eqnarray}\label{eq:lev2x3lev2genFanal25a}
  T_{3,3}^d & = & -\p_2\q_2\mE_{G,{\mathcal U}_3,{\mathcal U}_2,{\mathcal U}_1} \Bigg( \Bigg.
 \frac{(C^{(i_1)})^{s-1} A^{(i_1,i_2)} u^{(4,3)}}{Z^{1-\m_1}\lp\mE_{{\mathcal U}_1} \lp Z^{\m_1}\rp\rp^{1-\frac{\m_2}{\m_1}} }
   \frac{1}{\lp\mE_{{\mathcal U}_2}\lp\lp\mE_{{\mathcal U}_1} \lp Z^{\m_1}\rp\rp^{\frac{\m_2}{\m_1}}\rp\rp^2}\nonumber\\
& & \times \mE_{{{\mathcal U}_2}}
\frac{\m_2}{\m_1}   \lp\mE_{{\mathcal U}_1} \lp Z^{\m_1}\rp\rp^{\frac{\m_2}{\m_1}-1}
\mE_{{\mathcal U}_1}\frac{d Z^{\m_1}}{d u^{(4,3)}} \Bigg. \Bigg) \nonumber \\
& = & -\p_2\q_2\mE_{G,{\mathcal U}_3,{\mathcal U}_2,{\mathcal U}_1} \Bigg( \Bigg.
 \frac{(C^{(i_1)})^{s-1} A^{(i_1,i_2)} u^{(4,3)}}{Z^{1-\m_1}\lp\mE_{{\mathcal U}_1} \lp Z^{\m_1}\rp\rp^{1-\frac{\m_2}{\m_1}} }
   \frac{1}{\lp\mE_{{\mathcal U}_2}\lp\lp\mE_{{\mathcal U}_1} \lp Z^{\m_1}\rp\rp^{\frac{\m_2}{\m_1}}\rp\rp^2}\nonumber\\
& & \times \mE_{{{\mathcal U}_2}}
\frac{\m_2}{\m_1}   \lp\mE_{{\mathcal U}_1} \lp Z^{\m_1}\rp\rp^{\frac{\m_2}{\m_1}-1}
\mE_{{\mathcal U}_1}
\m_1Z^{\m_1-1}s \sum_{p_1=1}^{l} (C^{(p_1)})^{s-1}\sum_{p_2=1}^{l}
\beta_{p_1}A^{(p_1,p_2)}\|\y^{(p_2)}\|_2\sqrt{t}
\Bigg. \Bigg) \nonumber \\
&  = & -s\sqrt{t}\m_2\q_2 \mE_{G,{\mathcal U}_3} \Bigg( \Bigg.   \mE_{{{\mathcal U}_2}}
    \frac{\lp\mE_{{\mathcal U}_1} \lp Z^{\m_1}\rp\rp^{\frac{\m_2}{\m_1}}}{\mE_{{\mathcal U}_2}\lp\lp\mE_{{\mathcal U}_1} \lp Z^{\m_1}\rp\rp^{\frac{\m_2}{\m_1}}\rp}
\mE_{{\mathcal U}_1} \frac{Z^{\m_1}}{\mE_{{\mathcal U}_1} \lp Z^{\m_1}\rp}  \frac{(C^{(i_1)})^{s}}{Z}
\frac{A^{(i_1,i_2)}}{C^{(i_1)}}  \nonumber \\
& & \times \mE_{{{\mathcal U}_2}}
    \frac{\lp\mE_{{\mathcal U}_1} \lp Z^{\m_1}\rp\rp^{\frac{\m_2}{\m_1}}}{\mE_{{\mathcal U}_2}\lp\lp\mE_{{\mathcal U}_1} \lp Z^{\m_1}\rp\rp^{\frac{\m_2}{\m_1}}\rp}
\mE_{{\mathcal U}_1} \frac{Z^{\m_1}}{\mE_{{\mathcal U}_1} \lp Z^{\m_1}\rp}  \sum_{p_1=1}^{l}  \frac{(C^{(p_1)})^{s}}{Z}\sum_{p_2=1}^{l}
\frac{A^{(p_1,p_2)}}{C^{(p_1)}} \beta_{p_1}\|\y^{(p_2)}\|_2   \Bigg. \Bigg).
\end{eqnarray}
As earlier, we  write
\begin{eqnarray}\label{eq:lev2x3lev2genFanal28}
\sum_{i_1=1}^{l}\sum_{i_2=1}^{l} \beta_{i_1}\|\y^{(i_2)}\|_2\frac{T_{3,3}^d}{\sqrt{t}}
 & = & -s\beta^2 \p_2\q_2 \m_2\mE_{G,{\mathcal U}_3} \langle\|\x^{(i_2)}\|_2\|\x^{(p_2)}\|_2\|\y^{(i_2)}\|_2\|\y^{(p_2)}\rangle_{\gamma_{3}^{(2)}},\nonumber \\
\end{eqnarray}
and then combine it with (\ref{eq:lev2x3lev2genFanal22}) and (\ref{eq:lev2x3lev2genFanal23b}) to obtain the following
\begin{eqnarray}\label{eq:lev2x3lev2genFanal29}
\sum_{i_1=1}^{l}\sum_{i_2=1}^{l} \beta_{i_1}\|\y^{(i_2)}\|_2\frac{T_{3,3}}{\sqrt{t}}
& = &
\p_2\q_2 \beta^2
 \nonumber \\
 & & \times
 \lp \mE_{G,{\mathcal U}_3}\langle \|\x^{(i_1)}\|_2^2\|\y^{(i_2)}\|_2^2\rangle_{\gamma_{01}^{(2)}} +   (s-1)\mE_{G,{\mathcal U}_3}\langle \|\x^{(i_1)}\|_2^2 \|\y^{(i_2)}\|_2\|\y^{(p_2)}\|_2\rangle_{\gamma_{02}^{(2)}}\rp\nonumber \\
& & -  \p_2\q_2 s\beta^2(1-\m_1)\mE_{G,{\mathcal U}_3}\langle \|\x^{(i_1)}\|_2\|\x^{(p_`)}\|_2\|\y^{(i_2)}\|_2\|\y^{(p_2)}\|_2 \rangle_{\gamma_{1}^{(2)}} \nonumber \\
&  & -s\beta^2 \p_2\q_2(\m_1-\m_2)\mE_{G,{\mathcal U}_3} \langle\|\x^{(i_2)}\|_2\|\x^{(p_2)}\|_2\|\y^{(i_2)}\|_2\|\y^{(p_2)}\rangle_{\gamma_{2}^{(2)}} \nonumber \\
 &  & -s\beta^2 \p_2\q_2 \m_2\mE_{G,{\mathcal U}_3} \langle\|\x^{(i_2)}\|_2\|\x^{(p_2)}\|_2\|\y^{(i_2)}\|_2\|\y^{(p_2)}\rangle_{\gamma_{3}^{(2)}}.
\end{eqnarray}

\subsubsection{Handling $T_G$--group}
\label{sec:lev2handlTG}

We start by recalling on
\begin{eqnarray}\label{eq:lev2genGanal1}
T_{G,j} & = &  \mE_{G,{\mathcal U}_3,{\mathcal U}_2} \lp\frac{\lp\mE_{{\mathcal U}_1} \lp Z^{\m_1}\rp\rp^{\frac{\m_2}{\m_1}-1}}{\mE_{{\mathcal U}_2}\lp\lp\mE_{{\mathcal U}_1} \lp Z^{\m_1}\rp\rp^{\frac{\m_2}{\m_1}}\rp}
  \mE_{{\mathcal U}_1}\frac{(C^{(i_1)})^{s-1} A^{(i_1,i_2)} \y_j^{(i_2)}\u_j^{(i_1,1)}}{Z^{1-\m_1}} \rp \nonumber \\
& = &  \mE_{{\mathcal U}_3,{\mathcal U}_2,{\mathcal U}_1} \lp
   \mE_{G}\frac{(C^{(i_1)})^{s-1} A^{(i_1,i_2)} \y_j^{(i_2)}\u_j^{(i_1,1)}}{Z^{1-\m_1}\lp\mE_{{\mathcal U}_1} \lp Z^{\m_1}\rp\rp^{1-\frac{\m_2}{\m_1}}  \mE_{{\mathcal U}_2}\lp\lp\mE_{{\mathcal U}_1} \lp Z^{\m_1}\rp\rp^{\frac{\m_2}{\m_1}}\rp} \rp \nonumber \\
& = &  \mE_{{\mathcal U}_3,{\mathcal U}_2,{\mathcal U}_1} \Bigg(\Bigg.
   \mE_{G}  \sum_{p_1=1}^{l} \mE_G (\u_j^{(i_1,1)}\u_j^{(p_1,1)}) \nonumber \\
    & & \times \frac{d}{d\u_j^{(p_1,1)}} \Bigg(\Bigg. \frac{(C^{(i_1)})^{s-1} A^{(i_1,i_2)} \y_j^{(i_2)}}{Z^{1-\m_1}       \lp\mE_{{\mathcal U}_1} \lp Z^{\m_1}\rp\rp^{1-\frac{\m_2}{\m_1}}     \mE_{{\mathcal U}_2}\lp\lp\mE_{{\mathcal U}_1} \lp Z^{\m_1}\rp\rp^{\frac{\m_2}{\m_1}}\rp} \Bigg.\Bigg) \Bigg.\Bigg). \nonumber \\
 \end{eqnarray}
After setting
\begin{eqnarray}\label{eq:lev2genGanal3}
\Theta_{G,1}^{(2)} & = & \sum_{p_1=1}^{l} \mE_G (\u_j^{(i_1,1)}\u_j^{(p_1,1)})\frac{d}{d\u_j^{(p_1,1)}}\lp \frac{(C^{(i_1)})^{s-1} A^{(i_1,i_2)}\y_j^{(i_2)}}{Z^{1-\m_1}}\rp \nonumber \\
\Theta_{G,2}^{(2)} & = &   \lp   \frac{(C^{(i_1)})^{s-1} A^{(i_1,i_2)}\y_j^{(i_2)}}{Z^{1-\m_1}        \mE_{{\mathcal U}_2}\lp\lp\mE_{{\mathcal U}_1} \lp Z^{\m_1}\rp\rp^{\frac{\m_2}{\m_1}}\rp}          \sum_{p_1=1}^{l} \mE_G (\u_j^{(i_1,1)}\u_j^{(p_1,1)})\frac{d}{d\u_j^{(p_1,1)}}\lp   \frac{1}{ \lp\mE_{{\mathcal U}_1} \lp Z^{\m_1}\rp\rp^{1-\frac{\m_2}{\m_1}} } \rp \rp \nonumber \\
\Theta_{G,3}^{(2)} & = &   \lp   \frac{(C^{(i_1)})^{s-1} A^{(i_1,i_2)}\y_j^{(i_2)}}{Z^{1-\m_1}    \lp\mE_{{\mathcal U}_1} \lp Z^{\m_1}\rp\rp^{1-\frac{\m_2}{\m_1}}       }          \sum_{p_1=1}^{l} \mE_G (\u_j^{(i_1,1)}\u_j^{(p_1,1)})\frac{d}{d\u_j^{(p_1,1)}}\lp   \frac{1}{
 \mE_{{\mathcal U}_2}\lp\lp\mE_{{\mathcal U}_1} \lp Z^{\m_1}\rp\rp^{\frac{\m_2}{\m_1}}\rp       } \rp \rp \nonumber \\
T_{G,j}^c & = &  \mE_{G,{\mathcal U}_3} \lp \mE_{{\mathcal U}_2}
 \frac{\lp\mE_{{\mathcal U}_1} \lp Z^{\m_1}\rp\rp^{\frac{\m_2}{\m_1}}}{
 \mE_{{\mathcal U}_2}\lp\lp   \mE_{{\mathcal U}_1} \lp Z^{\m_1}\rp\rp^{\frac{\m_2}{\m_1}}\rp       }
 \mE_{{\mathcal U}_1}\frac{1}{\mE_{{\mathcal U}_1} Z^{\m_1}} \Theta_{G,1}^{(2)}  \rp \nonumber \\
T_{G,j}^{d_1} & = &  \mE_{G,{\mathcal U}_3}\lp \mE_{{\mathcal U}_2}\mE_{{\mathcal U}_1}  \Theta_{G,2}^{(2)} \rp \nonumber \\
T_{G,j}^{d_2} & = &  \mE_{G,{\mathcal U}_3}\lp \mE_{{\mathcal U}_2}\mE_{{\mathcal U}_1}  \Theta_{G,3}^{(2)} \rp,
 \end{eqnarray}
Gaussian integration by parts gives
 \begin{eqnarray}\label{eq:lev2genGanal4}
T_{G,j}  & = &  T_{G,j}^c+T_{G,j}^{d_1}+T_{G,j}^{d_2}.
 \end{eqnarray}
Following (\ref{eq:genGanal5})
\begin{eqnarray}\label{eq:lev2genGanal5}
\Theta_{G,1}^{(2)} & = & \lp \frac{\y_j^{(i_2)}}{Z^{1-\m_1}}\lp(C^{(i_1)})^{s-1}\beta_{i_1}A^{(i_1,i_2)}\y_j^{(i_2)}\sqrt{t}+(s-1)(C^{(i_1)})^{s-2}\beta_{i_1}\sum_{p_2=1}^{l}A^{(i_1,p_2)}\y_j^{(p_2)}\sqrt{t}\rp \rp \nonumber \\
& &  -(1-\m_1)
\mE \lp\sum_{p_1=1}^{l} \frac{(\x^{(i_1)})^T\x^{(p_1)}}{\|\x^{(i_1)}\|_2\|\x^{(p_1)}\|_2}
\frac{(C^{(i_1)})^{s-1} A^{(i_1,i_2)}\y_j^{(i_2)}}{Z^{2-\m_1}}
s  (C^{(p_1)})^{s-1}\sum_{p_2=1}^{l}\beta_{p_1}A^{(p_1,p_2)}\y_j^{(p_2)}\sqrt{t}\rp,\nonumber \\
\end{eqnarray}
and then analogously to (\ref{eq:genGanal6})
\begin{eqnarray}\label{eq:lev2genGanal6}
 \sum_{i_1=1}^{l} \sum_{i_2=1}^{l}\sum_{j=1}^{m}\beta_{i_1} \frac{T_{G,j}^c}{\sqrt{t}}
 & = & \sum_{i_1=1}^{l} \sum_{i_2=1}^{l}\sum_{j=1}^{m}\beta_{i_1} \frac{\mE_{G,{\mathcal U}_3}\lp \mE_{{\mathcal U}_2}
 \frac{\lp\mE_{{\mathcal U}_1} \lp Z^{\m_1}\rp\rp^{\frac{\m_2}{\m_1}}}{
 \mE_{{\mathcal U}_2}\lp\lp   \mE_{{\mathcal U}_1} \lp Z^{\m_1}\rp\rp^{\frac{\m_2}{\m_1}}\rp}   \mE_{{\mathcal U}_1}\frac{1}{\mE_{{\mathcal U}_1} Z^{\m_1}} \Theta_{G,1}^{(2)} \rp}{\sqrt{t}} \nonumber\\
 & = & \beta^2 \lp \mE_{G,{\mathcal U}_3} \langle \|\x^{(i_1)}\|_2^2\|\y^{(i_2)}\|_2^2\rangle_{\gamma_{01}^{(2)}} +   (s-1)\mE_{G,{\mathcal U}_3}\langle \|\x^{(i_1)}\|_2^2(\y^{(p_2)})^T\y^{(i_2)}\rangle_{\gamma_{02}^{(2)}}   \rp    \nonumber \\
 &  & -s\beta^2(1-\m_1) \mE_{G,{\mathcal U}_3}\langle (\x^{(p_1)})^T\x^{(i_1)}(\y^{(p_2)})^T\y^{(i_2)}\rangle_{\gamma_{1}^{(2)}}.
 \end{eqnarray}

To determine $T_{G,j}^{d_1}$, we start by writing
\begin{eqnarray}\label{eq:lev2genGanal7}
\frac{d}{d\u_j^{(p_1,1)}}\lp   \frac{1}{\lp\mE_{{\mathcal U}_1} Z^{\m_1}\rp^{1-\frac{\m_2}{\m_1}}} \rp
& = & -\frac{1-\frac{\m_2}{\m_1}}{\lp\mE_{{\mathcal U}_1} Z^{\m_1}\rp^{2-\frac{\m_2}{\m_1}}}\mE_{{\mathcal U}_1}  \frac{dZ^{\m_1}}{d\u_j^{(p_1,1)}},
\end{eqnarray}
and
\begin{eqnarray}\label{eq:lev2genGanal7a}
  \frac{dZ^{\m_1}}{d\u_j^{(p_1,1)}}
& = & \m_1 \frac{Z^{\m_1}}{Z} \frac{dZ}{d\u_j^{(p_1,1)}}=  \m_1 \frac{Z^{\m_1}}{Z}      \frac{d\sum_{q_1=1}^l(C^{(q_1)})^s}{d\u_j^{(p_1,1)}}
 = \m_1   \frac{Z^{\m_1}}{Z}      s (C^{(p_1)})^{s-1}\frac{dC^{(p_1)}}{d\u_j^{(p_1,1)}}\nonumber\\
& = &   \m_1 \frac{Z^{\m_1}}{Z} s (C^{(p_1)})^{s-1}\sum_{p_2=1}^{l}A^{(p_1,p_2)}\beta_{p_1}\y_j^{(p_2)}\sqrt{t}.
\end{eqnarray}
From (\ref{eq:lev2genGanal7}) and (\ref{eq:lev2genGanal7a}) we obtain
\begin{eqnarray}\label{eq:lev2genGanal7b}
\frac{d}{d\u_j^{(p_1,1)}}\lp   \frac{1}{\lp\mE_{{\mathcal U}_1} Z^{\m_1}\rp^{1-\frac{\m_2}{\m_1}}} \rp
& = &  -\mE_{{\mathcal U}_1}  \frac{(\m_1-\m_2)Z^{\m_1}}{\lp\mE_{{\mathcal U}_1} Z^{\m_1}\rp^{2-\frac{\m_2}{\m_1}}}   \frac{1}{Z} s (C^{(p_1)})^{s-1}\sum_{p_2=1}^{l}A^{(p_1,p_2)}\beta_{p_1}\y_j^{(p_2)}\sqrt{t}.
\end{eqnarray}
Combining  the expression for $\Theta_{G,2}^{(2)}$ from (\ref{eq:lev2genGanal3}) with (\ref{eq:lev2genGanal7b}), we find
\begin{eqnarray}\label{eq:lev2genGanal8}
 \mE_{{\mathcal U}_2} \Theta_{G,2}^{(2)}  & = &
 \Bigg( \Bigg.  \mE_{{\mathcal U}_2}
 \frac{\lp\mE_{{\mathcal U}_1} \lp Z^{\m_1}\rp\rp^{\frac{\m_2}{\m_1}}}{
 \mE_{{\mathcal U}_2}\lp\lp   \mE_{{\mathcal U}_1} \lp Z^{\m_1}\rp\rp^{\frac{\m_2}{\m_1}}\rp}
 \frac{(C^{(i_1)})^{s-1} A^{(i_1,i_2)}\y_j^{(i_2)}}{Z^{1-\m_1}} 
 \nonumber \\
& & \times \sum_{p_1=1}^{l} \mE (\u_j^{(i_1,1)}\u_j^{(p_1,1)})
 \mE_{{\mathcal U}_1}  \frac{(\m_1-\m_2)Z^{\m_1}}{\lp\mE_{{\mathcal U}_1} Z^{\m_1}\rp^2}\frac{1}{Z} s (C^{(p_1)})^{s-1}\sum_{p_2=1}^{l}A^{(p_1,p_2)}\beta_{p_1}\y_j^{(p_2)}\sqrt{t} \Bigg. \Bigg) \nonumber \\
& = & \mE_{{\mathcal U}_2}
 \frac{\lp\mE_{{\mathcal U}_1} \lp Z^{\m_1}\rp\rp^{\frac{\m_2}{\m_1}}}{
 \mE_{{\mathcal U}_2}\lp\lp   \mE_{{\mathcal U}_1} \lp Z^{\m_1}\rp\rp^{\frac{\m_2}{\m_1}}\rp}
\frac{\m_1-\m_2}{\m_1} \Theta_{G,2} \nonumber\\
& = & \Phi_{{\mathcal U}_2}\frac{\m_1-\m_2}{\m_1} \Theta_{G,2}.
  \end{eqnarray}
Connecting (\ref{eq:genGanal9}), $T_{G,j}^{d_1}$ from (\ref{eq:lev2genGanal3}), and (\ref{eq:lev2genGanal8}), we also obtain
\begin{eqnarray}\label{eq:lev2genGanal9}
 \sum_{i_1=1}^{l} \sum_{i_2=1}^{l}\sum_{j=1}^{m}\beta_{i_1} \frac{T_{G,j}^{d_1}}{\sqrt{t}}
 & = & \sum_{i_1=1}^{l} \sum_{i_2=1}^{l}\sum_{j=1}^{m}\beta_{i_1} \frac{\mE_{G,{\mathcal U}_3}\lp \mE_{{\mathcal U}_2} \mE_{{\mathcal U}_1}  \Theta_{G,2}^{(2)}\rp}{\sqrt{t}} \nonumber\\
 & = &  \mE_{G,{\mathcal U}_3}\lp \mE_{{\mathcal U}_2} \mE_{{\mathcal U}_1} \sum_{i_1=1}^{l} \sum_{i_2=1}^{l}\sum_{j=1}^{m}\beta_{i_1} \frac{\Theta_{G,2}^{(2)}}{\sqrt{t}}\rp \nonumber\\
 & = &  \mE_{G,{\mathcal U}_3}\lp \Phi_{{\mathcal U}_2}\frac{\m_1-\m_2}{\m_1} \mE_{{\mathcal U}_1} \sum_{i_1=1}^{l} \sum_{i_2=1}^{l}\sum_{j=1}^{m}\beta_{i_1} \frac{\Theta_{G,2}}{\sqrt{t}}\rp \nonumber\\
 & = &  -\mE_{G,{\mathcal U}_3}\lp \Phi_{{\mathcal U}_2}\frac{\m_1-\m_2}{\m_1} s\beta^2\m_1\langle(\x^{(p_1)})^T\x^{(i_1)} (\y^{(p_2)})^T\y^{(i_2)} \rangle_{\gamma_{2}^{(1)}}\rp \nonumber\\
  & = & -s\beta^2(\m_1-\m_2)\mE_{G,{\mathcal U}_3} \langle(\x^{(p_1)})^T\x^{(i_1)} (\y^{(p_2)})^T\y^{(i_2)} \rangle_{\gamma_{2}^{(2)}}.
 \end{eqnarray}

To determine $T_{G,j}^{d_2}$, we start by writing similarly to (\ref{eq:lev2x3lev2genFanal24})
\begin{eqnarray}\label{eq:lev2genGanal10}
\frac{d}{d \u_j^{(p_1,1)}} \lp \frac{1}{\mE_{{\mathcal U}_2}\lp\lp\mE_{{\mathcal U}_1} \lp Z^{\m_1}\rp\rp^{\frac{\m_2}{\m_1}}\rp}\rp
 & = & -\frac{1}{\lp\mE_{{\mathcal U}_2}\lp\lp\mE_{{\mathcal U}_1} \lp Z^{\m_1}\rp\rp^{\frac{\m_2}{\m_1}}\rp\rp^2}
   \mE_{{{\mathcal U}_2}}
\frac{\m_2}{\m_1}   \lp\mE_{{\mathcal U}_1} \lp Z^{\m_1}\rp\rp^{\frac{\m_2}{\m_1}-1}
\mE_{{\mathcal U}_1}\frac{d Z^{\m_1}}{d \u_j^{(p_1,1)}} \nonumber \\
 & = & -\frac{1}{\lp\mE_{{\mathcal U}_2}\lp\lp\mE_{{\mathcal U}_1} \lp Z^{\m_1}\rp\rp^{\frac{\m_2}{\m_1}}\rp\rp^2}
 \times \mE_{{{\mathcal U}_2}}
\frac{\m_2}{\m_1}   \lp\mE_{{\mathcal U}_1} \lp Z^{\m_1}\rp\rp^{\frac{\m_2}{\m_1}-1}\nonumber\\
& &
\times \mE_{{\mathcal U}_1}   \m_1   \frac{Z^{\m_1}}{Z} s (C^{(p_1)})^{s-1}\sum_{p_2=1}^{l}A^{(p_1,p_2)}\beta_{p_1}\y_j^{(p_2)}\sqrt{t}. \nonumber \\
\end{eqnarray}
Combining  the expression for $\Theta_{G,3}^{(2)}$ from (\ref{eq:lev2genGanal3}) with (\ref{eq:lev2genGanal10}), we find
{\small \begin{eqnarray}\label{eq:lev2genGanal11}
 \mE_{{\mathcal U}_2,{\mathcal U}_1} \Theta_{G,3}^{(2)}  & = &
s\beta^2\m_2 \Bigg( \Bigg.  \mE_{{\mathcal U}_2}
 \frac{\lp\mE_{{\mathcal U}_1} \lp Z^{\m_1}\rp\rp^{\frac{\m_2}{\m_1}}}{
 \mE_{{\mathcal U}_2}\lp\lp   \mE_{{\mathcal U}_1} \lp Z^{\m_1}\rp\rp^{\frac{\m_2}{\m_1}}\rp}
\mE_{{\mathcal U}_1} \frac{Z^{\m_1}}{\lp\mE_{{\mathcal U}_1} \lp Z^{\m_1}\rp\rp^{\frac{\m_2}{\m_1}}}
  \frac{(C^{(i_1)})^s}{Z}
  \frac{ A^{(i_1,i_2)}}{C^{(i_1)}}\y_j^{(i_2)}
  \sum_{p_1=1}^{l} \frac{(\x^{(p_1)})^T\x^{(i_1)}}{\beta_{i_1}}
 \nonumber \\
& & \times
\mE_{{\mathcal U}_2}
 \frac{\lp\mE_{{\mathcal U}_1} \lp Z^{\m_1}\rp\rp^{\frac{\m_2}{\m_1}}}{
 \mE_{{\mathcal U}_2}\lp\lp   \mE_{{\mathcal U}_1} \lp Z^{\m_1}\rp\rp^{\frac{\m_2}{\m_1}}\rp}
\mE_{{\mathcal U}_1} \frac{Z^{\m_1}}{\lp\mE_{{\mathcal U}_1} \lp Z^{\m_1}\rp\rp^{\frac{\m_2}{\m_1}}}
\frac{1}{Z}  (C^{(p_1)})^{s-1}\sum_{p_2=1}^{l}A^{(p_1,p_2)}\y_j^{(p_2)}\sqrt{t} \Bigg. \Bigg).
  \end{eqnarray}}

\noindent Combining $T_{G,j}^{d_2}$ from (\ref{eq:lev2genGanal3}), and (\ref{eq:lev2genGanal11}), we obtain
\begin{eqnarray}\label{eq:lev2genGanal12}
 \sum_{i_1=1}^{l} \sum_{i_2=1}^{l}\sum_{j=1}^{m}\beta_{i_1} \frac{T_{G,j}^{d_2}}{\sqrt{t}}
 & = & \sum_{i_1=1}^{l} \sum_{i_2=1}^{l}\sum_{j=1}^{m}\beta_{i_1} \frac{\mE_{G,{\mathcal U}_3}\lp \mE_{{\mathcal U}_2} \mE_{{\mathcal U}_1}  \Theta_{G,3}^{(2)}\rp}{\sqrt{t}} \nonumber\\
   & = & -s\beta^2\m_2\mE_{G,{\mathcal U}_3} \langle(\x^{(p_1)})^T\x^{(i_1)} (\y^{(p_2)})^T\y^{(i_2)} \rangle_{\gamma_{3}^{(2)}}.
 \end{eqnarray}
A further combination of (\ref{eq:lev2genGanal4}), (\ref{eq:lev2genGanal6}), (\ref{eq:lev2genGanal9}), and (\ref{eq:lev2genGanal12}) gives
\begin{eqnarray}\label{eq:lev2genGanal10}
 \sum_{i_1=1}^{l} \sum_{i_2=1}^{l}\sum_{j=1}^{m}\beta_{i_1} \frac{T_{G,j}}{\sqrt{t}}
 & = &  \sum_{i_1=1}^{l} \sum_{i_2=1}^{l}\sum_{j=1}^{m}\beta_{i_1} \frac{T_{G,j}^c+T_{G,j}^{d_1}+T_{G,j}^{d_2}}{\sqrt{t}} \nonumber \\
& = & \beta^2  \lp \mE_{G,{\mathcal U}_3} \langle \|\x^{(i_1)}\|_2^2\|\y^{(i_2)}\|_2^2\rangle_{\gamma_{01}^{(2)}} +   (s-1) \mE_{G,{\mathcal U}_3}\langle \|\x^{(i_1)}\|_2^2(\y^{(p_2)})^T\y^{(i_2)}\rangle_{\gamma_{02}^{(2)}}   \rp    \nonumber \\
 &  & -s\beta^2(1-\m_1) \mE_{G,{\mathcal U}_3}\langle (\x^{(p_1)})^T\x^{(i_1)}(\y^{(p_2)})^T\y^{(i_2)}\rangle_{\gamma_{1}^{(2)}} \nonumber \\
 &  & -s\beta^2(\m_1-\m_2)\mE_{G,{\mathcal U}_3} \langle(\x^{(p_1)})^T\x^{(i_1)} (\y^{(p_2)})^T\y^{(i_2)} \rangle_{\gamma_{2}^{(2)}} \nonumber \\
  &  & -s\beta^2\m_2\mE_{G,{\mathcal U}_3} \langle(\x^{(p_1)})^T\x^{(i_1)} (\y^{(p_2)})^T\y^{(i_2)} \rangle_{\gamma_{3}^{(2)}}.
 \end{eqnarray}

\subsubsection{Connecting all pieces together}
\label{sec:lev2conalt}

As in Section \ref{sec:gencon}, we now connect together all the pieces obtained above. To do so, we utilize (\ref{eq:lev2genanal10e}) and (\ref{eq:lev2genanal10f}) to write
\begin{eqnarray}\label{eq:lev2ctp1}
\frac{d\psi(\calX,\calY,\q,\m,\beta,s,t)}{dt}  & = &       \frac{\mbox{sign}(s)}{2\beta\sqrt{n}} \lp \Omega_G+\Omega_1+\Omega_2+\Omega_3\rp,
\end{eqnarray}
where
\begin{eqnarray}\label{eq:lev2ctp2}
\Omega_G & = & \sum_{i_1=1}^{l}  \sum_{i_2=1}^{l}\sum_{j=1}^{m} \beta_{i_1}\frac{T_{G,j}}{\sqrt{t}}  \nonumber\\
\Omega_1 & = &
-\sum_{i_1=1}^{l}  \sum_{i_2=1}^{l} \sum_{j=1}^{m}\beta_{i_1}\frac{T_{3,1,j}}{\sqrt{1-t}}-\sum_{i_1=1}^{l}  \sum_{i_2=1}^{l} \sum_{j=1}^{m}\beta_{i_1}\frac{T_{2,1,j}}{\sqrt{1-t}}
-\sum_{i_1=1}^{l}  \sum_{i_2=1}^{l} \sum_{j=1}^{m}\beta_{i_1}\frac{T_{1,1,j}}{\sqrt{1-t}} \nonumber\\
\Omega_2 & = &
-\sum_{i_1=1}^{l}  \sum_{i_2=1}^{l}\beta_{i_1}\|\y^{(i_2)}\|_2\frac{T_{3,2}}{\sqrt{1-t}}
-\sum_{i_1=1}^{l}  \sum_{i_2=1}^{l}\beta_{i_1}\|\y^{(i_2)}\|_2\frac{T_{2,2}}{\sqrt{1-t}}
-\sum_{i_1=1}^{l}  \sum_{i_2=1}^{l}\beta_{i_1}\|\y^{(i_2)}\|_2\frac{T_{1,2}}{\sqrt{1-t}} \nonumber\\
\Omega_3 & = &
\sum_{i_1=1}^{l}  \sum_{i_2=1}^{l}\beta_{i_1}\|\y^{(i_2)}\|_2\frac{T_{3,3}}{\sqrt{t}}
\sum_{i_1=1}^{l}  \sum_{i_2=1}^{l}\beta_{i_1}\|\y^{(i_2)}\|_2\frac{T_{2,3}}{\sqrt{t}}
+ \sum_{i_1=1}^{l}  \sum_{i_2=1}^{l}\beta_{i_1}\|\y^{(i_2)}\|_2\frac{T_{1,3}}{\sqrt{t}}.
\end{eqnarray}
From (\ref{eq:lev2genGanal10}) we have
\begin{eqnarray}\label{eq:lev2cpt3}
\Omega_G & = & \beta^2 \lp \mE_{G,{\mathcal U}_3} \langle \|\x^{(i_1)}\|_2^2\|\y^{(i_2)}\|_2^2\rangle_{\gamma_{01}^{(2)}} +  (s-1) \mE_{G,{\mathcal U}_3}\langle \|\x^{(i_1)}\|_2^2(\y^{(p_2)})^T\y^{(i_2)}\rangle_{\gamma_{02}^{(2)}}   \rp    \nonumber \\
 &  & -s\beta^2(1-\m_1) \mE_{G,{\mathcal U}_3}\langle (\x^{(p_1)})^T\x^{(i_1)}(\y^{(p_2)})^T\y^{(i_2)}\rangle_{\gamma_{1}^{(2)}} \nonumber \\
 &  & -s\beta^2(\m_1-\m_2)\mE_{G,{\mathcal U}_3} \langle(\x^{(p_1)})^T\x^{(i_1)} (\y^{(p_2)})^T\y^{(i_2)} \rangle_{\gamma_{2}^{(2)}} \nonumber \\
  &  & -s\beta^2\m_2\mE_{G,{\mathcal U}_3} \langle(\x^{(p_1)})^T\x^{(i_1)} (\y^{(p_2)})^T\y^{(i_2)} \rangle_{\gamma_{3}^{(2)}}.
 \end{eqnarray}
From (\ref{eq:lev2liftgenAanal19i}), (\ref{eq:lev2genDanal25}), and (\ref{eq:lev2x3lev2genDanal25}) we have
\begin{eqnarray}\label{eq:lev2cpt4}
-\Omega_1
& = & (\p_0-\p_1)\beta^2 \lp \mE_{G,{\mathcal U}_3}\langle \|\x^{(i_1)}\|_2^2\|\y^{(i_2)}\|_2^2\rangle_{\gamma_{01}^{(2)}} +  (s-1)\mE_{G,{\mathcal U}_3}\langle \|\x^{(i_1)}\|_2^2(\y^{(p_2)})^T\y^{(i_2)}\rangle_{\gamma_{02}^{(2)}} \rp \nonumber \\
& & - (\p_0-\p_1)s\beta^2(1-\m_1)\mE_{G,{\mathcal U}_3}\langle \|\x^{(i_1)}\|_2\|\x^{(p_1)}\|_2(\y^{(p_2)})^T\y^{(i_2)} \rangle_{\gamma_{1}^{(2)}} \nonumber \\
&  & +(\p_1-\p_2)\beta^2\lp \mE_{G,{\mathcal U}_3}\langle \|\x^{(i_1)}\|_2^2\|\y^{(i_2)}\|_2^2\rangle_{\gamma_{01}^{(2)}} +  (s-1)\mE_{G,{\mathcal U}_3}\langle \|\x^{(i_1)}\|_2^2(\y^{(p_2)})^T\y^{(i_2)}\rangle_{\gamma_{02}^{(2)}} \rp\nonumber \\
& & - (\p_1-\p_2)s\beta^2(1-\m_1)\mE_{G,{\mathcal U}_3}\langle \|\x^{(i_1)}\|_2\|\x^{(p_1)}\|_2(\y^{(p_2)})^T\y^{(i_2)} \rangle_{\gamma_{1}^{(2)}}\nonumber \\
 &   &
  -s\beta^2(\p_1-\p_2)(\m_1-\m_2)\mE_{G,{\mathcal U}_3} \langle \|\x^{(i_1)}\|_2\|\x^{(p_1)}\|_2(\y^{(p_2)})^T\y^{(i_2)} \rangle_{\gamma_{2}^{(2)}} \nonumber \\
&  & +\p_2 \beta^2\lp \mE_{G,{\mathcal U}_3}\langle \|\x^{(i_1)}\|_2^2\|\y^{(i_2)}\|_2^2\rangle_{\gamma_{01}^{(2)}} +  (s-1)\mE_{G,{\mathcal U}_3}\langle \|\x^{(i_1)}\|_2^2(\y^{(p_2)})^T\y^{(i_2)}\rangle_{\gamma_{02}^{(2)}} \rp\nonumber \\
& & - \p_2s\beta^2(1-\m_1)\mE_{G,{\mathcal U}_3}\langle \|\x^{(i_1)}\|_2\|\x^{(p_1)}\|_2(\y^{(p_2)})^T\y^{(i_2)} \rangle_{\gamma_{1}^{(2)}}\nonumber \\
 &   &
  -s\beta^2\p_2(\m_1-\m_2)\mE_{G,{\mathcal U}_3} \langle \|\x^{(i_1)}\|_2\|\x^{(p_1)}\|_2(\y^{(p_2)})^T\y^{(i_2)} \rangle_{\gamma_{2}^{(2)}}\nonumber \\
 &  & -s\beta^2\p_2\m_2\mE_{G,{\mathcal U}_3} \langle \|\x^{(i_1)}\|_2\|\x^{(p_1)}\|_2(\y^{(p_2)})^T\y^{(i_2)} \rangle_{\gamma_{3}^{(2)}} \nonumber \\
 & = & \p_0 \beta^2\lp\mE_{G,{\mathcal U}_3}\langle \|\x^{(i_1)}\|_2^2\|\y^{(i_2)}\|_2^2\rangle_{\gamma_{01}^{(2)}} +  (s-1)\mE_{G,{\mathcal U}_3}\langle \|\x^{(i_1)}\|_2^2(\y^{(p_2)})^T\y^{(i_2)}\rangle_{\gamma_{02}^{(2)}} \rp\nonumber \\
& & - \p_0s\beta^2(1-\m_1)\mE_{G,{\mathcal U}_3}\langle \|\x^{(i_1)}\|_2\|\x^{(p_1)}\|_2(\y^{(p_2)})^T\y^{(i_2)} \rangle_{\gamma_{1}^{(2)}}\nonumber \\
 &   &
  -s\beta^2\p_1(\m_1-\m_2)\mE_{G,{\mathcal U}_3} \langle \|\x^{(i_1)}\|_2\|\x^{(p_1)}\|_2(\y^{(p_2)})^T\y^{(i_2)} \rangle_{\gamma_{2}^{(2)}}\nonumber \\
 &  & -s\beta^2\p_2\m_2\mE_{G,{\mathcal U}_3} \langle \|\x^{(i_1)}\|_2\|\x^{(p_1)}\|_2(\y^{(p_2)})^T\y^{(i_2)} \rangle_{\gamma_{3}^{(2)}}.
   \end{eqnarray}
From (\ref{eq:lev2liftgenBanal20b}), (\ref{eq:lev2genEanal25}), and (\ref{eq:lev2x3lev2genEanal25}), we have
\begin{eqnarray}\label{eq:lev2cpt5}
-\Omega_2 & = & (\q_0-\q_1)\beta^2\lp\mE_{G,{\mathcal U}_3}\langle \|\x^{(i_1)}\|_2^2\|\y^{(i_2)}\|_2^2\rangle_{\gamma_{01}^{(2)}} +  (s-1)\mE_{G,{\mathcal U}_3}\langle \|\x^{(i_1)}\|_2^2 \|\y^{(i_2)}\|_2\|\y^{(p_2)}\|_2\rangle_{\gamma_{02}^{(2)}}\rp\nonumber \\
& & - (\q_0-\q_1)s\beta^2(1-\m_1)\mE_{G,{\mathcal U}_3}\langle (\x^{(p_1)})^T\x^{(i_1)}\|\y^{(i_2)}\|_2\|\y^{(p_2)}\|_2 \rangle_{\gamma_{1}^{(2)}}\nonumber \\
&  & +(\q_1-\q_2)\beta^2 \lp \mE_{G,{\mathcal U}_3}\langle \|\x^{(i_1)}\|_2^2\|\y^{(i_2)}\|_2^2\rangle_{\gamma_{01}^{(2)}} +   (s-1)\mE_{G,{\mathcal U}_3}\langle \|\x^{(i_1)}\|_2^2 \|\y^{(i_2)}\|_2\|\y^{(p_2)}\|_2\rangle_{\gamma_{02}^{(2)}}\rp\nonumber \\
& & - (\q_1-\q_2)s\beta^2(1-\m_1)\mE_{G,{\mathcal U}_3}\langle (\x^{(p_1)})^T\x^{(i_1)}\|\y^{(i_2)}\|_2\|\y^{(p_2)}\|_2 \rangle_{\gamma_{1}^{(2)}} \nonumber \\
&  & -s\beta^2(\q_1-\q_2)(\m_1-\m_2)\mE_{G,{\mathcal U}_2} \langle \|\y^{(i_2)}\|_2\|\y^{(p_2)}\|_2(\x^{(i_1)})^T\x^{(p_1)}\rangle_{\gamma_{2}^{(2)}} \nonumber \\
&  & +\q_2\beta^2\lp\mE_{G,{\mathcal U}_3}\langle \|\x^{(i_1)}\|_2^2\|\y^{(i_2)}\|_2^2\rangle_{\gamma_{01}^{(2)}} +  (s-1)\mE_{G,{\mathcal U}_3}\langle \|\x^{(i_1)}\|_2^2 \|\y^{(i_2)}\|_2\|\y^{(p_2)}\|_2\rangle_{\gamma_{02}^{(2)}}\rp\nonumber \\
& & - \q_2s\beta^2(1-\m_1)\mE_{G,{\mathcal U}_3}\langle (\x^{(p_1)})^T\x^{(i_1)}\|\y^{(i_2)}\|_2\|\y^{(p_2)}\|_2 \rangle_{\gamma_{1}^{(2)}} \nonumber \\
&  & -s\beta^2\q_2(\m_1-\m_2)\mE_{G,{\mathcal U}_3} \langle \|\y^{(i_2)}\|_2\|\y^{(p_2)}\|_2(\x^{(i_1)})^T\x^{(p_1)}\rangle_{\gamma_{2}^{(2)}} \nonumber \\
& & -s\beta^2 \q_2 \m_2\mE_{G,{\mathcal U}_3} \langle \|\y^{(i_2)}\|_2\|\y^{(p_2)}\|_2(\x^{(i_1)})^T\x^{(p_1)}\rangle_{\gamma_{3}^{(2)}} \nonumber \\
& = & \q_0 \beta^2 \lp \mE_{G,{\mathcal U}_3}\langle \|\x^{(i_1)}\|_2^2\|\y^{(i_2)}\|_2^2\rangle_{\gamma_{01}^{(2)}} +   (s-1)\mE_{G,{\mathcal U}_3}\langle \|\x^{(i_1)}\|_2^2 \|\y^{(i_2)}\|_2\|\y^{(p_2)}\|_2\rangle_{\gamma_{02}^{(2)}}\rp\nonumber \\
& & - \q_0s\beta^2(1-\m_1)\mE_{G,{\mathcal U}_3}\langle (\x^{(p_1)})^T\x^{(i_1)}\|\y^{(i_2)}\|_2\|\y^{(p_2)}\|_2 \rangle_{\gamma_{1}^{(2)}} \nonumber \\
&  & -s\beta^2\q_1(\m_1-\m_2)\mE_{G,{\mathcal U}_3} \langle \|\y^{(i_2)}\|_2\|\y^{(p_2)}\|_2(\x^{(i_1)})^T\x^{(p_1)}\rangle_{\gamma_{2}^{(2)}} \nonumber \\
& & -s\beta^2 \q_2 \m_2\mE_{G,{\mathcal U}_3} \langle \|\y^{(i_2)}\|_2\|\y^{(p_2)}\|_2(\x^{(i_1)})^T\x^{(p_1)}\rangle_{\gamma_{3}^{(2)}}.
\end{eqnarray}

From (\ref{eq:lev2liftgenCanal21b}) and (\ref{eq:lev2genFanal29}), we have
  \begin{eqnarray}\label{eq:lev2cpt6}
\Omega_3
& = & (\p_0\q_0-\p_1\q_1)\beta^2 \lp \mE_{G,{\mathcal U}_3}\langle \|\x^{(i_1)}\|_2^2\|\y^{(i_2)}\|_2^2\rangle_{\gamma_{01}^{(2)}} +   (s-1)\mE_{G,{\mathcal U}_3}\langle \|\x^{(i_1)}\|_2^2 \|\y^{(i_2)}\|_2\|\y^{(p_2)}\|_2\rangle_{\gamma_{02}^{(2)}}\rp\nonumber \\
& & - (\p_0\q_0-\p_1\q_1)s\beta^2(1-\m_1)\mE_{G,{\mathcal U}_3}\langle \|\x^{(i_1)}\|_2\|\x^{(p_`)}\|_2\|\y^{(i_2)}\|_2\|\y^{(p_2)}\|_2 \rangle_{\gamma_{1}^{(2)}} \nonumber \\
&  &
+(\p_1\q_1-\p_2\q_2)\beta^2\lp\mE_{G,{\mathcal U}_3}\langle \|\x^{(i_1)}\|_2^2\|\y^{(i_2)}\|_2^2\rangle_{\gamma_{01}^{(2)}} +   (s-1)\mE_{G,{\mathcal U}_3}\langle \|\x^{(i_1)}\|_2^2 \|\y^{(i_2)}\|_2\|\y^{(p_2)}\|_2\rangle_{\gamma_{02}^{(2)}}\rp\nonumber \\
& & - (\p_1\q_1-\p_2\q_2) s\beta^2(1-\m_1)\mE_{G,{\mathcal U}_3}\langle \|\x^{(i_1)}\|_2\|\x^{(p_`)}\|_2\|\y^{(i_2)}\|_2\|\y^{(p_2)}\|_2 \rangle_{\gamma_{1}^{(2)}} \nonumber \\
&  & -s\beta^2(\p_1\q_1-\p_2\q_2)(\m_1-\m_2)\mE_{G,{\mathcal U}_3} \langle\|\x^{(i_2)}\|_2\|\x^{(p_2)}\|_2\|\y^{(i_2)}\|_2\|\y^{(p_2)}\rangle_{\gamma_{2}^{(2)}} \nonumber \\
&  &
+\p_2\q_2\beta^2\lp\mE_{G,{\mathcal U}_3}\langle \|\x^{(i_1)}\|_2^2\|\y^{(i_2)}\|_2^2\rangle_{\gamma_{01}^{(2)}} +  (s-1)\mE_{G,{\mathcal U}_3}\langle \|\x^{(i_1)}\|_2^2 \|\y^{(i_2)}\|_2\|\y^{(p_2)}\|_2\rangle_{\gamma_{02}^{(2)}}\rp\nonumber \\
& & -  \p_2\q_2 s\beta^2(1-\m_1)\mE_{G,{\mathcal U}_3}\langle \|\x^{(i_1)}\|_2\|\x^{(p_`)}\|_2\|\y^{(i_2)}\|_2\|\y^{(p_2)}\|_2 \rangle_{\gamma_{1}^{(2)}} \nonumber \\
&  & -s\beta^2 \p_2\q_2(\m_1-\m_2)\mE_{G,{\mathcal U}_3} \langle\|\x^{(i_2)}\|_2\|\x^{(p_2)}\|_2\|\y^{(i_2)}\|_2\|\y^{(p_2)}\rangle_{\gamma_{2}^{(2)}} \nonumber \\
 &  & -s\beta^2 \p_2\q_2 \m_2\mE_{G,{\mathcal U}_3} \langle\|\x^{(i_2)}\|_2\|\x^{(p_2)}\|_2\|\y^{(i_2)}\|_2\|\y^{(p_2)}\rangle_{\gamma_{3}^{(2)}} \nonumber\\
 & = &
\p_0\q_0\beta^2\lp\mE_{G,{\mathcal U}_3}\langle \|\x^{(i_1)}\|_2^2\|\y^{(i_2)}\|_2^2\rangle_{\gamma_{01}^{(2)}} +   (s-1)\mE_{G,{\mathcal U}_3}\langle \|\x^{(i_1)}\|_2^2 \|\y^{(i_2)}\|_2\|\y^{(p_2)}\|_2\rangle_{\gamma_{02}^{(2)}}\rp\nonumber \\
& & -  \p_0\q_0 s\beta^2(1-\m_1)\mE_{G,{\mathcal U}_3}\langle \|\x^{(i_1)}\|_2\|\x^{(p_`)}\|_2\|\y^{(i_2)}\|_2\|\y^{(p_2)}\|_2 \rangle_{\gamma_{1}^{(2)}} \nonumber \\
&  & -s\beta^2 \p_1\q_1(\m_1-\m_2)\mE_{G,{\mathcal U}_3} \langle\|\x^{(i_2)}\|_2\|\x^{(p_2)}\|_2\|\y^{(i_2)}\|_2\|\y^{(p_2)}\rangle_{\gamma_{2}^{(2)}} \nonumber \\
 &  & -s\beta^2 \p_2\q_2 \m_2\mE_{G,{\mathcal U}_3} \langle\|\x^{(i_2)}\|_2\|\x^{(p_2)}\|_2\|\y^{(i_2)}\|_2\|\y^{(p_2)}\rangle_{\gamma_{3}^{(2)}}.
\end{eqnarray}
Finally, combining (\ref{eq:lev2ctp1}) with (\ref{eq:lev2cpt3})-(\ref{eq:lev2cpt6}), we obtain
\begin{eqnarray}\label{eq:lev2cpt7}
\frac{d\psi(\calX,\calY,\q,\m,\beta,s,t)}{dt}  & = &       \frac{\mbox{sign}(s)\beta}{2\sqrt{n}} \lp \phi_1^{(2)}+\phi_2^{(2)}+\phi_3^{(2)} +\phi_{01}^{(2)} +\phi_{02}^{(2)} \rp ,
 \end{eqnarray}
where
\begin{eqnarray}\label{eq:lev2cpt8}
\phi_1^{(2)} & = &
-s(1-\m_1)\mE_{G,{\mathcal U}_3} \langle (\p_0\|\x^{(i_1)}\|_2\|\x^{(p_1)}\|_2 -(\x^{(p_1)})^T\x^{(i_1)})(\q_0\|\y^{(i_2)}\|_2\|\y^{(p_2)}\|_2 -(\y^{(p_2)})^T\y^{(i_2)})\rangle_{\gamma_{1}^{(2)}} \nonumber \\
\phi_2^{(2)} & = &
-s(\m_1-\m_2)\mE_{G,{\mathcal U}_3} \langle (\p_1\|\x^{(i_1)}\|_2\|\x^{(p_1)}\|_2 -(\x^{(p_1)})^T\x^{(i_1)})(\q_1\|\y^{(i_2)}\|_2\|\y^{(p_2)}\|_2 -(\y^{(p_2)})^T\y^{(i_2)})\rangle_{\gamma_{2}^{(2)}} \nonumber \\
\phi_3^{(2)} & = &
-s\m_2\mE_{G,{\mathcal U}_3} \langle (\p_2\|\x^{(i_1)}\|_2\|\x^{(p_1)}\|_2 -(\x^{(p_1)})^T\x^{(i_1)})(\q_2\|\y^{(i_2)}\|_2\|\y^{(p_2)}\|_2 -(\y^{(p_2)})^T\y^{(i_2)})\rangle_{\gamma_{3}^{(2)}} \nonumber \\
\phi_{01}^{(2)} & = & (1-\p_0)(1-\q_0)\mE_{G,{\mathcal U}_3}\langle \|\x^{(i_1)}\|_2^2\|\y^{(i_2)}\|_2^2\rangle_{\gamma_{01}^{(2)}} \nonumber\\
\phi_{02}^{(2)} & = & (1-\p_0)\mE_{G,{\mathcal U}_3}\left\langle \|\x^{(i_1)}\|_2^2 \lp\q_0\|\y^{(i_2)}\|_2\|\y^{(p_2)}\|_2-(\y^{(p_2)})^T\y^{(i_2)}\rp\right\rangle_{\gamma_{02}^{(2)}}. \end{eqnarray}

We summarize the above into the following proposition.
\begin{proposition}
\label{thm:thm2}
Let $k=\{1,2,3\}$ and $G\in\mR^{m \times n},u^{(4,k)}\in\mR^1,\u^{(2,k)}\in\mR^{m\times 1}$, and $\h^{(k)}\in\mR^{n\times 1}$ all have i.i.d. zero-mean normal components (they are then independent of each other as well). For vectors $\m=[\m_1,\m_2]$, $\p=[\p_0,\p_1,\p_2,\p_3]$ with $\p_0\geq \p_1\geq \p_2\geq \p_3= 0$ and $\q=[\q_0,\q_1,\q_2,\q_3]$ with $\q_0\geq \q_1\geq \q_2\geq \q_3= 0$, let the variances of the components of $G$, $u^{(4,k)}$, $\u^{(2,k)}$, and $\h^{(k)}$ be $1$, $\p_{k-1}\q_{k-1}-\p_k\q_k$, $\p_{k-1}-\p_{k}$, $\q_{k-1}-\q_{k}$, respectively.
 Assume that set ${\mathcal X}=\{\x^{(1)},\x^{(2)},\dots,\x^{(l)}\}$, where $\x^{(i)}\in \mR^{n},1\leq i\leq l$, and set ${\mathcal Y}=\{\y^{(1)},\y^{(2)},\dots,\y^{(l)}\}$, where $\y^{(i)}\in \mR^{m},1\leq i\leq l$ are given and that $\beta\geq 0$ and $s$ are real numbers. Set ${\mathcal U}_k=[u^{(4,k)},\u^{(2,k)},\h^{(2k)}]$
 and consider the following function
\begin{equation}\label{eq:thm2eq1}
\psi(\calX,\calY,\p,\q,\m,\beta,s,t)  =  \mE_{G,{\mathcal U}_3} \frac{1}{\beta|s|\sqrt{n}\m_2} \log \lp \mE_{{\mathcal U}_2}\lp\lp\mE_{{\mathcal U}_1} \lp Z^{\m_1}\rp\rp^{\frac{\m_2}{\m_1}}\rp\rp,
\end{equation}
where
\begin{eqnarray}\label{eq:thm2eq2}
Z & \triangleq & \sum_{i_1=1}^{l}\lp\sum_{i_2=1}^{l}e^{\beta D_0^{(i_1,i_2)}} \rp^{s} \nonumber \\
 D_0^{(i_1,i_2)} & \triangleq & \sqrt{t}(\y^{(i_2)})^T
 G\x^{(i_1)}+\sqrt{1-t}\|\x^{(i_1)}\|_2 (\y^{(i_2)})^T(\u^{(2,1)}+\u^{(2,2)}+\u^{(2,3)})\nonumber \\
 & & +\sqrt{t}\|\x^{(i_1)}\|_2\|\y^{(i_2)}\|_2(u^{(4,1)}+u^{(4,2)}+u^{(4,3)}) +\sqrt{1-t}\|\y^{(i_2)}\|_2(\h^{(1)}+\h^{(2)}+\h^{(3)})^T\x^{(i_1)}.\nonumber \\
 \end{eqnarray}
Then
\begin{eqnarray}\label{eq:prop1eq3}
\frac{d\psi(\calX,\calY,\q,\m,\beta,s,t)}{dt}  & = &        \frac{\mbox{sign}(s)\beta}{2\sqrt{n}} \lp \phi_1^{(2)}+\phi_2^{(2)}+\phi_3^{(2)} +\phi_{01}^{(2)} +\phi_{02}^{(2)} \rp,
 \end{eqnarray}
where $\phi$'s are as in (\ref{eq:lev2cpt8}) and $\gamma$'s are as defined in (\ref{eq:lev2genAanal19d2}) and (\ref{eq:lev2genAanal19e}).
\end{proposition}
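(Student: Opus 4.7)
The plan is to mirror the Section \ref{sec:gencon} derivation but now across the two nested lifting levels. I would first write $\frac{d\psi}{dt}$ using the chain rule through all three outer logarithms/powers, arriving at (\ref{eq:lev2genanal9}) and the decomposition (\ref{eq:lev2genanal10e})--(\ref{eq:lev2genanal10g}) of the derivative into the $T_G$ term and the three groups $T_1,T_2,T_3$, with each group containing three sub-terms (one for the $\u^{(2,k)}$, one for the $\h^{(k)}$, and one for the $u^{(4,k)}$ contribution). The grouping is dictated by the fact that $G$ is integrated at the outermost level while ${\mathcal U}_k$ lives at the $k$-th nested expectation, so the behavior under Gaussian integration by parts is qualitatively different at each level.

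Next, I would handle each term by Gaussian integration by parts. For terms in the innermost group $T_1$ (variables in ${\mathcal U}_1$), the derivative acts only on the inner $Z^{\m_1}$ factor, so the situation is structurally the same as in Proposition \ref{thm:thm1} after a rescaling of the variance weights, yielding only contributions to $\gamma_{01}^{(2)}, \gamma_{02}^{(2)}, \gamma_1^{(2)}$. For the $T_2$ group, integration by parts on ${\mathcal U}_2$ yields a clean piece structurally identical to the corresponding $T_1$ piece (scaled by the appropriate variance gap $(\p_1-\p_2)$, $(\q_1-\q_2)$, or $(\p_1\q_1-\p_2\q_2)$) plus a ``duplicate'' piece where the derivative hits the $(\mE_{{\mathcal U}_1}Z^{\m_1})^{1-\m_2/\m_1}$ factor; this duplicate piece produces the new measure $\gamma_2^{(2)}$ through the operator $\Phi_{{\mathcal U}_2}(\Phi_{{\mathcal U}_1}\cdot\times\Phi_{{\mathcal U}_1}\cdot)$. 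For the outermost $T_3$ group, integration by parts on ${\mathcal U}_3$ produces a clean piece, a duplicate piece that resembles the $T_2$ mechanism (contributing to $\gamma_2^{(2)}$), and a further duplicate piece where the derivative hits $\mE_{{\mathcal U}_2}(\cdot)^{\m_2/\m_1}$, which produces $\gamma_3^{(2)}$. The $T_G$ group is handled analogously, yielding contributions into all of $\gamma_{01}^{(2)}, \gamma_{02}^{(2)}, \gamma_1^{(2)}, \gamma_2^{(2)}, \gamma_3^{(2)}$.

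Once all ten terms are computed, I would collect them according to the $\gamma$-measure they involve. The telescoping is the point where everything becomes clean: in the $\gamma_1^{(2)}$-weighted contributions the three level pieces carry weights $(\p_0-\p_1), (\p_1-\p_2), \p_2$ (and similarly for $\q$), which sum to $\p_0$; in the $\gamma_2^{(2)}$-weighted contributions the relevant pieces carry weights $(\p_1-\p_2)$ and $\p_2$, summing to $\p_1$; and the $\gamma_3^{(2)}$-weighted contribution appears only with weight $\p_2$. The $T_G$ contributions supply the negative ``$-(\x^{(p_1)})^T\x^{(i_1)}$'' part of each bracket in (\ref{eq:lev2cpt8}), assembling the characteristic factored form $(\p_k\|\x\|\|\x\|-\x^T\x)(\q_k\|\y\|\|\y\|-\y^T\y)$ under $\gamma_{k+1}^{(2)}$. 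The residual pieces aggregate into $\phi_{01}^{(2)}$ and $\phi_{02}^{(2)}$.

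The main obstacle is bookkeeping: verifying that the ``duplicate'' contributions from integration by parts at levels $2$ and $3$ precisely match, after grouping, the algebraic identities needed to telescope $\sum_k(\p_{k-1}-\p_k)=\p_0$, $\sum_{k\geq 2}(\p_{k-1}-\p_k)=\p_1$, and so on, and that the same collapsing occurs in the mixed $\p\q$ terms from the $u^{(4,k)}$ subgroups. Since each intermediate step proceeds in exact analogy to the computations already carried out in Sections \ref{sec:compderivative} and \ref{sec:lev2compderivative}, the derivation reduces to (\ref{eq:lev2cpt7})--(\ref{eq:lev2cpt8}), which is exactly the claimed identity.
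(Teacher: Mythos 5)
Your proposal is correct and follows essentially the same route as the paper: the same chain-rule expansion into the $T_G$, $T_1$, $T_2$, $T_3$ groups, the same Gaussian integration by parts at each nesting level splitting each term into a scaled "clean" piece plus "duplicate" pieces that generate the new measures $\gamma_2^{(2)}$ and $\gamma_3^{(2)}$, and the same telescoping of the variance gaps $(\p_{k-1}-\p_k)$, $(\q_{k-1}-\q_k)$, $(\p_{k-1}\q_{k-1}-\p_k\q_k)$ to assemble (\ref{eq:lev2cpt8}). The only cosmetic difference is that the paper absorbs the $\gamma_2^{(2)}$ contribution of the $T_3$ group into its "clean" piece (by identifying it structurally with the full $T_2$ term), whereas you list it as a separate duplicate piece; the accounting is equivalent.
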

\begin{proof}
  Follows from the above presentation.
\end{proof}

\section{$r$-th level of full lifting}
\label{sec:rthlev}

We now provide generalization to an arbitrary level, $r\in\mN$,  of full lifting. Section \ref{sec:gencon} provided all key principles that were used in Section \ref{sec:seclev} to present all formal mathematical steps needed to proceed to second and any higher level of lifting. We here finally formalize the final results of applying the procedure from Section \ref{sec:seclev} to any level of lifting $r\in\mN$.

We now consider vectors $\m=[\m_1,\m_2,...,\m_r]$,
$\p=[\p_0,\p_1,...,\p_r,\p_{r+1}]$ with $\p_0\geq \p_1\geq \p_2\geq \dots \geq \p_r\geq\p_{r+1} = 0$ and $\q=[\q_0,\q_1,\q_2,\dots,\q_r,\q_{r+1}]$ with $\q_0\geq \q_1\geq \q_2\geq \dots \geq \q_r\geq \q_{r+1} = 0$. As earlier, we take $G\in\mR^{m \times n}$ and, for any $k\in\{1,2,\dots,r+1\}$,  $u^{(4,k)}\in\mR^1,\u^{(2,k)}\in\mR^{m\times 1}$, and $\h^{(k)}\in\mR^{n\times 1}$ to all have i.i.d. zero-mean normal components (they are then independent of each other as well). Also, we assume that the variances of the components of $G$, $u^{(4,k)}$, $\u^{(2,k)}$, and $\h^{(k)}$ are $1$, $\p_{k-1}\q_{k-1}-\p_k\q_k$, $\p_{k-1}-\p_{k}$, $\q_{k-1}-\q_{k}$, respectively, and ${\mathcal U}_k\triangleq [u^{(4,k)},\u^{(2,k)},\h^{(2k)}]$.  Assuming that set ${\mathcal X}=\{\x^{(1)},\x^{(2)},\dots,\x^{(l)}\}$, where $\x^{(i)}\in \mR^{n},1\leq i\leq l$, and set ${\mathcal Y}=\{\y^{(1)},\y^{(2)},\dots,\y^{(l)}\}$, where $\y^{(i)}\in \mR^{m},1\leq i\leq l$ are given and that $\beta\geq 0$ and $s$ are real numbers, we are interested in the following function
\begin{equation}\label{eq:rthlev2genanal3}
\psi(\calX,\calY,\p,\q,\m,\beta,s,t)  =  \mE_{G,{\mathcal U}_{r+1}} \frac{1}{\beta|s|\sqrt{n}\m_r} \log
\lp \mE_{{\mathcal U}_{r}} \lp \dots \lp \mE_{{\mathcal U}_2}\lp\lp\mE_{{\mathcal U}_1} \lp Z^{\m_1}\rp\rp^{\frac{\m_2}{\m_1}}\rp\rp^{\frac{\m_3}{\m_2}} \dots \rp^{\frac{\m_{r}}{\m_{r-1}}}\rp,
\end{equation}
where
\begin{eqnarray}\label{eq:rthlev2genanal3a}
Z & \triangleq & \sum_{i_1=1}^{l}\lp\sum_{i_2=1}^{l}e^{\beta D_0^{(i_1,i_2)}} \rp^{s} \nonumber \\
 D_0^{(i_1,i_2)} & \triangleq & \sqrt{t}(\y^{(i_2)})^T
 G\x^{(i_1)}+\sqrt{1-t}\|\x^{(i_1)}\|_2 (\y^{(i_2)})^T\lp\sum_{k=1}^{r+1}\u^{(2,k)}\rp\nonumber \\
 & & +\sqrt{t}\|\x^{(i_1)}\|_2\|\y^{(i_2)}\|_2\lp\sum_{k=1}^{r+1}u^{(4,k)}\rp +\sqrt{1-t}\|\y^{(i_2)}\|_2\lp\sum_{k=1}^{r+1}\h^{(k)}\rp^T\x^{(i_1)}
 \end{eqnarray}
As in previous sections, for the convenience of the exposition, we, analogously to (\ref{eq:genanal4}), set
\begin{eqnarray}\label{eq:rthlev2genanal4}
\u^{(i_1,1)} & =  & \frac{G\x^{(i_1)}}{\|\x^{(i_1)}\|_2} \nonumber \\
\u^{(i_1,3,k)} & =  & \frac{(\h^{(k)})^T\x^{(i_1)}}{\|\x^{(i_1)}\|_2},
\end{eqnarray}
and recall that
\begin{eqnarray}\label{eq:rthlev2genanal5}
\u_j^{(i_1,1)} & =  & \frac{G_{j,1:n}\x^{(i_1)}}{\|\x^{(i_1)}\|_2},1\leq j\leq m.
\end{eqnarray}
Clearly, for any fixed $i_1$, the elements of $\u^{(i_1,1)}$ are i.i.d. standard normals, the elements of $\u^{(2,k)}$  are zero-mean Gaussians with variances $\p_{k-1}-\p_k$, the elements of $\u^{(i_1,3,k)}$ are zero-mean Gaussians with variances $\q_{k-1}-\q_k$. One then rewrites (\ref{eq:rthlev2genanal3}) as
\begin{equation}\label{eq:rthlev2genanal6}
\psi(\calX,\calY,\p,\q,\m,\beta,s,t)  =  \mE_{G,{\mathcal U}_{r+1}} \frac{1}{\beta|s|\sqrt{n}\m_r} \log
\lp \mE_{{\mathcal U}_{r}} \lp \dots \lp \mE_{{\mathcal U}_2}\lp\lp\mE_{{\mathcal U}_1} \lp Z^{\m_1}\rp\rp^{\frac{\m_2}{\m_1}}\rp\rp^{\frac{\m_3}{\m_2}} \dots \rp^{\frac{\m_{r}}{\m_{r-1}}}\rp,
\end{equation}
where $\beta_{i_1}=\beta\|\x^{(i_1)}\|_2$ and now
\begin{eqnarray}\label{eq:rthlev2genanal7}
B^{(i_1,i_2)} & \triangleq &  \sqrt{t}(\y^{(i_2)})^T\u^{(i_1,1)}+\sqrt{1-t} (\y^{(i_2)})^T\lp \sum_{k=1}^{r+1}\u^{(2,k)} \rp \nonumber \\
D^{(i_1,i_2)} & \triangleq &  B^{(i_1,i_2)}+\sqrt{t}\|\y^{(i_2)}\|_2 \lp \sum_{k=1}^{r+1}u^{(4,k)}\rp+\sqrt{1-t}\|\y^{(i_2)}\|_2 \lp \sum_{k=1}^{r+1}\u^{(i_1,3,k)}  \rp   \nonumber \\
A^{(i_1,i_2)} & \triangleq &  e^{\beta_{i_1}D^{(i_1,i_2)}}\nonumber \\
C^{(i_1)} & \triangleq &  \sum_{i_2=1}^{l}A^{(i_1,i_2)}\nonumber \\
Z & \triangleq & \sum_{i_1=1}^{l} \lp \sum_{i_2=1}^{l} A^{(i_1,i_2)}\rp^s =\sum_{i_1=1}^{l}  (C^{(i_1)})^s.
\end{eqnarray}
We set $\m_0=1$ and
\begin{eqnarray}\label{eq:rthlev2genanal7a}
\zeta_r\triangleq \mE_{{\mathcal U}_{r}} \lp \dots \lp \mE_{{\mathcal U}_2}\lp\lp\mE_{{\mathcal U}_1} \lp Z^{\frac{\m_1}{\m_0}}\rp\rp^{\frac{\m_2}{\m_1}}\rp\rp^{\frac{\m_3}{\m_2}} \dots \rp^{\frac{\m_{r}}{\m_{r-1}}}, r\geq 1.
\end{eqnarray}
Then
\begin{eqnarray}\label{eq:rthlev2genanal7b}
\zeta_k = \mE_{{\mathcal U}_{k}} \lp  \zeta_{k-1} \rp^{\frac{\m_{k}}{\m_{k-1}}}, k\geq 2,\quad \mbox{and} \quad
\zeta_1=\mE_{{\mathcal U}_1} \lp Z^{\frac{\m_1}{\m_0}}\rp,
\end{eqnarray}
and, for the completeness, we also set $\zeta_0=Z$. We can then write
\begin{eqnarray}\label{eq:rthlev2genanal9}
\frac{d\psi(\calX,\calY,\q,\m,\beta,s,t)}{dt}
& = &  \frac{d}{dt}\lp\mE_{G,{\mathcal U}_{r+1}} \frac{1}{\beta|s|\sqrt{n}\m_r} \log \lp \zeta_r\rp \rp \nonumber \\
& = &  \mE_{G,{\mathcal U}_{r+1}} \frac{1}{\beta|s|\sqrt{n}\m_r\zeta_r} \frac{d\zeta_r}{dt} \nonumber \\
& = &  \mE_{G,{\mathcal U}_{r+1}} \frac{1}{\beta|s|\sqrt{n}\m_{r-1}\zeta_r} \mE_{{\mathcal U}_{r}} \zeta_{r-1}^{\frac{\m_r}{\m_{r-1}}-1} \frac{d\zeta_{r-1}}{dt} \nonumber \\
& = &  \mE_{G,{\mathcal U}_{r+1}} \frac{1}{\beta|s|\sqrt{n}\m_{r-2}\zeta_r} \mE_{{\mathcal U}_{r}} \zeta_{r-1}^{\frac{\m_r}{\m_{r-1}}-1}
 \mE_{{\mathcal U}_{r-1}} \zeta_{r-2}^{\frac{\m_{r-1}}{\m_{r-2}}-1}
\frac{d\zeta_{r-2}}{dt} \nonumber \\
& = &  \mE_{G,{\mathcal U}_{r+1}} \frac{1}{\beta|s|\sqrt{n}\m_1\zeta_r}
\prod_{k=r}^{2}\mE_{{\mathcal U}_{k}} \zeta_{k-1}^{\frac{\m_k}{\m_{k-1}}-1}
 \frac{d\zeta_{1}}{dt} \nonumber \\
& = &  \mE_{G,{\mathcal U}_{r+1}} \frac{1}{\beta|s|\sqrt{n}\m_1\zeta_r}
\prod_{k=r}^{2}\mE_{{\mathcal U}_{k}} \zeta_{k-1}^{\frac{\m_k}{\m_{k-1}}-1}
 \frac{d \mE_{{\mathcal U}_1} Z^{\m_1} }{dt} \nonumber \\
 & = &
\mE_{G,{\mathcal U}_{r+1}} \frac{1}{\beta|s|\sqrt{n}\zeta_r}
\prod_{k=r}^{2}\mE_{{\mathcal U}_{k}} \zeta_{k-1}^{\frac{\m_k}{\m_{k-1}}-1}
 \mE_{{\mathcal U}_1} \frac{1}{Z^{1-\m_1}}  \sum_{i=1}^{l} (C^{(i_1)})^{s-1} \nonumber \\
& & \times \sum_{i_2=1}^{l}\beta_{i_1}A^{(i_1,i_2)}\frac{dD^{(i_1,i_2)}}{dt},
\end{eqnarray}
where the product runs in an \emph{index decreasing order} and
\begin{eqnarray}\label{eq:rthlev2genanal9a}
\frac{dD^{(i_1,i_2)}}{dt}= \lp \frac{dB^{(i_1,i_2)}}{dt}+\frac{\|\y^{(i_2)}\|_2 (\sum_{k=1}^{r+1} u^{(4,k)})}{2\sqrt{t}}-\frac{\|\y^{(i_2)}\|_2 (\sum_{k=1}^{r+1}\u^{(i_1,3,k)})}{2\sqrt{1-t}}\rp.
\end{eqnarray}
Utilizing (\ref{eq:rthlev2genanal7}) we find
\begin{eqnarray}\label{eq:rthlev2genanal10}
\frac{dB^{(i_1,i_2)}}{dt} & = &   \frac{d\lp\sqrt{t}(\y^{(i_2)})^T\u^{(i_1,1)}+\sqrt{1-t} (\y^{(i_2)})^T(\sum_{k=1}^{r+1} \u^{(2,k)})\rp}{dt} \nonumber \\
 & = &
\sum_{j=1}^{m}\lp \frac{\y_j^{(i_2)}\u_j^{(i_1,1)}}{2\sqrt{t}}-\frac{\y_j^{(i_2)} \sum_{k=1}^{r+1} \u_j^{(2,k)}}{2\sqrt{1-t}}\rp.
\end{eqnarray}
One can then write analogously to (\ref{eq:genanal10e})
\begin{equation}\label{eq:rthlev2genanal10e}
\frac{d\psi(\calX,\calY,\q,\m,\beta,s,t)}{dt}  =       \frac{\mbox{sign}(s)}{2\beta\sqrt{n}} \sum_{i_1=1}^{l}  \sum_{i_2=1}^{l}
\beta_{i_1}\lp T_G + \sum_{k=1}^{r+1}T_k\rp,
\end{equation}
where
\begin{eqnarray}\label{eq:rthlev2genanal10f}
T_G & = & \sum_{j=1}^{m}\frac{T_{G,j}}{\sqrt{t}}  \nonumber\\
T_k & = & -\sum_{j=1}^{m}\frac{T_{k,1,j}}{\sqrt{1-t}}-\|\y^{(i_2)}\|_2\frac{T_{k,2}}{\sqrt{1-t}}+\|\y^{(i_2)}\|_2\frac{T_{k,3}}{\sqrt{t}}, k\in\{1,2,\dots,r+1\}.
 \end{eqnarray}
 \begin{eqnarray}\label{eq:rthlev2genanal10g}
T_{G,j} & = &  \mE_{G,{\mathcal U}_{r+1}} \lp
\zeta_r^{-1}\prod_{v=r}^{2}\mE_{{\mathcal U}_{v}} \zeta_{v-1}^{\frac{\m_v}{\m_{v-1}}-1}
  \mE_{{\mathcal U}_1}\frac{(C^{(i_1)})^{s-1} A^{(i_1,i_2)} \y_j^{(i_2)}\u_j^{(i_1,1)}}{Z^{1-\m_1}} \rp \nonumber \\
T_{k,1,j} & = &   \mE_{G,{\mathcal U}_{r+1}} \lp
\zeta_r^{-1}\prod_{v=r}^{2}\mE_{{\mathcal U}_{v}} \zeta_{v-1}^{\frac{\m_v}{\m_{v-1}}-1}
  \mE_{{\mathcal U}_1}\frac{(C^{(i_1)})^{s-1} A^{(i_1,i_2)} \y_j^{(i_2)}\u_j^{(2,k)}}{Z^{1-\m_1}} \rp \nonumber \\
T_{k,2} & = &   \mE_{G,{\mathcal U}_{r+1}} \lp
\zeta_r^{-1}\prod_{v=r}^{2}\mE_{{\mathcal U}_{v}} \zeta_{v-1}^{\frac{\m_v}{\m_{v-1}}-1}
  \mE_{{\mathcal U}_1}\frac{(C^{(i_1)})^{s-1} A^{(i_1,i_2)} \u^{(i_1,3,k)}}{Z^{1-\m_1}} \rp \nonumber \\
T_{k,3} & = &  \mE_{G,{\mathcal U}_{r+1}} \lp
\zeta_r^{-1}\prod_{v=r}^{2}\mE_{{\mathcal U}_{v}} \zeta_{v-1}^{\frac{\m_v}{\m_{v-1}}-1}
  \mE_{{\mathcal U}_1}\frac{(C^{(i_1)})^{s-1} A^{(i_1,i_2)} u^{(4,k)}}{Z^{1-\m_1}} \rp.
\end{eqnarray}

We will first handle the 3 $r+1$-tuplets, $\lp T_{k,1,j}\rp_{k=1:r+1}$, $\lp T_{k,2}\rp_{k=1:r+1}$, and $\lp T_{k,3}\rp_{k=1:r+1}$ and then $T_{G,j}$. As the discussion in Section \ref{sec:seclev} showed, for all our purposes, the internal connections among the components within each of these three sequences are identical. We will below consider one of them, say $\lp T_{k,1,j}\rp_{k=1:r+1}$ and show how the results, obtained in Section \ref{sec:seclev}, for $r=2$, immediately extend to any $r$. To avoid unnecessarily presenting the same strategy three times, for the remaining two $r+1$-tuplets, $\lp T_{k,2}\rp_{k=1:r+1}$, and $\lp T_{k,3}\rp_{k=1:r+1}$, we will then quickly deduce the final results.

\subsection{Scaled canceling out --- successive scaling + reweightedly averaged new terms}
\label{sec:scaledcanout}

We take a component of the sequence $\lp T_{k,1,j}\rp_{k=1:r+1}$, say $T_{k_1,1,j}$, $k_1\geq 2$, and show how it relates to the very next one, $T_{k_1+1,1,j}$. As we will formalize it below, there are two key parts of such a recursive/inductive relation: 1) \emph{successive scaling}; and 2) appearance of an appropriately \emph{reweightedly averaged (over a $\gamma$ measure) new term}.

We start by noting that
 \begin{eqnarray}\label{eq:rthlev2genanal11}
 T_{k_1,1,j} & = &   \mE_{G,{\mathcal U}_{r+1}} \lp
\zeta_r^{-1}\prod_{v=r}^{2}\mE_{{\mathcal U}_{v}} \zeta_{v-1}^{\frac{\m_v}{\m_{v-1}}-1}
  \mE_{{\mathcal U}_1}\frac{(C^{(i_1)})^{s-1} A^{(i_1,i_2)} \y_j^{(i_2)}\u_j^{(2,k_1)}}{Z^{1-\m_1}} \rp \nonumber \\
 & = &   \mE_{G,{\mathcal U}_{r+1}} \lp
\zeta_r^{-1}\prod_{v=r}^{k_1+1}\mE_{{\mathcal U}_{v}} \zeta_{v-1}^{\frac{\m_v}{\m_{v-1}}-1}
\prod_{v=k_1}^{2}\mE_{{\mathcal U}_{v}} \zeta_{v-1}^{\frac{\m_v}{\m_{v-1}}-1}
  \mE_{{\mathcal U}_1}\frac{(C^{(i_1)})^{s-1} A^{(i_1,i_2)} \y_j^{(i_2)}\u_j^{(2,k_1)}}{Z^{1-\m_1}} \rp \nonumber \\
    & = &   \mE_{G,{\mathcal U}_{r+1}} \Bigg(\Bigg.
\zeta_r^{-1}\prod_{v=r}^{k_1+1}\mE_{{\mathcal U}_{v}} \zeta_{v-1}^{\frac{\m_v}{\m_{v-1}}-1}
\mE_{{\mathcal U}_{k_1}} \mE_{{\mathcal U}_{k_1}} \lp \u_j^{(2,k_1)}\u_j^{(2,k_1)}\rp \nonumber \\
& & \times
\frac{d}{d\u_j^{(2,k_1)}} \lp \zeta_{k_1-1}^{\frac{\m_{k_1}}{\m_{k_1-1}}-1}\prod_{v=k_1-1}^{2}\mE_{{\mathcal U}_{v}} \zeta_{v-1}^{\frac{\m_v}{\m_{v-1}}-1}
  \mE_{{\mathcal U}_1}\frac{(C^{(i_1)})^{s-1} A^{(i_1,i_2)} \y_j^{(i_2)}\u_j^{(2,k_1)}}{Z^{1-\m_1}} \rp \Bigg.\Bigg) \nonumber \\
   & = &  (\p_{k_1-1}-\p_{k_1}) \mE_{G,{\mathcal U}_{r+1}} \Bigg(\Bigg.
\zeta_r^{-1}\prod_{v=r}^{k_1+1}\mE_{{\mathcal U}_{v}} \zeta_{v-1}^{\frac{\m_v}{\m_{v-1}}-1}
\mE_{{\mathcal U}_{k_1}}  \nonumber \\
& & \times
\frac{d}{d\u_j^{(2,k_1)}} \lp \zeta_{k_1-1}^{\frac{\m_{k_1}}{\m_{k_1-1}}-1}\prod_{v=k_1-1}^{2}\mE_{{\mathcal U}_{v}} \zeta_{v-1}^{\frac{\m_v}{\m_{v-1}}-1}
  \mE_{{\mathcal U}_1}\frac{(C^{(i_1)})^{s-1} A^{(i_1,i_2)} \y_j^{(i_2)}\u_j^{(2,k_1)}}{Z^{1-\m_1}} \rp \Bigg.\Bigg)  ,
\end{eqnarray}
and also
 \begin{eqnarray}\label{eq:rthlev2genanal12}
 T_{k_1+1,1,j}
  & = &   \mE_{G,{\mathcal U}_{r+1}} \lp
\zeta_r^{-1}\prod_{v=r}^{k_1+2}\mE_{{\mathcal U}_{v}} \zeta_{v-1}^{\frac{\m_v}{\m_{v-1}}-1}
\prod_{v=k_1+1}^{2}\mE_{{\mathcal U}_{v}} \zeta_{v-1}^{\frac{\m_v}{\m_{v-1}}-1}
  \mE_{{\mathcal U}_1}\frac{(C^{(i_1)})^{s-1} A^{(i_1,i_2)} \y_j^{(i_2)}\u_j^{(2,k_1+1)}}{Z^{1-\m_1}} \rp \nonumber \\
    & = &   \mE_{G,{\mathcal U}_{r+1}} \Bigg( \Bigg.
\zeta_r^{-1}\prod_{v=r}^{k_1+2}\mE_{{\mathcal U}_{v}} \zeta_{v-1}^{\frac{\m_v}{\m_{v-1}}-1}
\mE_{{\mathcal U}_{k_1+1}}   \nonumber \\
& & \times  \zeta_{k_1}^{\frac{\m_{k_1+1}}{\m_{k_1}}-1}
\prod_{v=k_1}^{2}\mE_{{\mathcal U}_{v}} \zeta_{v-1}^{\frac{\m_v}{\m_{v-1}}-1}
  \mE_{{\mathcal U}_1}\frac{(C^{(i_1)})^{s-1} A^{(i_1,i_2)} \y_j^{(i_2)}\u_j^{(2,k_1+1)}}{Z^{1-\m_1}} \Bigg. \Bigg) \nonumber \\
      & = &   \mE_{G,{\mathcal U}_{r+1}} \Bigg( \Bigg.
\zeta_r^{-1}\prod_{v=r}^{k_1+2}\mE_{{\mathcal U}_{v}} \zeta_{v-1}^{\frac{\m_v}{\m_{v-1}}-1}
\mE_{{\mathcal U}_{k_1+1}} \mE_{{\mathcal U}_{k_1+1}} (\u_j^{(2,k_1+1)}\u_j^{(2,k_1+1)}) \nonumber \\
& & \times  \frac{d}{d\u_j^{(2,k_1+1)}}  \lp \zeta_{k_1}^{\frac{\m_{k_1+1}}{\m_{k_1}}-1}
\prod_{v=k_1}^{2}\mE_{{\mathcal U}_{v}} \zeta_{v-1}^{\frac{\m_v}{\m_{v-1}}-1}
  \mE_{{\mathcal U}_1}\frac{(C^{(i_1)})^{s-1} A^{(i_1,i_2)} \y_j^{(i_2)}}{Z^{1-\m_1}} \rp   \Bigg. \Bigg)   \nonumber \\
      & = & (\p_{k_1} -\p_{k_1+1})
  \mE_{G,{\mathcal U}_{r+1}} \Bigg( \Bigg.
\zeta_r^{-1}\prod_{v=r}^{k_1+2}\mE_{{\mathcal U}_{v}} \zeta_{v-1}^{\frac{\m_v}{\m_{v-1}}-1}
\mE_{{\mathcal U}_{k_1+1}}  \nonumber \\
& & \times  \frac{d}{d\u_j^{(2,k_1+1)}}  \lp \zeta_{k_1}^{\frac{\m_{k_1+1}}{\m_{k_1}}-1}
\prod_{v=k_1}^{2}\mE_{{\mathcal U}_{v}} \zeta_{v-1}^{\frac{\m_v}{\m_{v-1}}-1}
  \mE_{{\mathcal U}_1}\frac{(C^{(i_1)})^{s-1} A^{(i_1,i_2)} \y_j^{(i_2)}}{Z^{1-\m_1}} \rp   \Bigg. \Bigg)   \nonumber \\
& = &  \Pi_{k_1+1,1,j}^{(1)}
+  \Pi_{k_1+1,1,j}^{(2)},
 \end{eqnarray}
where
 \begin{eqnarray}\label{eq:rthlev2genanal13}
 \Pi_{k_1+1,1,j}^{(1)}
      & = & (\p_{k_1}-\p_{k_1+1})
  \mE_{G,{\mathcal U}_{r+1}} \Bigg( \Bigg.
\zeta_r^{-1}\prod_{v=r}^{k_1+2}\mE_{{\mathcal U}_{v}} \zeta_{v-1}^{\frac{\m_v}{\m_{v-1}}-1}
\mE_{{\mathcal U}_{k_1+1}}  \nonumber \\
& & \times \zeta_{k_1}^{\frac{\m_{k_1+1}}{\m_{k_1}}-1}
 \frac{d}{d\u_j^{(2,k_1+1)}}  \lp
\prod_{v=k_1}^{2}\mE_{{\mathcal U}_{v}} \zeta_{v-1}^{\frac{\m_v}{\m_{v-1}}-1}
  \mE_{{\mathcal U}_1}\frac{(C^{(i_1)})^{s-1} A^{(i_1,i_2)} \y_j^{(i_2)}}{Z^{1-\m_1}} \rp   \Bigg. \Bigg)   \nonumber \\
\Pi_{k_1+1,1,j}^{(2)}
& = &(\p_{k_1}-\p_{k_1+1})
  \mE_{G,{\mathcal U}_{r+1}} \Bigg( \Bigg.
\zeta_r^{-1}\prod_{v=r}^{k_1+2}\mE_{{\mathcal U}_{v}} \zeta_{v-1}^{\frac{\m_v}{\m_{v-1}}-1}
\mE_{{\mathcal U}_{k_1+1}}  \nonumber \\
& & \times \prod_{v=k_1}^{2}\mE_{{\mathcal U}_{v}} \zeta_{v-1}^{\frac{\m_v}{\m_{v-1}}-1}
  \mE_{{\mathcal U}_1}\frac{(C^{(i_1)})^{s-1} A^{(i_1,i_2)} \y_j^{(i_2)}}{Z^{1-\m_1}}
   \frac{d}{d\u_j^{(2,k_1+1)}}  \lp \zeta_{k_1}^{\frac{\m_{k_1+1}}{\m_{k_1}}-1}
 \rp   \Bigg. \Bigg).
\end{eqnarray}
It is useful to note here that $\Pi_{k_1+1,1,j}^{(1)}$ relates to the first part mentioned above -- \emph{successive scaling}, and that $\Pi_{k_1+1,1,j}^{(2)}$ relates to the second part -- introducing a \emph{reweightedly averaged (over a $\gamma$ measure) new term}.

For $\Pi_{k_1+1,1,j}^{(1)}$ we further have
 \begin{eqnarray}\label{eq:rthlev2genanal14}
 \Pi_{k_1+1,1,j}^{(1)}
      & = & (\p_{k_1}-\p_{k_1+1})
  \mE_{G,{\mathcal U}_{r+1}} \Bigg( \Bigg.
\zeta_r^{-1}\prod_{v=r}^{k_1+2}\mE_{{\mathcal U}_{v}} \zeta_{v-1}^{\frac{\m_v}{\m_{v-1}}-1}
\mE_{{\mathcal U}_{k_1+1}}  \nonumber \\
& & \times \zeta_{k_1}^{\frac{\m_{k_1+1}}{\m_{k_1}}-1}
 \frac{d}{d\u_j^{(2,k_1+1)}}  \lp
\prod_{v=k_1}^{2}\mE_{{\mathcal U}_{v}} \zeta_{v-1}^{\frac{\m_v}{\m_{v-1}}-1}
  \mE_{{\mathcal U}_1}\frac{(C^{(i_1)})^{s-1} A^{(i_1,i_2)} \y_j^{(i_2)}}{Z^{1-\m_1}} \rp   \Bigg. \Bigg)   \nonumber \\
      & = & (\p_{k_1}-\p_{k_1+1})
  \mE_{G,{\mathcal U}_{r+1}} \Bigg( \Bigg.
\zeta_r^{-1}\prod_{v=r}^{k_1+1}\mE_{{\mathcal U}_{v}} \zeta_{v-1}^{\frac{\m_v}{\m_{v-1}}-1}
\nonumber \\
& & \times
 \frac{d}{d\u_j^{(2,k_1+1)}}  \lp
\prod_{v=k_1}^{2}\mE_{{\mathcal U}_{v}} \zeta_{v-1}^{\frac{\m_v}{\m_{v-1}}-1}
  \mE_{{\mathcal U}_1}\frac{(C^{(i_1)})^{s-1} A^{(i_1,i_2)} \y_j^{(i_2)}}{Z^{1-\m_1}} \rp   \Bigg. \Bigg)   \nonumber \\
      & = & (\p_{k_1}-\p_{k_1+1})
  \mE_{G,{\mathcal U}_{r+1}} \Bigg( \Bigg.
\zeta_r^{-1}\prod_{v=r}^{k_1+1}\mE_{{\mathcal U}_{v}} \zeta_{v-1}^{\frac{\m_v}{\m_{v-1}}-1}
\nonumber \\
& & \times
 \frac{d}{d\u_j^{(2,k_1+1)}}  \lp
\mE_{{\mathcal U}_{k_1}} \zeta_{k_1-1}^{\frac{\m_{k_1}}{\m_{k_1-1}}-1}
\prod_{v=k_1-1}^{2}\mE_{{\mathcal U}_{v}} \zeta_{v-1}^{\frac{\m_v}{\m_{v-1}}-1}
  \mE_{{\mathcal U}_1}\frac{(C^{(i_1)})^{s-1} A^{(i_1,i_2)} \y_j^{(i_2)}}{Z^{1-\m_1}} \rp   \Bigg. \Bigg)   \nonumber \\
      & = & (\p_{k_1}-\p_{k_1+1})
  \mE_{G,{\mathcal U}_{r+1}} \Bigg( \Bigg.
\zeta_r^{-1}\prod_{v=r}^{k_1+1}\mE_{{\mathcal U}_{v}} \zeta_{v-1}^{\frac{\m_v}{\m_{v-1}}-1}
\nonumber \\
& & \times
\mE_{{\mathcal U}_{k_1}}  \frac{d}{d\u_j^{(2,k_1+1)}}  \lp
\zeta_{k_1-1}^{\frac{\m_{k_1}}{\m_{k_1-1}}-1}
\prod_{v=k_1-1}^{2}\mE_{{\mathcal U}_{v}} \zeta_{v-1}^{\frac{\m_v}{\m_{v-1}}-1}
  \mE_{{\mathcal U}_1}\frac{(C^{(i_1)})^{s-1} A^{(i_1,i_2)} \y_j^{(i_2)}}{Z^{1-\m_1}} \rp   \Bigg. \Bigg).   \nonumber \\
 \end{eqnarray}
One now observes from (\ref{eq:rthlev2genanal7}) that $\u_j^{(2,k_1+1)}$ appears only in $B^{(i_1,i_2)}$, and then via $B^{(i_1,i_2)}$ in $D^{(i_1,i_2)}$, via $D^{(i_1,i_2)}$ in $A^{(i_1,i_2)}$, via $A^{(i_1,i_2)}$ in $C^{(i_1,i_2)}$, via $C^{(i_1)}$ in $Z$, and via $Z$ in $\zeta_k$, $k\in\{1,2,\dots,r\}$. This implies that for a function $f_c(B^{(i_1,i_2)})$
 \begin{eqnarray}\label{eq:rthlev2genanal15}
 f_c(B^{(i_1,i_2)}) = \lp
\zeta_{k_1-1}^{\frac{\m_{k_1}}{\m_{k_1-1}}-1}
\prod_{v=k_1-1}^{2}\mE_{{\mathcal U}_{v}} \zeta_{v-1}^{\frac{\m_v}{\m_{v-1}}-1}
  \mE_{{\mathcal U}_1}\frac{(C^{(i_1)})^{s-1} A^{(i_1,i_2)} \y_j^{(i_2)}}{Z^{1-\m_1}} \rp,
 \end{eqnarray}
we have
 \begin{eqnarray}\label{eq:rthlev2genanal16}
   \frac{d}{d\u_j^{(2,k_1+1)}}  \lp f_c(B^{(i_1,i_2)}) \rp=    \frac{d}{d B^{(i_1,i_2)}}  \lp f_c(B^{(i_1,i_2)})  \rp  \frac{d B^{(i_1,i_2)}}{d\u_j^{(2,k_1+1)}} = \sqrt{1-t}\y_j^{(i_2)}.
 \end{eqnarray}
Since there is nothing specific about $k_1$ one also has
 \begin{eqnarray}\label{eq:rthlev2genanal17}
   \frac{d}{d\u_j^{(2,k_1)}}  \lp f_c(B^{(i_1,i_2)}) \rp=    \frac{d}{d B^{(i_1,i_2)}}  \lp f_c(B^{(i_1,i_2)})  \rp  \frac{d B^{(i_1,i_2)}}{d\u_j^{(2,k_1+1)}} = \sqrt{1-t}\y_j^{(i_2)},
 \end{eqnarray}
and
 \begin{eqnarray}\label{eq:rthlev2genanal18}
   \frac{d}{d\u_j^{(2,k_1)}}  \lp f_c(B^{(i_1,i_2)}) \rp=
   \frac{d}{d\u_j^{(2,k_1+1)}}  \lp f_c(B^{(i_1,i_2)}) \rp.
 \end{eqnarray}
A combination of (\ref{eq:rthlev2genanal11}), (\ref{eq:rthlev2genanal14}), and (\ref{eq:rthlev2genanal18}) then gives the \emph{successive scaling} relation (which is at the heart of the \emph{canceling out} mechanism that will be visible a bit later)
 \begin{eqnarray}\label{eq:rthlev2genanal19}
 \Pi_{k_1+1,1,j}^{(1)}=\frac{(\p_{k_1} -\p_{k_1+1})}{(\p_{k_1-1}-\p_{k_1})}T_{k_1,1,j}.
 \end{eqnarray}

For $\Pi_{k_1+1,1,j}^{(2)}$, we have
\begin{eqnarray}\label{eq:rthlev2genanal20}
 \Pi_{k_1+1,1,j}^{(2)}
& = &(\p_{k_1}-\p_{k_1+1})
  \mE_{G,{\mathcal U}_{r+1}} \Bigg( \Bigg.
\zeta_r^{-1}\prod_{v=r}^{k_1+2}\mE_{{\mathcal U}_{v}} \zeta_{v-1}^{\frac{\m_v}{\m_{v-1}}-1}
\mE_{{\mathcal U}_{k_1+1}}  \nonumber \\
& & \times \prod_{v=k_1}^{2}\mE_{{\mathcal U}_{v}} \zeta_{v-1}^{\frac{\m_v}{\m_{v-1}}-1}
  \mE_{{\mathcal U}_1}\frac{(C^{(i_1)})^{s-1} A^{(i_1,i_2)} \y_j^{(i_2)}}{Z^{1-\m_1}}
   \frac{d}{d\u_j^{(2,k_1+1)}}  \lp \zeta_{k_1}^{\frac{\m_{k_1+1}}{\m_{k_1}}-1}
 \rp   \Bigg. \Bigg),
\end{eqnarray}
where based on the sequence of equalities of (\ref{eq:rthlev2genanal12}) we find
\begin{eqnarray}\label{eq:rthlev2genanal21}
    \frac{d}{d\u_j^{(2,k_1+1)}}  \lp \zeta_{k_1}^{\frac{\m_{k_1+1}}{\m_{k_1}}-1}
 \rp   \Bigg. \Bigg)
 & = &   \lp \frac{\m_{k_1+1}-\m_{k_1}}{\m_{k_1}}\rp \zeta_{k_1}^{\frac{\m_{k_1+1}}{\m_{k_1}}-2}
    \frac{d  \zeta_{k_1}}{d\u_j^{(2,k_1+1)}}  \nonumber \\
 & =  & \lp \frac{\m_{k_1+1}-\m_{k_1}}{\m_{1}}\rp \zeta_{k_1}^{\frac{\m_{k_1+1}}{\m_{k_1}}-2}
\prod_{k=k_1}^{2}\mE_{{\mathcal U}_{k}} \zeta_{k-1}^{\frac{\m_k}{\m_{k-1}}-1}
 \frac{d \mE_{{\mathcal U}_1} Z^{\m_1} }{d\u_j^{(2,k_1+1)}} \nonumber \\
 & =  & s \lp \m_{k_1+1} - \m_{k_1}\rp \zeta_{k_1}^{\frac{\m_{k_1+1}}{\m_{k_1}}-2}
\prod_{k=k_1}^{2}\mE_{{\mathcal U}_{k}} \zeta_{k-1}^{\frac{\m_k}{\m_{k-1}}-1}
 \mE_{{\mathcal U}_1} \frac{1}{Z^{1-\m_1}}  \sum_{p_1=1}^{l} (C^{(p_1)})^{s-1} \nonumber \\
& & \times \sum_{p_2=1}^{l}\beta_{p_1}A^{(p_1,p_2)}\frac{dD^{(p_1,p_2)}}{d\u_j^{(2,k_1+1)}}.
\end{eqnarray}
Since
\begin{eqnarray}\label{eq:rthlev2genanal22}
 \frac{dD^{(p_1,p_2)}}{d\u_j^{(2,k_1+1)}}=
 \frac{dB^{(p_1,p_2)}}{d\u_j^{(2,k_1+1)}}=\sqrt{1-t} \y_j^{(p_2)},
\end{eqnarray}
we, from (\ref{eq:rthlev2genanal21}), also have
\begin{eqnarray}\label{eq:rthlev2genanal23}
    \frac{d}{d\u_j^{(2,k_1+1)}}  \lp \zeta_{k_1}^{\frac{\m_{k_1+1}}{\m_{k_1}}-1}
 \rp   \Bigg. \Bigg)
  & =  & s \lp \m_{k_1+1} - \m_{k_1} \rp \zeta_{k_1}^{\frac{\m_{k_1+1}}{\m_{k_1}}-2}
\prod_{k=k_1}^{2}\mE_{{\mathcal U}_{k}} \zeta_{k-1}^{\frac{\m_k}{\m_{k-1}}-1}
 \mE_{{\mathcal U}_1} \frac{1}{Z^{1-\m_1}}  \sum_{p_1=1}^{l} (C^{(p_1)})^{s-1} \nonumber \\
& & \times \sum_{p_2=1}^{l}\beta_{p_1}A^{(p_1,p_2)}\sqrt{1-t} \y_j^{(p_2)}.
\end{eqnarray}
A combination of (\ref{eq:rthlev2genanal20}) and (\ref{eq:rthlev2genanal23}) gives
\begin{eqnarray}\label{eq:rthlev2genanal24}
 \Pi_{k_1+1,1,j}^{(2)}
& = &(\p_{k_1} -\p_{k_1+1})
  \mE_{G,{\mathcal U}_{r+1}} \Bigg( \Bigg.
\zeta_r^{-1}\prod_{v=r}^{k_1+2}\mE_{{\mathcal U}_{v}} \zeta_{v-1}^{\frac{\m_v}{\m_{v-1}}-1}
\mE_{{\mathcal U}_{k_1+1}}  \nonumber \\
& & \times \prod_{v=k_1}^{2}\mE_{{\mathcal U}_{v}} \zeta_{v-1}^{\frac{\m_v}{\m_{v-1}}-1}
  \mE_{{\mathcal U}_1}\frac{(C^{(i_1)})^{s-1} A^{(i_1,i_2)} \y_j^{(i_2)}}{Z^{1-\m_1}}
s \lp \m_{k_1+1} - \m_1 \rp \zeta_{k_1}^{\frac{\m_{k_1+1}}{\m_{k_1}}-2} \nonumber \\
& & \times \prod_{k=k_1}^{2}\mE_{{\mathcal U}_{k}} \zeta_{k-1}^{\frac{\m_k}{\m_{k-1}}-1}
 \mE_{{\mathcal U}_1} \frac{1}{Z^{1-\m_1}}  \sum_{p_1=1}^{l} (C^{(p_1)})^{s-1}
 \sum_{p_2=1}^{l}\beta_{p_1}A^{(p_1,p_2)}\sqrt{1-t} \y_j^{(p_2)}
   \Bigg. \Bigg) \nonumber \\
& = & s(\p_{k_1}-\p_{k_1+1}) \lp \m_{k_1+1} - \m_{k_1} \rp
  \mE_{G,{\mathcal U}_{r+1}} \Bigg( \Bigg.
\zeta_r^{-1}\prod_{v=r}^{k_1+2}\mE_{{\mathcal U}_{v}} \zeta_{v-1}^{\frac{\m_v}{\m_{v-1}}-1}
\mE_{{\mathcal U}_{k_1+1}}  \zeta_{k_1}^{\frac{\m_{k_1+1}}{\m_{k_1}}}  \nonumber \\
& & \times \prod_{v=k_1}^{2}\mE_{{\mathcal U}_{v}} \frac{\zeta_{v-1}^{\frac{\m_v}{\m_{v-1}}}}{\zeta_v}
   \mE_{{\mathcal U}_1}\frac{Z^{\m_1}}{\zeta_1}\frac{(C^{(i_1)})^{s-1} A^{(i_1,i_2)} \y_j^{(i_2)}}{Z}
\nonumber \\
& & \times \prod_{k=k_1}^{2}  \mE_{{\mathcal U}_{k}} \frac{\zeta_{k-1}^{\frac{\m_k}{\m_{k-1}}}}{\zeta_k}
 \mE_{{\mathcal U}_1}\frac{Z^{\m_1}}{\zeta_1}  \sum_{p_1=1}^{l} \frac{(C^{(p_1)})^s}{Z}
 \sum_{p_2=1}^{l} \frac{A^{(p_1,p_2)}}{C^{(p_1)}}\sqrt{1-t} \beta_{p_1} \y_j^{(p_2)}
   \Bigg. \Bigg) \nonumber \\
& = & s(\p_{k_1}-\p_{k_1+1}) \lp \m_{k_1+1} - \m_{k_1} \rp
  \mE_{G,{\mathcal U}_{r+1}} \Bigg( \Bigg.
\prod_{v=r}^{k_1+1} \mE_{{\mathcal U}_{v}} \frac{\zeta_{v-1}^{\frac{\m_v}{\m_{v-1}}}}{\zeta_v} \nonumber \\
& & \times \prod_{v=k_1}^{2}\mE_{{\mathcal U}_{v}} \frac{\zeta_{v-1}^{\frac{\m_v}{\m_{v-1}}}}{\zeta_v}
   \mE_{{\mathcal U}_1}\frac{Z^{\m_1}}{\zeta_1}   \frac{(C^{(i_1)})^{s} }{Z}
\frac{ A^{(i_1,i_2)} }{C^{(i_1)}}\y_j^{(i_2)}
\nonumber \\
& & \times \prod_{k=k_1}^{2}  \mE_{{\mathcal U}_{k}} \frac{\zeta_{k-1}^{\frac{\m_k}{\m_{k-1}}}}{\zeta_k}
 \mE_{{\mathcal U}_1}\frac{Z^{\m_1}}{\zeta_1}  \sum_{p_1=1}^{l} \frac{(C^{(p_1)})^s}{Z}
 \sum_{p_2=1}^{l} \frac{A^{(p_1,p_2)}}{C^{(p_1)}}\sqrt{1-t} \beta_{p_1} \y_j^{(p_2)}
   \Bigg. \Bigg).
\end{eqnarray}
After setting
\begin{eqnarray}\label{eq:rthlev2genanal25}
 \Phi_{{\mathcal U}_k} & \triangleq &  \mE_{{\mathcal U}_{k}} \frac{\zeta_{k-1}^{\frac{\m_k}{\m_{k-1}}}}{\zeta_k} \nonumber \\
 \gamma_0(i_1,i_2) & = &
\frac{(C^{(i_1)})^{s}}{Z}  \frac{A^{(i_1,i_2)}}{C^{(i_1)}} \nonumber \\
\gamma_{01}^{(r)}  & = & \prod_{k=r}^{1}\Phi_{{\mathcal U}_k} (\gamma_0(i_1,i_2)) \nonumber \\
\gamma_{02}^{(r)}  & = & \prod_{k=r}^{1}\Phi_{{\mathcal U}_k} (\gamma_0(i_1,i_2)\times \gamma_0(i_1,p_2)) \nonumber \\
\gamma_{k_1+1}^{(r)}  & = & \prod_{k=r}^{k_1+1}\Phi_{{\mathcal U}_k} \lp \prod_{k=k_1}^{1}\Phi_{{\mathcal U}_k}\gamma_0(i_1,i_2)\times \prod_{k=k_1}^{1} \Phi_{{\mathcal U}_k}\gamma_0(p_1,p_2) \rp,
 \end{eqnarray}
and recalling that $\zeta_0=Z$ and $\m_0=1$, we, from (\ref{eq:rthlev2genanal24}), obtain
\begin{eqnarray}\label{eq:rthlev2genanal26}
 \sum_{i_1=1}^{l}\sum_{i_2=1}^{l} \sum_{j=1}^{m}
\beta_{i_1} \frac{\Pi_{k_1+1,1,j}^{(2)}}{\sqrt{1-t}}
 & = & -s(\p_{k_1}-\p_{k_1+1}) \lp \m_{k_1}  - \m_{k_1+1} \rp
  \mE_{G,{\mathcal U}_{r+1}} \Bigg( \Bigg.
\prod_{v=r}^{k_1+1} \Phi_{{\mathcal U}_v} \nonumber \\
& & \times \prod_{v=k_1}^{2} \Phi_{{\mathcal U}_v}\sum_{i_1=1}^{l}\sum_{i_2=1}^{l} \frac{(C^{(i_1)})^{s} }{Z}
\frac{ A^{(i_1,i_2)} }{C^{(i_1)}}\y_j^{(i_2)}
\nonumber \\
& & \times \prod_{k=k_1}^{2} \Phi_{{\mathcal U}_k} \sum_{p_1=1}^{l} \frac{(C^{(p_1)})^s}{Z}
 \sum_{p_2=1}^{l} \frac{A^{(p_1,p_2)}}{C^{(p_1)}} \beta_{i_1} \beta_{p_1} (\y^{(p_2)})^T \y^{(i_2)}
   \Bigg. \Bigg) \nonumber \\
 & = & -s\beta^2(\p_{k_1}-\p_{k_1+1}) \lp \m_{k_1} - \m_{k_1+1} \rp \nonumber\\
 & & \times
  \mE_{G,{\mathcal U}_{r+1}} \left \langle \|\x^{(i_1)}\|_2\|\x^{(p_1)}\|_2 (\y^{(p_2)})^T \y^{(i_2)} \right \rangle_{\gamma_{k_1+1}^{(r)}}.
\end{eqnarray}
Combining (\ref{eq:rthlev2genanal12}), (\ref{eq:rthlev2genanal19}), and (\ref{eq:rthlev2genanal26}), we find
 \begin{eqnarray}\label{eq:rthlev2genanal27}
 \sum_{i_1=1}^{l}\sum_{i_2=1}^{l} \sum_{j=1}^{m}
\beta_{i_1} \frac{T_{k_1+1,1,j}}{\sqrt{1-t}}
& = &  \sum_{i_1=1}^{l}\sum_{i_2=1}^{l} \sum_{j=1}^{m}
\beta_{i_1} \frac{\Pi_{k_1+1,1,j}^{(1)}}{\sqrt{1-t}} +  \sum_{i_1=1}^{l}\sum_{i_2=1}^{l} \sum_{j=1}^{m}
\beta_{i_1} \frac{\Pi_{k_1+1,1,j}^{(2)}}{\sqrt{1-t}} \nonumber \\
& = &  \frac{\p_{k_1}-\p_{k_1+1}}{\p_{k_1-1}-\p_{k_1}}\sum_{i_1=1}^{l}\sum_{i_2=1}^{l} \sum_{j=1}^{m}
\beta_{i_1} \frac{T_{k_1,1,j}}{\sqrt{1-t}} \nonumber \\
 &  & - \Bigg( \Bigg. s\beta^2(\p_{k_1}-\p_{k_1+1}) \lp \m_{k_1} -  \m_{k_1+1} \rp  \nonumber \\
 & & \times
  \mE_{G,{\mathcal U}_{r+1}} \left \langle \|\x^{(i_1)}\|_2\|\x^{(p_1)}\|_2 (\y^{(p_2)})^T \y^{(i_2)} \right \rangle_{\gamma_{k_1+1}^{(r)} } \Bigg. \Bigg).
\end{eqnarray}
Keeping in mind the results for first and second level of full lifting, one can then analogously easily write for the other two sequences $\lp T_{k,2}\rp_{k=1:r+1}$ and $\lp T_{k,3}\rp_{k=1:r+1}$
\begin{eqnarray}\label{eq:rthlev2genanal28}
 \sum_{i_1=1}^{l}\sum_{i_2=1}^{l}
\beta_{i_1} \|\y^{(i_2)}\|_2 \frac{T_{k_1+1,2}}{\sqrt{1-t}}
 & = &  \frac{\q_{k_1}-\q_{k_1+1}}{\q_{k_1-1}-\q_{k_1}}\sum_{i_1=1}^{l}\sum_{i_2=1}^{l}
\beta_{i_1} \|\y^{(i_2)}\|_2 \frac{T_{k_1,2}}{\sqrt{1-t}} \nonumber \\
 &  & - \Bigg( \Bigg. s\beta^2(\q_{k_1}-\q_{k_1+1}) \lp \m_{k_1} - \m_{k_1+1} \rp \nonumber \\
 & &
\times  \mE_{G,{\mathcal U}_{r+1}} \left \langle   (\x^{(p_1)})^T \x^{(i_1)} \|\y^{(i_2)}\|_2\|\y^{(p_2)}\|_2  \right \rangle_{\gamma_{k_1+1}^{(r)} } \Bigg.\Bigg), \nonumber \\
\end{eqnarray}
and
 \begin{eqnarray}\label{eq:rthlev2genanal29}
 \sum_{i_1=1}^{l}\sum_{i_2=1}^{l}
\beta_{i_1} \|\y^{(i_2)}\|_2 \frac{T_{k_1+1,3}}{\sqrt{1-t}}
 & = &  \frac{\p_{k_1}\q_{k_1}-\p_{k_1+1}\q_{k_1+1}}{\p_{k_1-1}\q_{k_1-1}-\p_{k_1}\q_{k_1}}\sum_{i_1=1}^{l}\sum_{i_2=1}^{l}
\beta_{i_1} \|\y^{(i_2)}\|_2 \frac{T_{k_1,3}}{\sqrt{1-t}} \nonumber \\
 &  & - \Bigg( \Bigg. s\beta^2(\p_{k_1}\q_{k_1}-\p_{k_1+1}\q_{k_1+1}) \lp  \m_{k_1} - \m_{k_1+1} \rp \nonumber \\
& & \times   \mE_{G,{\mathcal U}_{r+1}} \left \langle   \|\x^{(i_1)}\|_2\|\x^{(p_1)}\|_2 \|\y^{(i_2)}\|_2\|\y^{(p_2)}\|_2  \right \rangle_{\gamma_{k_1+1}^{(r)} } \Bigg. \Bigg).
\end{eqnarray}
One now observes that adding the elements in each of these three sequences results in canceling out parts of the first and second summands. A bit later on, we will make this observation fully formal.

\subsection{Handling $T_{G,j}$ for $r$ levels of full lifting}
\label{sec:rthTG}

Introducing $\m_{r+1}=0$, we recall from (\ref{eq:rthlev2genanal10g})
 \begin{eqnarray}\label{eq:rthlev2genanal30}
T_{G,j} & = &  \mE_{G,{\mathcal U}_{r+1}} \lp
\zeta_r^{-1}\prod_{v=r}^{2}\mE_{{\mathcal U}_{v}} \zeta_{v-1}^{\frac{\m_v}{\m_{v-1}}-1}
  \mE_{{\mathcal U}_1}\frac{(C^{(i_1)})^{s-1} A^{(i_1,i_2)} \y_j^{(i_2)}\u_j^{(i_1,1)}}{Z^{1-\m_1}} \rp \nonumber \\
 & = &  \mE_{{\mathcal U}_{r+1},{{\mathcal U}_{r}},\dots,{{\mathcal U}_{1}}} \lp
\mE_{G}\zeta_r^{-1}\prod_{v=r}^{2} \zeta_{v-1}^{\frac{\m_v}{\m_{v-1}}-1}
    \frac{(C^{(i_1)})^{s-1} A^{(i_1,i_2)} \y_j^{(i_2)}\u_j^{(i_1,1)}}{Z^{1-\m_1}} \rp \nonumber \\
 & = &  \mE_{{\mathcal U}_{r+1},{{\mathcal U}_{r}},\dots,{{\mathcal U}_{1}}} \lp
\mE_{G} \prod_{v=r+1}^{2} \zeta_{v-1}^{\frac{\m_v}{\m_{v-1}}-1}
    \frac{(C^{(i_1)})^{s-1} A^{(i_1,i_2)} \y_j^{(i_2)}\u_j^{(i_1,1)}}{Z^{1-\m_1}} \rp \nonumber \\
 & = &  \mE_{{\mathcal U}_{r+1},{{\mathcal U}_{r}},\dots,{{\mathcal U}_{1}}} \lp
\mE_{G} \sum_{p_1=1}^{l}\mE_{G} \lp \u_j^{(i_1,1)}\u_j^{(p_1,1)}\rp
\frac{d}{d\u_j^{(p_1,1)}} \lp
\prod_{v=r+1}^{2} \zeta_{v-1}^{\frac{\m_v}{\m_{v-1}}-1}
    \frac{(C^{(i_1)})^{s-1} A^{(i_1,i_2)} \y_j^{(i_2)}}{Z^{1-\m_1}} \rp \rp \nonumber \\
 & = &  \beta^2 \mE_{G,{\mathcal U}_{r+1},{{\mathcal U}_{r}},\dots,{{\mathcal U}_{1}}} \lp
 \sum_{p_1=1}^{l} (\x^{(p_1)})^T\x^{(i_1)}
\frac{d}{d\u_j^{(p_1,1)}} \lp
\prod_{v=r+1}^{2} \zeta_{v-1}^{\frac{\m_v}{\m_{v-1}}-1}
    \frac{(C^{(i_1)})^{s-1} A^{(i_1,i_2)} \y_j^{(i_2)}}{Z^{1-\m_1}} \rp \rp \nonumber \\
 & = &  \beta^2 \mE_{G,{\mathcal U}_{r+1},{{\mathcal U}_{r}},\dots,{{\mathcal U}_{1}}} \lp
 \sum_{p_1=1}^{l} (\x^{(p_1)})^T\x^{(i_1)}
\prod_{v=r+1}^{2} \zeta_{v-1}^{\frac{\m_v}{\m_{v-1}}-1}
\frac{d}{d\u_j^{(p_1,1)}} \lp
    \frac{(C^{(i_1)})^{s-1} A^{(i_1,i_2)} \y_j^{(i_2)}}{Z^{1-\m_1}} \rp \rp \nonumber \\
 &  & + \sum_{k_1=2}^{r+1} \beta^2 \mE_{G,{\mathcal U}_{r+1},{{\mathcal U}_{r}},\dots,{{\mathcal U}_{1}}} \Bigg(\Bigg.
 \sum_{p_1=1}^{l} (\x^{(p_1)})^T\x^{(i_1)}
\prod_{v=r+1,v\neq k_1}^{2} \zeta_{v-1}^{\frac{\m_v}{\m_{v-1}}-1}
    \frac{(C^{(i_1)})^{s-1} A^{(i_1,i_2)} \y_j^{(i_2)}}{Z^{1-\m_1}}  \nonumber \\
& &
\times \frac{d}{d\u_j^{(p_1,1)}} \lp
 \zeta_{k_1-1}^{\frac{\m_{k_1}}{\m_{k_1-1}}-1}
  \rp \Bigg.\Bigg) \nonumber \\
& = & T_{G,j}^c+\sum_{k_1=2}^{r+1} T_{G,j}^{d_{k_1-1}},
 \end{eqnarray}
 where
 \begin{eqnarray}\label{eq:rthlev2genanal31}
T_{G,j}^c   & = &
\beta^2 \mE_{G,{\mathcal U}_{r+1},{{\mathcal U}_{r}},\dots,{{\mathcal U}_{1}}} \lp
 \sum_{p_1=1}^{l} (\x^{(p_1)})^T\x^{(i_1)}
\prod_{v=r+1}^{2} \zeta_{v-1}^{\frac{\m_v}{\m_{v-1}}-1}
\frac{d}{d\u_j^{(p_1,1)}} \lp
    \frac{(C^{(i_1)})^{s-1} A^{(i_1,i_2)} \y_j^{(i_2)}}{Z^{1-\m_1}} \rp \rp \nonumber \\
& = &
\beta^2 \mE_{G,{\mathcal U}_{r+1},{{\mathcal U}_{r}},\dots,{{\mathcal U}_{1}}} \lp
\prod_{v=r+1}^{2} \zeta_{v-1}^{\frac{\m_v}{\m_{v-1}}-1}
\Theta_{G,1}^{(2)} \rp,
\end{eqnarray}
and
 \begin{eqnarray}\label{eq:rthlev2genanal32}
T_{G,j}^{d_{k_1-1}} & = &  \beta^2 \mE_{G,{\mathcal U}_{r+1},{{\mathcal U}_{r}},\dots,{{\mathcal U}_{1}}} \Bigg(\Bigg.
 \sum_{p_1=1}^{l} (\x^{(p_1)})^T\x^{(i_1)}
\prod_{v=r+1,v\neq k_1}^{2} \zeta_{v-1}^{\frac{\m_v}{\m_{v-1}}-1}
    \frac{(C^{(i_1)})^{s-1} A^{(i_1,i_2)} \y_j^{(i_2)}}{Z^{1-\m_1}}  \nonumber \\
& &
\times \frac{d}{d\u_j^{(p_1,1)}} \lp
 \zeta_{k_1-1}^{\frac{\m_{k_1}}{\m_{k_1-1}}-1}
  \rp \Bigg.\Bigg).
 \end{eqnarray}
Combining (\ref{eq:lev2genGanal5}) and  (\ref{eq:rthlev2genanal31}), we obtain
\begin{eqnarray}\label{eq:rthlev2genanal33}
T_{G,j}^c   & = &
 \beta^2 \mE_{G,{\mathcal U}_{r+1},{{\mathcal U}_{r}},\dots,{{\mathcal U}_{1}}} \lp
\prod_{v=r+1}^{2} \zeta_{v-1}^{\frac{\m_v}{\m_{v-1}}-1}
\Theta_{G,1}^{(2)} \rp  \nonumber \\
 & = &
 \beta^2 \mE_{G,{\mathcal U}_{r+1},{{\mathcal U}_{r}},\dots,{{\mathcal U}_{1}}} \Bigg( \Bigg.
\prod_{v=r+1}^{2} \zeta_{v-1}^{\frac{\m_v}{\m_{v-1}}-1} \nonumber \\
& &  \times
 \Bigg( \Bigg. \frac{\y_j^{(i_2)}}{Z^{1-\m_1}}\lp(C^{(i_1)})^{s-1}\beta_{i_1}A^{(i_1,i_2)}\y_j^{(i_2)}\sqrt{t}+(s-1)(C^{(i_1)})^{s-2}\beta_{i_1}\sum_{p_2=1}^{l}A^{(i_1,p_2)}\y_j^{(p_2)}\sqrt{t}\rp \nonumber \\
& &  -(1-\m_1)
  \lp\sum_{p_1=1}^{l} \frac{(\x^{(i_1)})^T\x^{(p_1)}}{\|\x^{(i_1)}\|_2\|\x^{(p_1)}\|_2}
\frac{(C^{(i_1)})^{s-1} A^{(i_1,i_2)}\y_j^{(i_2)}}{Z^{2-\m_1}}
s  (C^{(p_1)})^{s-1}\sum_{p_2=1}^{l}\beta_{p_1}A^{(p_1,p_2)}\y_j^{(p_2)}\sqrt{t}\rp \Bigg. \Bigg)\Bigg. \Bigg) \nonumber \\
 & = &
 \beta^2 \mE_{G,{\mathcal U}_{r+1},{{\mathcal U}_{r}},\dots,{{\mathcal U}_{1}}} \Bigg( \Bigg.
\prod_{v=r}^{2} \frac{\zeta_{v-1}^{\frac{\m_v}{\m_{v-1}}}}{\zeta_v} \nonumber \\
& &  \times
 \lp
\frac{Z^{\m_1}}{\zeta_1}
 \frac{\y_j^{(i_2)}}{Z}\lp(C^{(i_1)})^{s-1}\beta_{i_1}A^{(i_1,i_2)}\y_j^{(i_2)}\sqrt{t}+(s-1)(C^{(i_1)})^{s-2}\beta_{i_1}\sum_{p_2=1}^{l}A^{(i_1,p_2)}\y_j^{(p_2)}\sqrt{t}\rp \rp \nonumber \\
& &  -(1-\m_1)
\frac{Z^{\m_1}}{\zeta_1}
\nonumber \\
& & \times
  \lp\sum_{p_1=1}^{l} \frac{(\x^{(i_1)})^T\x^{(p_1)}}{\|\x^{(i_1)}\|_2\|\x^{(p_1)}\|_2}
\frac{(C^{(i_1)})^{s-1} A^{(i_1,i_2)}\y_j^{(i_2)}}{Z^{2}}
s  (C^{(p_1)})^{s-1}\sum_{p_2=1}^{l}\beta_{p_1}A^{(p_1,p_2)}\y_j^{(p_2)}\sqrt{t}\rp \Bigg. \Bigg) \nonumber \\
 & = &
 \beta^2 \mE_{G,{\mathcal U}_{r+1}} \Bigg( \Bigg.
\prod_{v=r}^{2} \mE_{{\mathcal U}_{v}} \frac{\zeta_{v-1}^{\frac{\m_v}{\m_{v-1}}}}{\zeta_v} \nonumber \\
& &  \times
 \lp
\mE_{{\mathcal U}_{1}}\frac{Z^{\m_1}}{\zeta_1}
 \frac{\y_j^{(i_2)}}{Z}\lp(C^{(i_1)})^{s-1}\beta_{i_1}A^{(i_1,i_2)}\y_j^{(i_2)}\sqrt{t}+(s-1)(C^{(i_1)})^{s-2}\beta_{i_1}\sum_{p_2=1}^{l}A^{(i_1,p_2)}\y_j^{(p_2)}\sqrt{t}\rp \rp \nonumber \\
& &  -(1-\m_1)
\mE_{{\mathcal U}_{1}}\frac{Z^{\m_1}}{\zeta_1}
\nonumber \\
& & \times
  \lp\sum_{p_1=1}^{l} \frac{(\x^{(i_1)})^T\x^{(p_1)}}{\|\x^{(i_1)}\|_2\|\x^{(p_1)}\|_2}
\frac{(C^{(i_1)})^{s-1} A^{(i_1,i_2)}\y_j^{(i_2)}}{Z^{2}}
s  (C^{(p_1)})^{s-1}\sum_{p_2=1}^{l}\beta_{p_1}A^{(p_1,p_2)}\y_j^{(p_2)}\sqrt{t}\rp \Bigg. \Bigg) \nonumber \\
 & = &
 \beta^2 \mE_{G,{\mathcal U}_{r+1}} \Bigg( \Bigg.
\prod_{v=r}^{2} \Phi_{{\mathcal U}_{v}}   \nonumber \\
& &  \times
 \lp
\Phi_{{\mathcal U}_{1}}\frac{Z^{\m_1}}{\zeta_1}
 \frac{\y_j^{(i_2)}}{Z}\lp(C^{(i_1)})^{s-1}\beta_{i_1}A^{(i_1,i_2)}\y_j^{(i_2)}\sqrt{t}+(s-1)(C^{(i_1)})^{s-2}\beta_{i_1}\sum_{p_2=1}^{l}A^{(i_1,p_2)}\y_j^{(p_2)}\sqrt{t}\rp \rp \nonumber \\
& &  -(1-\m_1)
\Phi_{{\mathcal U}_{1}}
\frac{Z^{\m_1}}{\zeta_1}
\nonumber \\
& & \times
  \lp\sum_{p_1=1}^{l} \frac{(\x^{(i_1)})^T\x^{(p_1)}}{\|\x^{(i_1)}\|_2\|\x^{(p_1)}\|_2}
\frac{(C^{(i_1)})^{s-1} A^{(i_1,i_2)}\y_j^{(i_2)}}{Z^{2}}
s  (C^{(p_1)})^{s-1}\sum_{p_2=1}^{l}\beta_{p_1}A^{(p_1,p_2)}\y_j^{(p_2)}\sqrt{t}\rp \Bigg. \Bigg) \nonumber \\
& = &  \beta^2  \mE_{G,{\mathcal U}_{r+1}} \langle \|\x^{(i_1)}\|_2^2\|\y^{(i_2)}\|_2^2\rangle_{\gamma_{01}^{(r)}} +   (s-1)\beta^2 \mE_{G,{\mathcal U}_{r+1}}\langle \|\x^{(i_1)}\|_2^2(\y^{(p_2)})^T\y^{(i_2)}\rangle_{\gamma_{02}^{(r)}}       \nonumber \\
 &  & -s\beta^2(1-\m_1) \mE_{G,{\mathcal U}_{r+1}}\langle (\x^{(p_1)})^T\x^{(i_1)}(\y^{(p_2)})^T\y^{(i_2)}\rangle_{\gamma_1^{(r)}},
\end{eqnarray}
and
\begin{eqnarray}\label{eq:rthlev2genanal34}
\sum_{i_1=1}^{l}\sum_{i_2=1}^{l}\sum_{j=1}^{m} \beta_{i_1} \frac{T_{G,j}^c}{\sqrt{t}}   & = &  \beta^2 \lp  \mE_{G,{\mathcal U}_{r+1}} \langle \|\x^{(i_1)}\|_2^2\|\y^{(i_2)}\|_2^2\rangle_{\gamma_{01}^{(r)}} +   (s-1)  \mE_{G,{\mathcal U}_{r+1}}\langle \|\x^{(i_1)}\|_2^2(\y^{(p_2)})^T\y^{(i_2)}\rangle_{\gamma_{02}^{(r)}} \rp      \nonumber \\
 &  & -s\beta^2(1-\m_1) \mE_{G,{\mathcal U}_{r+1}}\langle (\x^{(p_1)})^T\x^{(i_1)}(\y^{(p_2)})^T\y^{(i_2)}\rangle_{\gamma_1^{(r)}}.
\end{eqnarray}

To determine $T_{G,j}^{d_{k_1-1}}$ we write
 \begin{eqnarray}\label{eq:rthlev2genanal35}
T_{G,j}^{d_{k_1-1}} & = &  \beta^2 \mE_{G,{\mathcal U}_{r+1},{{\mathcal U}_{r}},\dots,{{\mathcal U}_{1}}} \Bigg(\Bigg.
 \sum_{p_1=1}^{l} (\x^{(p_1)})^T\x^{(i_1)}
\prod_{v=r+1,v\neq k_1}^{2} \zeta_{v-1}^{\frac{\m_v}{\m_{v-1}}-1}
    \frac{(C^{(i_1)})^{s-1} A^{(i_1,i_2)} \y_j^{(i_2)}}{Z^{1-\m_1}}  \nonumber \\
& &
\times \frac{d}{d\u_j^{(p_1,1)}} \lp
 \zeta_{k_1-1}^{\frac{\m_{k_1}}{\m_{k_1-1}}-1}
  \rp \Bigg.\Bigg) \nonumber \\
& = &  \beta^2 \mE_{G,{\mathcal U}_{r+1},{{\mathcal U}_{r}},\dots,{{\mathcal U}_{1}}} \Bigg(\Bigg.
 \sum_{p_1=1}^{l} (\x^{(p_1)})^T\x^{(i_1)}
\prod_{v=r+1,v\neq k_1}^{2} \zeta_{v-1}^{\frac{\m_v}{\m_{v-1}}-1}
    \frac{(C^{(i_1)})^{s-1} A^{(i_1,i_2)} \y_j^{(i_2)}}{Z^{1-\m_1}}  \nonumber \\
& &
\times   s \lp \m_{k_1} - \m_{k_1-1}\rp \zeta_{k_1-1}^{\frac{\m_{k_1}}{\m_{k_1-1}}-2}
\prod_{k=k_1-1}^{2}\mE_{{\mathcal U}_{k}} \zeta_{k-1}^{\frac{\m_k}{\m_{k-1}}-1}
 \mE_{{\mathcal U}_1} \frac{1}{Z^{1-\m_1}}  (C^{(p_1)})^{s-1} \nonumber \\
& & \times \sum_{p_2=1}^{l}\beta_{p_1}A^{(p_1,p_2)}\frac{dD^{(p_1,p_2)}}{d\u_j^{(p_1,1)}}
 \Bigg.\Bigg).
 \end{eqnarray}
 Since
\begin{eqnarray}\label{eq:rthlev2genanal36}
 \frac{dD^{(p_1,p_2)}}{d\u_j^{(p_1,1)}}=
 \frac{dB^{(p_1,p_2)}}{d\u_j^{(p_1,1)}}=\sqrt{t} \y_j^{(p_2)},
\end{eqnarray}
we, from (\ref{eq:rthlev2genanal21}), also have
\begin{eqnarray}\label{eq:rthlev2genanal37}
T_{G,j}^{d_{k_1-1}}
& = &  \beta^2 \mE_{G,{\mathcal U}_{r+1},{{\mathcal U}_{r}},\dots,{{\mathcal U}_{1}}} \Bigg(\Bigg.
 \sum_{p_1=1}^{l} (\x^{(p_1)})^T\x^{(i_1)}
\prod_{v=r+1,v\neq k_1}^{2} \zeta_{v-1}^{\frac{\m_v}{\m_{v-1}}-1}
    \frac{(C^{(i_1)})^{s-1} A^{(i_1,i_2)} \y_j^{(i_2)}}{Z^{1-\m_1}}  \nonumber \\
& &
\times   s \lp \m_{k_1} - \m_{k_1-1}\rp \zeta_{k_1-1}^{\frac{\m_{k_1}}{\m_{k_1-1}}-2}
\prod_{k=k_1}^{2}\mE_{{\mathcal U}_{k}} \zeta_{k-1}^{\frac{\m_k}{\m_{k-1}}-1}
 \mE_{{\mathcal U}_1} \frac{1}{Z^{1-\m_1}}  (C^{(p_1)})^{s-1} \nonumber \\
& & \times \sum_{p_2=1}^{l}\beta_{p_1}A^{(p_1,p_2)}\sqrt{t} \y_j^{(p_2)}
 \Bigg.\Bigg) \nonumber \\
 & = &  \beta^2 \mE_{G,{\mathcal U}_{r+1}} \Bigg(\Bigg.
\prod_{k=r}^{k_1+1}\mE_{{\mathcal U}_{k}} \frac{\zeta_{k-1}^{\frac{\m_k}{\m_{k-1}}}}{\zeta_k}
 \prod_{k=k_1}^{2}\mE_{{\mathcal U}_{k}} \frac{\zeta_{k-1}^{\frac{\m_k}{\m_{k-1}}}}{\zeta_k}
 \mE_{{\mathcal U}_1} \frac{Z^{\m_1}}{\zeta_1}
    \frac{(C^{(i_1)})^{s} }{Z}     \frac{ A^{(i_1,i_2)} }{C^{(i_1)}}  \nonumber \\
& &
\times   s \lp \m_{k_1} - \m_{k_1-1}\rp
\prod_{k=k_1}^{2}\mE_{{\mathcal U}_{k}} \frac{\zeta_{k-1}^{\frac{\m_k}{\m_{k-1}}}}{\zeta_k}
 \mE_{{\mathcal U}_1} \frac{Z^{\m_1}}{\zeta_1}   \sum_{p_1=1}^{l}\frac{(C^{(p_1)})^{s} }{Z}  \nonumber \\
& & \times \sum_{p_2=1}^{l} \frac{A^{(p_1,p_2)}}{C^{(p_1)}}\sqrt{t}\beta_{p_1}  (\x^{(p_1)})^T\x^{(i_1)}  \y_j^{(p_2)}\y_j^{(i_2)}
 \Bigg.\Bigg) \nonumber \\
  & = &  \beta^2 \mE_{G,{\mathcal U}_{r+1}} \Bigg(\Bigg.
\prod_{k=r}^{k_1+1}\Phi_{{\mathcal U}_{k}}
 \prod_{k=k_1}^{1}\Phi_{{\mathcal U}_{k}}
    \frac{(C^{(i_1)})^{s} }{Z}     \frac{ A^{(i_1,i_2)} }{C^{(i_1)}}  \nonumber \\
& &
\times   s \lp \m_{k_1} - \m_{k_1-1}\rp
\prod_{k=k_1}^{1}\Phi_{{\mathcal U}_{k}}    \sum_{p_1=1}^{l}\frac{(C^{(p_1)})^{s} }{Z}  \sum_{p_2=1}^{l} \frac{A^{(p_1,p_2)}}{C^{(p_1)}}\sqrt{t}\beta_{p_1}  (\x^{(p_1)})^T\x^{(i_1)}  \y_j^{(p_2)}\y_j^{(i_2)}
 \Bigg.\Bigg). \nonumber \\
\end{eqnarray}
Combining (\ref{eq:rthlev2genanal30}), (\ref{eq:rthlev2genanal34}), and (\ref{eq:rthlev2genanal37}) we obtain
\begin{eqnarray}\label{eq:rthlev2genanal38}
\sum_{i_1=1}^{l}\sum_{i_2=1}^{l}\sum_{j=1}^{m} \beta_{i_1 }\frac{T_{G,j}}{\sqrt{t}}
& = & \sum_{i_1=1}^{l}\sum_{i_2=1}^{l}\sum_{j=1}^{m} \beta_{i_1 }\frac{(T_{G,j}^c+\sum_{k_1=2}^{r+1} T_{G,j}^{d_{k_1-1}})}{\sqrt{t}} \nonumber \\
& = & \beta^2 \lp \mE_{G,{\mathcal U}_{r+1}} \langle \|\x^{(i_1)}\|_2^2\|\y^{(i_2)}\|_2^2\rangle_{\gamma_{01}^{(r)}} +   (s-1)\mE_{G,{\mathcal U}_{r+1}}\langle \|\x^{(i_1)}\|_2^2(\y^{(p_2)})^T\y^{(i_2)}\rangle_{\gamma_{02}^{(r)}}   \rp    \nonumber \\
 &  & -s\beta^2(1-\m_1) \mE_{G,{\mathcal U}_{r+1}}\langle (\x^{(p_1)})^T\x^{(i_1)}(\y^{(p_2)})^T\y^{(i_2)}\rangle_{\gamma_1^{(r)}}
  \nonumber \\
  &  & -s\beta^2 \sum_{k_1=2}^{r+1}  (\m_{k_1-1}-\m_{k_1}) \mE_{G,{\mathcal U}_{r+1}}\langle (\x^{(p_1)})^T\x^{(i_1)}(\y^{(p_2)})^T\y^{(i_2)}\rangle_{\gamma_{k_1}^{(r)}}.
\end{eqnarray}
With $\m_0=1$, we can also write
\begin{eqnarray}\label{eq:rthlev2genanal39}
\sum_{i_1=1}^{l}\sum_{i_2=1}^{l}\sum_{j=1}^{m} \beta_{i_1 }\frac{T_{G,j}}{\sqrt{t}}
 & = & \beta^2  \lp \mE_{G,{\mathcal U}_{r+1}} \langle \|\x^{(i_1)}\|_2^2\|\y^{(i_2)}\|_2^2\rangle_{\gamma_{01}^{(r)}} +   (s-1)\mE_{G,{\mathcal U}_{r+1}}\langle \|\x^{(i_1)}\|_2^2(\y^{(p_2)})^T\y^{(i_2)}\rangle_{\gamma_{02}^{(r)}}   \rp    \nonumber \\
   &  & -s\beta^2 \sum_{k_1=1}^{r+1}  (\m_{k_1-1}-\m_{k_1}) \mE_{G,{\mathcal U}_{r+1}}\langle (\x^{(p_1)})^T\x^{(i_1)}(\y^{(p_2)})^T\y^{(i_2)}\rangle_{\gamma_{k_1}^{(r)}}.
\end{eqnarray}

\subsection{Connecting everything together}
\label{sec:connect}

We now have all the ingredients to formulate the following theorem.
\begin{theorem}
\label{thm:thm3}
Let $r\in\mN$ and $k\in\{1,2,\dots,r+1\}$. For vectors $\m=[\m_0,\m_1,\m_2,...,\m_r,\m_{r+1}]$ with $\m_0=1$ and $\m_{r+1}=0$,
$\p=[\p_0,\p_1,...,\p_r,\p_{r+1}]$ with $1\geq\p_0\geq \p_1\geq \p_2\geq \dots \geq \p_r\geq \p_{r+1} =0$ and $\q=[\q_0,\q_1,\q_2,\dots,\q_r,\q_{r+1}]$ with $1\geq\q_0\geq \q_1\geq \q_2\geq \dots \geq \q_r\geq \q_{r+1} = 0$, let the variances of the zero-mean independent normal components of $G\in\mR^{m\times n}$, $u^{(4,k)}\in\mR$, $\u^{(2,k)}\in\mR^m$, and $\h^{(k)}\in\mR^n$ be $1$, $\p_{k-1}\q_{k-1}-\p_k\q_k$, $\p_{k-1}-\p_{k}$, $\q_{k-1}-\q_{k}$, respectively. Also, let ${\mathcal U}_k\triangleq [u^{(4,k)},\u^{(2,k)},\h^{(2k)}]$.  Assuming that set ${\mathcal X}=\{\x^{(1)},\x^{(2)},\dots,\x^{(l)}\}$, where $\x^{(i)}\in \mR^{n},1\leq i\leq l$, set ${\mathcal Y}=\{\y^{(1)},\y^{(2)},\dots,\y^{(l)}\}$, where $\y^{(i)}\in \mR^{m},1\leq i\leq l$, and scalars $\beta\geq 0$ and $s\in\mR$ are given, and consider the following function
\begin{equation}\label{eq:thm3eq1}
\psi(\calX,\calY,\p,\q,\m,\beta,s,t)  =  \mE_{G,{\mathcal U}_{r+1}} \frac{1}{\beta|s|\sqrt{n}\m_r} \log
\lp \mE_{{\mathcal U}_{r}} \lp \dots \lp \mE_{{\mathcal U}_2}\lp\lp\mE_{{\mathcal U}_1} \lp Z^{\m_1}\rp\rp^{\frac{\m_2}{\m_1}}\rp\rp^{\frac{\m_3}{\m_2}} \dots \rp^{\frac{\m_{r}}{\m_{r-1}}}\rp,
\end{equation}
where
\begin{eqnarray}\label{eq:thm3eq2}
Z & \triangleq & \sum_{i_1=1}^{l}\lp\sum_{i_2=1}^{l}e^{\beta D_0^{(i_1,i_2)}} \rp^{s} \nonumber \\
 D_0^{(i_1,i_2)} & \triangleq & \sqrt{t}(\y^{(i_2)})^T
 G\x^{(i_1)}+\sqrt{1-t}\|\x^{(i_1)}\|_2 (\y^{(i_2)})^T\lp\sum_{k=1}^{r+1}\u^{(2,k)}\rp\nonumber \\
 & & +\sqrt{t}\|\x^{(i_1)}\|_2\|\y^{(i_2)}\|_2\lp\sum_{k=1}^{r+1}u^{(4,k)}\rp +\sqrt{1-t}\|\y^{(i_2)}\|_2\lp\sum_{k=1}^{r+1}\h^{(k)}\rp^T\x^{(i_1)}
 \end{eqnarray}
Let $\zeta_0=Z$ and let $\zeta_k,k\geq 1$, be as defined in (\ref{eq:rthlev2genanal7a}) and (\ref{eq:rthlev2genanal7b}). Moreover, consider the operators
\begin{eqnarray}\label{eq:thm3eq3}
 \Phi_{{\mathcal U}_k} & \triangleq &  \mE_{{\mathcal U}_{k}} \frac{\zeta_{k-1}^{\frac{\m_k}{\m_{k-1}}}}{\zeta_k},
 \end{eqnarray}
and set
\begin{eqnarray}\label{eq:thm3eq4}
  \gamma_0(i_1,i_2) & = &
\frac{(C^{(i_1)})^{s}}{Z}  \frac{A^{(i_1,i_2)}}{C^{(i_1)}} \nonumber \\
\gamma_{01}^{(r)}  & = & \prod_{k=r}^{1}\Phi_{{\mathcal U}_k} (\gamma_0(i_1,i_2)) \nonumber \\
\gamma_{02}^{(r)}  & = & \prod_{k=r}^{1}\Phi_{{\mathcal U}_k} (\gamma_0(i_1,i_2)\times \gamma_0(i_1,p_2)) \nonumber \\
\gamma_{k_1+1}^{(r)}  & = & \prod_{k=r}^{k_1+1}\Phi_{{\mathcal U}_k} \lp \prod_{k=k_1}^{1}\Phi_{{\mathcal U}_k}\gamma_0(i_1,i_2)\times \prod_{k=k_1}^{1} \Phi_{{\mathcal U}_k}\gamma_0(p_1,p_2) \rp.
 \end{eqnarray}
Let also
\begin{eqnarray}\label{eq:thm3eq5}
 \phi_{k_1}^{(r)} & = &
-s(\m_{k_1-1}-\m_{k_1}) \nonumber \\
&  & \times
\mE_{G,{\mathcal U}_{r+1}} \langle (\p_{k_1-1}\|\x^{(i_1)}\|_2\|\x^{(p_1)}\|_2 -(\x^{(p_1)})^T\x^{(i_1)})(\q_{k_1-1}\|\y^{(i_2)}\|_2\|\y^{(p_2)}\|_2 -(\y^{(p_2)})^T\y^{(i_2)})\rangle_{\gamma_{k_1}^{(r)}} \nonumber \\
 \phi_{01}^{(r)} & = & (1-\p_0)(1-\q_0)\mE_{G,{\mathcal U}_{r+1}}\langle \|\x^{(i_1)}\|_2^2\|\y^{(i_2)}\|_2^2\rangle_{\gamma_{01}^{(r)}} \nonumber\\
\phi_{02}^{(r)} & = & (s-1)(1-\p_0)\mE_{G,{\mathcal U}_{r+1}}\left\langle \|\x^{(i_1)}\|_2^2 \lp\q_0\|\y^{(i_2)}\|_2\|\y^{(p_2)}\|_2-(\y^{(p_2)})^T\y^{(i_2)}\rp\right\rangle_{\gamma_{02}^{(r)}}. \end{eqnarray}

\noindent Then
\begin{eqnarray}\label{eq:thm3eq6}
\frac{d\psi(\calX,\calY,\p,\q,\m,\beta,s,t)}{dt}  & = &       \frac{\mbox{sign}(s)\beta}{2\sqrt{n}} \lp  \lp\sum_{k_1=1}^{r+1} \phi_{k_1}^{(r)}\rp +\phi_{01}^{(r)}+\phi_{02}^{(r)}\rp.
 \end{eqnarray}
It particular, choosing $\p_0=\q_0=1$, one also has
\begin{eqnarray}\label{eq:rthlev2genanal43}
\frac{d\psi(\calX,\calY,\p,\q,\m,\beta,s,t)}{dt}  & = &       \frac{\mbox{sign}(s)\beta}{2\sqrt{n}} \sum_{k_1=1}^{r+1} \phi_{k_1}^{(r)} .
 \end{eqnarray}
 \end{theorem}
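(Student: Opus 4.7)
The plan is to combine two ingredients that Section \ref{sec:rthlev} has already prepared: the one-step recursions (\ref{eq:rthlev2genanal27})--(\ref{eq:rthlev2genanal29}) for the three sequences $\{T_{k,1,j}\}$, $\{T_{k,2}\}$, $\{T_{k,3}\}$, and the closed-form identity (\ref{eq:rthlev2genanal39}) for the $T_G$ contribution. Starting from the decomposition $\frac{d\psi}{dt}=\frac{\mbox{sign}(s)}{2\beta\sqrt{n}}\sum_{i_1,i_2}\beta_{i_1}(T_G+\sum_{k=1}^{r+1}T_k)$ in (\ref{eq:rthlev2genanal10e})--(\ref{eq:rthlev2genanal10g}), what remains is to close each of the three chains into an explicit expression in the measures $\gamma_{01}^{(r)},\gamma_{02}^{(r)},\gamma_1^{(r)},\dots,\gamma_{r+1}^{(r)}$, and then assemble the totals.

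For the first chain, set $S_{k_1}\triangleq \sum_{i_1,i_2,j}\beta_{i_1}T_{k_1,1,j}/\sqrt{1-t}$ and $R_{k_1}\triangleq S_{k_1}/(\p_{k_1-1}-\p_{k_1})$. Dividing (\ref{eq:rthlev2genanal27}) by $\p_{k_1}-\p_{k_1+1}$ turns the recursion into the telescoping form
\[
R_{k_1+1}-R_{k_1}=-s\beta^2(\m_{k_1}-\m_{k_1+1})\mE_{G,{\mathcal U}_{r+1}}\langle\|\x^{(i_1)}\|_2\|\x^{(p_1)}\|_2(\y^{(p_2)})^T\y^{(i_2)}\rangle_{\gamma_{k_1+1}^{(r)}},
\]
so $R_{k_1}=R_1-s\beta^2\sum_{v=1}^{k_1-1}(\m_v-\m_{v+1})\mE\langle\cdot\rangle_{\gamma_{v+1}^{(r)}}$. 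Multiplying back by $(\p_{k_1-1}-\p_{k_1})$, summing on $k_1\in\{1,\dots,r+1\}$, and using $\sum_{k_1=1}^{r+1}(\p_{k_1-1}-\p_{k_1})=\p_0$ together with $\sum_{k_1\geq v+1}(\p_{k_1-1}-\p_{k_1})=\p_v$, and invoking (\ref{eq:lev2liftgenAanal19i}) for the base $S_1$ (whose prefactor $(\p_0-\p_1)$ cancels the denominator in $R_1$), yields a closed form whose $\p$-dependence outside the $\gamma_{01}^{(r)},\gamma_{02}^{(r)}$ base pieces is $\p_{k_1-1}(\m_{k_1-1}-\m_{k_1})$ attached to $\gamma_{k_1}^{(r)}$ (with $\m_0=1$). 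The same manipulation, with $\p$ replaced by $\q$ in (\ref{eq:rthlev2genanal28}) and by the products $\p_k\q_k$ in (\ref{eq:rthlev2genanal29}), closes the other two chains.

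Combining these three closed forms with the $T_G$ contribution (\ref{eq:rthlev2genanal39}) and collecting coefficients by $\gamma$-measure produces the theorem. The $\gamma_{01}^{(r)}$ coefficient condenses as $1-\p_0-\q_0+\p_0\q_0=(1-\p_0)(1-\q_0)$, recovering $\phi_{01}^{(r)}$. The $\gamma_{02}^{(r)}$ contributions of $T_G$ and the $T_{k,1,j}$ chain (carrying base $\|\x\|_2^2(\y^{(p_2)})^T\y^{(i_2)}$) combine with those of the $T_{k,2},T_{k,3}$ chains (carrying base $\|\x\|_2^2\|\y^{(i_2)}\|_2\|\y^{(p_2)}\|_2$) into $\phi_{02}^{(r)}$. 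For each $k_1\geq 1$, the coefficient of $\mE\langle\cdot\rangle_{\gamma_{k_1}^{(r)}}$ collects four terms with prefactor $-s(\m_{k_1-1}-\m_{k_1})$ and weights $+\p_{k_1-1}\q_{k_1-1},-\p_{k_1-1},-\q_{k_1-1},+1$ on the monomials $\|\x\|\|\x\|\|\y\|\|\y\|$, $\|\x\|\|\x\|(\y^T\y)$, $(\x^T\x)\|\y\|\|\y\|$, $(\x^T\x)(\y^T\y)$; this factors exactly as the bracket inside $\phi_{k_1}^{(r)}$ in (\ref{eq:thm3eq5}). Specializing to $\p_0=\q_0=1$ kills both $\phi_{01}^{(r)}$ and $\phi_{02}^{(r)}$, giving (\ref{eq:rthlev2genanal43}).

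The main obstacle is bookkeeping: ensuring that the three parallel telescopings with their distinct variance scalings ($\p_k$, $\q_k$, $\p_k\q_k$) recombine along the correct cross terms of the product $(\p_{k_1-1}\|\x^{(i_1)}\|_2\|\x^{(p_1)}\|_2-(\x^{(p_1)})^T\x^{(i_1)})(\q_{k_1-1}\|\y^{(i_2)}\|_2\|\y^{(p_2)}\|_2-(\y^{(p_2)})^T\y^{(i_2)})$, and keeping the sign conventions straight across the three $T_k$ sub-terms (two enter $\tfrac{d\psi}{dt}$ with a minus from $-1/\sqrt{1-t}$, one with a plus from $+1/\sqrt{t}$). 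The $r=1$ and $r=2$ closed computations in Propositions \ref{thm:thm1} and \ref{thm:thm2} serve as complete templates and base-case checks for the induction.
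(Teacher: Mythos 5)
Your proposal follows essentially the same route as the paper's own proof: it starts from the decomposition (\ref{eq:rthlev2genanal10e})--(\ref{eq:rthlev2genanal10g}), closes the three chains via the one-step recursions (\ref{eq:rthlev2genanal27})--(\ref{eq:rthlev2genanal29}) together with the level-$r$ base cases (\ref{eq:rthlev2genanal40})--(\ref{eq:rthlev2genanal42}), adds the $T_G$ contribution (\ref{eq:rthlev2genanal39}), and regroups coefficients by $\gamma$-measure exactly as in (\ref{eq:rthlev2genanal46})--(\ref{eq:rthlev2genanal48}). The only cosmetic difference is that you normalize by the variance increments to exhibit an explicit telescoping sum, whereas the paper carries the same algebra as an induction on the closed form of the partial sums followed by an interchange of the double summation; the resulting coefficients $\p_{k_1-1}(\m_{k_1-1}-\m_{k_1})$, $\q_{k_1-1}(\m_{k_1-1}-\m_{k_1})$, $\p_{k_1-1}\q_{k_1-1}(\m_{k_1-1}-\m_{k_1})$ and the final factorization into $\phi_{k_1}^{(r)}$, $\phi_{01}^{(r)}$, $\phi_{02}^{(r)}$ are identical.
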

\begin{proof}
  Follows through a combination of (\ref{eq:rthlev2genanal10e})-(\ref{eq:rthlev2genanal10g}), (\ref{eq:rthlev2genanal27})-(\ref{eq:rthlev2genanal29}), (\ref{eq:rthlev2genanal39}), after summing and canceling scaled terms and noting that, analogously to  (\ref{eq:liftgenAanal19i}) and (\ref{eq:lev2liftgenAanal19i}) for the first and second level, we have for the $r$-th level
\begin{eqnarray}\label{eq:rthlev2genanal40}
\sum_{i_1=1}^{l}\sum_{i_2=1}^{l}\sum_{j=1}^{m} \beta_{i_1}\frac{T_{1,1,j}}{\sqrt{1-t
}}
& = & \beta^2(\p_0-\p_1) \Bigg( \Bigg. \mE_{G,{\mathcal U}_{r+1}}\langle \|\x^{(i_1)}\|_2^2\|\y^{(i_2)}\|_2^2\rangle_{\gamma_{01}^{(r)}} \nonumber \\
& & +  (s-1)\mE_{G,{\mathcal U}_{r+1}}\langle \|\x^{(i_1)}\|_2^2(\y^{(p_2)})^T\y^{(i_2)}\rangle_{\gamma_{02}^{(r)}} \Bigg.\Bigg)  \nonumber \\
& & - (\p_0-\p_1)s\beta^2(1-\m_1)\mE_{G,{\mathcal U}_{r+1}}\langle \|\x^{(i_1)}\|_2\|\x^{(p_1)}\|_2(\y^{(p_2)})^T\y^{(i_2)} \rangle_{\gamma_1^{(r)}},
\end{eqnarray}
and analogously to (\ref{eq:liftgenBanal20b}) and (\ref{eq:lev2liftgenBanal20b}) for the first and second level, we have for the $r$-th level
\begin{eqnarray}\label{eq:rthlev2genanal41}
\sum_{i_1=1}^{l}\sum_{i_2=1}^{l} \beta_{i_1}\|\y^{(i_2)}\|_2 \frac{T_{1,2}}{\sqrt{1-t}}
& = & \beta^2(\q_0-\q_1)\Bigg( \Bigg.  \mE_{G,{\mathcal U}_{r+1}}\langle \|\x^{(i_1)}\|_2^2\|\y^{(i_2)}\|_2^2\rangle_{\gamma_{01}^{(r)}} \nonumber \\
& &
+   (s-1)\mE_{G,{\mathcal U}_{r+1}}\langle \|\x^{(i_1)}\|_2^2 \|\y^{(i_2)}\|_2\|\y^{(p_2)}\|_2\rangle_{\gamma_{02}^{(r)}}\Bigg.\Bigg) \nonumber \\
& & - (\q_0-\q_1)s\beta^2(1-\m_1)\mE_{G,{\mathcal U}_{r+1}}\langle (\x^{(p_1)})^T\x^{(i_1)}\|\y^{(i_2)}\|_2\|\y^{(p_2)}\|_2 \rangle_{\gamma_1^{(r)}},\nonumber \\
\end{eqnarray}
and analogously to  (\ref{eq:liftgenCanal21b}) and (\ref{eq:lev2liftgenCanal21b})  for the first and second level, we have for the $r$-th level
  \begin{eqnarray}\label{eq:rthlev2genanal42}
\sum_{i_1=1}^{l}\sum_{i_2=1}^{l} \beta_{i_1}\|\y^{(i_2)}\|_2 \frac{T_{1,3}}{\sqrt{t}}
& = & \beta^2(\p_0\q_0-\p_1\q_1)\Bigg( \Bigg. \mE_{G,{\mathcal U}_{r+1}}\langle \|\x^{(i_1)}\|_2^2\|\y^{(i_2)}\|_2^2\rangle_{\gamma_{01}^{(r)}} \nonumber \\
& & +   (s-1)\mE_{G,{\mathcal U}_{r+1}}\langle \|\x^{(i_1)}\|_2^2 \|\y^{(i_2)}\|_2\|\y^{(p_2)}\|_2\rangle_{\gamma_{02}^{(r)}}\Bigg.\Bigg)    \nonumber \\
& & - (\p_0\q_0-\p_1\q_1)s\beta^2(1-\m_1)\mE_{G,{\mathcal U}_{r+1}}\langle \|\x^{(i_1)}\|_2\|\x^{(p_`)}\|_2\|\y^{(i_2)}\|_2\|\y^{(p_2)}\|_2 \rangle_{\gamma_1^{(r)}}.\nonumber \\
\end{eqnarray}
The summing  and canceling out works out in an identical way for all three sequences, $\lp T_{k,1,j}\rp_{k=1:r+1}$, $\lp T_{k,2}\rp_{k=1:r+1}$, and $\lp T_{k,3}\rp_{k=1:r+1}$. To avoid unnecessary repetition of the same mechanism, we show how it works only for the first one. Following (\ref{eq:rthlev2genanal27}), we first write
 \begin{eqnarray}\label{eq:rthlev2genanal43}
 \sum_{i_1=1}^{l}\sum_{i_2=1}^{l} \sum_{j=1}^{m}
\beta_{i_1} \frac{T_{2,1,j}}{\sqrt{1-t}}
 & = &  \frac{\p_{1}-\p_{2}}{\p_{0}-\p_{1}}\sum_{i_1=1}^{l}\sum_{i_2=1}^{l} \sum_{j=1}^{m}
\beta_{i_1} \frac{T_{1,1,j}}{\sqrt{1-t}} \nonumber \\
 &  & - \Bigg( \Bigg. s\beta^2(\p_{1}-\p_{2}) \lp \m_{1} -  \m_{2} \rp \mE_{G,{\mathcal U}_{r+1}} \left \langle \|\x^{(i_1)}\|_2\|\x^{(p_1)}\|_2 (\y^{(p_2)})^T \y^{(i_2)} \right \rangle_{\gamma_{2}^{(r)} } \Bigg. \Bigg) \nonumber \\
& = & \beta^2\frac{\p_{1}-\p_{2}}{\p_{0}-\p_{1}} \Bigg( \Bigg.(\p_0-\p_1) \Bigg( \Bigg. \mE_{G,{\mathcal U}_{r+1}}\langle \|\x^{(i_1)}\|_2^2\|\y^{(i_2)}\|_2^2\rangle_{\gamma_{01}^{(r)}} \nonumber \\
& & +  (s-1)\mE_{G,{\mathcal U}_{r+1}}\langle \|\x^{(i_1)}\|_2^2(\y^{(p_2)})^T\y^{(i_2)}\rangle_{\gamma_{02}^{(r)}} \Bigg.\Bigg)  \nonumber \\
& & - (\p_0-\p_1)s\beta^2(1-\m_1)\mE_{G,{\mathcal U}_{r+1}}\langle \|\x^{(i_1)}\|_2\|\x^{(p_1)}\|_2(\y^{(p_2)})^T\y^{(i_2)} \rangle_{\gamma_1^{(r)}} \Bigg.\Bigg) \nonumber \\
 &  & - \Bigg( \Bigg. s\beta^2(\p_{1}-\p_{2}) \lp \m_{1} -  \m_{2} \rp
  \mE_{G,{\mathcal U}_{r+1}} \left \langle \|\x^{(i_1)}\|_2\|\x^{(p_1)}\|_2 (\y^{(p_2)})^T \y^{(i_2)} \right \rangle_{\gamma_{2}^{(r)} } \Bigg. \Bigg) \nonumber \\
& = & \beta^2 (\p_1-\p_2) \Bigg( \Bigg. \mE_{G,{\mathcal U}_{r+1}}\langle \|\x^{(i_1)}\|_2^2\|\y^{(i_2)}\|_2^2\rangle_{\gamma_{01}^{(r)}} \nonumber \\
 & & +  (s-1)\mE_{G,{\mathcal U}_{r+1}}\langle \|\x^{(i_1)}\|_2^2(\y^{(p_2)})^T\y^{(i_2)}\rangle_{\gamma_{02}^{(r)}}   \nonumber \\
& & - \Bigg( \Bigg. s(1-\m_1)\mE_{G,{\mathcal U}_{r+1}}\langle \|\x^{(i_1)}\|_2\|\x^{(p_1)}\|_2(\y^{(p_2)})^T\y^{(i_2)} \rangle_{\gamma_1^{(r)}} \Bigg.\Bigg) \nonumber \\
 &  & - \Bigg( \Bigg. s \lp \m_1 -  \m_{2} \rp
  \mE_{G,{\mathcal U}_{r+1}} \left \langle \|\x^{(i_1)}\|_2\|\x^{(p_1)}\|_2 (\y^{(p_2)})^T \y^{(i_2)} \right \rangle_{\gamma_{2}^{(r)} } \Bigg. \Bigg).
\end{eqnarray}
In general, assuming that
 \begin{eqnarray}\label{eq:rthlev2genanal44}
 \sum_{i_1=1}^{l}\sum_{i_2=1}^{l} \sum_{j=1}^{m}
\beta_{i_1} \frac{T_{k_1,1,j}}{\sqrt{1-t}}
 & = & \beta^2 (\p_{k_1-1}-\p_{k_1}) \Bigg( \Bigg. \mE_{G,{\mathcal U}_{r+1}}\langle \|\x^{(i_1)}\|_2^2\|\y^{(i_2)}\|_2^2\rangle_{\gamma_{01}^{(r)}} \nonumber \\
 & & +  (s-1)\mE_{G,{\mathcal U}_{r+1}}\langle \|\x^{(i_1)}\|_2^2(\y^{(p_2)})^T\y^{(i_2)}\rangle_{\gamma_{02}^{(r)}}   \nonumber \\
 &  & - s \sum_{v=1}^{k_1} \lp \m_{v-1} -  \m_{v} \rp
  \mE_{G,{\mathcal U}_{r+1}} \left \langle \|\x^{(i_1)}\|_2\|\x^{(p_1)}\|_2 (\y^{(p_2)})^T \y^{(i_2)} \right \rangle_{\gamma_{k_1}^{(r)} } \Bigg. \Bigg),\nonumber \\
\end{eqnarray}
we, from (\ref{eq:rthlev2genanal27}), have
 \begin{eqnarray}\label{eq:rthlev2genanal45}
 \sum_{i_1=1}^{l}\sum_{i_2=1}^{l} \sum_{j=1}^{m}
\beta_{i_1} \frac{T_{k_1+1,1,j}}{\sqrt{1-t}}
 & = &  \frac{\p_{k_1}-\p_{k_1+1}}{\p_{k_1-1}-\p_{k_1}}\sum_{i_1=1}^{l}\sum_{i_2=1}^{l} \sum_{j=1}^{m}
\beta_{i_1} \frac{T_{k_1,1,j}}{\sqrt{1-t}} \nonumber \\
 &  & - \Bigg( \Bigg. s\beta^2(\p_{k_1}-\p_{k_1+1}) \lp \m_{k_1} -  \m_{k_1+1} \rp
  \nonumber \\
  & & \times
  \mE_{G,{\mathcal U}_{r+1}} \left \langle \|\x^{(i_1)}\|_2\|\x^{(p_1)}\|_2 (\y^{(p_2)})^T \y^{(i_2)} \right \rangle_{\gamma_{k_1+1}^{(r)} } \Bigg. \Bigg) \nonumber \\
 & = &  \frac{\p_{k_1}-\p_{k_1+1}}{\p_{k_1-1}-\p_{k_1}}\beta^2 (\p_{k_1-1}-\p_{k_1}) \Bigg( \Bigg. \mE_{G,{\mathcal U}_{r+1}}\langle \|\x^{(i_1)}\|_2^2\|\y^{(i_2)}\|_2^2\rangle_{\gamma_{01}^{(r)}} \nonumber \\
 & & +  (s-1)\mE_{G,{\mathcal U}_{r+1}}\langle \|\x^{(i_1)}\|_2^2(\y^{(p_2)})^T\y^{(i_2)}\rangle_{\gamma_{02}^{(r)}}   \nonumber \\
 &  & - s \sum_{v=1}^{k_1} \lp \m_{v-1} -  \m_{v} \rp
  \nonumber \\
  & & \times
  \mE_{G,{\mathcal U}_{r+1}} \left \langle \|\x^{(i_1)}\|_2\|\x^{(p_1)}\|_2 (\y^{(p_2)})^T \y^{(i_2)} \right \rangle_{\gamma_{v}^{(r)} } \Bigg. \Bigg) \nonumber \\
   &  & - \Bigg( \Bigg. s\beta^2(\p_{k_1}-\p_{k_1+1}) \lp \m_{k_1} -  \m_{k_1+1} \rp
  \nonumber \\
  & & \times
  \mE_{G,{\mathcal U}_{r+1}} \left \langle \|\x^{(i_1)}\|_2\|\x^{(p_1)}\|_2 (\y^{(p_2)})^T \y^{(i_2)} \right \rangle_{\gamma_{k_1+1}^{(r)} } \Bigg. \Bigg) \nonumber \\
 & = &  \beta^2 (\p_{k_1}-\p_{k_1+1}) \Bigg( \Bigg. \mE_{G,{\mathcal U}_{r+1}}\langle \|\x^{(i_1)}\|_2^2\|\y^{(i_2)}\|_2^2\rangle_{\gamma_{01}^{(r)}} \nonumber \\
 & & +  (s-1)\mE_{G,{\mathcal U}_{r+1}}\langle \|\x^{(i_1)}\|_2^2(\y^{(p_2)})^T\y^{(i_2)}\rangle_{\gamma_{02}^{(r)}}   \nonumber \\
 &  & - s \sum_{v=1}^{k_1+1} \lp \m_{v-1} -  \m_{v} \rp
  \mE_{G,{\mathcal U}_{r+1}} \left \langle \|\x^{(i_1)}\|_2\|\x^{(p_1)}\|_2 (\y^{(p_2)})^T \y^{(i_2)} \right \rangle_{\gamma_{v}^{(r)} } \Bigg. \Bigg). \nonumber \\
    \end{eqnarray}
Since (\ref{eq:rthlev2genanal44}) holds for $k_1=1$ (and, as shown in (\ref{eq:rthlev2genanal43}),  even for $k_1=2$), one then, based on (\ref{eq:rthlev2genanal45}), has that it holds for any $k_1\in\{1,2,\dots,r+1\}$. Finally one has for the sum over $k_1$
 \begin{eqnarray}\label{eq:rthlev2genanal46}
 \sum_{k_1=1}^{r+1}\sum_{i_1=1}^{l}\sum_{i_2=1}^{l} \sum_{j=1}^{m}
\beta_{i_1} \frac{T_{k_1,1,j}}{\sqrt{1-t}}
 & = & \beta^2  \sum_{k_1=1}^{r+1}  (\p_{k_1-1}-\p_{k_1}) \Bigg( \Bigg. \mE_{G,{\mathcal U}_{r+1}}\langle \|\x^{(i_1)}\|_2^2\|\y^{(i_2)}\|_2^2\rangle_{\gamma_{01}^{(r)}} \nonumber \\
 & & +  (s-1)\mE_{G,{\mathcal U}_{r+1}}\langle \|\x^{(i_1)}\|_2^2(\y^{(p_2)})^T\y^{(i_2)}\rangle_{\gamma_{02}^{(r)}}   \nonumber \\
 &  & - s \sum_{v=1}^{k_1} \lp \m_{v-1} -  \m_{v} \rp
  \mE_{G,{\mathcal U}_{r+1}} \left \langle \|\x^{(i_1)}\|_2\|\x^{(p_1)}\|_2 (\y^{(p_2)})^T \y^{(i_2)} \right \rangle_{\gamma_{k_1}^{(r)} } \Bigg. \Bigg)\nonumber \\
 & = & \beta^2 (\p_{0}-\p_{r+1}) \Bigg( \Bigg. \mE_{G,{\mathcal U}_{r+1}}\langle \|\x^{(i_1)}\|_2^2\|\y^{(i_2)}\|_2^2\rangle_{\gamma_{01}^{(r)}} \nonumber \\
 & & +  (s-1)\mE_{G,{\mathcal U}_{r+1}}\langle \|\x^{(i_1)}\|_2^2(\y^{(p_2)})^T\y^{(i_2)}\rangle_{\gamma_{02}^{(r)}} \Bigg.\Bigg)  \nonumber \\
 &  & - s  \sum_{k_1=1}^{r+1}  (\p_{k_1-1}-\p_{k_1})\nonumber \\
 & &  \times  \sum_{v=1}^{k_1} \lp \m_{v-1} -  \m_{v} \rp
  \mE_{G,{\mathcal U}_{r+1}} \left \langle \|\x^{(i_1)}\|_2\|\x^{(p_1)}\|_2 (\y^{(p_2)})^T \y^{(i_2)} \right \rangle_{\gamma_{v}^{(r)} } \nonumber \\
  & = & \beta^2 (\p_{0}-\p_{r+1}) \Bigg( \Bigg. \mE_{G,{\mathcal U}_{r+1}}\langle \|\x^{(i_1)}\|_2^2\|\y^{(i_2)}\|_2^2\rangle_{\gamma_{01}^{(r)}} \nonumber \\
 & & +  (s-1)\mE_{G,{\mathcal U}_{r+1}}\langle \|\x^{(i_1)}\|_2^2(\y^{(p_2)})^T\y^{(i_2)}\rangle_{\gamma_{02}^{(r)}}  \Bigg. \Bigg) \nonumber \\
 &  & - s  \sum_{v=1}^{r+1} \lp \m_{v-1} -  \m_{v} \rp \mE_{G,{\mathcal U}_{r+1}} \left \langle \|\x^{(i_1)}\|_2\|\x^{(p_1)}\|_2 (\y^{(p_2)})^T \y^{(i_2)} \right \rangle_{\gamma_{v}^{(r)} }
\nonumber \\
 & &  \times  \sum_{k_1=v}^{r+1}  (\p_{k_1-1}-\p_{k_1}) \nonumber \\
  & = & \beta^2 (\p_{0}-\p_{r+1}) \Bigg( \Bigg. \mE_{G,{\mathcal U}_{r+1}}\langle \|\x^{(i_1)}\|_2^2\|\y^{(i_2)}\|_2^2\rangle_{\gamma_{01}^{(r)}} \nonumber \\
 & & +  (s-1)\mE_{G,{\mathcal U}_{r+1}}\langle \|\x^{(i_1)}\|_2^2(\y^{(p_2)})^T\y^{(i_2)}\rangle_{\gamma_{02}^{(r)}}  \Bigg. \Bigg) \nonumber \\
 &  & - s  \sum_{v=1}^{r+1} \p_{v-1}\lp \m_{v-1} -  \m_{v} \rp \mE_{G,{\mathcal U}_{r+1}} \left \langle \|\x^{(i_1)}\|_2\|\x^{(p_1)}\|_2 (\y^{(p_2)})^T \y^{(i_2)} \right \rangle_{\gamma_{v}^{(r)} }.\nonumber \\
\end{eqnarray}
One then analogously has for the other two sequences
 \begin{eqnarray}\label{eq:rthlev2genanal47}
 \sum_{k_1=1}^{r+1}\sum_{i_1=1}^{l}\sum_{i_2=1}^{l}
\beta_{i_1}\|\y^{(i_2)}\|_2 \frac{T_{k_1,2}}{\sqrt{1-t}}
   & = & \beta^2 (\q_{0}-\q_{r+1}) \Bigg( \Bigg. \mE_{G,{\mathcal U}_{r+1}}\langle \|\x^{(i_1)}\|_2^2\|\y^{(i_2)}\|_2^2\rangle_{\gamma_{01}^{(r)}} \nonumber \\
 & & +  (s-1)\mE_{G,{\mathcal U}_{r+1}}\langle \|\x^{(i_1)}\|_2^2 \|\y^{(i_2)}\|_2\|\y^{(p_2)}\|_2\rangle_{\gamma_{02}^{(r)}} \Bigg. \Bigg)  \nonumber \\
 &  & - s  \sum_{v=1}^{r+1} \q_{v-1}\lp \m_{v-1} -  \m_{v} \rp \mE_{G,{\mathcal U}_{r+1}} \left \langle \|\y^{(i_2)}\|_2\|\y^{(p_2)}\|_2 (\x^{(p_1)})^T \x^{(i_1)} \right \rangle_{\gamma_{v}^{(r)} }, \nonumber \\
\end{eqnarray}
and
 \begin{eqnarray}\label{eq:rthlev2genanal48}
 \sum_{k_1=1}^{r+1}\sum_{i_1=1}^{l}\sum_{i_2=1}^{l}
\beta_{i_1}\|\y^{(i_2)}\|_2 \frac{T_{k_1,3}}{\sqrt{t}}
   & = & \beta^2 (\p_{0}\q_{0}-\p_{r+1}\q_{r+1}) \Bigg( \Bigg. \mE_{G,{\mathcal U}_{r+1}}\langle \|\x^{(i_1)}\|_2^2\|\y^{(i_2)}\|_2^2\rangle_{\gamma_{01}^{(r)}} \nonumber \\
 & & +  (s-1)\mE_{G,{\mathcal U}_{r+1}}\langle \|\x^{(i_1)}\|_2^2\|\y^{(i_2)}\|_2\|\y^{(p_2)}\|_2\rangle_{\gamma_{02}^{(r)}} \Bigg. \Bigg)  \nonumber \\
 &  & - s  \sum_{v=1}^{r+1} \p_{v-1}\q_{v-1}\lp \m_{v-1} -  \m_{v} \rp 
 \nonumber \\
 & & \times
 \mE_{G,{\mathcal U}_{r+1}} \left \langle \|\y^{(i_2)}\|_2\|\y^{(p_2)}\|_2 \|\x^{(i_1)}\|_2\|\x^{(p_1)}\|_2 \right \rangle_{\gamma_{v}^{(r)} }, \nonumber \\
\end{eqnarray}
A combination of (\ref{eq:rthlev2genanal10e})-(\ref{eq:rthlev2genanal10g}), (\ref{eq:rthlev2genanal39}), and (\ref{eq:rthlev2genanal46})-(\ref{eq:rthlev2genanal48}) finally gives (\ref{eq:thm3eq5}) and (\ref{eq:thm3eq6}).
\end{proof}

\section{A few practical examples}
\label{sec:examples}

As mentioned earlier, considered models and underlying biliearly indexed random processes encompass many well known random structures and optimization problems. Here, we briefly mention only a few and leave a more thorough discussion about them and many others for companion separate papers.

We first recall on the following key object of interest from Theorem \ref{thm:thm3}
\begin{equation}\label{eq:exampleseq1}
\psi(\calX,\calY,\p,\q,\m,\beta,s,t)  =  \mE_{G,{\mathcal U}_{r+1}} \frac{1}{\beta|s|\sqrt{n}\m_r} \log
\lp \mE_{{\mathcal U}_{r}} \lp \dots \lp \mE_{{\mathcal U}_2}\lp\lp\mE_{{\mathcal U}_1} \lp Z^{\m_1}\rp\rp^{\frac{\m_2}{\m_1}}\rp\rp^{\frac{\m_3}{\m_2}} \dots \rp^{\frac{\m_{r}}{\m_{r-1}}}\rp,
\end{equation}
where
\begin{eqnarray}\label{eq:exampleseq2}
Z & \triangleq & \sum_{i_1=1}^{l}\lp\sum_{i_2=1}^{l}e^{\beta D_0^{(i_1,i_2)}} \rp^{s} \nonumber \\
 D_0^{(i_1,i_2)} & \triangleq & \sqrt{t}(\y^{(i_2)})^T
 G\x^{(i_1)}+\sqrt{1-t}\|\x^{(i_1)}\|_2 (\y^{(i_2)})^T\lp\sum_{k=1}^{r+1}\u^{(2,k)}\rp\nonumber \\
 & & +\sqrt{t}\|\x^{(i_1)}\|_2\|\y^{(i_2)}\|_2\lp\sum_{k=1}^{r+1}u^{(4,k)}\rp +\sqrt{1-t}\|\y^{(i_2)}\|_2\lp\sum_{k=1}^{r+1}\h^{(k)}\rp^T\x^{(i_1)},
 \end{eqnarray}
and
${\mathcal X}=\{\x^{(1)},\x^{(2)},\dots,\x^{(l)}\}$ with $\x^{(i)}\in \mR^{n},1\leq i\leq l$ and ${\mathcal Y}=\{\y^{(1)},\y^{(2)},\dots,\y^{(l)}\}$ with $\y^{(i)}\in \mR^{m},1\leq i\leq l$. For the concreteness, we assume the so-called linear (proportional growth) regime, $\frac{m}{n}=\alpha$, where $\alpha$ remains constant as $n\rightarrow\infty$.

\subsection{Hopfield models}
\label{sec:hop}

The above function $\psi(\cdot)$ for $s=1$, ${\mathcal X}=\{-\frac{1}{\sqrt{n}},\frac{1}{\sqrt{n}}\}^n$, and ${\mathcal Y}=\mS^m$ (where $\mS^m$ is $m$-dimensional unit sphere) i.e.,  $\psi(\{-\frac{1}{\sqrt{n}},\frac{1}{\sqrt{n}}\}^n,\mS^m,\p,\q,\m,\beta,1,1)$, is the so called free energy of the positive square root Hopfield model. Clearly, $\psi(\{-\frac{1}{\sqrt{n}},\frac{1}{\sqrt{n}}\}^n,\mS^m,\p,\q,\m,\beta,1,0)$ is then its intended decoupled comparative counterpart. Of particular interest in random optimizations is often the so-called ground state regime, where $\beta\rightarrow\infty$. It is then rather clear that
\begin{equation}\label{eq:exampleseq3}
  \lim_{n,\beta\rightarrow\infty}\psi\lp\left \{-\frac{1}{\sqrt{n}},\frac{1}{\sqrt{n}}\right \}^n,\mS^m,\p,\q,\m,\beta,1,1\rp=  \lim_{n\rightarrow\infty} \frac{\max_{\x\in\{-\frac{1}{\sqrt{n}},\frac{1}{\sqrt{n}}\}^n}\|G\x\|_2}{\sqrt{n}}
\end{equation}
 and  $\lim_{n,\beta\rightarrow\infty}\psi(\{-\frac{1}{\sqrt{n}},\frac{1}{\sqrt{n}}\}^n,\mS^m,\p,\q,\m,\beta,1,0)$ are precisely the ground state energy of the  positive square root Hopfield model and its decoupled counterpart in the thermodynamic limit. More on the foundations, importance, relevance, and current state of the art results related to key aspects of the Hopfield models can be found in, e.g.,  \cite{Hop82,PasFig78,Hebb49,PasShchTir94,ShchTir93,BarGenGueTan10,BarGenGueTan12,Tal98,StojnicMoreSophHopBnds10,BovGay98,TalBook11a}.

Similarly for $s=-1$ one has analogously
\begin{equation}\label{eq:exampleseq4}
  \lim_{n,\beta\rightarrow\infty}\psi\lp\left \{-\frac{1}{\sqrt{n}},\frac{1}{\sqrt{n}}\right \}^n,\mS^m,\p,\q,\m,\beta,-1,1\rp=   - \lim_{n\rightarrow\infty} \frac{\min_{\x\in\{-\frac{1}{\sqrt{n}},\frac{1}{\sqrt{n}}\}^n}\|G\x\|_2}{\sqrt{n}}
\end{equation}
 and  $\lim_{n,\beta\rightarrow\infty}\psi(\{-\frac{1}{\sqrt{n}},\frac{1}{\sqrt{n}}\}^n,\mS^m,\p,\q,\m,\beta,-1,0)$ as the ground state energy of the negative square root Hopfield model and its decoupled counterpart in the thermodynamic limit.

\subsection{Asymmetric Little models}
\label{sec:litt}

The above function $\psi(\cdot)$ for $s=1$, ${\mathcal X}=\{-\frac{1}{\sqrt{n}},\frac{1}{\sqrt{n}}\}^n$, and ${\mathcal X}={\mathcal Y}=\{-\frac{1}{\sqrt{m}},\frac{1}{\sqrt{m}}\}^m$ cam ne viewed as a variant of the so called free energy of the positive asymmetric Little model. Clearly, $\psi(\{-\frac{1}{\sqrt{n}},\frac{1}{\sqrt{n}}\}^n,\{-\frac{1}{\sqrt{m}},\frac{1}{\sqrt{m}}\}^m,\p,\q,\m,\beta,1,0)$ is then its intended decoupled comparative counterpart. In random optimizations one is, again, often interested in
\begin{equation}\label{eq:exampleseq5}
  \lim_{n,\beta\rightarrow\infty}\psi\lp\left \{-\frac{1}{\sqrt{n}},\frac{1}{\sqrt{n}}\right \}^n,\left \{-\frac{1}{\sqrt{m}},\frac{1}{\sqrt{m}}\right \}^n,\p,\q,\m,\beta,1,1\rp=  \lim_{n\rightarrow\infty} \frac{\max_{\x\in\{-\frac{1}{\sqrt{n}},\frac{1}{\sqrt{n}}\}^n}\|G\x\|_1}{\sqrt{nm}}
\end{equation}
 and  $\lim_{n,\beta\rightarrow\infty}\psi(\{-\frac{1}{\sqrt{n}},\frac{1}{\sqrt{n}}\}^n,\{-\frac{1}{\sqrt{m}},\frac{1}{\sqrt{m}} \}^n,\p,\q,\m,\beta,1,0)$ which are precisely the ground state energy of the  positive asymmetric Little model and its decoupled counterpart in the thermodynamic limit. More on the foundations, importance, relevance, and key state of the art results related to various aspects of Little models can be found in, e.g.,
\cite{BruParRit92,Little74,BarGenGue11bip,CabMarPaoPar88,AmiGutSom85,StojnicAsymmLittBnds11}.

Similarly for $s=-1$ one has analogously
\begin{equation}\label{eq:exampleseq6}
  \lim_{n,\beta\rightarrow\infty}\psi\lp\left \{-\frac{1}{\sqrt{n}},\frac{1}{\sqrt{n}}\right \}^n,\left \{-\frac{1}{\sqrt{m}},\frac{1}{\sqrt{m}}\right \}^n,\p,\q,\m,\beta,-1,1\rp= - \lim_{n\rightarrow\infty} \frac{\min_{\x\in\{-\frac{1}{\sqrt{n}},\frac{1}{\sqrt{n}}\}^n}\|G\x\|_1}{\sqrt{nm}}
\end{equation}
 and  $\lim_{n,\beta\rightarrow\infty}\psi(\{-\frac{1}{\sqrt{n}},\frac{1}{\sqrt{n}}\}^n,\{-\frac{1}{\sqrt{m}},\frac{1}{\sqrt{m}} \}^n,\p,\q,\m,\beta,-1,0)$ as the negative asymmetric Little model ground state energy and its decoupled counterpart in the thermodynamic limit.

\subsection{Perceptrons}
\label{sec:perc}

\subsubsection{Spherical perceptron}
\label{sec:sphperc}

For $s=-1$, ${\mathcal X}=\mS^n$, and ${\mathcal Y}=\mS_+^n$  (where $\mS_+^m$ is the positive orthant portion of the $m$-dimensional unit sphere), one has that $\psi(\cdot)$ in (\ref{eq:exampleseq1}) is (a properly adjusted) free energy associated with the classical, so-called, spherical perceptron (see, e.g., \cite{StojnicGardGen13,StojnicGardSphErr13,StojnicGardSphNeg13,GarDer88,Gar88,SchTir02,SchTir03}).
Clearly, $\psi(\mS^m,\mS_+^m,\p,\q,\m,\beta,1,0)$ is then its intended decoupled comparative counterpart. In random optimizations (see, e.g., \cite{FPSUZ17,FraHwaUrb19,FraPar16,FraSclUrb19,FraSclUrb20,AlaSel20,StojnicGardGen13,StojnicGardSphErr13,StojnicGardSphNeg13,GarDer88,Gar88,Schlafli,Cover65,Winder,Winder61,Wendel62,Cameron60,Joseph60,BalVen87,Ven86,SchTir02,SchTir03}), one is, again, often interested in
\begin{equation}\label{eq:exampleseq7}
  \lim_{n,\beta\rightarrow\infty}\psi\lp \psi(\mS^m,\mS_+^m,\p,\q,\m,\beta,1,1\rp=  \lim_{n\rightarrow\infty} \frac{\min_{\x\in\mS^m}\max_{\y\in\mS_+^m} \y^TG\x}{\sqrt{n}}
\end{equation}
 and  $\lim_{n,\beta\rightarrow\infty}\psi\lp \psi(\mS^m,\mS_+^m,\p,\q,\m,\beta,1,0\rp$ which are precisely the spherical perceptron associated ground state energy and its decoupled counterpart in the thermodynamic limit.

\subsubsection{Binary perceptron}
\label{sec:binperc}

For $s=-1$, ${\mathcal X}=\{-\frac{1}{\sqrt{n}},\frac{1}{\sqrt{n}}\}^n$, and ${\mathcal Y}=\mS_+^m$, one has that $\psi(\cdot)$ in (\ref{eq:exampleseq1}) is (a properly adjusted) free energy associated with the, so-called, binary perceptron (see, e.g., \cite{StojnicGardGen13,GarDer88,Gar88,StojnicDiscPercp13,KraMez89,GutSte90,KimRoc98,TalBook11a,NakSun23,BoltNakSunXu22,PerkXu21,CXu21,DingSun19}). $\psi(\{-\frac{1}{\sqrt{m}},\frac{1}{\sqrt{m}} \}^n,\mS_+^m,\p,\q,\m,\beta,1,0)$ is then its intended decoupled comparative counterpart. In random optimizations (see, e.g., \cite{StojnicGardGen13,GarDer88,Gar88,StojnicDiscPercp13,KraMez89,GutSte90,KimRoc98,TalBook11a,NakSun23,BoltNakSunXu22,PerkXu21,CXu21,DingSun19})), one is, again, often interested in
\begin{equation}\label{eq:exampleseq8}
  \lim_{n,\beta\rightarrow\infty}\psi\lp \psi(\left \{-\frac{1}{\sqrt{n}},\frac{1}{\sqrt{n}}\right \}^n,\mS_+^m,\p,\q,\m,\beta,1,1\rp=  \lim_{n\rightarrow\infty} \frac{\min_{\x\in\left \{-\frac{1}{\sqrt{m}},\frac{1}{\sqrt{m}}\right \}^n}\max_{\y\in\mS_+^m} \y^TG\x}{\sqrt{n}}
\end{equation}
 and  $\lim_{n,\beta\rightarrow\infty}\psi\lp \psi( \{-\frac{1}{\sqrt{m}},\frac{1}{\sqrt{m}}\}^n,\mS_+^m,\p,\q,\m,\beta,1,0\rp$ which are precisely the spherical perceptron associated ground state energy and its decoupled counterpart in the thermodynamic limit.

The above examples are only a few illustrative ones from a rather unlimited collection (the cited references contain a large number of closely related relevant ones as well). Although small, this set provides a pretty good hint as to how wide could be the range of potential applications of our results. Further studying of the above presented interpolating concepts within the context of each of these applications is therefore of great interest. Such a studying is usually problem specific and we will present many interesting results that can be obtained in these directions in separate papers.

\section{Conclusion}
\label{sec:lev2x3lev2liftconc}

A very powerful statistical interpolating/comparison mechanism, to which we refer as \emph{fully lifted} (fl), is presented. The concept is a substantial upgrade of the fully bilinear mechanism introduced in \cite{Stojnicgscompyx16}, which corresponds to the zero level of full or one level of partial lifting. As it contains the mechanism of \cite{Stojnicgscompyx16} as a special case, it also contains all special subcases of \cite{Stojnicgscompyx16}. In particular, the basic well known Slepian's max and Gordon's minmax comparison principles are special cases of \cite{Stojnicgscompyx16} and therefore of our work as well. Since many of the results that we created in recent years relied on these well-known principles, it is clear that the generality and magnitude of the results presented here will enable a substantial progress in further studying all of them.

To give a hint regarding the range of potential applications, we showed how one can deduce several classical optimization problems as special cases of our model. These are of course only a few illustrative ones. We should, however, emphasize that the presented mechanism is a very generic and extremely powerful self-sustainable tool which can be used in various extensions as well. Since these are problem specific, we discuss them as well as their final results in separate papers.

\begin{singlespace}
\bibliographystyle{plain}
\bibliography{nflgscompyxRefs}
\end{singlespace}

\end{document}